\documentclass[11pt]{amsart}

\newif\ifnotes\notesfalse

\ifnotes
\usepackage{xcolor}
\definecolor{mygrey}{gray}{0.50}
\newcommand{\notename}[2]{{\textcolor{mygrey}{\footnotesize{\bf (#1:} {#2}{\bf ) }}}}
\newcommand{\noteswarning}{{\begin{center} {\Large WARNING: NOTES ON}\end{center}}}

\else

\newcommand{\notename}[2]{{}}
\newcommand{\noteswarning}{{}}

\fi

\usepackage{latexsym,amssymb,amsfonts,amsmath,mathrsfs,float, hyperref}
\usepackage{thmtools,thm-restate}
\usepackage{xcolor}
\usepackage{tikz}
\hypersetup{
    colorlinks,
    linkcolor={black},
    citecolor={black},
}
\usepackage[capitalise]{cleveref}
\usepackage{doi}

\usepackage[ruled, vlined, linesnumbered]{algorithm2e}

\calclayout

\newcommand{\parset}{
	\setlength{\parskip}{3mm}
  	\setlength{\parindent}{0mm}}
\parset
	
  \renewcommand{\Pr}{\mathop{\mbox{\rm Pr}}}
  \newcommand{\Exp}{\mathop{\mathbb{E}}}
  \newcommand{\E}{\mathbb{E}}

  \newcommand{\Lag}{\mathrm{Lag}}
  \DeclareMathOperator{\Stab}{Stab}\DeclareMathOperator{\Spec}{Spec}
  
  \newcommand{\R}{\mathbb{R}} 
  \newcommand{\C}{\mathbb{C}} 
  \newcommand{\D}{\mathbb{D}} 
  \newcommand{\N}{\mathbb{N}} 
  \newcommand{\Z}{\mathbb{Z}} 
  \newcommand{\F}{\mathbb{F}} 
  \newcommand{\Lcal}{\mathcal{L}}
  \newcommand{\HW}{\mathrm{HW}}
  \newcommand{\pmset}[1]{\{-1,1\}^{#1}} 
  \newcommand{\bset}[1]{\{0,1\}^{#1}} 
  \DeclareMathOperator{\vspan}{Span} 

  \newcommand{\tvect}[2]{%
   \ensuremath{\Bigl(\negthinspace\begin{smallmatrix}#1\\#2\end{smallmatrix}\Bigr)}}

  \newcommand{\Id}{\ensuremath{\mathop{\rm I}\nolimits}}

  \newcommand{\PFR}{\textsf{PFR}}
  \newcommand{\PGI}{\textsf{PGI}}
  \newcommand{\Iso}{\mathrm{Iso}}
  \newcommand{\Sp}{\mathrm{Sp}}
  \newcommand{\ASp}{\mathrm{ASp}}
  \DeclareMathOperator{\GL}{GL}
  
  \newcommand{\one}{\mathbf{1}}

  \DeclareMathOperator{\supp}{supp}

  \newcommand{\eps}{\varepsilon}

  \DeclareMathOperator{\U}{\mathcal{U}}
  
  \newcommand{\poly}{\mbox{\rm poly}}

  \DeclareMathOperator{\tr}{tr}
  
  \DeclareMathOperator{\Diag}{Diag}
  \DeclareMathOperator{\spec}{Spec}
  \DeclareMathOperator{\Est}{Est}
  \DeclareMathOperator{\Weyl}{Weyl}
  \DeclareMathOperator{\FourEst}{FourEst}
  \DeclareMathOperator{\dist}{dist}

  \newcommand{\ra}{\rangle}
  \newcommand{\la}{\langle}
  \newcommand{\tp}{\mathsf{T}}

  \newcommand{\ket}[1]{|#1\rangle}
  \newcommand{\bra}[1]{\langle#1|}

  \newcommand{\A}{\mathcal A}
  \newcommand{\B}{\mathcal B}

  \newcommand{\beq}{\begin{equation}}
  \newcommand{\eeq}{\end{equation}}
  \newcommand{\beqn}{\begin{equation*}}
  \newcommand{\eeqn}{\end{equation*}}
  \newcommand{\beqr}{\begin{eqnarray}}
  \newcommand{\eeqr}{\end{eqnarray}}
  \newcommand{\beqrn}{\begin{eqnarray*}}
  \newcommand{\eeqrn}{\end{eqnarray*}}
  \newcommand{\bmline}{\begin{multline}}
  \newcommand{\emline}{\end{multline}}
  \newcommand{\bmlinen}{\begin{multline*}}
  \newcommand{\emlinen}{\end{multline*}}

  \theoremstyle{plain}
  \newtheorem{theorem}{Theorem}[section]
  \newtheorem{lemma}[theorem]{Lemma}
  \newtheorem{proposition}[theorem]{Proposition}
  \newtheorem{claim}[theorem]{Claim}
  \newtheorem{corollary}[theorem]{Corollary}
  
  \theoremstyle{definition}
  \newtheorem{definition}[theorem]{Definition}

  \theoremstyle{remark}
  \newtheorem{remark}[theorem]{Remark}
  
  \renewenvironment{proof}[1][]{
    	\begin{trivlist}
     	\item[\hspace{\labelsep}{\em\noindent Proof#1:\/}]}
     	{{\hfill$\Box$}
    	\end{trivlist}
  }

\begin{document}

\title{An algorithmic Polynomial Freiman--Ruzsa theorem}

\begin{abstract}
    We provide algorithmic versions of the Polynomial Freiman--Ruzsa theorem of Gowers, Green, Manners, and Tao (\emph{Ann.\ of Math.,}~2025).
    In particular, we give a polynomial-time algorithm that, given a set $A \subseteq \mathbb{F}_2^n$ with doubling constant~$K$, returns a subspace $V \subseteq \mathbb{F}_2^n$ of size $|V| \leq |A|$ such that~$A$ can be covered by $2K^C$ translates of $V$, for a universal constant $C>1$.
    We also provide efficient algorithms for several ``equivalent'' formulations of the Polynomial Freiman--Ruzsa theorem, such as the polynomial Gowers inverse theorem, the classification of approximate Freiman homomorphisms, and quadratic structure-vs-randomness decompositions.

    Our algorithmic framework is based on a new and optimal version of the Quadratic Goldreich--Levin algorithm, which we obtain using ideas from quantum learning theory.
    This framework fundamentally relies on a connection between quadratic Fourier analysis and symplectic geometry, first speculated by Green and Tao (\emph{Proc.\ of Edinb.\ Math.\ Soc.,}~2008) and which we make explicit in this paper.
\end{abstract}

\author[D.\ Castro-Silva]{Davi Castro-Silva}
\address{Department of Computer Science and Technology, University of Cambridge, UK}
\email{dd654@cam.ac.uk} 
\author[J.\ Bri\"{e}t]{Jop Bri\"et} 
\address{Centrum Wiskunde \& Informatica (CWI) and Leiden University, The Netherlands}
\email{j.briet@cwi.nl}
\author[S.\ Arunachalam]{Srinivasan Arunachalam}
\address{IBM Research, Silicon Valley Lab, USA}
\email{srinivasan.arunachalam@ibm.com}
\author[A.\ Dutt]{\newline Arkopal Dutt} 
\address{IBM Research, Cambridge, USA}
\email{arkopal@ibm.com}
\author[T.\ Gur]{Tom Gur} 
\address{Department of Computer Science and Technology, University of Cambridge, UK}
\email{tom.gur@cl.cam.ac.uk}

\maketitle

\noteswarning

\section{Introduction}

The Freiman--Ruzsa theorem \cite{freiman1987structure,ruzsa1999analog} is a cornerstone of additive combinatorics,
with numerous applications to theoretical computer science~\cite{lovett2015exposition}.
Loosely speaking, the theorem shows that sets exhibiting approximate combinatorial subgroup behavior must be algebraically structured.
To make this precise, recall that an additive set $A$ has \emph{doubling constant}~$K$ if $|A+A|\le K|A|$, where  $A+A = \{a+a' \;:\; a,a' \in A\}$.
In the extremal case $K=1$, the set~$A$ must be a subgroup or a coset of a subgroup.
The doubling constant therefore gives a combinatorial measure of the approximate subgroup behavior of sets.

Here, we focus on subsets of $\mathbb{F}_2^n$.
In this setting, the Freiman--Ruzsa theorem states that any set $A \subseteq \F_2^n$ with doubling constant~$K$ can be covered by $F(K)$ translates of a subspace $V \leq \F_2^n$ of size $|V| \leq |A|$, where $F: \R_+ \to \R_+$ is a universal function.
The original proof of this result, due to Ruzsa \cite{ruzsa1999analog}, shows that one may take $F(K) \leq 2K^2 2^{K^4}$.
In the same paper, Ruzsa puts forward a conjecture of Marton asserting that the dependence on~$K$ can be improved to a polynomial.
This has become widely known as the \emph{Polynomial Freiman--Ruzsa} ($\PFR$) conjecture.

The $\PFR$ conjecture has sparked much research in additive combinatorics, as it became clear that this question lies at the heart of several results relating algebraic and combinatorial notions of structure; see \cite{green2004finite, green2005notes}.
The first significant improvement, due to Sanders \cite{Sanders2012}, gave a quasipolynomial bound:
$F(K) \leq \exp\big((\log K)^{4+o(1)}\big)$.
Since then, the status of the $\PFR$ conjecture remained open for over a decade.
In a recent breakthrough, the $\PFR$ conjecture was proved by Gowers, Green, Manners, and Tao~\cite{GowersGMT2025}.

\begin{theorem}[$\PFR$]
\label{thm:marton_conjecture}
    Let~$n\geq 1$ be an integer and let $A \subseteq \F_2^{n}$ be a set satisfying $|A + A| \leq K |A|$. 
    Then, there exists a subspace~$V \leq \F_2^n$ of size $|V|\leq |A|$ such that~$A$ can be covered by $\poly(K)$ translates of~$V$.
\end{theorem}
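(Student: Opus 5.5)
This is the Gowers--Green--Manners--Tao theorem. I would follow their entropy-based route, stated for random variables on $\F_2^n$.

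\textbf{Entropic reformulation.} For independent random variables $X,Y$ on $\F_2^n$, define the entropic Ruzsa distance
\[
d[X;Y]=H(X+Y)-\tfrac12 H(X)-\tfrac12 H(Y).
\]
The core claim is the following: if $X_1,X_2$ are independent, then there is a subgroup $V$ with
\[
d[X_1;U_V]+d[X_2;U_V]\le C\,d[X_1;X_2],
\]
where $U_V$ is uniform on $V$ and $C$ is an absolute constant (GGMT obtain $C=11$).

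Given this, apply it with $X_1=X_2=U_A$. Then $d[U_A;U_A]\le \log K$, so $d[U_A;U_V]\le \tfrac{C}{2}\log K$. Standard entropy-to-combinatorics lemmas now finish the argument. Since $H(U_A+U_V)\ge\max(\log|A|,\log|V|)$, the sizes satisfy $|V|/|A|\in[K^{-C},K^{C}]$. Some translate $x+V$ satisfies $|A\cap(x+V)|\ge K^{-O(C)}|A|$. Ruzsa's covering lemma then covers $A$ by $K^{O(C)}$ translates of $V+V=V$. If $|V|>|A|$, pass to a subspace of $V$ of size between $|A|/2$ and $|A|$; this multiplies the number of translates by at most $2K^{C}$.

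\textbf{Proof of the entropic claim.} Minimize the functional
\[
\tau[X_1;X_2]=d[X_1;X_2]+\eta\,\bigl(d[X_1^0;X_1]+d[X_2^0;X_2]\bigr)
\]
over all pairs, with $\eta$ a small constant and $X_i^0$ the original variables. A minimizer exists by compactness once one restricts to the finite support. Let $(X_1,X_2)$ be a minimizer and take independent copies $X_1,X_2,\tilde X_1,\tilde X_2$. Set
\[
U=X_1+X_2,\qquad V=\tilde X_1+X_2,\qquad W=X_1+\tilde X_1,
\]
so that $U+V+W=0$ in characteristic $2$.

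Using the fibring identity, the entropic Balog--Szemer\'edi--Gowers inequality and Ruzsa-distance triangle inequalities, one compares $\tau$ at the minimizer with $\tau$ at conditioned pairs such as $(U|U+V,\;V|U+V)$. Minimality then forces
\[
I(U:V\mid U+V\ldots)\le O(\eta)\,d[X_1;X_2].
\]
Summing over the three symmetric choices of pairs yields an upper bound on $d[X_1;X_2]$ that is at most $(1-c)\,d[X_1;X_2]$ for small $\eta$. This forces $d[X_1;X_2]=0$.

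Zero distance means $X_1,X_2$ are uniform on translates of a common subgroup $H$. The $\eta$-penalty term then gives
\[
d[X_i^0;U_H]\le \eta^{-1}\tau\le \eta^{-1}d[X_1^0;X_2^0],
\]
which is the claim.

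\textbf{Main obstacle.} The hardest step is the endgame inequality: one needs the correct conditioning choices and the key identity showing that the sum of the three conditional mutual informations is controlled. Here I would follow the GGMT bookkeeping with the ``$U+V+W=0$'' trick and the entropic BSG lemma, checking carefully that all constants close with, say, $\eta=1/9$.
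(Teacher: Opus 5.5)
This is the right approach: the paper does not prove the theorem itself but quotes it from Gowers--Green--Manners--Tao, and your outline is their argument. That outline is to minimize $\tau$, bound the relevant conditional mutual informations via the fibring identity, run the $U+V+W=0$ endgame with entropic Balog--Szemer\'{e}di--Gowers, and then deduce the covering from the entropy bound. The covering deduction itself is correct. Two slips need repairing.

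First, the mutual informations that minimality controls are $I(U:V\mid S)$, $I(U:W\mid S)$ and $I(V:W\mid S)$, where $S=X_1+X_2+\tilde X_1+\tilde X_2$. No conditioning should contain $U+V$: since $U+V=W$ determines $V$ from $U$, we get $I(U:V\mid U+V,\ldots)=H(U\mid U+V,\ldots)$, which is not small.

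Second, in the last step you cannot bound $\tau$ at the minimizer by $d[X_1^0;X_2^0]$. The value $\tau[X_1^0;X_2^0]$ also contains $\eta\,(d[X_1^0;X_1^0]+d[X_2^0;X_2^0])$, and $d[X;X]>0$ unless $X$ is uniform on a coset. Compare instead with the swapped pair $(X_2^0,X_1^0)$, which gives $\tau\le(1+2\eta)\,d[X_1^0;X_2^0]$. For $\eta=1/9$ this yields the constant $\eta^{-1}+2=11$, which is exactly where the GGMT constant $11$ comes from.
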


In the theorem above, and throughout the paper, we use $\poly(\cdot)$ to denote an arbitrary (positive) polynomial $P: \R_+ \to \R_+$ that does not depend on any parameters (such as the dimension~$n$).

\subsection{Algorithmic PFR}
The $\PFR$ theorem and closely-related variants play an important role in several areas of theoretical computer science, including linearity testing of maps $f \colon \F_2^n \to \F_2^m$ \cite{samorodnitsky2007low}, constructions of two-source extractors from affine extractors \cite{zewi2011affine}, communication complexity lower bounds \cite{ben2014additive},
super-polynomial lower bounds on locally decodable codes \cite{bhowmick2013new}, constructions of non-malleable codes \cite{aggarwal2014non}, sparsification algorithms for 1-in-3-SAT \cite{bedert2025strong}, quantum and classical worst-case to average-case reductions \cite{asadi2022worst,asadi2024quantum}, quantum algorithms for testing stabilizer states \cite{ad2024tolerant, bao2025tolerant,mehraban2024improved}, learning bounded stabilizer-extent quantum states \cite{SCforstabilizers}, and testing Clifford unitaries \cite{hinsche2025clifford}.

However, certain applications in theoretical computer science rely on an \emph{efficient algorithmic} statement, where an explicit description of the subspace can be learned efficiently, as opposed to an existential combinatorial statement.
For instance, while the Freiman--Ruzsa theorem plays a crucial role in the Quadratic Goldreich--Levin algorithm of Tulsiani and Wolf~\cite{TulsianiW2014}, the $\PFR$ theorem does not in itself imply any improvements to this algorithm because its proof does not readily translate to an efficient procedure.
Indeed, the naive brute-force algorithm that extracts the subspace runs in time exponential in the dimension~$n$.
This motivates a natural question that arises after the resolution of the $\PFR$ conjecture: 
Can the subspace~$V$ from \cref{thm:marton_conjecture} be learned efficiently?

Our main contribution answers this question affirmatively by providing an explicit algorithm that obtains a basis for the covering subspace in $\poly(n)$-time.

\begin{theorem}[Algorithmic \PFR]\label{thm:APFR}
    For every~$K \geq 1$, there exists a randomized algorithm such that the following holds.
    If~$A\subseteq \F_2^n$ is a set satisfying $|A+A| \leq K|A|$, then, with probability at least~$2/3$, the algorithm outputs a basis of a subspace~$V\leq \F_2^n$ of size $|V|\leq |A|$ such that $A$ can be covered by $\poly(K)$ translates of~$V$.
    Moreover, the algorithm uses $O(\log|A|)$ random samples from~$A$, makes $\widetilde{O}(\log^2|A|)$ queries to~$A$, and runs in time $\widetilde{O}(n^4)$.
\end{theorem}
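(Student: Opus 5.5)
The plan is to go through the standard equivalence between $\PFR$ and the polynomial inverse theorem for the Gowers $U^3$ norm, and to make each step algorithmic. The main algorithmic ingredient will be an optimal Quadratic Goldreich--Levin procedure, which we will build from ideas in quantum learning theory (Bell sampling and stabilizer learning).

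\textbf{Step 1: a quadratic phase.} From $|A+A|\le K|A|$, Plünnecke--Ruzsa gives $|4A|\le K^4|A|$. We then pass to a set with large additive energy, working in a small ambient space. Concretely, we sample $O(\log|A|)$ elements of $A$ and restrict to their span, or to a random linear projection of dimension about $\log|A|+O(\log K)$ (a Ruzsa-type modeling argument). In this model space $A$ has density $\poly(1/K)$.

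Next we form the function $f(x,y)=(-1)^{\dots}$ associated with the graph of an approximate homomorphism. Following Green--Tao, we use $\Gamma=\{(a,a'): \dots\}$ and realize $\one_A$ as a function with $\|\cdot\|_{U^3}\ge\poly(1/K)$. The $\PFR$ theorem of Gowers--Green--Manners--Tao, via its $U^3$-inverse consequence, guarantees a correlating quadratic phase $(-1)^{x^\tp Mx+b\cdot x}$ with correlation $\poly(1/K)$.

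\textbf{Step 2: finding the phase (main obstacle).} We must find such a phase in time polynomial in $n$ using queries. Our approach is a symplectic Goldreich--Levin algorithm:
\begin{itemize}
\item Encode the function as a state $\sum_x f(x)\ket{x}$, simulated classically via queries.
\item Identify the Weyl/Pauli spectrum of its Bell-difference distribution with pairs $(h,Mh)$ lying in a Lagrangian subspace of $\F_2^{2n}$.
\item Sample $\poly(K)\cdot n$ points of large weight in this distribution using query access.
\item Apply Gaussian elimination to recover a large isotropic subspace, which yields a symmetric $M$.
\end{itemize}
The hard part is showing that the sampled points concentrate near a single Lagrangian subspace with only a $\poly(K)$ loss. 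For this we will invoke $\PFR$ itself, existentially, applied to the set of large spectral points; that set has small doubling in $\F_2^{2n}$. We then use approximate-linear-algebra-with-errors (robust Gaussian elimination) to extract it.

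\textbf{Step 3: recovering $V$.} With $M$ in hand, the approximate Freiman homomorphism $\phi(h)=Mh$ agrees with a linear map on a $\poly(1/K)$ fraction of $2A$. Taking $V$ as a suitable kernel or subspace built from $M$ and the sampled elements, we trim it by random sampling until $|V|\le|A|$. A Ruzsa covering argument then gives the $\poly(K)$ translates of $V$. Throughout, membership queries to $A$ estimate the relevant densities to constant accuracy, so query complexity $\widetilde O(\log^2|A|)$ comes from running the procedures in dimension $O(\log|A|)$. The final runtime $\widetilde O(n^4)$ is dominated by the Gaussian elimination steps in dimension $n$ and the change of basis back to $\F_2^n$. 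Success probability $2/3$ follows by amplifying each stage and taking a union bound.
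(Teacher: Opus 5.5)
Your Steps 1 and 3 follow the spirit of the paper's reduction. Step 2, however, is the entire technical content of the theorem, and it has a genuine gap: the route you sketch cannot work.

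First, Bell-difference samples need not concentrate near one Lagrangian, and the set of large spectral points need not have small doubling. Take $n$ even and $f=\frac12\big((-1)^{x^\tp Q_1x}+(-1)^{x^\tp Q_2x}\big)$, with $M_i=Q_i+Q_i^\tp$ and $M_1+M_2$ invertible (e.g.\ $Q_1=0$ and $x^\tp Q_2x=x_1x_2+x_3x_4+\cdots$). Then $|\widehat{\Delta_af}(M_ia)|=\frac14+O(2^{-n/2})$ for all $a\neq0$. So the set $\{(a,b):|\widehat{\Delta_af}(b)|\ge\frac18\}$ has the following properties:
\begin{enumerate}
\item it contains both transverse Lagrangians $L_i=\{(h,M_ih)\}$;
\item it has only $O(2^n)$ elements, by Parseval;
\item its sumset contains $L_1+L_2=\F_2^{2n}$, so its doubling is $\Omega(2^n)$.
\end{enumerate}
The span of $\poly(K)\cdot n$ samples is then all of $\F_2^{2n}$, which is not isotropic.

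Second, suppose you first pass to a small-doubling piece via Balog--Szemer\'edi--Gowers. Then ``apply $\PFR$ existentially and extract the subspace by robust Gaussian elimination'' is circular. Recovering from samples a subspace that captures a $\poly(1/K)$ fraction of a small-doubling set is equivalent to finding a linear map that agrees with an approximate Freiman homomorphism on a $\poly(1/K)$ fraction of points. That \emph{is} the algorithmic $\PFR$/homomorphism-testing problem you are trying to solve, now in $\F_2^{2n}$. No linear-algebra primitive over $\F_2$ that tolerates a $1-\poly(1/K)$ fraction of outliers is available. This is exactly where the earlier quadratic Goldreich--Levin algorithms \cite{TulsianiW2014, ben2014sampling} lose exponentially or quasi-polynomially, and it is why existential $\PFR$ gives no algorithmic gain by itself.

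The missing idea is the paper's Lagrangian sampling. It does not look for one Lagrangian explaining all of $f$. Instead it targets a fixed but unknown $\gamma$-approximate local maximizer $\phi$ with $|\langle f,\phi\rangle|\ge\tau$; there are only $((\gamma-\frac12)\tau)^{-O(\log(1/\tau))}$ of these. The mechanism is:
\begin{enumerate}
\item Post-select samples of $Q_f=P_f*P_f$ on $\Spec(f)$, which is isotropic by the uncertainty principle.
\item Whenever $\Spec(f)$ fails to robustly generate $\Lcal(\phi)$, take a sample $(a,b)\in\Lcal(\phi)\setminus\Spec(f)$ and a random sign $\sigma$, and replace $f$ by $\Pi^{\sigma}_{a,b}f$. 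This raises $|\langle f,\phi\rangle|^2/\|f\|_2^2$ by a factor $1.08$, so it can happen only $O(\log(1/\tau))$ times.
\item Such samples exist because $Q_f$, unlike $P_f$, gives mass $\Omega((\gamma-\frac12)^2\tau^8)$ to $\Lcal(\phi)\setminus T$ for every proper subspace $T$.
\item $Q_f$ is only approximately sampled: draw $a$ uniformly and run Goldreich--Levin on $\Delta_af$.
\item Stabilizer sampling, followed by rounding to classical quadratics, then yields correlation within $\eps$ of optimal.
\end{enumerate}

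Your reduction also needs to be pinned down in three places.
\begin{itemize}
\item The function to feed this algorithm is $g(x,y)=\one_S(x)(-1)^{f(x)\cdot y}$, where $f$ is the inverse of the random projection $\pi$ on $A'=A\cap U$. Correlating $\one_S$ alone with a quadratic phase carries no information, since the constant phase already achieves its density.
\item $M$ is read off from the bilinear $x$--$y$ part of the correlating quadratic, and the affine shift is found by sampling.
\item Restricting to the span $U$ of the samples is not an alternative to projecting but a prerequisite for it. Queries to $S$ and $f$ are simulated by enumerating fibers of $\pi$ on $U$. These fibers have size at most $2^{2K}$, since $|\vspan(A)|\le\frac{2^{2K}}{2K}|A|$ by Even-Zohar's bound. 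Fibers of a projection of all of $\F_2^n$ would have size $2^{n-m}$.
\end{itemize}
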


Above, a \emph{query} to a set $A \subseteq \F_2^n$ is an evaluation of the characteristic function $\one_A(x)$ for a chosen $x \in \F_2^n$, and a \emph{random sample} from $A$ is a uniformly chosen element $a\in A$.
We use the standard asymptotic notation $f(x) = O(g(x))$ to denote that $f(x) \leq Cg(x)$ for some constant $C>0$ and all sufficiently large~$x$, and use $f(x) = \widetilde{O}(g(x))$ to mean that $f(x) \leq Cg(x) (\log g(x))^C$ for some constant $C>0$ and all sufficiently large~$x$.
Note that one must allow access to random samples from~$A$ in order to have a sub-exponential time algorithm:
the density of~$A$ inside~$\F_2^n$ can potentially be exponentially small, and one would then require $2^{\Omega(n)}$-many queries to~$A$ to hit a single element of that set.\footnote{This situation happens, for instance, in the proof of the inverse theorem for the Gowers $U^3$ norm \cite{samorodnitsky2007low}.
In that setting, the Freiman--Ruzsa (or $\PFR$) theorem is used for a ``graph'' set $\big\{(x, \phi(x)):\: x\in A\big\} \subset \F_2^{2n}$, which has density at most $2^{-n}$ inside its ambient space $\F_2^{2n}$.}
On the other hand, our algorithm only uses random samples in order to learn the linear span of~$A$, and thus access to samples from~$A$ can be replaced by access to a basis of its linear span (which might be more adequate for some applications).

As typically viewed in additive combinatorics, the doubling constant $K$ is independent of the dimension~$n$ (as bounded doubling implies structure), and in turn our asymptotic notation suppresses factors of $K$ for better readability.
Our proof gives more precise bounds:
the algorithm takes $O(\log|A|+K)$ random samples from~$A$, makes $2^{2K+O(\log^2K)} \log^2 |A| \log\log|A|$ queries to~$A$, and runs in time $K^{O(\log K)} n^4 \log n + 2^{2K+O(\log^2K)} n^3 \log n$, where we assume $K\geq 2$.

\subsection{An algorithmic polynomial Gowers inverse theorem} 
Let us briefly discuss how \cref{thm:APFR} is proved here.
A natural approach  would be to algorithmize each step in the proof of the combinatorial $\PFR$ theorem in \cite{GowersGMT2025}, which would in principle  answer the question.
However, this proof relies heavily on entropy-minimization techniques, and it is unclear whether such machinery can be transformed into efficient algorithms.
Instead, we utilize a connection to higher-order Fourier analysis, a field where the Gowers uniformity norms play a prominent role.

Given a function $f:\F_2^n\to\C$ and $a\in \F_2^n$, let $\Delta_af(x) := f(x+a)\overline{f(x)}$.
The \emph{Gowers $U^3$ norm} of~$f$ is then given by
\begin{equation*}
    \|f\|_{U^3} = \big(\Exp_{x,a,b,c \in \F_2^n}\Delta_a\Delta_b\Delta_cf(x)\big)^{\frac{1}{8}},
\end{equation*}
where we use the usual averaging notation $\E_{x\in X} f(x) := |X|^{-1} \sum_{x\in X} f(x)$.
It immediately follows from the triangle inequality that bounded functions have bounded uniformity norms:
\begin{equation*}
    \|f\|_{U^3} \leq \|f\|_{\infty}.
\end{equation*}
The extremizers of this inequality are given by (scalar multiples of) \emph{non-classical quadratic phase functions}:
functions $\psi: \F_2^n \to \C$ that satisfy
\begin{equation}\label{def:ncqphase}
    \Delta_a \Delta_b \Delta_c \psi(x) = 1 \quad \text{for all $x, a, b, c\in \F_2^n$.}
\end{equation}

For any quadratic polynomial $p: \F_2^n \to \F_2$, the function $(-1)^{p}$ is an example of a non-classical quadratic phase function,\footnote{These functions are sometimes known as \emph{classical} quadratic phase functions.}
but these are not the only examples.
If we denote by $|\cdot|: \F_2 \to \{0,1\}\subseteq \Z$ the natural identification map, then for any $c\in \F_2^n$ the function $\psi(x) = i^{|c_1x_1|+\dots+|c_nx_n|}$ will also be a non-classical quadratic phase function.
While these ``strictly non-classical'' functions do not commonly play an important role in quadratic Fourier analysis, they will be important to us later on.

The $U^3$ norm quantifies approximate quadratic structure in a function.
The so-called ``direct theorem,''  which follows from repeated applications of the Cauchy-Schwarz inequality, shows that the uniformity norms bound correlation with quadratic phases:
$|\E_{x\in \F_2^n} f(x) \overline{\psi(x)}| \leq \|f\|_{U^{3}}$ holds for any non-classical quadratic phase function~$\psi$.
Inverse theorems for the uniformity norms show that, for bounded functions, a large $U^3$ norm implies correlation with a quadratic phase \cite{samorodnitsky2007low, GreenTao2008}.
Of particular interest here are quantitative aspects. 
One of the principal motivations for proving \cref{thm:marton_conjecture} ($\PFR$) was to obtain a \emph{polynomial inverse theorem} for the~$U^3$ norm ($\PGI$, for ``polynomial Gowers inverse'').
In what follows, we say that a complex function~$f$ is \emph{1-bounded} if~$\|f\|_\infty \leq 1$.

\begin{theorem}[$\PGI$]\label{thm:pgi}
    If $f:\F_2^n\to\C$ is a 1-bounded function with $\|f\|_{U^3} \geq \gamma$, then there exists a quadratic polynomial $p:\F_2^n\to\F_2$ such that
    \begin{equation*}
        \Big|\Exp_{x\in \F_2^n}f(x)(-1)^{p(x)}\Big| \geq \poly(\gamma).
    \end{equation*}
\end{theorem}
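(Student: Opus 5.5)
We will derive \cref{thm:pgi} from \cref{thm:marton_conjecture} by following the classical Gowers / Green--Tao / Samorodnitsky route. The point is that every step of that argument loses only polynomially in $\gamma$, except the Freiman--Ruzsa step, which we replace by $\PFR$.

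\textbf{Step 1: many large Fourier coefficients of derivatives.} Expanding the definition gives $\|f\|_{U^3}^8=\E_a\|\Delta_af\|_{U^2}^4$. Since $\|g\|_{U^2}^4=\sum_\xi|\widehat g(\xi)|^4$, this equals $\E_a\sum_\xi|\widehat{\Delta_af}(\xi)|^4$. Parseval gives $\sum_\xi|\widehat{\Delta_af}(\xi)|^2\le 1$, so we can choose $\phi(a)$ with $|\widehat{\Delta_af}(\phi(a))|^2\ge\|\Delta_af\|_{U^2}^4$. By averaging, the set
\[
S=\bigl\{a:|\widehat{\Delta_af}(\phi(a))|\ge\gamma^{4}/2\bigr\}
\]
has density at least $\gamma^8/2$.

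\textbf{Step 2: $\phi$ is an approximate Freiman homomorphism on $S$.} We run the standard Cauchy--Schwarz argument (Gowers' argument via the identity for $\Delta_{a}\Delta_{b}$). It shows there are at least $\poly(\gamma)2^{3n}$ additive quadruples $a_1+a_2=a_3+a_4$ in $S$ with $\phi(a_1)+\phi(a_2)=\phi(a_3)+\phi(a_4)$. Let $\Gamma=\{(a,\phi(a)):a\in S\}\subseteq\F_2^{2n}$. Then $\Gamma$ has additive energy at least $\poly(\gamma)|\Gamma|^3$. By the Balog--Szemer\'edi--Gowers theorem, which has polynomial bounds, there is $\Gamma'\subseteq\Gamma$ with $|\Gamma'|\ge\poly(\gamma)|\Gamma|$ and $|\Gamma'+\Gamma'|\le\poly(1/\gamma)|\Gamma'|$.

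\textbf{Step 3: apply $\PFR$ and linearize.} Applying \cref{thm:marton_conjecture} to $\Gamma'$ gives a subspace $V\le\F_2^{2n}$ with $|V|\le|\Gamma'|$ such that $\poly(1/\gamma)$ translates of $V$ cover $\Gamma'$. By pigeonhole, some coset $x+V$ contains $\poly(\gamma)|\Gamma'|$ points of $\Gamma'$. The graph structure of $\Gamma'$ forces $V\cap(\{0\}\times\F_2^n)$ to be small, so after passing to a bounded-index piece we may regard $V$ as the graph of a linear map. This yields an affine map $x\mapsto Mx+c$ that agrees with $\phi$ on a set $S'$ of density $\poly(\gamma)$. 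The cost here is only polynomial, which is exactly what $\PFR$ buys over the exponential Freiman--Ruzsa bound. One subtlety is that $V$ may be a graph only over a subspace of bounded codimension in $\F_2^n$. We handle this by extending arbitrarily, which loses only a $\poly(\gamma)$ factor in density, since the projection of $\Gamma'$ has density $\poly(\gamma)$ and $|V|\le|\Gamma'|$.

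\textbf{Step 4: symmetrize and integrate.} We run the Green--Tao symmetry argument: a further Cauchy--Schwarz/energy argument shows that $M$ may be replaced by a symmetric matrix, and then by one with zero diagonal (up to a linear term), on a subset $S''$ still of density $\poly(\gamma)$. Write $M=B+B^\tp$, which is possible in characteristic $2$ for symmetric matrices with zero diagonal, and set $q(x)=x^\tp Bx$. Then $\Delta_a q(x)=a^\tp Mx+q(a)$. Hence for $a\in S''$ we have $|\widehat{\Delta_a(f(-1)^{q})}(\xi_a)|\ge\poly(\gamma)$ for a suitable $\xi_a$ depending affinely on $a$ (from $c$), together with a phase independent of $x$. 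Summing over $a\in S''$ shows that $\|f(-1)^{q}\|_{U^2}\ge\poly(\gamma)$. Therefore $f(-1)^q$ has a Fourier coefficient of size $\poly(\gamma)$ at some $\eta$. Taking $p(x)=q(x)+\eta\cdot x$ gives the claimed correlation.

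\textbf{Main obstacle.} We expect the main obstacle to be Step 4, the symmetrization in characteristic $2$. The diagonal of $M$ need not vanish, so it does not directly come from a classical quadratic, and this is precisely where non-classical phases such as $i^{|c_1x_1|+\dots}$ arise. Our first attempt is to absorb the diagonal into a linear correction using the identity $x^\tp Dx=(\mathrm{diag}D)\cdot x$ over $\F_2$, tracking the $i$-valued phases. If that fails, we would instead prove correlation with a non-classical quadratic phase and then pass to a classical one by splitting $\F_2^n$ into cosets of a bounded-codimension subspace, accepting a further polynomial loss.
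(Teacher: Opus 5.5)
Your proposal is correct and is essentially the route the paper relies on: the paper does not reprove $\PGI$ but deduces it from the Gowers--Green--Manners--Tao $\PFR$ theorem via the $\PFR\Rightarrow\PGI$ implication of Green--Tao and of Lovett, which is exactly the Gowers/Samorodnitsky/Green--Tao argument you sketch with $\PFR$ inserted at the Freiman step. The Step~4 diagonal obstacle has a simple classical resolution: with $d=\mathrm{diag}(M)$, the matrix $M+dd^\tp$ is symmetric with zero diagonal and $(M+dd^\tp)a=Ma+(d\cdot a)d$, so $\xi_a\in\{c,\,c+d\}$ takes only two values, which is what the $U^2$ summation actually needs (an $a$-dependent affine $\xi_a$ would not suffice); hence no non-classical phases are needed, although your coset-splitting fallback also works at the cost of a factor $\sqrt{2}$.
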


It was shown by Green and Tao~\cite{green2010equivalence}, and independently by Lovett~\cite{lovett2012equivalence}, that $\PFR$ is equivalent to $\PGI$.
As such, the resolution of the $\PFR$ conjecture by Gowers, Green, Manners and Tao also provided a proof of Theorem~\ref{thm:pgi}.
Here, we use this equivalence \emph{in the other direction} as a bridge to reduce the proof of \cref{thm:APFR} to obtaining an algorithmic version of $\PGI$.
Since the proof of equivalence between $\PGI$ and $\PFR$ is combinatorial---as opposed to the information-theoretic proof of $\PFR$---it is easier to translate it into an algorithmic framework that provides such a bridge.

We note that algorithmic versions of the $U^3$ inverse theorem have been previously developed in \cite{TulsianiW2014, ben2014sampling}.
However, these algorithms are not guaranteed to produce sufficiently strong quadratic correlators to obtain a Freiman--Ruzsa algorithm of polynomial strength.
One of our main contributions in this work, then, is  to provide the first efficient algorithm for the $\PGI$ theorem.
In the following, a \emph{query} to a function $f:\F_2^n\to\C$ means an evaluation of~$f$ at some given point~$x\in \F_2^n$.

\begin{theorem}[Algorithmic $\PGI$] \label{thm:algoPGI}
    For every $\gamma>0$,  there exists a randomized algorithm such that the following holds. 
    If $f: \F_2^n \to \C$ is a 1-bounded function satisfying $\|f\|_{U^3} \geq \gamma$, then, with probability at least~$2/3$, the algorithm outputs a quadratic polynomial $q: \F_2^n \to \F_2$ satisfying
    \begin{equation*}
    \big|\Exp_{x\in \F_2^n} f(x) (-1)^{q(x)}\big| \geq \poly(\gamma).
    \end{equation*}
    This algorithm makes $\widetilde{O}(n^2)$ queries to~$f$ and runs in time $\widetilde{O}(n^3)$.
\end{theorem}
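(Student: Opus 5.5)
The plan is to follow the standard structure of the inverse-theorem proof (Samorodnitsky, Green--Tao) and replace each existential step with an efficient one. The existential $\PGI$ (\cref{thm:pgi}) is used only as a guarantee that a good quadratic correlator exists; the algorithm then locates it through a quadratic Goldreich--Levin procedure.

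\emph{Step 1: extract derivatives with large Fourier coefficients.} Since $\|f\|_{U^3}^8=\E_a\|\Delta_a f\|_{U^2}^4$ and $\|g\|_{U^2}^4=\sum_\xi|\widehat g(\xi)|^4\le \max_\xi|\widehat g(\xi)|^2$, a $\poly(\gamma)$ fraction of directions $a$ have some $\xi$ with $|\widehat{\Delta_a f}(\xi)|\ge\poly(\gamma)$. For a random $a$, queries to $\Delta_a f$ cost two queries to $f$. Running linear Goldreich--Levin on $\Delta_a f$ with $\widetilde O(n)$ queries lists its large coefficients, and picking one at random gives a candidate $\phi(a)$. Doing this at $O(n)$ random points $a$ samples from a set $S=\{(a,\phi(a))\}\subseteq\F_2^{2n}$. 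The usual Cauchy--Schwarz argument shows that this set contains $\poly(\gamma)|S|$ additive quadruples.

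\emph{Step 2: find a linear structure inside $S$.} By Balog--Szemer\'edi--Gowers and $\PFR$, $\phi$ agrees with a linear map $M$ on a $\poly(\gamma)$ fraction of inputs. By symmetrization, $M$ can be taken symmetric up to alternating corrections. This is the main obstacle: turning this into an algorithm without brute force over subspaces. My first attempt would be the symplectic/stabilizer viewpoint the abstract points to. Encode $f$ as the state $\sum_x f(x)\ket{x}$ and note that the pairs $(a,\xi)$ with large $|\widehat{\Delta_a f}(\xi)|$ behave like the Weyl-operator (characteristic) distribution of that state. The distribution $p(a,\xi)\propto|\E_x\Delta_af(x)(-1)^{\xi\cdot x}|^2$ can be sampled classically from queries to $f$ at random points. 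The claim to establish is that this distribution has $\poly(\gamma)$ mass on a Lagrangian subspace of $\F_2^{2n}$, whose existence should follow from \cref{thm:pgi} together with the structure of stabilizer states. Given that, sampling $O(n)$ pairs and solving linear systems over $\F_2$ recovers a Lagrangian $L=\{(a,Ma)\}$ with $M$ symmetric, where $a$ is restricted to a possibly proper subspace. This costs $\widetilde O(n^2)$ queries, because each sample needs $O(n)$ queries to estimate, and $O(n^3)$ time for Gaussian elimination.

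\emph{Step 3: integrate to a quadratic and verify.} A symmetric $M$ integrates to a non-classical quadratic $\psi$ with $\Delta_a\psi=(-1)^{a^\tp Mx}\cdot$const. The diagonal of $M$ is absorbed by the $i^{|c_ix_i|}$ phases, which is why these phases matter. Then:
\begin{enumerate}
\item Consider $f\overline{\psi}$, which correlates with some linear phase; find that phase by linear Goldreich--Levin.
\item Pass from the non-classical $\psi$ back to a classical $(-1)^q$ by a standard averaging over the $2^{O(1)}$ cosets/extensions, losing only polynomially.
\item Estimate $|\E f(-1)^q|$ by sampling and repeat $O(1)$ times to boost the success probability to $2/3$.
\end{enumerate}

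Polynomial losses compound only a constant number of times, giving $\poly(\gamma)$ correlation overall.
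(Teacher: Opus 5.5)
\textbf{Gap: Step 2 cannot recover the Lagrangian as described.} Your high-level plan matches the paper's: use \cref{thm:pgi} only existentially, locate a correlator through a high-weight Lagrangian in $\F_2^{2n}$, then integrate and round. The step that carries all the weight, however, fails.

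Knowing that the characteristic distribution gives mass $\poly(\gamma)$ to some Lagrangian $L$ does not let you recover $L$ from $O(n)$ samples and Gaussian elimination. The reasons are as follows.
\begin{enumerate}
\item A $1-\poly(\gamma)$ fraction of your samples lie outside $L$, and you have no membership test for the unknown $L$. So the span of the samples is typically far too large and not isotropic.
\item Obtaining $n$ samples that all lie in $L$ happens only with probability $\poly(\gamma)^{\Omega(n)}$.
\item Even conditioned on $L$, the distribution may concentrate on a proper subspace, so the span need not reach all of $L$.
\end{enumerate}
Finding a subspace that contains a small constant fraction of the sample points is essentially the original problem again. Your Steps 1--2 do not rescue this: making BSG plus $\PFR$ algorithmic is exactly algorithmic homomorphism testing, which the paper derives \emph{from} quadratic Goldreich--Levin, not the other way around.

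A smaller issue: $P_f$ cannot be sampled directly from queries, because its $a$-marginal is proportional to $\|\Delta_a f\|_2^2$. The paper instead samples $a$ uniformly and self-convolves, obtaining a measure that approximates $\|f\|_2^8 Q_f$ only on sets of size at most $2^n$ (\cref{lem:sampling}).

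\textbf{The missing idea: the Lagrangian-sampling machinery of \cref{thm:lagrangian}.}
\begin{enumerate}
\item Do not target an arbitrary good stabilizer state. Target an \emph{approximate local maximizer} $\phi$ of correlation. One with near-optimal correlation exists by climbing through neighbouring stabilizer states.
\item Work with the convoluted distribution $Q_f=P_f*P_f$. For such $\phi$, $Q_f$ is smooth over cosets of proper subspaces of $\mathcal L(\phi)$ (\cref{lem:Qfsmooth}).
\item Keep only samples lying in the spectral set $\Spec(f)$. Membership is checkable by Fourier estimation, and the set is isotropic by the uncertainty principle.
\item If $f$ robustly generates $\mathcal L(\phi)$, the span of the kept samples is exactly $\mathcal L(\phi)$.
\item Otherwise, a $Q_f$-sample lands in $\mathcal L(\phi)\setminus\Spec(f)$ with probability polynomial in the correlation. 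Projecting $f$ onto the corresponding Weyl eigenspace, with a guessed sign, multiplies $|\langle f,\phi\rangle|^2/\|f\|_2^2$ by $1.08$ and preserves local maximality.
\item So $O(\log(1/\gamma))$ guessed projections suffice. This guessing is the source of the $(1/\gamma)^{O(\log(1/\gamma))}$ factor in the complexity.
\end{enumerate}
Repetition yields a list of candidate stabilizer states. For each, the coset and linear part are found via \cref{lem:linear}, and the state is rounded to classical quadratics, which gives \cref{thm:quadratic_GL}. Applying that theorem with $\eps$ a suitable polynomial in $\gamma$ then gives \cref{thm:algoPGI}. Your Step 3 is in the spirit of this final rounding stage, but without a working Step 2 the argument does not yield an efficient algorithm.
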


Here again, we have hidden the dependence of the implied constants on the parameter~$\gamma$ for better readability. An inspection of the algorithm shows that it makes $(1/\gamma)^{O(\log(1/\gamma))} n^2 \log n$ queries to~$f$, and runs in time $(1/\gamma)^{O(\log(1/\gamma))} n^3 \log n$.

\subsection{Symplectic geometry and quadratic Fourier analysis}

The proof of \cref{thm:algoPGI} (algorithmic $\PGI$) relies on a connection between symplectic geometry and quadratic Fourier analysis, which was first observed by Green and Tao~\cite{GreenTao2008}.
This observation appears in an outline of the inverse theorem for the Gowers~$U^3$ norm in a model case, namely over a finite abelian group~$G$ of odd order, for a non-classical quadratic phase function $f = e^{2\pi i\phi(x)}$ given by a map $\phi:G\to \R/\Z$.\footnote{
Though we focus on a particular group of even order here, the connection with symplectic geometry remains (see \cref{sec:symplectic}).}
In this case, the discrete derivatives of~$\phi$ turn out to satisfy an identity of the form
\begin{equation*}
    \phi(x+h) - \phi(x) =  ( Mh + c)\cdot x,
\end{equation*}
where $M:G \to \widehat{G}$ is a linear map (homomorphism) and $c\in \widehat{G}$.
This shows that the discrete derivatives of~$\phi$ resemble affine linear functions.
In this setting, the inverse problem involves finding a global description of~$f$ from this data. In other words, one must somehow integrate this identity.
This integration is possible  due to the fact that the map~$M$ can be shown to obey a self-adjointness condition of the form
\begin{equation}\label{eq:selfadjoin}
    M(y'-y)\cdot x -  Mx\cdot (y'-y)=0
    \quad
    \text{for all $x,y,y'\in G$.}
\end{equation}
Motivated by this, Green and Tao remark:

\begin{quote}
    ``There appear to be some intriguing parallels with symplectic geometry here.
    Roughly speaking, the vanishing~\eqref{eq:selfadjoin} is an assertion that the graph $\{(h,Mh) : h\in G\}$ is a ``Lagrangian manifold'' on the ``phase space'' $G\times \widehat{G}$. [\dots] Thus we see hints of some kind of ``combinatorial symplectic geometry'' emerging, though we do not see how to develop these possible connections further.''
\end{quote}

In Section~\ref{sec:symplectic}, we expand on this observation by providing a number of results showing that quadratic Fourier analysis and symplectic geometry are indeed tightly intertwined.
Our starting point is the fact that Hilbert spaces form the natural analytic space for the~$U^3$ norm~\cite{eisner2012large}.
In this setting, the multiplicative derivatives of a function $f\in L^2(\F_p^n)$ give rise to a natural probability distribution on the phase space $V = \F_p^n \times \F_p^n$, which is closely connected to the Heisenberg group over~$V$.
Quadratic structure in~$f$ is then reflected in this distribution as a bias towards isotropic sets in~$V$ (with respect to the standard symplectic form).
An extremal instance of this phenomenon is that the extremizers of the $U^3$ norm relative to the~$L^2$ norm can be characterized in terms of maximal isotropic subspaces of~$V$ (i.e., Lagrangian manifolds).
This implies, for instance, that the unitary isometry group of the~$U^3$ norm modulo the Heisenberg group is isomorphic to the symplectic group~$\Sp(V)$;
see \cref{sec:symplectic} for details.
The central component of our $\PGI$ algorithm operates on the phase space~$V$, and is most naturally expressed in this context.

\subsection{Algorithms for approximate algebraic structure}
Our work provides a general framework that yields a number of  algorithms for learning algebraic structure in sets and functions.
These algorithms naturally fall into two categories.

The first category falls within the scope of set addition.
This includes \cref{thm:APFR} (algorithmic $\PFR$), as well as algorithms for learning approximations of functions between boolean hypercubes by homomorphisms (see \cref{thm:algoPFR2'} and \cref{thm:algoPFR3'}).

The second category is related to quadratic Fourier analysis. This includes \cref{thm:algoPGI} (algorithmic $\PGI$), as well as the following quadratic analogue of the Goldreich--Levin algorithm~\cite{GoldreichLevin1989} and its corollaries.

\begin{theorem}[Quadratic Goldreich--Levin algorithm] \label{thm:quadratic_GL}
For every $\eps,\delta>0$, there exists a randomized algorithm such that the following holds.
Given query access to a function $f: \F_2^n \rightarrow \C$, with probability at least~$1 - \delta$, the algorithm outputs a quadratic polynomial $p: \F_2^n \rightarrow \F_2$ satisfying
$$\big|\Exp_{x \in \F_2^n} f(x) (-1)^{p(x)}\big| \geq \max_{q \text{ quadratic }} \big|\Exp_{x \in \F_2^n} f(x) (-1)^{q(x)}\big| - \eps.$$
This algorithm makes $n^2 \log n \log(1/\delta) (1/\varepsilon)^{O(\log(1/\varepsilon))}$ queries to $f$ and runs in time $n^3 \log n \log(1/\delta) (1/\varepsilon)^{O(\log(1/\varepsilon))}$.
\end{theorem}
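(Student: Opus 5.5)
The plan is to reduce the quadratic Goldreich--Levin problem to finding a good symmetric bilinear "derivative" matrix, and to do that search in the phase space $V=\F_2^n\times\F_2^n$ using ideas from stabilizer learning, in the spirit of the connection outlined above.

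First we reduce to the case where the optimum is large. Let $\eta=\max_q|\E_x f(x)(-1)^{q(x)}|$. If $\eta\le\eps$, any output works. Otherwise, by the direct theorem, $\|f\|_{U^3}\ge\eta\ge\eps$.

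Next we write a quadratic phase as $(-1)^{q(x)}$ with $q(x)=x^\tp Mx+\ell\cdot x+c$, where $M$ is upper triangular. Its derivatives are affine with symmetric bilinear part $B=M+M^\tp$. For any fixed $M$, the best $\ell$ can be found by the linear Goldreich--Levin algorithm applied to $f(x)(-1)^{x^\tp Mx}$. This costs $O(n\log n\cdot\poly(1/\eps))$ queries and returns every $\ell$ whose Fourier coefficient is at least $\eps/2$, up to $\eps/4$ accuracy. So the task reduces to producing a list of $(1/\eps)^{O(\log 1/\eps)}$ candidate matrices $M$ that, with good probability, contains one with $|\E f(-1)^{x^\tp Mx+\ell x}|\ge\eta-\eps/2$ for some $\ell$. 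Given the list, we estimate each candidate's correlation empirically to accuracy $\eps/4$ with $O(\log(\text{list size}/\delta)/\eps^2)$ queries and output the best. Union bounds and repetition give the $\log(1/\delta)$ dependence.

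Candidate matrices come from the phase-space distribution. For random $h$, the function $\Delta_h f$ has a Fourier mass distribution $|\widehat{\Delta_h f}(\xi)|^2$. Sampling $\xi$ from it via Goldreich--Levin on $\Delta_h f$ gives pairs $(h,\xi)\in V$, distributed like a Bell-sampling distribution on $V$. If $f$ correlates with $(-1)^q$ at level $\eta$, a $\poly(\eta)$ fraction of the mass lies on the Lagrangian graph $\{(h,Bh+c_h)\}$. We draw $O(n)$ such pairs, then repeatedly pick random subsets of $O(n+\log(1/\eps))$ pairs. We compute the smallest isotropic (symplectic) span of each subset and, when it is a Lagrangian graph, read off a symmetric $B$ together with its diagonal data, which determine $M$ up to linear terms.

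The main obstacle is this step: showing that random subsets of samples hit only "good" pairs often enough, and that the resulting matrix correlates with $f$ almost optimally rather than merely at $\poly(\eta)$ level. Hitting good pairs requires roughly $(1/\eps)^{O(\log 1/\eps)}$ attempts, obtained by sampling within nested subspaces, which I expect to be the source of the quasi-polynomial factor. Near-optimality I would argue by a symplectic uncertainty/isotropy lemma from \cref{sec:symplectic}: mass concentrated near a Lagrangian subspace forces the corresponding quadratic phase to capture nearly all of the correlation. Sampling $O(n\log n)$ points for each of $O(n)$ derivatives gives the $n^2\log n$ query count. Gaussian elimination over $\F_2$ on $n\times n$ systems gives the $n^3\log n$ running time.
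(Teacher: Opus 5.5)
The outer reduction is fine: fix the quadratic part, recover the linear part by ordinary Goldreich--Levin, and select the best candidate by empirical estimation. The two steps you flag as the ``main obstacle'' are genuine gaps, and they are where the paper needs its main new ideas.

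\textbf{The search step.} Under the characteristic distribution, only a $\poly(\eta)$ fraction of the mass need lie on the Lagrangian $L^*$ of the optimal phase; \cref{lem:Lcorr} only guarantees $P_f(L^*)\ge\eta^4$. Since $L^*$ is unknown, you cannot tell which samples are good. Spanning an $n$-dimensional Lagrangian needs at least $n$ good samples. A random subset of size $\ge n$ of your samples is entirely good with probability at most about $\poly(\eta)^{n}$, and a single bad pair generally spoils the span. ``Sampling within nested subspaces'' gives no way around this. This is exactly where earlier algorithms invoked Balog--Szemer\'edi--Gowers and Freiman--Ruzsa and lost $\exp(\poly(1/\eta))$ or $\exp(\poly\log(1/\eta))$.

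The paper avoids it with three ideas:
\begin{enumerate}
\item It targets only $\gamma$-approximate local maximizers $\phi$. There are boundedly many of these with correlation $\ge\tau$.
\item It replaces membership in $\Lcal(\phi)$ by membership in $\Spec(f)$. This set is testable by Fourier estimation and is isotropic by \cref{lem:uncertainty1}.
\item It uses a dichotomy. Either $f$ robustly generates $\Lcal(\phi)$, in which case the span of the $Q_f$-samples lying in $\Spec(f)$ is exactly $\Lcal(\phi)$. Or, by smoothness of $Q_f=P_f*P_f$ over cosets of subspaces of $\Lcal(\phi)$ (\cref{lem:Qfsmooth}, which needs both local maximality and the self-convolution), a sample lands in $\Lcal(\phi)\setminus\Spec(f)$ with non-negligible probability. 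Projecting $f$ onto the corresponding eigenspace of $W(a,b)$ then boosts $|\langle f,\phi\rangle|^2/\|f\|_2^2$ by a factor $1.08$ (\cref{lem:boost}).
\end{enumerate}
The quasi-polynomial factor comes from the $O(\log(1/\tau))$ rounds of this energy increment, each succeeding with probability $\poly(\eps)$.

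\textbf{Near-optimality.} Nothing in \cref{sec:symplectic} says a high-weight Lagrangian yields a near-optimal phase. \cref{lem:Lcorr} and \cref{lem:integration} relate $P_f(L)$ to $\max_{\Lcal(\phi)=L}|\langle f,\phi\rangle|$ only polynomially, so your route gives correlation $\poly(\eta)$, not $\eta-\eps$.

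Moreover, the stabilizer state one can actually find is generally not $(-1)^{p^*}$. The optimal phase need not be a local maximizer. Its better neighbors $\frac{1}{\sqrt2}(I\pm W(v))\phi$ are typically supported on a proper coset $u+V$ or carry a non-classical factor $i^{|c\circ x|}$. Their Lagrangians are then not graphs or have nonzero diagonal, which are precisely the cases your procedure discards.

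The paper's argument is a bookkeeping one, and some such accounting is indispensable for the additive-$\eps$ guarantee:
\begin{itemize}
\item Walk from $\phi_0=(-1)^{p^*}$ through neighbors with $|\langle f,\phi_{i+1}\rangle|^2>(\frac12+\eps^2)^{-1}|\langle f,\phi_i\rangle|^2$ until reaching a local maximizer $\phi_t$, which lies in the list.
\item Each step lowers the support dimension by at most one, so $t\ge n-d+\one_{c\notin V^\perp}$.
\item Rounding $\phi_t$ to a classical phase costs a factor $2^{-(n-d)-\one_{c\notin V^\perp}}$ in squared correlation. The rounding averages over characters of $V^\perp$ and splits $i^{|c\circ x|}$ into $(-1)^{r}$ and $(-1)^{r+c\cdot x}$.
\item This loss is compensated by the gain $(\frac12+\eps^2)^{-t}$, up to a factor $(1+2\eps^2)^{-O(\log(1/\eps))}\ge(1-\eps/2)^2$.
\end{itemize}
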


Earlier works on Quadratic Goldreich--Levin theorems were based on algorithmic proofs of the inverse theorem for the~$U^3$ norm.
Given a function~$f$ whose maximal correlation with a quadratic phase~$(-1)^q$ is at least~$\tau>0$, these algorithms produce a quadratic phase that has correlation either $\exp(-\poly(1/\tau))$ \cite{TulsianiW2014} or $\exp(-\poly\log(1/\tau))$ \cite{ben2014sampling}.
\cref{thm:quadratic_GL} shows that this loss in correlation can be avoided almost entirely.
(Note that it would be impossible to guarantee the exact optimal correlator using only a polynomial number of queries to~$f$.)

\cref{thm:quadratic_GL} in fact plays a central role in proving our main results.
Theorem~\ref{thm:algoPGI} (algorithmic $\PGI$)  follows immediately by combining \cref{thm:pgi} ($\PGI$) and \cref{thm:quadratic_GL}.
As further applications, we obtain an optimal self-correction algorithm for quadratic Reed-Muller codes over~$\F_2$ (\cref{cor:ReedMuller}) and an algorithm for quadratic structure-versus-randomness decompositions (\cref{cor:decomposition}).
For the proof of \cref{thm:APFR} (algorithmic $\PFR$), we  also use \cref{thm:quadratic_GL}, rather than the closely-related algorithmic $\PGI$~theorem.

Our proof of \cref{thm:quadratic_GL} crucially relies on a connection to quantum information theory.
Namely, it can be viewed as a ``dequantization'' of a result of Chen, Gong, Ye, and Zhang~\cite{chen2024stabilizer}, who gave an efficient quantum protocol for learning the stabilizer state closest to a given quantum state. We capitalize on the close connection between stabilizer states and quadratic phase functions to obtain a classical analogue of their~result.

\subsection{Quantum algorithms.}

Since our classical algorithm for $\PFR$ is obtained by dequantizing a \emph{quantum} algorithm for learning stabilizer states, it is natural to ask whether any advantage can be retained by working directly in the quantum setting. 
In \cref{sec:quantum}, we present a quantum algorithm for this same task whose query and time complexities\footnote{In the context of quantum algorithms, by time we mean the total number of single and two-qubit quantum gates used in the quantum algorithm.} are both improved by a factor of~$n$ compared to their classical counterparts.
Moreover, the quantum result admits a significantly simpler proof, as we can invoke the stabilizer learning algorithm of~\cite{chen2024stabilizer} as a black box.

\begin{restatable}{theorem}{quantumPFR}
\label{thm:quantumPFR} (Quantum Algorithmic $\PFR$)
    Let $A \subseteq \mathbb{F}_2^n$ satisfy $|A + A| \leq K|A|$ for a doubling constant $K \geq 1$.
    There is an $O(n^3)$-time quantum algorithm that uses $O(\log|A|)$ random samples and $O(\log |A|)$ quantum queries to~$A$ which, with probability at least $2/3$, returns a subspace $V \leq \mathbb{F}_2^n$ of size $|V| \leq |A|$ such that $A$ can be covered by $\poly(K)$ translates of~$V$.
\end{restatable}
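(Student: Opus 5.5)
We reduce the quantum algorithmic $\PFR$ statement to learning a stabilizer state. We follow the same route as the classical proof of \cref{thm:APFR}, but replace the classical Quadratic Goldreich--Levin step (\cref{thm:quadratic_GL}) with the quantum stabilizer-learning algorithm of Chen, Gong, Ye and Zhang~\cite{chen2024stabilizer}, used as a black box.

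\textbf{Step 1: reduce the ambient space.} Take $O(\log|A|+K)$ random samples from $A$. With high probability they span $W=\vspan(A)$; the doubling condition gives $|W|\le \poly(K)$-type control relative to $|A|$, since $A$ has small doubling inside its span. Fix a basis of $W$ and identify $W$ with $\F_2^m$, where $m=\dim W\le \log|A|+O(\log K)$. Every query to $A$ is then a query on $m$ qubits.

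\textbf{Step 2: prepare the state.} Using $O(1)$ quantum queries to $\one_A$ and amplitude amplification, prepare the subset state $\ket{A}=|A|^{-1/2}\sum_{a\in A}\ket{a}$. Since $A$ has density at least $\poly(1/K)$ in $W$, this costs $\poly(K)$ queries per copy.

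The point is the stabilizer fidelity. By the Green--Tao/Lovett equivalence combined with $\PFR$/$\PGI$ (\cref{thm:marton_conjecture}, \cref{thm:pgi}), a set of doubling $K$ satisfies $\|\one_A\|_{U^3}$ bounded below relative to its density. In quantum terms, $\ket{A}$ has Gowers-$U^3$ (stabilizer) fidelity at least $\poly(1/K)$: some stabilizer state, i.e.\ a quadratic phase on an affine subspace, correlates with it to $\poly(1/K)$.

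\textbf{Step 3: learn a stabilizer state.} Run the algorithm of~\cite{chen2024stabilizer} on $O(m)$ copies of $\ket{A}$, which takes $O(m^3)$ time after the initial basis change. It outputs a stabilizer state $\ket{\phi}$, supported on an affine subspace $x_0+U$ with some quadratic phase, such that $|\langle\phi|A\rangle|^2\ge\poly(1/K)$.

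\textbf{Step 4: extract the covering subspace.} Write out the fidelity:
\[
\frac{|A\cap(x_0+U)|}{\sqrt{|A||U|}}\ \ge\ |\langle\phi|A\rangle|\ \ge\ \poly(1/K).
\]
This gives two consequences. First, $|A\cap(x_0+U)|\ge\poly(1/K)|A|$. Second, $|U|\le\poly(K)|A|$, and also $|U|\ge \poly(1/K)|A|$. By Ruzsa covering, $A$ is covered by $|A+(A\cap(x_0+U))|/|A\cap(x_0+U)|\le\poly(K)$ translates of $(A\cap (x_0+U))-(A\cap(x_0+U))\subseteq U$. To enforce $|V|\le|A|$, take $V$ to be any subspace of $U$ of codimension $O(\log K)$. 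Each coset of $U$ is a union of $\poly(K)$ cosets of $V$, so the translate count stays $\poly(K)$.

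The overall success probability is boosted to $2/3$ by repetition, verifying each candidate $V$ via sampling.

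\textbf{Main obstacle.} The hard step is Step 2: establishing that $\ket{A}$ has polynomial stabilizer fidelity using only $\PFR$ as stated. I would derive it from the $\PFR\Rightarrow\PGI$ direction applied to $\one_A$ normalized inside $W$. The subtlety is that the correlation is with a quadratic phase over the whole of $W$, not restricted to a subspace, which gives overlap with a stabilizer state of full support. I would first try the direct route instead: apply \cref{thm:marton_conjecture} to obtain a subspace $V'$ with $|V'|\le|A|$ covering $A$ by $\poly(K)$ translates. By pigeonhole, some coset $x_0+V'$ contains $\ge|A|/\poly(K)$ points of $A$, so $\ket{x_0+V'}$ is itself a stabilizer state of fidelity $\poly(1/K)$ with $\ket{A}$.
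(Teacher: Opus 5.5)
Your argument works once the direct coset-state bound from your last paragraph is used to justify Step~2, and it takes a genuinely different and shorter route than the paper.

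\textbf{Comparison with the paper.} The paper reproves its restricted-homomorphism lemma quantumly, in several stages:
\begin{enumerate}
\item It passes to a Freiman-isomorphic dense model $S=\pi(A\cap U)\subseteq\F_2^m$ via a random linear map.
\item It prepares the graph state $\ket{\psi}\propto\sum_{x\in S,\,y}(-1)^{f(x)\cdot y}\ket{x,y}$. Its stabilizer fidelity comes from Claim~\ref{claim:highu3}, hence from Balog--Szemer\'edi--Gowers plus $\PFR$ via Lemma~\ref{lem:closeaffine}.
\item It learns a stabilizer state with~\cite{chen2024stabilizer} and rounds it to a full-support quadratic phase, by Fourier-expanding the indicator of its support and SWAP-testing $\poly(K)$ candidates.
\item It extracts an affine map $\psi$ agreeing with $f$ often, and only then applies Ruzsa covering to $\mathrm{Im}(\psi)$.
\end{enumerate}
You instead apply the learner directly to the subset state. $\PFR$ plus pigeonhole shows that a coset state $\ket{x_0+V'}$ has overlap at least $1/P_1(K)$ with it; the condition $|V'|\le|A|$ is exactly what makes this work. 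If the learned state has overlap $c=\poly(1/K)$, the triangle inequality forces its support $x_0+U$ to satisfy $|A\cap(x_0+U)|\ge c^2|A|$ and $|U|\le|A|/c^2$, and Ruzsa covering finishes.

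Your route bypasses Balog--Szemer\'edi--Gowers, the dense model, the graph construction and the phase-rounding step. It also gives a much smaller polynomial, roughly $K\,P_1(2K)^4$ translates. The paper's route parallels its classical proof, where the $L^\infty$ Quadratic Goldreich--Levin framework genuinely needs a dense model and full quadratic phases. Its quantum lemma (Lemma~\ref{lem:findingfreimanquantum}) would also give quantum versions of the homomorphism-testing formulations, which a subset-state argument does not address.

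\textbf{Corrections needed.} Several intermediate claims are wrong, though none affects the conclusion.
\begin{itemize}
\item $O(\log|A|+K)$ samples need not span $\vspan(A)$; for a subspace plus one extra point, that point is almost never sampled. Use Lemma~\ref{lem:spansample} to get $W$ with $|A\cap W|\ge|A|/2$. Then work with $A'=A\cap W$, which has doubling at most $2K$, so the fidelity bound becomes $1/P_1(2K)^2$. Apply Ruzsa covering to all of $A$ with $B=A'\cap(x_0+U)$, using $|A+B|\le K|A|\le 2K|A'|$.
\item The density of $A'$ in $W$ is only guaranteed to be about $2^{-2K}$ by Theorem~\ref{thm:spanAsize}. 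This is essentially tight: take a subspace together with roughly $K$ independent extra points. So each copy of the state costs $2^{O(K)}$ queries, not $\poly(K)$. This is absorbed in the statement's $K$-dependent constants and matches the paper's $2^{2K}$ overhead per query to $S$.
\item For the same reason, the PGI-based justification you first give in Step~2 cannot work. A full-support quadratic phase state has squared overlap at most $|A'|/|W|$ with $\ket{A'}$, which may be $2^{-\Omega(K)}$.
\item The repetition-with-verification step is unnecessary, and verifying a covering by sampling is not straightforward. Every stage can be run with failure probability as small as desired.
\end{itemize}
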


We remark that the proof of this result is largely modular.
Accordingly, readers primarily interested in the quantum setting may proceed directly to the final section.

\subsection{Structure of the paper} Section~\ref{sec:symplectic} covers connections between symplectic geometry and quadratic Fourier analysis.
In Section~\ref{sec:finding_Lagrangians}, we give the core algorithmic primitive: finding high-weight Lagrangian subspaces.
In Section~\ref{sec:quadGL}, we use this primitive to prove the optimal Quadratic Goldreich--Levin theorem and deduce the algorithmic $\PGI$ theorem and other corollaries.
Then, in Section~\ref{sec:algo_PFR}, we derive our classical algorithmic $\PFR$ theorems.
Finally, in Section~\ref{sec:quantum} we prove the quantum algorithmic $\PFR$ theorem.

\subsection{Notation}
\label{sec:notation}

Let~$\F$ be a field.
Given vectors $a, b\in \F^n$, define their inner product by
$$a\cdot b = a_1b_1 + \dots+ a_nb_n$$
and their entry-wise product by
$$a\circ b = (a_1b_1,\, a_2 b_2,\, \dots,\, a_nb_n).$$
The linear span of a set of points $S\subseteq \F^n$ is denoted $\vspan(S)$.

We are most interested in the case where the field is~$\F_2$.
We write $|\cdot|: \F_2 \to \bset{}\subseteq\Z$ to denote the natural identification map given by $|0| = 0$ and $|1| = 1$.
For a vector $a\in \F_2^n$, let $|a| = |a_1| + \dots + |a_n|$.
Note that~$|a|$ is the usual Hamming weight of the vector~$a$.

For a finite set~$X$, we use the common averaging notation $\E_{x\in X} [f(x)] := |X|^{-1} \sum_{x\in X} f(x)$. The support of a function~$f$ is denoted by $\supp(f)$.
We say that $f: X \to \C$ is \emph{1-bounded} if $|f(x)|\leq 1$ for all $x\in X$, and denote $\D = \{z\in\C:\: |z|\leq 1\}$. 
We write~$\U(X)$ for the unitary group on~$\C^X$, and $\U(1) := \{z\in \C:\: |z|=1\}$ for the unitary group on~$\C$.
We denote by $\propto$ proportionality up to a constant in~$\C$.
For functions $f, g: X \to \C$, denote $\langle f, g\rangle = \E_{x\in X} f(x) \overline{g(x)}$ and $\|f\|_2 = \langle f, f\rangle^{1/2}$.
For a linear operator $A: \C^X \to \C^X$, we write~$A^*$ for its Hermitian conjugate:
$$\langle f, Ag\rangle = \langle A^*f, g\rangle.$$
The Hilbert--Schmidt inner product on $\C^{X\times X}$ is defined by
\begin{equation*}
    \langle A, B\rangle_{HS} = \frac{1}{|X|} \tr(A^*B).
\end{equation*}

\section{Symplectic geometry and quadratic Fourier analysis}
\label{sec:symplectic}

In this section we make explicit a connection between symplectic geometry and quadratic Fourier analysis that was first speculated by Green and Tao~\cite{GreenTao2008}.
Our goal is to develop the aspects of quadratic Fourier analysis needed for this paper (with the exception of the PFR theorem) directly from elementary arguments in symplectic geometry. 

We will primarily work in the setting of functions over~$\F_2^n$, which is the main regime of interest in this paper. 
However, the connection persists---and in fact becomes simpler---in odd characteristic spaces~$\F_p^n$. 
For this reason, at the end of each subsection we state the corresponding results in odd characteristic.
Their proofs follow by straightforward simplifications of the characteristic-two arguments and will therefore be omitted.

The connection we wish to establish becomes most transparent once we change the analytic framework in which the $U^3$ norm is considered. 
In additive combinatorics one typically studies bounded functions, such as indicator functions of sets, and inverse theorems for the uniformity norms are therefore formulated relative to the $L^\infty$ norm (as in \cref{thm:pgi}). 
However, as observed by Eisner and Tao~\cite{eisner2012large}, the Hilbert space~$L^2$ provides a more natural ambient space for the $U^3$ norm. Indeed, $L^2$ is the largest Lebesgue space in which the $U^3$ norm remains bounded independently of the ambient dimension:
\begin{equation}
    \sup\big\{\|f\|_{U^3} : \|f\|_{k}=1\big\}
    =
    \begin{cases}
        1 & \text{if $k\geq 2$,}\\
        \infty & \text{if $k<2$,}
    \end{cases}
\end{equation}
where the supremum is taken over all $n\geq 1$ and all functions $f: \F_2^n \to \C$.
Thus, the $U^3$ norm remains controlled when passing from uniformly bounded functions to~$L^2$. 
Moreover, $k=2$ is the unique exponent for which the norms $\|f\|_k$ and $\|f\|_{U^3}$ remain comparable under arbitrary dilations of the Haar measure on~$\F_2^n$.

As we will see, once the $U^3$ norm is viewed inside the Hilbert space~$L^2$, the connection between quadratic Fourier analysis and symplectic geometry emerges naturally.

\subsection{Basic notions in finite symplectic geometry}

We collect a few basic facts about linear-algebraic aspects of symplectic geometry (see for instance~\cite[Chapter~1]{Eslami_symp}).
Let~$\F$ be a finite field.
A symplectic vector space over~$\F$ is given by an $\F$-vector space~$V$ equipped with a non-degenerate, alternating bilinear map $\omega: V \times V \to \F$.
If~$V$ is finite-dimensional, then its dimension must be even, and we will typically denote it by~$2n$.
By choosing an appropriate basis for~$V$---called a \emph{symplectic basis}---we can assume~$V$ to be $\F^{2n}$ equipped with the \emph{standard symplectic inner product}:
\begin{equation*}
    [(a,b),\, (c,d)] := a\cdot d - b\cdot c \quad \text{for $(a,b),\, (c,d) \in \F^{2n}$.}
\end{equation*}
We will henceforth assume that such a symplectic basis $(e_1, \dots, e_{2n})$ has been chosen, and thus restrict our attention to the standard symplectic vector space $(\F^{2n},\, [\cdot, \cdot])$.

Note that every element $v\in \F^{2n}$ is self-orthogonal with respect to the symplectic inner product.
A subspace $U\leq \F^{2n}$ is \emph{isotropic} if it is likewise self-orthogonal, that is, if
$$[u,v] =0 \quad \text{for all $u,v\in U$.}$$
An important role is played by those isotropic subspaces that are maximal with respect to set inclusion;
they are called \emph{Lagrangian subspaces}, or \emph{Lagrangians} for short.
We denote the set of all Lagrangian subspaces of~$\F^{2n}$ by $\Lag(\F^{2n})$.

Note that Lagrangians must equal their orthogonal complement under the symplectic inner product, and therefore have dimension~$n$.
Since they are maximal isotropic subspaces, any isotropic subspace can be extended (non-uniquely) to a Lagrangian.

Lagrangian subspaces admit the following useful characterization:
a subspace $L\leq \F^{2n}$ is Lagrangian if and only if it can be written in the form
$$L = \big\{(h,\, Mh+w) : h\in V,\, w\in V^{\perp}\big\}$$
for some subspace $V\leq \F^n$ and some symmetric matrix $M\in \F^{n\times n}$.
The proof of this fact is an exercise in linear algebra, and will be omitted.
We will also use the following basic fact about complements:
for any Lagrangian $L \leq \F^{2n}$, there exists a (non-unique) complementary Lagrangian~$L'$ such that $\F^{2n} = L \oplus L'$.

Linear transformations between symplectic vector spaces that preserve the symplectic inner product are called \emph{symplectic maps}, or \emph{symplectomorphisms}.
Of particular importance to us will be invertible symplectic maps in $\GL(\F^{2n})$, which represent the automorphisms of the symplectic vector space $(\F^{2n},\, [\cdot,\cdot])$.
These maps form a group called the \emph{symplectic group}, denoted~$\Sp(\F^{2n})$.
One can show that the symplectic group acts transitively on~$\Lag(\F^{2n})$.

\subsection{The Heisenberg group}

It is possible to associate a Heisenberg group to any given symplectic vector space $(V, \omega)$.
If the characteristic of the underlying field~$\F$ is not two, this can be done in a canonical way by taking $H(V) = V \times \F$ (regarded as sets) equipped with the group operation
\begin{equation*}
    (u,s) \bullet (v,t) = \big(u+v,\, s+t+\tfrac{1}{2}\omega(u,v)\big).
\end{equation*}
This defines a central extension
$$0 \rightarrow \F \rightarrow H(V) \rightarrow V \rightarrow 0,$$
and one can easily check that the symplectic form~$\omega$ determines the commutator relations:
$$(u,s) \bullet (v,t) \bullet (u,s)^{-1} \bullet (v,t)^{-1} = (0,\, \omega(u,v)).$$

If the underlying field is~$\F_2$, however, dividing by two is disallowed and there is no canonical (basis-free) way to define a Heisenberg group.
Moreover, to preserve the main representation-theoretic properties of the Heisenberg group, one must consider a central extension of~$V$ by~$\Z_4$ rather than~$\F_2$ \cite{GurevichH2012}.
This is ultimately due to the existence of (strictly) non-classical quadratic phase functions, which take values on the fourth-roots of unity rather than $\pmset{}$;
see \cite[Section~0.2]{GurevichH2012}.

The definition of a Heisenberg group in characteristic two thus depends on the choice of a symplectic basis for~$V$.
Let us assume we are working on the vector space~$\F_2^{2n}$ with the standard symplectic inner product~$[\cdot,\cdot]$, corresponding to the symplectic basis $(e_1, \dots, e_{2n})$.
As we wish to preserve the connection between commutator relations and the symplectic inner product, it is simpler to define the associated Heisenberg group $H(\F_2^{2n})$ in terms of a \emph{group presentation}, that is, a set of generators together with the set of defining relations they satisfy (see \cite[Chapter~7.10]{artin_algebra}).

\begin{definition}[The Heisenberg group over~$\F_2$]
    Define~$H(\F_2^{2n})$ by
    \begin{align}\label{eq:Heisrelations}
        \big\langle z, w(e_1), w(e_2), \dots, w(e_{2n}) \mid\: &z^4=1,\, w(e_i)^2 = 1,\, w(e_i)z = zw(e_i),\\
        &w(e_i)w(e_j) = z^{2[e_i,e_j]}w(e_j)w(e_i) \quad \text{for $i,j\in [2n]$}\big\rangle.\nonumber
    \end{align}
\end{definition}

Note that the elements of the Heisenberg group can identified with the points in~$\F_2^{2n}$ up to powers of~$z$.
Indeed, for $x = (a,b)\in \F_2^{2n}$, let $\kappa(x) := |a\circ b|$ (recall the definition of~$|\cdot|$ from \cref{sec:notation}) and define
\begin{equation}\label{eq:wxdef}
    w(x) = z^{\kappa(x)}w(e_1)^{x_1}w(e_2)^{x_2}\cdots w(e_{2n})^{x_{2n}}.
\end{equation}
One easily checks that these elements have order~$2$ (this is the reason for adding the term~$z^{\kappa(x)}$), and that they satisfy the commutation relations
\begin{align}\label{eq:commutation}
    w(x)w(y) &= z^{2[x,y]}w(y)w(x).
\end{align}
The commutation relations imply that every element of~$H(\F_2^{2n})$ has a unique representation of the form $z^tw(x)$ for $t\in \Z_4$ and $x\in \F_2^{2n}$.
It follows that this group has order~$2^{2n+2}$, its center is $\langle z\rangle$, and (from equation~\eqref{eq:commutation}) it is 2-step nilpotent.\footnote{This means that the commutators $ghg^{-1}h^{-1}$ belong to the center for all $g, h \in H(\F_2^{2n})$.}

The Heisenberg group is a central extension of~$\F_2^{2n}$ by~$\Z_4$, in which the multiplication is given in terms of a 2-cocycle $\beta: \F_2^{2n} \times\F_2^{2n} \to \Z_4$,
\begin{equation} \label{eq:multiplication}
    w(x)w(y) = z^{\beta(x, y)} w(x+y).
\end{equation}
While we will not need an explicit formula for~$\beta$, we note it here for completeness.
For $x = (a, b) \in \F_2^{2n}$, define the projection maps $\pi_1, \pi_2: \F_2^{2n} \to \F_2^n$ by $\pi_1(x)=a$ and $\pi_2(x)=b$.
Note that $\kappa(x) = |\pi_1(x) \circ \pi_2(x)|$ in this notation.
The cocycle~$\beta$ can then be expressed as
\begin{equation} \label{eq:beta}
    \beta(x, y) = \kappa(x) + \kappa(y) - \kappa(x+y) + 2 \big|\pi_2(x)\circ \pi_1(y)\big| \mod{4},
\end{equation}
as can be checked from the definition of the elements~$w(x)$ and the relations~\eqref{eq:Heisrelations} defining the group.

\subsection{The Weyl operators}
We now introduce a unitary representation of the Heisenberg group $H(\F_2^{2n})$ known as the Weil representation.\footnote{Named after Andr\'{e} Weil.}
We will assume knowledge of the basic representation theory of finite groups, given e.g. in \cite[Chapter~10]{CSTbook2018}.

The Weil representation is given in terms of the \emph{Weyl operators},\footnote{Named after Hermann Weyl.}
which are an important notion in the theory of quantum computation and quantum error correction.
These operators can be defined in terms of two natural unitary representations of~$\F_2^n$ as operators on $L^2(\F_2^n)$:
the \emph{translations}~$\tau_a$ given by
$$(\tau_a f)(x) = f(x+a) \quad \text{for $a, x\in \F_2^n$},$$
and the \emph{characters}~$\chi_b$, whose action is given by
$$(\chi_b f)(x) = (-1)^{b\cdot x} f(x) \quad \text{for $b, x\in \F_2^n$.}$$

\begin{definition}[Weyl operators]\label{def:Weyl}
    For a pair $(a,b)\in \F_2^n\times\F_2^n$, define the linear operator
    $$W(a,b) := i^{|a\circ b|} \tau_a \chi_b.$$
    The group generated by all Weyl operators $W(u)$, $u\in \F_2^{2n}$, is called the \emph{Heisenberg--Weyl group}, and it is denoted $\HW(\F_2^{2n})$.
\end{definition}

\begin{remark}
    In terms of Pauli matrices in quantum theory, we have $W(0,0) = I$, $W(0,1) = Z$, $W(1,0) = X$ and $W(1,1) = Y$.
    Note that this is slightly different from the notation commonly used in the quantum literature, where the roles of the first component $a\in \F_2^n$ and the second component $b\in \F_2^n$ are typically reversed.
\end{remark}

The action of the Weyl operators on functions $f:\F_2^n\to \C$ is given by
\begin{equation*}
    (W(a,b)f)(x) = (-i)^{|a\circ b|}(-1)^{b\cdot x}f(x+a).
\end{equation*}
These operators are clearly unitary, and one can readily check that they square to identity and satisfy the commutation relations
\begin{equation} \label{eq:commutation_Weyl}
    W(u)W(v) = (-1)^{[u, v]} W(v) W(u) \quad \text{for all $u, v\in \F_2^{2n}$.}
\end{equation}
It follows from the defining relations~\eqref{eq:Heisrelations} of the Heisenberg group that the Weyl operators give a unitary representation $\rho: H(\F_2^{2n})\to \U(\F_2^n)$ by
\begin{equation} \label{eq:Weil}
    \quad \rho(z^t w(u)) = i^t W(u) \quad \text{for all $u\in \F_2^{2n}$, $t\in \Z_4$.}
\end{equation}
This is called the \emph{Weil representation} of the Heisenberg group, and it provides an isomorphism between the Heisenberg group~$H(\F_2^{2n})$ and the Heisenberg--Weyl group~$\HW(\F_2^{2n})$.

\begin{remark}
    As already noted by Heinrich \cite[Chapter~2]{heinrich2021}, there is a common misconception regarding the Weyl operators in the quantum literature.
    It is often assumed that
    $W(u) W(v) = i^{|\pi_1(u)\circ \pi_2(v)| - |\pi_2(u)\circ \pi_1(v)|} W(u+v)$, where $\pi_i: (u_1,u_2) \mapsto u_i$, but this formula \emph{does not} hold in general;
    this can be seen already in the case $n = 1$ by setting $u=(0,1)$ and $v=(1,0)$.
    The multiplication rule of the Weyl operators is the same as that of the Heisenberg group we defined:
    \begin{equation} \label{eq:Weyl_multiplication}
        W(u) W(v) = i^{\beta(u,v)} W(u+v),
    \end{equation}
    where~$\beta$ is the 2-cocycle given by equation~\eqref{eq:beta}.
\end{remark}

A useful property of the Weyl operators is that they form an orthonormal basis of $\C^{\F_2^n \times \F_2^n}$ under the normalized Hilbert-Schmidt inner product
\begin{equation*}
    \langle A, B\rangle_{HS} := \frac{1}{2^n} \tr(A^*B).
\end{equation*}
Indeed, it is easy to check that $\tr(W(u)) = 2^n \one[u=0]$ for all $u\in \F_2^{2n}$, and thus
$$\big\langle W(u),\, W(v)\big\rangle_{HS} = \frac{1}{2^n} \tr(W(u)W(v)) = \one[u+v=0];$$
since there are~$2^{2n}$ Weyl operators, they form an orthonormal basis.
As a consequence, for any function $f: \F_2^n \to \C$, we have 
\begin{align*}
    \big\|f\otimes \overline{f}\big\|_{HS}^2 = \sum_{u\in \F_2^{2n}} \big|\big\langle W(u),\, f\otimes \overline{f}\big\rangle_{HS}\big|^2.
\end{align*}
By the cyclic property of the trace, we conclude that $\|f\otimes \overline{f}\|_{HS}^2 = 2^n \|f\|_2^4$ and
\begin{equation}\label{eq:WeylL2}
\|f\|_2^4 = \frac{1}{2^n} \sum_{u\in \F_2^{2n}} |\langle f,\, W(u) f\rangle|^2.    
\end{equation}

\subsubsection{In odd characteristics}
One can similarly define the Weyl operators and the Weil representation of the Heisenberg group $H(\F_p^{2n})$ for odd primes~$p$.
Recall that the Heisenberg group over~$\F_p^{2n}$ for odd~$p$ has elements $\F_p^{2n} \times \F_p$ and group operation
\begin{equation*} \label{eq:Heisenberg_odd}
    (u,s) \bullet (v,t) = \big(u+v,\, s+t+\tfrac{1}{2}[u,v]\big).
\end{equation*}

Let $\omega_p = e^{2\pi i/p}$ and let $f: \F_p^n \to \C$ be a function.
For $a, b \in \F_p^n$, denote by~$\tau_a$ the translation operator $(\tau_af)(x) := f(x+a)$, and denote by~$\overline{\chi_b}$ the conjugated character operator $(\overline{\chi_b} f)(x) := \omega_p^{-b\cdot x} f(x)$.
The Weyl operators are then defined by
$$W(a,b) := \omega_p^{a\cdot b/2} \tau_a \overline{\chi_b},$$
where the division by~$2$ in the exponent is done over~$\F_p$.
The group generated by these operators is denoted $\HW(\F_p^{2n})$, and called the \emph{Heisenberg--Weyl group}.

One easily checks that
$$W(u)W(v) = \omega_p^{-[u,v]/2} W(u+v) = \omega_p^{-[u,v]} W(v)W(u)$$
for all $u, v\in \F_p^{2n}$, and thus the map $\rho_p: H(\F_p^{2n}) \to \U(\F_p^n)$ given by $\rho_p(v,t) = \omega_p^{-t} W(v)$ defines a unitary representation.
This is the \emph{Weil representation} in odd characteristic~$p$, and provides an isomorphism from $H(\F_p^{2n})$ to $\HW(\F_p^{2n})$.

As in characteristic two, one can show that the Weyl operators form an orthonormal basis of $\C^{\F_p^n \times \F_p^n}$ under the normalized Hilbert-Schmidt inner product, and that
\begin{equation*}
\|f\|_2^4 = \frac{1}{p^n} \sum_{u\in \F_p^{2n}} |\langle f,\, W(u) f\rangle|^2.    
\end{equation*}

\subsection{The $U^3$ norm via Weyl operators}

The Weyl operators (and thus the Heisenberg group) naturally appear when studying the $U^3$ norm.
Indeed, note that
\begin{equation}\label{eq:FourierWeyl}
\widehat{\Delta_a f}(b) = \langle \chi_b,\, (\tau_a f) \overline{f}\rangle = \langle f,\, (\tau_a f) \overline{\chi_b}\rangle = i^{|a\circ b|} \langle f,\, W(a,b)f\rangle.
\end{equation}
From the simple (and well-known) identity
\begin{equation} \label{eq:U3Fourier}
    \|f\|_{U^3}^8 = \E_{a\in \F_2^n} \sum_{b\in \F_2^n} \big|\widehat{\Delta_a f}(b)\big|^4,
\end{equation}
we conclude that
\begin{equation} \label{eq:U3Weyl}
\|f\|_{U^3}^8 = \frac{1}{2^n} \sum_{u\in \F_2^{2n}} |\langle f,\, W(u) f\rangle|^4.
\end{equation}
The~$U^3$ norm of a function~$f$ can thus be defined solely in terms of its self correlation when acted upon by the Weyl operators.
This will be a more convenient expression for our purposes.

Note that the connection between the~$U^3$ and~$L^2$ settings is made clearer when the~$U^3$ norm is expressed in this form, given the presence of the inner product and the unitaries~$W(u)$, and it helps explain why~$L^2$ is the ``right'' analytic space for quadratic Fourier analysis.
The inequality $\|f\|_{U^3} \leq \|f\|_2$ easily follows from equations~\eqref{eq:U3Weyl} and~\eqref{eq:WeylL2} using the Cauchy-Schwarz inequality:
\begin{align*}
    \|f\|_{U^3}^8 &\leq \frac{1}{2^n} \Big(\max_{u\in \F_2^{2n}} |\langle f,\, W(u) f\rangle|^2\Big) \sum_{u\in \F_2^{2n}} |\langle f,\, W(u) f\rangle|^2 \\
    &= \Big(\max_{u\in \F_2^{2n}} |\langle f,\, W(u) f\rangle|^2\Big) \|f\|_2^4 \\
    &\leq \|f\|_2^8.
\end{align*}

\begin{remark}
    By the cyclic property of the trace, equation~\eqref{eq:U3Weyl} can be rewritten as
    $$\|f\|_{U^3}^8 = \frac{1}{2^n} \sum_{u\in \F_2^{2n}} \big|\big\langle W(u),\, f\otimes \overline{f} \big\rangle_{HS} \big|^4.$$
    Thus, $\|f\|_{U^3}^2$ equals (up to normalization) the $\ell^4$ norm of $f\otimes \overline{f}$ written in the Weyl basis;
    this is reminiscent of the well-known fact that $\|f\|_{U^2}$ equals the $\ell^4$ norm of $f$ written in the Fourier basis.
\end{remark}

From identity~\eqref{eq:U3Weyl} above, we will extract two results connecting the $U^3$ norm with symplectic geometry:
(i) the extremizers of the $U^3$ norm are naturally associated with Lagrangian subspaces;
(ii) the isometries of the $U^3$ norm are naturally associated with symplectic maps.
We will then see how the inverse theorem for the $U^3$ norm relates to the ``characteristic weight'' of Lagrangian subspaces, and point out some instances where the notions discussed here have implicitly appeared in earlier works by Gowers and by Green and Tao.

\subsubsection{In odd characteristics}
Everything given in this subsection holds similarly over~$\F_p^n$, with only trivial modifications.
For instance, equation~\eqref{eq:FourierWeyl} now becomes
$$\widehat{\Delta_a f}(b) = \omega_p^{a\cdot b/2} \langle f,\, W(a,b)f\rangle,$$
and the~$U^3$ norm can be expressed as
$$\|f\|_{U^3}^8 = \frac{1}{p^n} \sum_{u\in \F_p^{2n}} |\langle f,\, W(u) f\rangle|^4.$$

\subsection{Extremizers of the $U^3$ norm} \label{sec:extremizers}

Recall that the extremizers of the~$U^3$ norm relative to the~$L^\infty$ norm are given by non-classical quadratic phase functions.
Relative to~$L^2$, the extremizers of the~$U^3$ norm form a larger set of functions \cite[Theorem~1.4]{eisner2012large}
(see \cref{lem:stab_explicit} for an explicit description of them).
The connection between the $U^3$ norm and the Heisenberg--Weyl group explained in the previous subsection
enables us to identify these extremizers with those functions known in quantum information theory as \emph{stabilizer states}~\cite{ad2024tolerant}.
For this reason, we will refer to them as such.

\begin{definition}[Stabilizer states]
A function $\phi:\F_2^n\to \C$ is a stabilizer state if it satisfies $\|\phi\|_{2} = \|\phi\|_{U^3} = 1$.
We denote the set of stabilizer states by~$\Stab(\F_2^n)$.
\end{definition}

Below, we will treat the notion of a stabilizer state projectively in that we will tacitly identify stabilizer states that differ by a global phase factor~$e^{i\theta}$.

Inverse theorems for the $U^3$ norm under $L^2$ normalization were obtained in the context of quantum property testing \cite{ad2024tolerant, bao2025tolerant, mehraban2024improved}.
Roughly speaking, they show that a function $f: \F_2^n \to \C$ with $\|f\|_2 \leq 1$ has high $U^3$ norm if and only if it correlates well with a stabilizer state.
This motivates a better study of stabilizer states in the context of quadratic Fourier analysis, which will further reinforce its ties with symplectic geometry.

The next result establishes a basic connection between stabilizer states and Lagrangian subspaces:

\begin{proposition}\label{prop:stab_Lagrangian}
    A function $\phi: \F_2^n \to \C$ is a stabilizer state if and only if there exists a Lagrangian subspace $L \leq \F_2^{2n}$ such that
    \begin{equation} \label{eg:Lphi}
        \big|\widehat{\Delta_a\phi}(b)\big| =
        \begin{cases}
			1 & \text{if $(a, b) \in L$,}\\
            0 & \text{if $(a, b) \notin L$.}
		 \end{cases}
    \end{equation}
\end{proposition}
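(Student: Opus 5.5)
Write $c_u := \langle \phi, W(u)\phi\rangle$ for $u\in\F_2^{2n}$. By \eqref{eq:FourierWeyl} we have $|\widehat{\Delta_a\phi}(b)| = |c_{(a,b)}|$, so the statement concerns only the moduli $|c_u|$. Throughout we use the two identities \eqref{eq:WeylL2} and \eqref{eq:U3Weyl}:
\begin{equation*}
\sum_u |c_u|^2 = 2^n\|\phi\|_2^4, \qquad \sum_u |c_u|^4 = 2^n\|\phi\|_{U^3}^8 .
\end{equation*}

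\emph{The ``if'' direction.} Suppose $|c_u| = \one[u\in L]$ for a Lagrangian $L$, so $|L| = 2^n$. We have $|c_0| = \|\phi\|_2^2$ and $0 \in L$, hence $\|\phi\|_2 = 1$. Then $\sum_u |c_u|^4 = |L| = 2^n$, so $\|\phi\|_{U^3} = 1$ by \eqref{eq:U3Weyl}. Thus $\phi$ is a stabilizer state.

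\emph{The ``only if'' direction.} Assume $\|\phi\|_2 = \|\phi\|_{U^3} = 1$. Each $|c_u| \le 1$ by Cauchy--Schwarz and unitarity of $W(u)$, so $|c_u|^4 \le |c_u|^2$. The two sums above are both equal to $2^n$, which forces $|c_u|^4 = |c_u|^2$ for every $u$, i.e.\ $|c_u| \in \{0,1\}$. Let $S = \{u : |c_u| = 1\}$; then $|S| = 2^n$. The aim is to show that $S$ is a Lagrangian subspace.

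If $|c_u| = 1$, equality in Cauchy--Schwarz gives $W(u)\phi = \lambda_u\phi$ with $|\lambda_u| = 1$. Since $W(u)^2 = I$, in fact $\lambda_u = \pm 1$. Now take $u, v\in S$.
\begin{itemize}
\item \emph{Closure:} by \eqref{eq:Weyl_multiplication}, $W(u+v)\phi = i^{-\beta(u,v)}W(u)W(v)\phi$, which is a unimodular multiple of $\phi$. Hence $u+v\in S$, and since $0 \in S$, the set $S$ is a subspace of dimension $n$.
\item \emph{Isotropy:} applying the commutation relation \eqref{eq:commutation_Weyl} to $\phi$ gives $\lambda_u\lambda_v\phi = (-1)^{[u,v]}\lambda_v\lambda_u\phi$. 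As $\phi\ne 0$, this yields $[u,v] = 0$.
\end{itemize}
So $S$ is an isotropic subspace of dimension $n$, hence maximal isotropic, i.e.\ Lagrangian. Taking $L = S$ proves the claim.

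The one step that needs genuine attention is the dichotomy $|c_u|\in\{0,1\}$. It depends on combining the two norm identities with the pointwise bound $|c_u|\le 1$, which produces the equality case. Everything after that is routine manipulation of the Weyl operator relations, with care taken over the phase $i^{\beta}$ in the multiplication rule; that phase is harmless because only moduli are involved.
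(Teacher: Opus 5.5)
Your proof is correct and follows the paper's argument. Both directions rest on \eqref{eq:WeylL2} and \eqref{eq:U3Weyl}, and the converse builds $S$ from the eigen-relations $W(u)\phi=\lambda_u\phi$ together with the commutation relations; the only difference is that you spell out why $|c_u|\in\{0,1\}$ and prove closure of $S$ under addition directly. The paper leaves both implicit: its step ``isotropic and $|S|=2^n$ implies Lagrangian'' tacitly uses that $\vspan(S)$ is isotropic, hence has size at most $2^n$, so $S=\vspan(S)$.
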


\begin{proof}
The forward direction follows easily from Parseval's identity and identity~\eqref{eq:U3Fourier}:
\begin{align*}
    \|\phi\|_2^4 = \E_{a\in \F_2^n} \|\Delta_a\phi\|_2^2 = \E_{a\in \F_2^n} \sum_{b\in \F_2^n} \big|\widehat{\Delta_a\phi}(b)\big|^2 = \frac{|L|}{2^n} = 1, \\
    \|\phi\|_{U^3}^8 = \E_{a\in \F_2^n} \sum_{b\in \F_2^n} \big|\widehat{\Delta_a\phi}(b)\big|^4 = \frac{|L|}{2^n} = 1.
\end{align*}

For the reverse direction, suppose that~$\phi$ is a stabilizer state and define the set 
\begin{equation*}
    S = \big\{(a,b)\in \F_2^{2n} : |\widehat{\Delta_a\phi}(b)| = 1\big\}.
\end{equation*}
It follows from equations~\eqref{eq:WeylL2}, \eqref{eq:FourierWeyl} and~\eqref{eq:U3Weyl} that
$$\frac{1}{2^n} \sum_{a, b \in \F_2^n} |\widehat{\Delta_a\phi}(b)|^2 = 1 = \frac{1}{2^n} \sum_{a, b \in \F_2^n} |\widehat{\Delta_a\phi}(b)|^4,$$
from which we conclude that $|S| = 2^n$.
From~\eqref{eq:FourierWeyl}, it follows that for each $(a,b)\in S$, there is a phase $\sigma_{a,b}\in \U(1)$ such that $W(a,b)f = \sigma_{a,b}f$.
In turn, this gives that the Weyl operators $W(a,b)$ with $(a,b)\in S$ pairwise commute.
Equivalently, the set~$S$ is isotropic.
Since $|S|=2^n$, we conclude that~$S$ is a Lagrangian subspace, as desired.
\end{proof}

This last result shows that each stabilizer state is associated with a unique Lagrangian subspace.
We denote the Lagrangian associated with a given stabilizer state~$\phi$ by $\Lcal(\phi)$, so that equation~\eqref{eg:Lphi} can be rewritten as
$$\big|\widehat{\Delta_a\phi}(b)\big| = \one_{\Lcal(\phi)}(a,b) \quad \text{for all $a, b \in \F_2^n$ and all $\phi \in \Stab(\F_2^n)$.}$$
We will next show that every Lagrangian subspace gives rise to stabilizer states, and that those stabilizer states associated with each given Lagrangian form an orthonormal basis of $L^2(\F_2^n)$.

\begin{proposition} \label{prop:Lag_stab_basis}
    Let $L\in \Lag(\F_2^{2n})$ be a Lagrangian subspace and let $u_1,\dots,u_n\in L$ be a basis for~$L$.
    For any choice of signs $\sigma_1,\dots,\sigma_n\in \pmset{}$, there exists a unique (up to phases) stabilizer state~$\phi\in \Stab(\F_2^n)$ satisfying $\mathcal L(\phi) = L$ and
    $$W(u_i)\phi = \sigma_i\phi \quad \text{for all $i\in [n]$.}$$
    Moreover, the set
    $$\Stab_L := \big\{\phi\in \Stab(\F_2^n):\: \Lcal(\phi)=L\big\}$$
    forms (scalar multiples of) an orthonormal basis of $L^2(\F_2^n)$. 
\end{proposition}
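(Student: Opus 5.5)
The plan is to use the standard projector construction from the theory of stabilizer codes. The Weyl operators $W(u)$, $u\in L$, pairwise commute by \eqref{eq:commutation_Weyl}, since $L$ is isotropic. Each of them squares to the identity and is unitary, hence Hermitian with eigenvalues $\pm1$.

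\textbf{Existence.} For signs $\sigma=(\sigma_1,\dots,\sigma_n)$ consider the operator
\begin{equation*}
    P_\sigma := \prod_{i=1}^n \frac{I+\sigma_i W(u_i)}{2}.
\end{equation*}
This is a product of commuting orthogonal projections, hence an orthogonal projection onto the joint eigenspace $E_\sigma=\{f: W(u_i)f=\sigma_i f \ \forall i\}$. Expanding the product gives a sum over subsets $T\subseteq[n]$ of the terms $2^{-n}\big(\prod_{i\in T}\sigma_i\big)\prod_{i\in T}W(u_i)$. By the multiplication rule \eqref{eq:Weyl_multiplication}, each $\prod_{i\in T}W(u_i)$ equals a fourth root of unity times $W\big(\sum_{i\in T}u_i\big)$. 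Since the $u_i$ are linearly independent, only $T=\emptyset$ produces $W(0)=I$. Using $\tr W(u)=2^n\one[u=0]$ we then get $\tr P_\sigma = 2^{-n}\cdot 2^n=1$, so $E_\sigma$ is one-dimensional. Let $\phi$ be a unit vector spanning it; this gives uniqueness up to phase immediately.

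\textbf{$\phi$ is a stabilizer state with $\Lcal(\phi)=L$.} Every $u\in L$ is a sum $\sum_{i\in T}u_i$, and $W(u)$ is a scalar multiple of $\prod_{i\in T}W(u_i)$. Hence $W(u)\phi=\lambda_u\phi$ with $|\lambda_u|=1$, and by \eqref{eq:FourierWeyl} we get $|\widehat{\Delta_a\phi}(b)|=1$ for every $(a,b)\in L$. To see that the value is $0$ off $L$, combine \eqref{eq:WeylL2} with $\|\phi\|_2=1$. This gives $\sum_u|\langle \phi,W(u)\phi\rangle|^2=2^n$, and $L$ alone already contributes $|L|=2^n$, so every other term vanishes. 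By \cref{prop:stab_Lagrangian}, $\phi$ is then a stabilizer state with $\Lcal(\phi)=L$.

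\textbf{Orthonormal basis.} Three facts give this.
\begin{itemize}
    \item The projections $P_\sigma$ over all $2^n$ sign patterns are mutually orthogonal, since different patterns differ in the eigenvalue of some $W(u_i)$.
    \item They sum to $I$, since $\sum_\sigma P_\sigma=\prod_i\big(\tfrac{I+W(u_i)}{2}+\tfrac{I-W(u_i)}{2}\big)=I$.
    \item Each has rank one.
\end{itemize}
Together these yield $2^n$ orthonormal vectors spanning $L^2(\F_2^n)$. Finally, every $\psi\in\Stab_L$ is one of them. By the proof of \cref{prop:stab_Lagrangian}, $W(u)\psi=\lambda_u\psi$ for all $u\in L$ with $\lambda_u$ unimodular. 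Because $W(u_i)^2=I$, we get $\lambda_{u_i}=\pm1$, so $\psi$ lies in some $E_\sigma$. Hence $\Stab_L$ is exactly this basis, up to phases.

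\textbf{Main obstacle.} The only delicate point is the phase bookkeeping in characteristic two. Products of the $W(u_i)$ equal $W\big(\sum u_i\big)$ only up to a factor $i^{\beta}$, and some care is needed to confirm that this does not spoil the trace computation. It does not, since all non-identity terms have trace zero regardless of the phase. The same care is needed to confirm that $W(u)\phi$ is a unimodular multiple of $\phi$ for every $u\in L$, not just for the basis vectors.
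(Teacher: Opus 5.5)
Your proposal is correct and follows essentially the same route as the paper. You use the projectors $\prod_i (I+\sigma_i W(u_i))/2$ with the trace computation $\tr W(u)=2^n\one[u=0]$ to get one-dimensional joint eigenspaces. You then identify those eigenvectors with $\Stab_L$ via \eqref{eq:FourierWeyl}, \eqref{eq:WeylL2} and \cref{prop:stab_Lagrangian}. Where the paper only says ``comparing these conditions'', you spell out both directions of that identification explicitly, which is a welcome addition.
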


\begin{proof}
Let $\Weyl(L) := \{W(u):\: u\in L\}$ be the set of Weyl operators associated to elements in the Lagrangian~$L$, and note that the operators inside this set pairwise commute.
We will first show that $\Weyl(L)$ admits a unique (up to phases) orthonormal basis of joint eigenvectors, and then show that this basis corresponds to the set $\Stab_L$.

Since the operators in $\Weyl(L)$ are unitary (hence normal) and pairwise commute, existence of a common orthonormal basis of eigenvectors is guaranteed by the spectral theorem.
To prove uniqueness of this basis, let $\{u_1, \dots, u_n\}$ be a basis of the Lagrangian~$L$.
Note that the set $\Weyl(L)$ is, up to phases, generated by the operators $\{W(u_i):\: i\in [n]\}$ under multiplication.
The common eigenvectors of this generating set will then also be common eigenvectors of the larger set $\Weyl(L)$.

The operators $W(u_i)$ are unitary and Hermitian, and so their eigenvalues are $\pmset{}$ and the associated eigenspace projectors are
$$\Pi_{u_i}^{-} = \frac{I-W(u_i)}{2} \quad \text{and} \quad \Pi_{u_i}^{+} = \frac{I+W(u_i)}{2}.$$
For any $\sigma\in \pmset{n}$, the projector onto the common eigenspace of $\{W(u_i):\: i\in [n]\}$ corresponding to eigenvalue~$\sigma_i$ for each~$W(u_i)$ is
$$\Pi^{\sigma} := \prod_{i=1}^n \frac{I + \sigma_i W(u_i)}{2}.$$
(The order of the product does not matter since the terms commute.)
The dimension of this common eigenspace is
\begin{align*}
    \tr(\Pi^{\sigma})
    &= \frac{1}{2^n} \tr\bigg(\sum_{a\in \F_2^n} \prod_{i=1}^n \sigma_i^{a_i} W(u_i)^{a_i} \bigg) \\
    &= \frac{1}{2^n} \sum_{a\in \F_2^n} \bigg(\prod_{i=1}^n \sigma_i^{a_i}\bigg) \tr\bigg( \prod_{i=1}^n W(u_i)^{a_i} \bigg) \\
    &= \frac{1}{2^n} \sum_{a\in \F_2^n} \bigg(\prod_{i=1}^n \sigma_i^{a_i}\bigg) \cdot 2^n \one\{a=0\} \\
    &= 1,
\end{align*}
where we used the fact that $\tr(W(u)) = 2^n \one\{u=0\}$.
Since these eigenspaces for different choices of $\sigma\in \pmset{n}$ are orthogonal, it follows that the joint eigenspaces of $\Weyl(L)$ decompose $L^2(\F_2^n)$ into~$2^n$ pairwise-orthogonal one-dimensional subspaces.
In other words, $\Weyl(L)$ admits a unique orthonormal basis of common eigenvectors (up to phases).

We now relate this basis to the stabilizer states in $\Stab_L$.
By \cref{prop:stab_Lagrangian}, the set $\Stab_L$ corresponds to those functions~$\phi$ that satisfy
$$|\langle\phi,\, W(a,b)\phi\rangle| = \big|\widehat{\Delta_a\phi}(b)\big| = \one_{L}(a,b),$$
where we used equation~\eqref{eq:FourierWeyl} for the first equality.
On the other hand, a unit-norm function~$\phi$ is a joint eigenvector of $\Weyl(L)$ if and only if it satisfies
$$|\langle\phi,\, W(u)\phi\rangle| = |\langle\phi, \phi\rangle| = 1 \quad \text{for all $u\in L$;}$$
since $|L| = 2^n$ and $\|\phi\|_2 = 1$ by assumption, equation~\eqref{eq:WeylL2} implies that
$$|\langle\phi,\, W(v)\phi\rangle| = 0 \quad \text{for all $v\notin L$.}$$
Comparing these conditions completes the proof.
\end{proof}

This result shows that each Lagrangian is naturally associated with an orthonormal basis composed of stabilizer states.
We will refer to such bases as \emph{single-Lagrangian bases}.
Note, however, that not every orthonormal stabilizer basis is of this type:
for instance, the stabilizer states $2\cdot \one_{(0,0)}$, $2\cdot \one_{(0,1)}$, $\sqrt{2} (\one_{(1,0)}+ \one_{(1,1)})$, $\sqrt{2} (\one_{(1,0)}- \one_{(1,1)})$ form an orthonormal basis of $L^2(\F_2^2)$ with two distinct Lagrangians.

An interesting consequence of the last two results is that we can identify the set $\Stab(\F_2^n)/\U(1)$ of stabilizer states (up to phases) with the set
$$\mathrm{LC}(\F_2^{2n}) := \big\{(L, \chi):\: L\in \Lag(\F_2^{2n}),\, \chi\in \widehat{L}\big\}$$
of Lagrangian-character pairs, as we now show.
Let $\B(L) = \{u_1, \dots, u_n\}$ be a basis for a given Lagrangian subspace~$L$.
By \cref{prop:Lag_stab_basis}, for each character $\chi \in \widehat{L}$, there exists a unique (up to phases) stabilizer state $\phi_\chi \in \Stab(\F_2^n)$ that satisfies
$$W(u_i) \phi_\chi = \chi(u_i) \phi_\chi \quad \text{for all $i\in [n]$;}$$
moreover, those are all stabilizer states whose associated Lagrangian is~$L$.
These~$n$ relations specify all other eigenvalues associated with~$\phi_\chi$:
for any $u\in L$, write $u = a_1u_1 + \dots + a_nu_n$ and
\begin{equation} \label{eq:def_gammaB}
    W(u) = i^{\gamma_{\B(L)}(u)} \prod_{j=1}^n W(u_i)^{a_i},
\end{equation}
where $\gamma_{\B(L)}: L \to \Z_4$ is a (basis-dependent) function specified by the multiplication rule~\eqref{eq:Weyl_multiplication} of the Weyl operators.
Then
$$W(u) \phi_\chi = i^{\gamma_{\B(L)}(u)} \bigg(\prod_{j=1}^n \chi(u_i)^{a_i}\bigg) \phi_\chi = i^{\gamma_{\B(L)}(u)} \chi(u) \phi_\chi$$
holds for all $u\in L$, from which we conclude that
$$\langle\phi_\chi,\, W(v)\phi_\chi\rangle = \one_L(v) i^{\gamma_{\B(L)}(v)} \chi(v) \quad \text{for all $v\in \F_2^{2n}$.}$$
Since the Weyl operators form an orthonormal basis, it follows that
\begin{align*}
    \phi_\chi \otimes \overline{\phi_\chi}
    &= \sum_{u\in \F_2^{2n}} \big\langle W(u),\, \phi_\chi \otimes \overline{\phi_\chi}\big\rangle_{HS} W(u) \\
    &= \sum_{u\in \F_2^{2n}} \langle \phi_\chi,\, W(u)^* \phi_\chi \rangle W(u) \\
    &= \sum_{u\in L} i^{\gamma_{\B(L)}(u)} \chi(u) W(u).
\end{align*}
This decomposition is unique since the Weyl operators are linearly independent.
The promised identification can now be made precise:

\begin{definition}[Identification $\simeq_\B$] \label{def:identification}
    Fix a basis~$\B(L)$ for each Lagrangian $L\in \Lag(\F_2^{2n})$.
    For a character $\chi\in \widehat{L}$, we write $\phi \simeq_\B (L, \chi)$ to denote that
    \begin{equation}\label{eq:stab_id}
    \phi \otimes \overline{\phi} = \sum_{u\in L} i^{\gamma_{\B(L)}(u)} \chi(u) W(u),
    \end{equation}
    where $\gamma_{\B(L)}: L \to \Z_4$ is the function defined by~\eqref{eq:def_gammaB}.
\end{definition}

By the discussion above, once the bases $\B(L)$ are specified, each stabilizer state~$\phi$ can be written in the form~\eqref{eq:stab_id} for a unique Lagrangian~$L$ and character $\chi \in \widehat{L}$.
Moreover, \cref{prop:stab_Lagrangian} shows that every function~$\phi$ that can be written in the form~\eqref{eq:stab_id} is a stabilizer state.
The relation~$\simeq_\B$ thus gives a bijection between $\Stab(\F_2^n)/\U(1)$ and the set
$\mathrm{LC}(\F_2^{2n}) = \big\{(L, \chi):\: L\in \Lag(\F_2^{2n}),\, \chi\in \widehat{L}\big\}$
of Lagrangian-character pairs.

The action of the Weyl operators on stabilizer states is simple to describe using this identification:
if $\phi \simeq_\B (L,\chi)$, then for any $v\in \F_2^{2n}$ we have
\begin{align*}
    \big(W(v)\phi\big)\otimes \overline{\big(W(v)\phi\big)}
    &=
    \sum_{u\in L} i^{\gamma_{\B(L)}(u)} \chi(u) W(v)W(u)W(v)^*\\
    &=
    \sum_{u\in L} i^{\gamma_{\B(L)}(u)} \chi(u)(-1)^{[v,u]} W(u),
\end{align*}
where we used the commutation relation~\eqref{eq:commutation_Weyl} for the second equality.
It follows that
\begin{equation} \label{eq:Weyl_action}
    \phi \simeq_\B (L,\, \chi) \implies W(v)\phi \simeq_\B \big(L,\, (-1)^{[v, \,\cdot\,]}\chi \big).
\end{equation}
As the functions $(-1)^{[v, \,\cdot\,]}\chi$ (defined over~$L$) correspond precisely to the characters in~$\widehat{L}$, we conclude that the single-Lagrangian basis $\Stab_L$ associated with~$L$ is identified with the set $\{(L, \chi):\: \chi\in \widehat{L}\}$, and that the Heisenberg--Weyl group acts transitively on each such basis.

Finally, the identification~$\simeq_\B$ also allows us to compute the inner products of any two given stabilizer states, as long as the bases corresponding to their Lagrangians are compatible:

\begin{proposition} \label{prop:inner_product_intersection}
    Let $\phi \simeq_\B (L, \chi)$ and $\phi' \simeq_\B (L', \chi')$ be two stabilizer states, with associated Lagrangian bases $\B(L)$ and $\B(L')$.
    Suppose that $\B(L) \cap \B(L')$ forms a basis of the intersection subspace $L\cap L'$.
    Then
    \begin{equation} \label{eq:intersection}
    |\langle\phi, \phi'\rangle| = \sqrt{\frac{|L\cap L'|}{2^n}} \one\{\chi_{|L\cap L'} = \chi'_{|L\cap L'}\}.
\end{equation}
\end{proposition}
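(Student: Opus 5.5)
The plan is to compute $|\langle\phi,\phi'\rangle|^2$ through the Hilbert--Schmidt pairing of the rank-one operators $\phi\otimes\overline{\phi}$ and $\phi'\otimes\overline{\phi'}$, and then expand both in the Weyl basis using \cref{def:identification}. By the cyclic property of the trace,
\begin{equation*}
\big\langle \phi\otimes\overline{\phi},\, \phi'\otimes\overline{\phi'}\big\rangle_{HS} = \frac{1}{2^n}\, |\langle \phi,\phi'\rangle|^2,
\end{equation*}
up to the normalization conventions fixed in \cref{sec:notation}. I will recheck this constant against $\|\phi\otimes\overline{\phi}\|_{HS}^2 = 2^n\|\phi\|_2^4$ from the computation preceding \eqref{eq:WeylL2}, since the $2^n$ factors have to be tracked carefully.

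Next I substitute the expansions \eqref{eq:stab_id} for $\phi$ and $\phi'$. Since the Weyl operators are orthonormal, with $\langle W(u),W(v)\rangle_{HS}=\one[u=v]$, only the terms with $u\in L\cap L'$ survive. This gives
\begin{equation*}
\frac{1}{2^n}|\langle\phi,\phi'\rangle|^2 = \sum_{u\in L\cap L'} \overline{i^{\gamma_{\B(L)}(u)}\chi(u)}\; i^{\gamma_{\B(L')}(u)}\chi'(u),
\end{equation*}
possibly up to the overall normalization constant. Because $\chi,\chi'$ take values in $\pm1$, the conjugation only acts on the $i$-powers.

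The key step, and the place where the hypothesis on $\B(L)\cap\B(L')$ is used, is to show that $\gamma_{\B(L)}(u)=\gamma_{\B(L')}(u)$ for every $u\in L\cap L'$. Let $u\in L\cap L'$ and expand it in the basis $\B(L)\cap\B(L')$ of $L\cap L'$. This expansion is simultaneously the unique expansion of $u$ in $\B(L)$ and in $\B(L')$: the basis vectors outside the intersection get coefficient zero, by linear independence. Hence the two products $\prod W(u_i)^{a_i}$ appearing in \eqref{eq:def_gammaB} are literally the same product of the same operators in the same order, provided the two bases are ordered consistently on their common part. Since the factors commute (all lie in an isotropic space), the order does not matter anyway. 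Therefore the two phases $\gamma$ agree, and the $i$-powers cancel to $1$.

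It remains to evaluate the character sum. The sum becomes $\sum_{u\in L\cap L'}\chi(u)\chi'(u)$. Since $\chi\chi'$ restricted to $L\cap L'$ is a character of that group, orthogonality of characters shows the sum equals $|L\cap L'|$ if $\chi=\chi'$ on $L\cap L'$ and $0$ otherwise. Combining with the normalization gives $|\langle\phi,\phi'\rangle|^2 = \frac{|L\cap L'|}{2^n}\one\{\chi_{|L\cap L'}=\chi'_{|L\cap L'}\}$ (after fixing constants), and taking square roots yields \eqref{eq:intersection}.

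As a sanity check, I will use $L=L'$, $\chi=\chi'$, which must give $1$; this pins down the normalization constant. I expect the phase-cancellation step to be the only real subtlety; the rest is bookkeeping of the $2^n$ factors.
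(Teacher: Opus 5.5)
Your proposal is correct and follows the paper's proof: you pass to the Hilbert--Schmidt pairing of $\phi\otimes\overline{\phi}$ and $\phi'\otimes\overline{\phi'}$, expand in the Weyl basis so that only $u\in L\cap L'$ survives, use the basis hypothesis to get $\gamma_{\B(L)}=\gamma_{\B(L')}$ on $L\cap L'$, and finish with character orthogonality. Your commuting-factors justification of the $\gamma$ step is more explicit than the paper's one-line assertion.

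The one slip is that the constant in your first display is inverted. With the paper's normalizations, $\langle \phi\otimes\overline{\phi},\, \phi'\otimes\overline{\phi'}\rangle_{HS} = 2^n|\langle\phi,\phi'\rangle|^2$, so $|\langle\phi,\phi'\rangle|^2 = 2^{-n}\langle \phi\otimes\overline{\phi},\, \phi'\otimes\overline{\phi'}\rangle_{HS}$. Your planned check against $\|\phi\otimes\overline{\phi}\|_{HS}^2=2^n\|\phi\|_2^4$, or the case $L=L'$, $\chi=\chi'$, would catch and correct this.
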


\begin{proof}
If $\phi \simeq_\B (L, \chi)$ and $\phi' \simeq_\B (L', \chi')$, then we have
\begin{align*}
    |\langle\phi, \phi'\rangle|^2
    &= \frac{1}{2^n} \big\langle \phi\otimes\overline{\phi},\, \phi'\otimes\overline{\phi'} \big\rangle_{HS} \\
    &= \frac{1}{2^n} \sum_{u\in L} \sum_{v\in L'} i^{-\gamma_{\B(L)}(u)+ \gamma_{\B(L')}(v)} \overline{\chi(u)} \chi'(v) \big\langle W(u),\, W(v)\big\rangle_{HS} \\
    &= \frac{1}{2^n} \sum_{u\in L\cap L'} i^{-\gamma_{\B(L)}(u)+ \gamma_{\B(L')}(u)} \overline{\chi(u)} \chi'(u).
\end{align*}
The assumption that $\B(L) \cap \B(L')$ forms a basis of $L\cap L'$ implies that $\gamma_{\B(L)}(u) = \gamma_{\B(L')}(u)$ on $L\cap L'$.
It then follows that
$$|\langle\phi, \phi'\rangle|^2 = \frac{1}{2^n} \sum_{u \in L\cap L'} \overline{\chi(u)} \chi'(u) = \frac{|L\cap L'|}{2^n} \one\{\chi_{|L\cap L'} = \chi'_{|L\cap L'}\}$$
by the orthogonality of characters, as wished.
\end{proof}

This last result allows for a convenient characterization of single-Lagrangian bases, $\Stab_L$ for $L\in\Lag(\F_2^n)$, that relies only on their correlations with stabilizer states.
It follows from this result that two stabilizer states have correlation that is either zero or a half-integer power of~2.
Say that an orthonormal basis~$\mathcal F$ of~$L^2(\F_2^n)$ composed of stabilizer states is \emph{regular} if, for any stabilizer state $\phi \in\Stab(\F_2^n)$, there is a $k\in \N$ such that
\begin{equation*}
    \{|\langle \phi,\phi'\rangle|^2 : \phi'\in \mathcal F\} = \{0,2^{-k}\}.
\end{equation*}

\begin{lemma} \label{lem:signature}
    An orthonormal stabilizer basis~$\mathcal F\subseteq\Stab(\F_2^n)$ of~$L^2(\F_2^n)$ is a single-Lagrangian basis if and only if~$\mathcal F$ is regular.
\end{lemma}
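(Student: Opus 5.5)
The plan is to reduce both directions to the intersection formula of \cref{prop:inner_product_intersection}. Note first that $|\langle \phi,\phi'\rangle|$ does not depend on the bases $\B(\cdot)$ fixed in \cref{def:identification}. Changing $\B(L)$ only relabels the character attached to each state of $\Stab_L$. So for any \emph{pair} of Lagrangians $L,L'$ we may choose a basis of $L\cap L'$, extend it to bases of $L$ and of $L'$, and apply \cref{prop:inner_product_intersection}. Hence for any stabilizer states $\phi\simeq(L,\chi)$ and $\phi'\simeq(L',\chi')$,
\[
|\langle \phi,\phi'\rangle|^2 = \frac{|L\cap L'|}{2^n}\,\one\{\chi_{|L\cap L'}=\chi'_{|L\cap L'}\}.
\]
I read ``regular'' as requiring $\{|\langle\phi,\phi'\rangle|^2:\phi'\in\mathcal F\}\subseteq\{0,2^{-k}\}$. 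Equality can fail for a single-Lagrangian basis when $L\cap L'=\{0\}$: then every overlap equals $2^{-n}$.

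\emph{Forward direction.} Let $\mathcal F=\Stab_L$ and fix a stabilizer state $\phi\simeq(L',\chi')$. By the identification above, $\mathcal F$ consists of the states $(L,\chi)$ with $\chi$ ranging over $\widehat L$. By the displayed formula, every overlap squared is either $0$ or $|L\cap L'|/2^n=2^{-k}$ with $k=n-\dim(L\cap L')$. At least one $\chi$ agrees with $\chi'$ on $L\cap L'$, because restriction $\widehat L\to\widehat{L\cap L'}$ is surjective. So the value $2^{-k}$ is attained, and $\mathcal F$ is regular.

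\emph{Reverse direction.} Suppose $\mathcal F$ is regular but contains $\phi_1\simeq(L_1,\chi_1)$ and $\phi_2\simeq(L_2,\chi_2)$ with $L_1\neq L_2$. I will build a test state $\psi\simeq(M,\mu)$ whose overlaps with $\phi_1$ and $\phi_2$ are nonzero and of different sizes, which contradicts regularity. Since $L_1\neq L_2$ are both of dimension $n$, there is $s\in L_1\setminus L_2$; set $S=\vspan\{s\}$. I want a Lagrangian $M$ with
\[
M\cap L_1=S \quad\text{and}\quad M\cap L_2=\{0\}.
\]
Given such an $M$, pick $\mu\in\widehat M$ with $\mu(s)=\chi_1(s)$. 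There is no compatibility condition to check, since $M\cap L_1\cap L_2=\{0\}$. The formula then gives $|\langle\psi,\phi_1\rangle|^2=2^{1-n}$ and $|\langle\psi,\phi_2\rangle|^2=2^{-n}$. These are two distinct nonzero values, contradicting regularity. Hence all elements of $\mathcal F$ share one Lagrangian $L$. Since $|\mathcal F|=2^n=|\Stab_L|$, we get $\mathcal F=\Stab_L$.

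\emph{Main obstacle: constructing $M$.} I expect this to be the hard step, and I would pass to the symplectic reduction $S^\perp/S$, which has dimension $2n-2$. Lagrangians $M\supseteq S$ correspond to Lagrangians $\overline M$ of $S^\perp/S$. Since $S\subseteq L_1$, the image $\overline{L_1}=L_1/S$ is itself Lagrangian there. Since $s\notin L_2$, the subspace $L_2\cap S^\perp$ has dimension $n-1$ and meets $S$ trivially. So its image $\overline{L_2}$ is also a Lagrangian of $S^\perp/S$.

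The conditions on $M$ then become: $\overline M$ is transversal to $\overline{L_1}$, and $\overline M$ is transversal to $\overline{L_2}$. For the second condition I also need that no $v\in (L_2\cap S^\perp)\setminus\{0\}$ has $v+S\subseteq M$. This holds because $M\cap L_2$ projects injectively into $\overline M\cap\overline{L_2}$.

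So it suffices to find a Lagrangian of a symplectic space $\F_2^{2m}$, with $m=n-1$, that is transversal to two given Lagrangians. I would prove this by counting. The Lagrangians transversal to a fixed $L_0$ are the graphs of symmetric maps from a complementary Lagrangian $L_0'$ to $L_0$, so there are $2^{m(m+1)/2}$ of them. I would then show that fewer than half of all Lagrangians meet a given Lagrangian nontrivially, or alternatively construct such an $\overline M$ directly from a common complement of $\overline{L_1}$ and $\overline{L_2}$. The case $m=0$ is trivial.
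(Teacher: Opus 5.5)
Your reduction to $S^\perp/S$ is correct, but the step it reduces to is never established, and the counting route you propose fails. You need a Lagrangian $\overline M$ of $S^\perp/S\cong\F_2^{2m}$, $m=n-1$, that is transversal to both $\overline{L_1}$ and $\overline{L_2}$. There are $\prod_{i=1}^m(2^i+1)$ Lagrangians in $\F_2^{2m}$, and $2^{m(m+1)/2}$ of them are transversal to a fixed one. So the transversal fraction is $\prod_{i=1}^m(1+2^{-i})^{-1}$, which already equals $64/135<1/2$ at $m=3$. Hence for $n\geq 4$ more than half of all Lagrangians meet a given Lagrangian nontrivially, and a union bound over $\overline{L_1},\overline{L_2}$ proves nothing. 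Your fallback, building $\overline M$ ``from a common complement'' of $\overline{L_1}$ and $\overline{L_2}$, just restates what has to be shown. The two-sided statement is in fact true: normalize $\overline{L_1}=\{0\}\times\F_2^m$, write $\overline{L_2}=\{(h,Bh+w):h\in V,\ w\in V^\perp\}$ with $B$ symmetric, and take $\overline M=\{(x,(B+C)x)\}$ with $C$ symmetric and its bilinear form nondegenerate on $V$. Your proposal does not supply this, however.

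The simpler repair is to notice that you never need $M\cap L_1=S$ exactly. It suffices that $S\subseteq M$ and $M\cap L_2=\{0\}$. Choose a basis of $M\cap L_1$, extend it to bases of $M$ and of $L_1$ (compatibility with $\B(L_2)$ is vacuous), and take $\mu\in\widehat M$ agreeing with $\chi_1$ on $M\cap L_1$. Then \cref{prop:inner_product_intersection} gives $|\langle\psi,\phi_1\rangle|^2=|M\cap L_1|/2^n\geq 2^{1-n}$ and $|\langle\psi,\phi_2\rangle|^2=2^{-n}$. In $S^\perp/S$ this only asks that $\overline M$ be a Lagrangian complement of $\overline{L_2}$, which always exists, and your injectivity argument then gives $M\cap L_2=\{0\}$.

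This is the paper's argument: it takes a Lagrangian complement $L''$ of $\Lcal(\phi)$ that meets $\Lcal(\phi')$ nontrivially. That $L''$ must be chosen suitably rather than arbitrarily, and your reduction is exactly how to justify the choice. Your reading of regularity with $\subseteq$ is the right one. With equality, a single-Lagrangian basis $\Stab_L$ would fail the definition at any stabilizer state whose Lagrangian is transversal to $L$.
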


\begin{proof}
It follows from \cref{prop:inner_product_intersection} that any single-Lagrangian basis is regular.
It thus suffices to show that any stabilizer basis that is not a single-Lagrangian basis is not regular.
To this end, let $\mathcal F \subseteq \Stab(\F_2^n)$ be a stabilizer basis and suppose that $\phi,\phi'\in \mathcal F$ have distinct Lagrangians $L = \mathcal L(\phi)$ and $L' = \mathcal L(\phi')$.
Let $\mathcal B(L')$ be a basis for~$L'$ and $\chi\in \widehat L'$ be such that $\phi'\simeq_{\mathcal B} (L',\mathcal \chi')$.
Denote by $L'' \in \Lag(\F_2^{2n})$ a complementary Lagrangian for~$L$, so that $\F_2^{2n} = L\oplus L''$.
Then $L\cap L'' = \{0\}$ and $L'\cap L'' \neq \{0\}$.
Let $\mathcal B(L'')$ be a basis for~$L''$ that agrees with~$\mathcal B(L')$ on the intersection $L'\cap L''$.
Let $\chi''\in \widehat L''$ be such that $\chi'_{L'\cap L''} = \chi''_{L'\cap L''}$, and let $\psi\in \Stab(\F_2^n)$ be a stabilizer state such that $\psi \simeq_{\mathcal B}(L'',\chi'')$.
By \cref{prop:inner_product_intersection}, we have that $|\langle\phi,\psi\rangle|^2 = 2^{-n}$ and $|\langle\phi',\psi\rangle|^2 > 2^{-n}$, showing that~$\mathcal F$ is not regular.
\end{proof}

\subsubsection{In odd characteristics}

We similarly define stabilizer states over~$\F_p^n$ for an odd prime~$p$ as the unit-$L^2$-norm extremizers of the $U^3$ norm:
$$\phi \in \Stab(\F_p^n) \quad \text{if} \quad \|\phi\|_{U^3} = \|\phi\|_2 = 1.$$
As in the characteristic-2 setting, one can show that $\phi \in \Stab(\F_p^n)$ if and only if there exists a Lagrangian $L\in \Lag(\F_p^{2n})$ such that
$$\big|\widehat{\Delta_a\phi}(b)\big| = \one_L(a,b) \quad \text{for all $a, b\in \F_p^n$.}$$
This is the \emph{Lagrangian associated with~$\phi$}, and is denoted~$\Lcal(\phi)$.

The theory of these extremizers in odd characteristics becomes simpler because the Weyl operators on a Lagrangian subspace form a group isomorphic to~$\F_p^n$:
$$W(u) W(v) = W(u+v) \quad \text{whenever $[u,v]=0$.}$$
This allows for a canonical (basis-free) identification~$\simeq$ between $\Stab(\F_p^n)/\U(1)$ and the set of Lagrangian-character pairs
$\mathrm{LC}(\F_p^{2n}) := \big\{(L, \chi):\: L\in \Lag(\F_p^{2n}),\, \chi\in \widehat{L}\big\}$:
write $\phi \simeq (L,\chi)$ to denote that
$$\phi \otimes \overline{\phi} = \sum_{u\in L} \chi(u) W(u).$$
Note that this is equivalent to requiring that
$$\widehat{\Delta_a \phi}(b) = \omega_p^{a\cdot b/2} \one_L(a,b) \overline{\chi(a,b)} \quad \text{for all $a, b\in \F_p^n$,}$$
and it gives a bijection between $\Stab(\F_p^n)/\U(1)$ and $\mathrm{LC}(\F_p^{2n})$.

Using this identification, it is easy to compute the inner product between two stabilizer states:
if $\phi\simeq (L, \chi)$ and $\phi' \simeq (L', \chi')$, then
$$|\langle\phi, \phi'\rangle| = \sqrt{\frac{|L\cap L'|}{p^n}} \one\{\chi_{|L\cap L'} = \chi'_{|L\cap L'}\}.$$
The action of the Weyl operators is also simple to specify:
$$\phi\simeq (L, \chi) \implies W(v)\phi \simeq \big(L,\, \omega_p^{-[v,\,\cdot\,]} \chi\big).$$

\subsection{Isometries of the $U^3$ norm}

In this subsection, we show how the symmetries of the normed vector space
$$U^3(\F_2^n) = \big(\{f: \F_2^n \to \C\},\, \|\cdot\|_{U^3}\big)$$
are related to those of the symplectic vector space $(\F_2^{2n},\, [\cdot, \cdot])$.
While this result will not be needed in our algorithms, we include it here because it provides a particularly clear connection between quadratic Fourier analysis and symplectic geometry.

To make this idea precise, let $\Iso_{U^3}(\F_2^n)$ denote the set of \emph{unitary isometries} of~$U^3(\F_2^n)$, meaning the unitary operators on~$L^2(\F_2^n)$ that leave the~$U^3$ norm invariant.
This can be regarded as the automorphism group of~$U^3(\F_2^n)$ when embedded into~$L^2(\F_2^n)$.
Our goal is to establish a connection between this automorphism group $\Iso_{U^3}(\F_2^n)$ and the symplectic group $\Sp(\F_2^{2n})$, which represents the automorphisms of the standard symplectic vector space.

It is clear that $\U(1) \leq \Iso_{U^3}(\F_2^n)$, and it immediately follows from our expression~\eqref{eq:U3Weyl} for the~$U^3$ norm that the Heisenberg--Weyl group $\HW(\F_2^{2n})$ is a subgroup of $\Iso_{U^3}(\F_2^n)$.
One can show that $\U(1)\times \HW(\F_2^{2n})$ is normal inside $\Iso_{U^3}(\F_2^n)$, and it can be regarded as a ``trivial part'' of the~$U^3$ isometries;
its action on the stabilizer states is given by equation~\eqref{eq:Weyl_action}.
For this reason, we will consider the quotient of the isometry group of the $U^3$ norm by this normal subgroup.
The main result of this subsection shows that this quotient is isomorphic to the symplectic group $\Sp(\F_2^{2n})$.

In order to prove this isomorphism, we will first need to introduce a number of preliminary results.
Throughout this section, we will use the relation symbol~$\propto$ to denote proportionality. The next lemma shows that there exists a ``semi-representation'' of the symplectic group $\Sp(\F_2^{2n})$ on the unitary group $\U(\F_2^{2n})$, whose action on the Weyl operators by conjugation mimics the symplectic group up to phases.

\begin{lemma}[Semi-representation] \label{lem:semi_rep}
    For every symplectic map $S\in \Sp(\F_2^{2n})$ there exist a unitary $\sigma(S) \in \U(\F_2^n)$ satisfying
    $$\sigma(S) W(x) \sigma(S)^* \propto W(Sx) \quad \text{for all $x\in \F_2^{2n}$.}$$
\end{lemma}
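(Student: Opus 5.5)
The plan is to realize the map $x\mapsto Sx$ as a second unitary representation of the Heisenberg group $H(\F_2^{2n})$ on $L^2(\F_2^n)$, and then to show by character theory that this representation is equivalent to the Weil representation~$\rho$ of~\eqref{eq:Weil}. The intertwining unitary will be the desired $\sigma(S)$.

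\textbf{Step 1: a twisted representation.} Fix $S\in\Sp(\F_2^{2n})$ and define $\rho_S$ on the generators of the presentation~\eqref{eq:Heisrelations} by
\[
\rho_S(z)=i\,I, \qquad \rho_S(w(e_j)) = W(Se_j), \quad j\in[2n].
\]
We must check that these operators satisfy the defining relations:
\begin{itemize}
\item $(iI)^4=I$, and $iI$ commutes with every operator.
\item $W(Se_j)^2=I$, since Weyl operators square to the identity.
\item By the commutation relation~\eqref{eq:commutation_Weyl}, $W(Se_j)W(Se_k) = (-1)^{[Se_j,Se_k]}W(Se_k)W(Se_j)$.
\item Since $S$ is symplectic, $[Se_j,Se_k]=[e_j,e_k]$, so the sign is $(iI)^{2[e_j,e_k]}$, as the presentation requires.
\end{itemize}
By von Dyck's theorem, $\rho_S$ therefore extends to a homomorphism $H(\F_2^{2n})\to\U(\F_2^n)$, which is a unitary representation.

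\textbf{Step 2: identify $\rho_S(w(x))$.} By~\eqref{eq:wxdef}, $w(x)$ equals a power of $z$ times $\prod_j w(e_j)^{x_j}$. Hence $\rho_S(w(x))$ is a phase times $\prod_j W(Se_j)^{x_j}$. Repeated use of the multiplication rule~\eqref{eq:Weyl_multiplication} shows this product is proportional to $W(\sum_j x_jSe_j)=W(Sx)$. Consequently
\[
\rho_S(z^t w(x)) \propto W(Sx).
\]

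\textbf{Step 3: equality of characters.} For $x\neq 0$ we have $Sx\neq 0$, since $S$ is invertible. Using $\tr W(u)=2^n\one[u=0]$, we get $\tr\rho_S(z^tw(x)) = 0 = \tr\rho(z^tw(x))$. For $x=0$, both representations send $z^t$ to $i^tI$, so both traces equal $i^t2^n$. Every element of $H(\F_2^{2n})$ has the form $z^tw(x)$, so $\rho$ and $\rho_S$ have the same character. Two finite-dimensional representations of a finite group with equal characters are equivalent. Hence there is an invertible $T$ with $T\rho(g)T^{-1}=\rho_S(g)$ for all $g$.

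\textbf{Step 4: choose the intertwiner unitary.} Take the polar decomposition $T=U|T|$. The operator $T^*T$ commutes with $\rho(g)$ for every $g$, because both representations are unitary. Therefore $|T|$ also commutes with every $\rho(g)$, and so $U\rho(g)U^*=\rho_S(g)$. Set $\sigma(S):=U$. Then
\[
\sigma(S)W(x)\sigma(S)^* = \rho_S(w(x))\cdot(\text{phase}) \propto W(Sx),
\]
which is the claim.

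\textbf{Main obstacle.} I expect Step 1 to be the delicate point. One must use the group presentation rather than the non-canonical cocycle~$\beta$, so that the images $W(Se_j)$, which have order~$2$, automatically satisfy the relations. If this caused trouble, the alternative would be to construct $\sigma(S)$ explicitly on generators of $\Sp(\F_2^{2n})$. However, the character argument above avoids any such case analysis.
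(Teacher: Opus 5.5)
Your proposal is correct and is essentially the paper's proof. Your $\rho_S$, defined directly on generators, is exactly the paper's $\rho\circ\alpha_S$: the paper first extends $w(e_i)\mapsto w(Se_i)$ to an automorphism $\alpha_S$ of $H(\F_2^{2n})$ and then composes with the Weil representation. The character comparison is the same, and your polar-decomposition step simply makes explicit the passage from equivalence to unitary equivalence that the paper cites from the literature.
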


\begin{proof}
We define a map $\alpha_S: H(\F_2^{2n})\to H(\F_2^{2n})$ as follows.
On the generators, set
\begin{equation*}
    \alpha_S(z) = z
    \quad\text{and}\quad
    \alpha_S\big(w(e_i)\big) = w(Se_i).
\end{equation*}
Since the elements $w(Se_i)$ have order two, it follows from equation~\eqref{eq:commutation} that~$\alpha_S$ preserves the relations~\eqref{eq:Heisrelations} defining $H(\F_2^{2n})$.
By the fundamental theorem of group presentations~\cite[Section~7.10]{artin_algebra}, we can then extend~$\alpha_S$ uniquely to an automorphism of~$H(\F_2^{2n})$, which is given by
$$\alpha_S \bigg(z^t \prod_{i=1}^{2n} w(e_i)^{x_i} \bigg) = z^t \prod_{i=1}^{2n} w(Se_i)^{x_i}.$$
(The products above are assumed to be in increasing order of $i\in [2n]$.)

From the multiplication rule~\eqref{eq:multiplication} and the formula above, we conclude there exists a map $\tau_S: \F_2^{2n} \to \Z_4$ satisfying
$$\alpha_S(z^t w(x)) = z^{t+\tau_S(x)} w(Sx) \quad \text{for all $t\in \Z_4$, $x\in \F_2^{2n}$.}$$
Denote the Weil representation by~$\rho$.
Since~$\alpha_S$ is an automorphism, the map
$$\rho_S := \rho\circ \alpha_S:\: z^t w(x) \mapsto i^{t+\tau_S(x)} W(Sx)$$
gives another unitary representation of~$H(\F_2^{2n})$.
The characters of this representation are given by
\begin{equation*}
    \chi_{\rho_S} \big(z^t w(x)\big) = \tr\big( i^{t+\tau_S(x)} W(Sx)\big) = i^{t+\tau_S(x)} 2^n \one[Sx = 0] = i^{t} 2^n \one[x = 0].
\end{equation*}
These equal the characters~$\chi_\rho$ of the Weil representation.
It then follows that these two representations are unitarily equivalent (see e.g. \cite[Chapter~10]{CSTbook2018}):
there exists a unitary $\sigma(S)\in \U(\F_2^n)$ such that
\begin{equation*}
    \sigma(S) \rho(h) \sigma(S)^* = \rho_S(h) \quad \text{for all $h\in H(\F_2^{2n})$.}
\end{equation*}
Applying this equation to the elements $h = w(x)$ gives the lemma.
\end{proof}

The lemma below gives a characterization of unitaries that act diagonally on the Weyl basis by conjugation.

\begin{lemma}[Diagonal action] \label{lem:diag_Weyl}
    Suppose that $U \in \U(\F_2^n)$ satisfies the property
    $$U W(x) U^* \propto W(x) \quad \text{for all $x\in \F_2^{2n}$.}$$
    Then, there exist $\alpha\in \U(1)$ and $v\in \F_2^{2n}$ such that $U = \alpha W(v)$.
\end{lemma}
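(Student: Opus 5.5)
Write $UW(x)U^* = c(x)W(x)$ with $c(x)\in\C$. Since both sides are unitary, $|c(x)|=1$. Since $W(x)^2=I$, squaring gives $I = UW(x)^2U^* = c(x)^2 W(x)^2 = c(x)^2 I$, so $c(x)\in\{\pm1\}$.

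The first step is to show that $c$ is a character of $\F_2^{2n}$. By~\eqref{eq:Weyl_multiplication}, $W(x)W(y)=i^{\beta(x,y)}W(x+y)$. Conjugating by $U$ gives
\begin{equation*}
c(x)c(y)\,i^{\beta(x,y)}W(x+y) = i^{\beta(x,y)}\,c(x+y)\,W(x+y).
\end{equation*}
Hence $c(x+y)=c(x)c(y)$, so $c:\F_2^{2n}\to\{\pm1\}$ is a group homomorphism.

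The second step realises $c$ as a commutation sign. Non-degeneracy of the symplectic form means every character of $\F_2^{2n}$ has the form $x\mapsto(-1)^{[v,x]}$ for a unique $v\in\F_2^{2n}$. Fix this $v$. By the commutation relation~\eqref{eq:commutation_Weyl}, $W(v)W(x)W(v)^* = (-1)^{[v,x]}W(x)$. Therefore $V:=W(v)^*U$ satisfies
\begin{equation*}
VW(x)V^* = W(v)^*\big((-1)^{[v,x]}W(x)\big)W(v) = W(x)
\end{equation*}
for all $x$, using that $(-1)^{[v,x]}$ is real and conjugation by $W(v)$ is an involution up to these signs.

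The third step is a commutant argument. $V$ commutes with every Weyl operator, and the Weyl operators span all operators on $\C^{\F_2^n}$ because they form an orthonormal basis for the Hilbert--Schmidt inner product. So $V$ lies in the center of the full matrix algebra, i.e.\ $V=\alpha I$. Equivalently, by Schur's lemma, the Weil representation is irreducible, as its character has norm one by the trace computation $\tr W(u)=2^n\one[u=0]$. Unitarity gives $\alpha\in\U(1)$, and $U=\alpha W(v)$.

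The only step needing care is the first, where the cocycle phases must cancel. This works because the same factor $i^{\beta(x,y)}$ appears on both sides, so no explicit formula for $\beta$ is needed.
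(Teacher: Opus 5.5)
Your proof is correct and follows essentially the same route as the paper. You show the proportionality constants form a character via cancellation of the cocycle $i^{\beta(x,y)}$, write that character as $(-1)^{[v,\cdot]}$, twist $U$ by $W(v)$ to get an operator commuting with every Weyl operator, and conclude it is scalar because the Weyl operators span the full operator algebra; the only differences are cosmetic (you use $W(v)^*U$ instead of $UW(v)$, and you explicitly check $c(x)\in\{\pm1\}$, which the paper records only in a footnote).
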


\begin{proof}
Denote the proportionality map by~$\tau$, so that $U W(x) U^* = \tau(x) W(x)$ for all~$x$, and note that $\tau(0) = 1$.
For all $x, y\in \F_2^{2n}$ we have
$$W(x+y) = i^{-\beta(x,y)} W(x) W(y) = i^{-\beta(x,y)} W(x) U^*U W(y),$$
and thus
\begin{align*}
    \tau(x+y) W(x+y) &= U W(x+y) U^* \\
    &=i^{-\beta(x,y)} UW(x)U^* UW(y)U^* \\
    &= i^{-\beta(x,y)} \tau(x)W(x) \tau(y)W(y) \\
    &= \tau(x)\tau(y) W(x+y).
\end{align*}
We conclude that $\tau(x+y) = \tau(x)\tau(y)$ for all $x, y\in \F_2^{2n}$, and thus~$\tau$ is a character of~$\F_2^{2n}$.
We can then write $\tau(x) = (-1)^{[v, x]}$ for some $v\in \F_2^{2n}$ and all~$x$.

Now consider the unitary map $V := UW(v)$.
Since $W(v)W(x)W(v)^* = (-1)^{[v,x]} W(x)$ by the commutation relations, we conclude that
$$VW(x)V^* = (-1)^{[v,x]} UW(x)U^* = W(x) \quad \text{for all $x\in \F_2^{2n}$.}$$
It follows that~$V$ commutes with all Weyl operators~$W(x)$.
As the Weyl operators form a basis of $\U(\F_2^n)$, we conclude that $V=\alpha I$ for some $\alpha\in \U(1)$, and thus $U = \alpha W(v)$.
\end{proof}

Finally, we will need a special case of Chow's theorem from incidence geometry~\cite{Chow}.
This result shows that every automorphism of the symplectic dual polar graph is induced by a symplectic map.

\begin{theorem}[Chow's theorem]\label{thm:chow}
    Suppose $\nu:\Lag(\F_2^{2n})\to \Lag(\F_2^{2n})$ is a map with the following property:
    for every pair $L,L'\in \Lag(\F_2^{2n})$ such that $\dim(L\cap L') = n-1$, it holds that $\dim\big(\nu(L)\cap \nu(L')\big) = n-1$.
    Then, there exists a symplectic map $S\in \Sp(\F_2^{2n})$ such that, for every $L\in \Lag(\F_2^{2n})$, we have $\nu(L) = SL$.
\end{theorem}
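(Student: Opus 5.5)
We do not expect to prove this from scratch; it is a special case of Chow's classical theorem on the dual polar graph~\cite{Chow}, and the paper will most likely cite it. The reduction we would carry out treats the Lagrangians as the vertices of the \emph{dual polar graph}~$\Gamma$, with $L\sim L'$ iff $\dim(L\cap L')=n-1$. The goal is to recover from~$\nu$ a map on the points of the polar space $(\F_2^{2n},[\cdot,\cdot])$ that preserves orthogonality. A fundamental-theorem-type argument then turns this into a semilinear map. Since $\mathrm{Aut}(\F_2)$ is trivial, semilinear means linear, and preservation of $[\cdot,\cdot]$ gives $S\in\Sp(\F_2^{2n})$.

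\textbf{Lines and distance.} First we identify lines and distance in~$\Gamma$.
\begin{itemize}
\item Two adjacent Lagrangians $L,L'$ determine the isotropic $(n-1)$-space $U=L\cap L'$. The Lagrangians containing~$U$ correspond to the Lagrangian lines of the $2$-dimensional symplectic space $U^\perp/U$. Over~$\F_2$ there are exactly three of them, so the ``lines'' of~$\Gamma$ are triangles.
\item Using the near-polygon property of dual polar spaces (every triangle lies in a line), we show that any triangle of~$\Gamma$ consists of the three Lagrangians through a common~$U$.
\item The hypothesis gives $\nu(L)\neq\nu(L')$ for adjacent $L,L'$. Hence~$\nu$ maps triangles to triangles, and so maps lines to lines.
\item Next we check that the graph distance is $d(L,L')=n-\dim(L\cap L')$, and that $\nu$ does not increase distance.
\item Convex subspaces of~$\Gamma$ have the form $\{L: L\supseteq U\}$ for isotropic~$U$. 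We would show that~$\nu$ maps the convex closure of a pair of vertices into a convex subspace of the same diameter. This gives an induced map $U\mapsto \nu_*(U)$ on isotropic subspaces that preserves codimension and inclusion.
\end{itemize}

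\textbf{Points of the polar space.} Specializing to $1$-dimensional isotropic subspaces yields a map~$\nu_*$ on points of the polar space, that is, on nonzero vectors of $\F_2^{2n}$. Two points $x,y$ are orthogonal iff they lie in a common Lagrangian, i.e.\ iff the maximal convex subspaces $\{L\ni x\}$ and $\{L\ni y\}$ intersect. Since $\nu$ maps each of these into the corresponding image subspace, this intersection property is preserved, so~$\nu_*$ preserves orthogonality. It also maps isotropic lines into isotropic lines. By Tits' fundamental theorem for polar spaces of rank $n\ge 3$ (rank~$\le 2$ is handled by a direct check on the small generalized quadrangle), $\nu_*$ is induced by a semilinear, hence linear, map~$S$ preserving~$[\cdot,\cdot]$. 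Finally, a Lagrangian is the span of the points it contains, so $\nu(L)=SL$.

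\textbf{Main obstacle.} The hard step is that the hypothesis is only one-directional and does not assume~$\nu$ bijective. The classical Chow theorem assumes a bijection preserving adjacency in both directions. We would first try to show that~$\nu$ is injective and distance-preserving. Starting from the fact that adjacent vertices have distinct images, we would induct on distance using the three-element lines: a distance-decreasing collapse of a geodesic should force two adjacent vertices to have equal images, a contradiction. Finiteness then gives bijectivity, so~$\nu$ is an automorphism of~$\Gamma$ and Chow's theorem applies directly. If this induction fails, we would add bijectivity as a hypothesis, which suffices for the paper's application to isometries of the $U^3$ norm.
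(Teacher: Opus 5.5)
\textbf{There is a genuine gap, at exactly the point you flag: nothing in the proposal shows that $\nu$ is injective.}

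The paper gives no argument of its own here. It quotes the statement as a special case of Chow's theorem~\cite{Chow}, which it glosses as saying that every \emph{automorphism} of the symplectic dual polar graph is induced by a symplectic map. The only use of the result, in the proof of \cref{thm:isoU3}, concerns a map $\nu_M$ with $|\nu_M(L)\cap\nu_M(L')|=|L\cap L'|$ for all $L,L'$. Such a map is injective, hence bijective, and preserves adjacency in both directions. So your fallback of adding bijectivity covers everything the paper does with this result, but it does not prove the statement as written.

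Your heuristic that a distance-decreasing collapse ``should force two adjacent vertices to have equal images'' is not a general fact about graph homomorphisms. For example, $C_6\to K_3$, $i\mapsto i \bmod 3$, identifies the endpoints of a geodesic of length~$3$ without merging any adjacent pair. So the specific geometry of $\Lag(\F_2^{2n})$ has to enter. Your later steps cannot supply injectivity either: sending convex closures into convex subspaces of the same diameter, and feeding a merely orthogonality-preserving point map into Tits' theorem, both already presuppose that $\nu$ preserves distances and is bijective.

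\textbf{The missing step is short and needs no induction on distance.}

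\emph{Step 1: $\nu$ never identifies two Lagrangians $u,v$ at distance~$2$.}
\begin{enumerate}
\item Such $u,v$ lie in the quad of all Lagrangians containing $u\cap v$. Its graph is the collinearity graph of $\mathrm{GQ}(2,2)$, i.e.\ the Kneser graph $K(6,2)$: vertices are the $2$-subsets of $\{1,\dots,6\}$, adjacency is disjointness, and lines are perfect matchings. Take $u=\{1,2\}$ and $v=\{1,3\}$.
\item Suppose $\nu(u)=\nu(v)$. For each common neighbor $z\subseteq\{4,5,6\}$, the line through $u,z$ and the line through $v,z$ are both mapped onto the unique line through the adjacent pair $\nu(u),\nu(z)$.
\item This forces $\nu(\{a,2\})=\nu(\{a,3\})$ for $a=4,5,6$. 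For instance, $z=\{4,5\}$ lies on the lines $\{12,45,36\}$ and $\{13,45,26\}$, giving $\nu(\{6,3\})=\nu(\{6,2\})$.
\item For distinct $a,b\in\{1,4,5,6\}$ we have $\{a,2\}\cap\{b,3\}=\emptyset$. Hence the four vertices $\nu(\{a,2\})$ are pairwise adjacent.
\item A $K_4$ is impossible, because every triangle lies in a $3$-point line.
\end{enumerate}

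\emph{Step 2: $\nu$ is an automorphism.}
\begin{enumerate}
\item Two distinct neighbors of a vertex $y$ are at distance $1$ or $2$, so by Step~1 $\nu$ is injective on every neighborhood $\Gamma(y)$.
\item The graph is regular, so $\nu$ maps $\Gamma(y)$ onto $\Gamma(\nu(y))$.
\item The image of $\nu$ is therefore closed under taking neighbors. By connectivity it is all of $\Lag(\F_2^{2n})$, so $\nu$ is bijective.
\item A bijective edge-preserving self-map of a finite graph is an automorphism.
\end{enumerate}

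Now the classical bijective Chow theorem applies, as the paper intends. For $n\le 2$ one can check directly that $\mathrm{Aut}(K_3)$ and $\mathrm{Aut}(K(6,2))\cong S_6$ are the images of $\Sp(\F_2^2)$ and $\Sp(\F_2^4)$. With this, your re-derivation of Chow through polar-space points and Tits' theorem becomes unnecessary.
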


We are now finally ready to characterize the symmetries of the normed space $U^3(\F_2^n)$ in terms of the symplectic group $\Sp(\F_2^{2n})$:

\begin{theorem}[Symmetries of~$U^3$] \label{thm:isoU3}
    Let $\sigma: \Sp(\F_2^{2n}) \to \U(\F_2^n)$ be a semi-representation in the sense of \cref{lem:semi_rep}.
    Then, $M\in \Iso_{U^3}(\F_2^n)$ if and only if it can be written in the form $M = \alpha \sigma(S)W(v)$ for some $\alpha\in \U(1)$, $S\in \Sp(\F_2^{2n})$ and $v\in \F_2^{2n}$.
    Moreover, we have the group isomorphism
    $$\Iso_{U^3}(\F_2^n)/(\U(1)\times \HW(\F_2^{2n})) \cong \Sp(\F_2^{2n}).$$
\end{theorem}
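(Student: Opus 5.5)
The plan is to prove the two directions of the characterization separately, and then read off the isomorphism from the resulting description.

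\emph{Easy direction.} First I show that every $M = \alpha\sigma(S)W(v)$ preserves the $U^3$ norm. By \eqref{eq:U3Weyl},
\[
\|Mf\|_{U^3}^8 = 2^{-n}\sum_{u}|\langle Mf, W(u)Mf\rangle|^4 = 2^{-n}\sum_u |\langle f, M^*W(u)M f\rangle|^4 .
\]
By \cref{lem:semi_rep} and the commutation relations, $M^*W(u)M$ is a phase times $W(S^{-1}u)$. Hence the sum is just a reindexing by the bijection $S^{-1}$, and it equals $\|f\|_{U^3}^8$.

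\emph{Hard direction: reduction to a symplectic map.} Let $M\in\Iso_{U^3}(\F_2^n)$. Since $M$ is unitary and preserves the $U^3$ norm, it preserves the condition $\|\phi\|_2=\|\phi\|_{U^3}=1$. So $M$ maps $\Stab(\F_2^n)$ bijectively to itself, since $M^{-1}$ is also an isometry. It also maps orthonormal stabilizer bases to orthonormal stabilizer bases, and it preserves all the inner products $|\langle\phi,\phi'\rangle|$. The regularity condition in \cref{lem:signature} is phrased only in terms of these quantities over all stabilizer states, so $M$ maps regular bases to regular bases. Therefore $M$ maps each single-Lagrangian basis $\Stab_L$ onto some $\Stab_{\nu(L)}$, which defines a bijection $\nu$ on $\Lag(\F_2^{2n})$.

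Next I check the hypothesis of \cref{thm:chow}. I choose bases compatible on $L\cap L'$ and apply \cref{prop:inner_product_intersection}. This shows that $\dim(L\cap L')$ is determined by the multiset of overlaps $|\langle\phi,\phi'\rangle|$ with $\phi\in\Stab_L$, $\phi'\in\Stab_{L'}$: the nonzero value is $\sqrt{|L\cap L'|/2^n}$. There is a mild subtlety in choosing compatible bases; I can do this because the bases $\B$ are arbitrary in the overlap computation once $\phi,\phi'$ are fixed. Since $M$ preserves these overlaps, $\dim(\nu(L)\cap\nu(L'))=\dim(L\cap L')$. Chow's theorem then gives $S\in\Sp(\F_2^{2n})$ with $\nu(L)=SL$ for all $L$.

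\emph{Hard direction: the diagonal lemma.} Set $U=\sigma(S)^*M$. Then $U$ is again a $U^3$ isometry, because $\sigma(S)$ is one by the easy direction. By \cref{lem:semi_rep}, $\sigma(S)$ maps $\Stab_L$ to $\Stab_{SL}$, since it conjugates $\Weyl(L)$ to $\Weyl(SL)$ up to phases. Hence $U$ preserves every $\Stab_L$. By \cref{prop:Lag_stab_basis}, $\Stab_L$ is the joint eigenbasis of $\Weyl(L)$. So for $x\in\F_2^{2n}$ I pick a Lagrangian $L\ni x$ and write $W(x)=\sum_{\phi\in\Stab_L}\lambda_\phi\,\phi\otimes\bar\phi$ with $\lambda_\phi\in\{\pm1\}$ (suitably normalized). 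Conjugation by $U$ permutes the projectors $\phi\otimes\bar\phi$ among themselves. Thus $UW(x)U^*$ is $\pm1$-diagonal in the same basis and commutes with $\Weyl(L)$.

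To conclude $UW(x)U^*\propto W(x)$, I would argue as follows. $UW(x)U^*$ is an order-two unitary whose expansion lies in $\mathrm{span}\{W(u):u\in L\}$ for every Lagrangian $L$ containing $x$. The intersection of all such Lagrangians is $\vspan\{x\}$, because $x^{\perp}$ is spanned by Lagrangians through $x$. Since the Weyl operators are linearly independent, $UW(x)U^*\in\mathrm{span}\{I,W(x)\}$. Trace considerations then force $UW(x)U^*=\pm W(x)$: the trace of $UW(x)U^*$ is $0$ for $x\ne0$, and it squares to the identity. Now \cref{lem:diag_Weyl} gives $U=\alpha W(v)$, that is, $M=\alpha\sigma(S)W(v)$.

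\emph{The isomorphism.} Define the map $M\mapsto S$. It is well defined: if $\sigma(S)W(v)\propto\sigma(S')W(v')$, then $\sigma(S'^{-1}S)$ acts diagonally up to phases. By \cref{lem:diag_Weyl} it is then proportional to a Weyl operator, and conjugation would give $S'^{-1}Sx=x$. The map is a homomorphism because $\sigma(S)\sigma(T)$ conjugates $W(x)$ to a multiple of $W(STx)$, so $\sigma(S)\sigma(T)\propto\sigma(ST)W(v)$ by \cref{lem:diag_Weyl} again. It is surjective, and its kernel is $\U(1)\times\HW(\F_2^{2n})$. I expect the main obstacle to be the step that uses the overlap structure to get Chow's hypothesis.
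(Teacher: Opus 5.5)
Your proposal is correct and follows essentially the same route as the paper. Both use regularity to get the induced map on Lagrangians, \cref{prop:inner_product_intersection} and Chow's theorem to produce $S$, then show that $\sigma(S)^*M$ conjugates each $W(x)$ into $\bigcap_{L\ni x}\A(L)=\vspan\{I,W(x)\}$ and apply \cref{lem:diag_Weyl}, and finally read off the isomorphism from $\sigma(S)\sigma(T)\propto\sigma(ST)W(h_{S,T})$. Your duality argument $\bigcap_{L\ni x}L=\bigl(\sum_{L\ni x}L\bigr)^\perp=(x^\perp)^\perp$ fills in a step the paper only asserts, and well-definedness of $M\mapsto S$ is cleanest from $MW(x)M^*\propto W(Sx)$ rather than your detour through $\sigma(S'^{-1}S)$.
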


\begin{proof}
From our expression of the~$U^3$ norm in terms of Weyl operators (equation~\eqref{eq:U3Weyl}), one immediately sees that any operator of the form $M = \alpha \sigma(S)W(v)$ is a unitary isometry of~$U^3(\F_2^n)$.
For the converse, let $M\in \Iso_{U^3}(\F_2^n)$ be an arbitrary element, and note that~$M$ must map stabilizer states to stabilizer states.

We first show that this isometry induces a map $\nu_M: \Lag(\F_2^{2n}) \to \Lag(\F_2^{2n})$ on the Lagrangian subspaces.
\cref{prop:Lag_stab_basis} shows that, to each Lagrangian~$L$, we can associate a single-Lagrangian basis $\Stab_L \subset \Stab(\F_2^n)$.
By unitarity,~$M$ maps this basis to another orthonormal basis composed of stabilizer states.
Moreover, since~$\Stab_L$ is regular (in the sense of \cref{lem:signature}), so is~$M\Stab_L$.
Hence, by \cref{lem:signature}, there exists a Lagrangian~$L'$ such that
\begin{equation*}
    M\Stab_L = \Stab_{L'}.
\end{equation*}
We thus obtain a map~$\nu_M$ given by $\nu_M(L) = L'$, and note that this map satisfies
$$\nu_M(\Lcal(\phi)) = \Lcal(M\phi) \quad \text{for all $\phi\in \Stab(\F_2^n)$.}$$

We now show that this map~$\nu_M$ preserves intersection sizes of the Lagrangians.
Let $L, L'$ be two Lagrangians and choose bases $\B(L)$ and $\B(L')$ for them in such a way that $\B(L) \cap \B(L')$ forms a basis of their intersection $L\cap L'$.
Consider the stabilizer states $\phi \simeq_\B (L, \one_L)$ and $\phi' \simeq_\B (L', \one_{L'})$, which by \cref{prop:inner_product_intersection} satisfy
$$|\langle\phi, \phi'\rangle|^2 = \frac{|L\cap L'|}{2^n}.$$
Since~$M$ is unitary, we have $|\langle M\phi, M\phi'\rangle|^2 = 2^{-n}|L\cap L'|$ as well, which (by \cref{prop:inner_product_intersection} again) implies
$$|\Lcal(M\phi) \cap \Lcal(M\phi')| = |L\cap L'|.$$
We conclude that $|\nu_M(L) \cap \nu_M(L')| = |L\cap L'|$ for any Lagrangians $L, L'$, as wished.

It then follows from Chow's theorem (\cref{thm:chow}) that there exists a (unique) symplectic map $S\in \Sp(\F_2^{2n})$ such that
$$\nu_M(L) = SL \quad \text{for all $L\in \Lag(\F_2^{2n})$.}$$
Now consider the unitary map $V := \sigma(S)^* M$.
Our goal is to show that $V = \alpha W(v)$ for some phase $\alpha\in \U(1)$ and some Weyl operator~$W(v)$, which will imply the first part of the theorem. For each Lagrangian~$L$, define the vector space (and algebra)
$$\A(L) := \vspan(\{W(u):\: u\in L\}).$$
Fixing any basis $\B(L)$ for~$L$, one easily shows that the set
$$\bigg\{\sum_{u\in L} i^{\gamma_{\B(L)}(u)} \chi(u) W(u):\: \chi \in \widehat{L}\bigg\}$$
forms a basis for~$\A(L)$.
Since for a stabilizer state $\phi \simeq_\B (L, \chi)$ we have
$$\phi \otimes \overline{\phi} = \sum_{u\in L} i^{\gamma_{\B(L)}(u)} \chi(u) W(u),$$
and since~$V$ induces a permutation of the stabilizer states associated with any given Lagrangian~$L$, we conclude there is some $\phi' \simeq_\B (L, \chi')$ such that
\begin{align*}
    V \bigg(\sum_{u\in L} i^{\gamma_{\B(L)}(u)} \chi(u) W(u)\bigg) V^*
    &= (V\phi) \otimes \overline{(V\phi)} \\
    &= \phi' \otimes \overline{\phi'} \\
    &= \sum_{u\in L} i^{\gamma_{\B(L)}(u)} \chi'(u) W(u) \in \A(L),
\end{align*}
and thus $V \A(L) V^* \subseteq \A(L)$ for all $L\in \Lag(\F_2^{2n})$.

We next show that the conjugation map $A \mapsto VAV^*$ acts diagonally on the Weyl basis.
For any $u\in \F_2^{2n}$ and any Lagrangian~$L$ containing~$u$, we have $W(u) \in \A(L)$ by definition.
We then conclude from the last paragraph that
$$VW(u)V^* \in \bigcap_{L:\: u\in L} V\A(L)V^* \subseteq \bigcap_{L:\: u\in L} \A(L).$$
As the intersection $\bigcap_{L:\: u\in L} L$ of all Lagrangians containing~$u$ equals $\{0, u\}$, we conclude from linear independence of the Weyl operators that
$\bigcap_{L:\: u\in L} \A(L) = \vspan(\{I, W(u)\})$.
It follows that we can write $VW(u)V^* = \alpha(u) W(u) + \beta(u) I$.
Since for $u\neq 0$ we have
$$\beta(u) = \langle I,\, VW(u)V^*\rangle_{HS} = \frac{\tr(VW(u)V^*)}{2^n} = \frac{\tr(W(u))}{2^n} = 0,$$
we conclude that
$$VW(u)V^* \propto W(u) \quad \text{for all $u\in \F_2^{2n}$,}$$
as claimed.
It then follows from Lemma~\ref{lem:diag_Weyl} that there exist $v\in \F_2^{2n}$ and $\alpha\in \U(1)$ such that $V = \alpha W(v)$, and thus $M = \alpha \sigma(S)W(v)$.
This concludes the proof of the first part of the theorem.

For the second part we note that, for all $S, T\in \Sp(\F_2^{2n})$, the unitary $\sigma(S)\sigma(T) \sigma(ST)^*$ acts diagonally on the Weyl basis by conjugation:
$$\sigma(S)\sigma(T) \sigma(ST)^* W(x) \sigma(ST) \sigma(T)^*\sigma(S)^* \propto W(x) \quad \text{for all $x\in \F_2^{2n}$.}$$
It then follows from \cref{lem:diag_Weyl} that
$$\sigma(S) \sigma(T) \propto \sigma(ST) W(h_{S,T})$$
for some $h_{S,T} \in \F_2^{2n}$.
The multiplication of two elements $M = \alpha \sigma(S)W(v)$ and $M' = \alpha' \sigma(T)W(u)$ from $\Iso_{U^3}(\F_2^n)$ thus satisfies
\begin{align*}
    MM' &\propto \sigma(S)W(v) \sigma(T)W(u) \\
    &= \sigma(S)\sigma(T) \big(\sigma(T)^* W(v)\sigma(T)\big) W(u) \\
    &\propto \sigma(ST) W(h_{S,T}) W(T^{-1}v) W(u) \\
    &\propto \sigma(ST) W(T^{-1}v+u+h_{S,T}).
\end{align*}
The claimed isomorphism follows.
\end{proof}

As a simple corollary, we obtain the following characterization of the unitary isometries of~$U^3$ in terms of the Heisenberg--Weyl group:

\begin{corollary}[Normalizer] \label{cor:normalizer}
    $\Iso_{U^3}(\F_2^n)$ is the normalizer group of the Heisenberg--Weyl group in $\U(\F_2^n)$:
    $$\Iso_{U^3}(\F_2^n) = \big\{U\in \U(\F_2^n):\: U\HW(\F_2^{2n})U^{-1} = \HW(\F_2^{2n})\big\}.$$
\end{corollary}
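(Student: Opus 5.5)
We prove the two inclusions separately, using \cref{thm:isoU3} for the forward direction and formula~\eqref{eq:U3Weyl} for the reverse one.

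\emph{Isometries normalize $\HW(\F_2^{2n})$.} Let $M\in \Iso_{U^3}(\F_2^n)$. By \cref{thm:isoU3} we can write $M = \alpha\sigma(S)W(v)$ with $\alpha\in\U(1)$, $S\in\Sp(\F_2^{2n})$ and $v\in\F_2^{2n}$. For any $x\in\F_2^{2n}$, the commutation relation~\eqref{eq:commutation_Weyl} and \cref{lem:semi_rep} give
$$MW(x)M^* = (-1)^{[v,x]}\sigma(S)W(x)\sigma(S)^* = c(x)\,W(Sx)$$
for some scalar $c(x)$. The operator $MW(x)M^*$ is unitary and squares to the identity, and the same holds for $W(Sx)$. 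Hence $c(x)^2=1$, so $c(x)\in\{\pm1\}$, and since $-I = W(e_1)W(e_{n+1})W(e_1)W(e_{n+1})\in\HW(\F_2^{2n})$ we get $MW(x)M^*\in\HW(\F_2^{2n})$. The Weyl operators generate $\HW(\F_2^{2n})$, so $M\HW M^{-1}\subseteq\HW$. Equality follows because conjugation is injective and the group is finite.

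\emph{Normalizers are isometries.} Let $U$ be a unitary with $U\HW U^{-1}=\HW$. First we describe the elements of $\HW(\F_2^{2n})$. By~\eqref{eq:Weyl_multiplication}, every element has the form $i^tW(y)$, and $\tr(i^tW(y)) = i^t2^n\one[y=0]$. Now fix $x$ and write $UW(x)U^* = i^{t}W(y)$. Taking traces shows that $y=0$ exactly when $x=0$.

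Next we show that $x\mapsto y=:\pi(x)$ is a bijection of $\F_2^{2n}$. Conjugation by $U$ is an automorphism of $\HW$ that fixes the center $\{i^t I\}$ pointwise, because it is linear. Since $\HW/\{i^tI\}\cong\F_2^{2n}$, conjugation induces a group automorphism $\pi$ of $\F_2^{2n}$.

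Now compute the $U^3$ norm of $Uf$ using~\eqref{eq:U3Weyl}:
$$|\langle Uf, W(u)Uf\rangle| = |\langle f, U^*W(u)Uf\rangle| = |\langle f, W(\pi^{-1}u)f\rangle|.$$
Summing the fourth powers over $u$ and reindexing by the bijection $\pi$ gives $\|Uf\|_{U^3}=\|f\|_{U^3}$. Thus $U\in\Iso_{U^3}(\F_2^n)$.

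The only step needing care is confirming that the phases $c(x)$ stay inside the group, i.e.\ that $\pm I$ and the phases $\pm i I$ lie in $\HW(\F_2^{2n})$. For $\pm I$ this follows from~\eqref{eq:commutation_Weyl} as shown above. For $\pm iI$ it suffices to note that $c(x)=\pm1$ anyway, and this is forced because conjugates of involutions are involutions.
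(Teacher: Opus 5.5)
Your proof is correct and follows the same route as the paper. The inclusion of the normalizer into $\Iso_{U^3}(\F_2^n)$ comes from reindexing the sum in~\eqref{eq:U3Weyl}, and the converse comes from writing $M=\alpha\sigma(S)W(v)$ via \cref{thm:isoU3} and checking that conjugation sends each Weyl operator to $\pm$ a Weyl operator. You supply details the paper leaves implicit: the automorphism of $\F_2^{2n}$ induced by conjugation, which justifies the reindexing, and the involution argument forcing the phases to be $\pm1$, where the paper instead points to the proof of \cref{lem:diag_Weyl}.
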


\begin{proof}
From the definition of the $U^3$ norm in terms of Weyl operators (equation~\eqref{eq:U3Weyl}), we see that every element in the normalizer group belongs to $\Iso_{U^3}(\F_2^n)$.
For the converse, note that every element of the form $\sigma(S)W(v)$ conjugates Weyl operators to scalar multiples of Weyl operators.\footnote{The proof of \cref{lem:diag_Weyl} shows that these scalar multipliers are in $\pmset{}$.}
The claim now follows from \cref{thm:isoU3}.
\end{proof}

The normalizer of the Heisenberg--Weyl group in $\U(\F_2^n)$ is known in the quantum literature as the \emph{Clifford group}, and it is an important concept in quantum computation and quantum information theory \cite{Gottesman1997, Gross2021}.
Our result then provides a proof of the structural characterization of the Clifford group, a folklore result whose proof (in characteristic two) seems to have appeared in print only in Heinrich's thesis \cite[Chapter~4]{heinrich2021}.

Finally, we remark on the action of an element $M = \alpha \sigma(S)W(v) \in \Iso_{U^3}(\F_2^n)$ on the stabilizer states, from which one can extend to the full space $L^2(\F_2^n)$ by linearity.
The ``linear part'' $W(v)$ only acts by permuting the character associated to the stabilizer state, without changing its Lagrangian:
if $\phi \simeq_\B (L, \chi)$, then $W(v)\phi \simeq_\B (L,\, (-1)^{[v,\cdot]} \chi)$.
The ``symplectic part'' $\sigma(S)$ changes the associated Lagrangian according to~$S$:
$$\Lcal(\sigma(S)\phi) = S\Lcal(\phi).$$
However, this action also changes the character associated with~$\phi$, in a way that depends on the specific semi-representation~$\sigma$ chosen.

\subsubsection{In odd characteristics}

In the case of~$\F_p^n$ when~$p$ is an odd prime, the situation is again significantly simpler.
In this setting, there exists a projective unitary representation $\sigma: \Sp(\F_p^n) \to \U(\F_p^n)$ satisfying
$$\sigma(S) W(x) \sigma(S)^* = W(Sx) \quad \text{for all $x\in \F_p^{2n}$.}$$
A similar argument to that in the proof of \cref{thm:isoU3} shows that $M\in \Iso_{U^3}(\F_p^n)$ if and only if it can be written in the form $M = \alpha \sigma(S)W(v)$ for some $\alpha\in \U(1)$, $S\in \Sp(\F_p^{2n})$ and $v\in \F_p^{2n}$.

Moreover, the multiplication rule also becomes simpler in this setting:
since~$\sigma$ is a projective representation, we have that
$\sigma(ST) \propto \sigma(S)\sigma(T)$ for all maps $S,T$.
If $M = \alpha \sigma(S)W(v)$ and $M' = \alpha' \sigma(T)W(u)$, we conclude that
\begin{align*}
    MM' &\propto \sigma(S)W(v) \sigma(T)W(u) \\
    &= \sigma(S)\sigma(T) \big(\sigma(T)^* W(v)\sigma(T)\big) W(u) \\
    &\propto \sigma(ST) W(T^{-1}v+u).
\end{align*}
This corresponds precisely to the multiplication rule of \emph{affine symplectic maps}, meaning maps of the form $x \mapsto S(x+v)$ for $v\in \F_p^{2n}$ and $S\in \Sp(\F_p^{2n})$.
Denoting the affine symplectic group by $\ASp(\F_p^{2n})$, we conclude that
$$\Iso_{U^3}(\F_p^n)/\U(1) \cong \ASp(\F_p^{2n}) \cong \F_p^{2n} \rtimes \Sp(\F_p^{2n}).$$
We note that this isomorphism \emph{does not} hold in characteristic two, as $\Iso_{U^3}(\F_2^n)/\U(1)$ cannot be written in the form of a semidirect product between $\F_2^{2n}$ and $\Sp(\F_2^{2n})$;
this fact was shown (in the context of the Clifford group) by Heinrich \cite[Chapter~4]{heinrich2021}.

The action of the unitary isometries on the stabilizer states can be fully specified (up to phases) using the canonical identification~$\simeq$:
if $M = \alpha \sigma(S) W(v) \in \Iso_{U^3}(\F_p^n)$, then
$$\phi\simeq (L, \chi) \implies M\phi \simeq \big(SL,\, \omega_p^{-[Sv,\, \cdot\,]} \chi\circ S^{-1}\big).$$
Note that the action of~$M$ on the phases depends on the specific projective representation $\sigma: \Sp(\F_p^n) \to \U(\F_p^n)$ chosen.
Once this is specified, the action of $\Iso_{U^3}(\F_p^n)$ can be extended from $\Stab(\F_p^n)$ to the full space $L^2(\F_p^n)$ by linearity.

\subsection{Lagrangian weights and the inverse theorem}

Since the work of Gowers \cite{Gowers1998}, it has been understood that the quadratic structure of a function~$f$ is encoded in the large Fourier coefficients of its multiplicative derivatives.
In the context of the inverse theorem for the~$U^3$ norm, this motivates the following probability distribution over~$\F_2^{2n}$, which is called the \emph{characteristic distribution} in the quantum literature.

\begin{definition}[Characteristic distribution] \label{def:char_dist}
    For a nonzero function $f:\F_2^n\to \C$, define its characteristic distribution~$P_f$ over~$\F_2^{2n}$ by
    \begin{equation*}
        P_f(a,b) = \frac{1}{2^n \|f\|_2^4}|\widehat{\Delta_af}(b)|^2 \quad \text{for all $(a, b) \in \F_2^n \times \F_2^n$.}
    \end{equation*}
\end{definition}

The quadratic structure of~$f$ is reflected in the characteristic distribution as a bias towards isotropic sets.
Below, we give a number of basic results that make this precise.

The relation~\eqref{eq:FourierWeyl} expresses Fourier coefficients of multiplicative derivatives in terms of the Weyl operators.
This perspective gives rise to an ``uncertainty principle'' which places strong upper bounds on the characteristic weight of sets of pairwise symplectically non-orthogonal vectors.
Closely related to this is the fact that sets of pairwise anti-commuting Weyl operators give explicit isometric embeddings of Euclidean spaces into~$C^*$ algebras (a fundamental property of CAR algebras).

\begin{lemma}[Uncertainty principle]\label{lem:uncertainty1}
Let $x_1 = (a_1,b_1),\dots,x_k = (a_k,b_k)\in \F_2^{2n}$ be such that $[x_i,x_j]=1$ for all $i\ne j$.
Then, for any function $f:\F_2^n\to \C$, we have that
\begin{equation*}
    \sum_{i=1}^k\big|\widehat{\Delta_{a_i}f}(b_i)\big|^2\leq \|f\|_2^4.
\end{equation*}
\end{lemma}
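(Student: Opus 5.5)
By \eqref{eq:FourierWeyl}, $|\widehat{\Delta_{a_i}f}(b_i)| = |\langle f, W(x_i) f\rangle|$, so the claim becomes
$$\sum_{i=1}^k |\langle f, W(x_i)f\rangle|^2 \le \|f\|_2^4 .$$
The strategy is to combine the anticommuting operators $W(x_i)$ into a single operator whose norm we can control. This is the CAR-algebra idea alluded to just before the statement.

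Set $c_i := \langle W(x_i) f, f\rangle$, and note that each $c_i$ is real, since $W(x_i)$ is Hermitian. Consider
$$A := \sum_{i=1}^k c_i W(x_i).$$
Each $W(x_i)$ is Hermitian and squares to the identity. By the commutation relations \eqref{eq:commutation_Weyl}, together with $[x_i,x_j]=1$, we have $W(x_i)W(x_j) = -W(x_j)W(x_i)$ for $i\neq j$. Squaring $A$, all cross terms cancel:
$$A^2 = \sum_i c_i^2 I + \sum_{i\neq j} c_ic_j W(x_i)W(x_j) = \Big(\sum_i c_i^2\Big) I.$$
Since $A$ is Hermitian, its operator norm is therefore exactly $\big(\sum_i c_i^2\big)^{1/2}$.

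Next we evaluate $\langle Af, f\rangle$ in two ways. On one hand, by linearity,
$$\langle Af,f\rangle = \sum_i c_i \langle W(x_i)f,f\rangle = \sum_i c_i^2 .$$
On the other hand, by Cauchy--Schwarz and the norm computation above,
$$\langle Af,f\rangle \le \|A\|_{op}\,\|f\|_2^2 = \Big(\sum_i c_i^2\Big)^{1/2}\|f\|_2^2 .$$
Combining these gives $\big(\sum_i c_i^2\big)^{1/2} \le \|f\|_2^2$. Squaring yields $\sum_i |\langle f,W(x_i)f\rangle|^2 \le \|f\|_2^4$, which is the claimed inequality.

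The main point to get right is the sign bookkeeping. We need that the $W(x_i)$ are genuinely Hermitian involutions, so that the coefficients $c_i$ are real, and that the phases $i^{|a\circ b|}$ in \cref{def:Weyl} do not disturb anticommutation. Both facts are already stated in the paper: the Weyl operators square to the identity, and they satisfy \eqref{eq:commutation_Weyl} exactly, with no extra phase. The orthonormality of the Weyl basis is not needed at any point.
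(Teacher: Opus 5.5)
Your proof is correct and is essentially the paper's argument: the paper also forms $M=\sum_i \alpha_i W(x_i)$ with the real coefficients $\alpha_i=\langle f,W(x_i)f\rangle$, uses anticommutation to get $MM^*=r^2 I$ with $r=(\sum_i\alpha_i^2)^{1/2}$, and concludes from $r^2=\langle f,Mf\rangle\le \|f\|_2^2\, r$. The only differences are cosmetic: you compute $A^2$ using Hermiticity instead of $MM^*$, and the case $\sum_i c_i^2=0$, where you cannot divide, is trivially true.
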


\begin{proof}
    For each $i\in [k]$, let $\alpha_i = \langle f,W(x_i)f\rangle$. Since the Weyl operators are Hermitian, we have that $\alpha_i\in \R$.
    It follows from~\eqref{eq:FourierWeyl} that
    $$
    \big|\widehat{\Delta_{a_i}f}(b_i)\big|^2 = \alpha_i^2.$$
    Defining $M = \alpha_1W(x_1) + \cdots +\alpha_kW(x_k)$ and $r = (\alpha_1^2 + \cdots + \alpha_k^2)^{1/2}$, we get that
    \begin{align*}
        r^2 &= \langle f, Mf\rangle\leq \|f\|_2^2\|M\|.
    \end{align*}
    By the commutation relations of the Weyl operators, we have that
    $MM^* = r^2\Id$.
    From this, we get that the operator norm of~$M$ equals
    $$
    \|M\| = \sqrt{\|MM^*\|} = r.$$
    Hence, $r^2\leq \|f\|_2^2\, r$, which gives the result.
\end{proof}

If~$\phi$ is a stabilizer state, it follows from \cref{prop:stab_Lagrangian} that~$P_\phi$ equals the uniform probability distribution over the Lagrangian~$\Lcal(\phi)$.
In this case, we have $P_\phi(\Lcal(\phi)) = 1$.
More generally, the characteristic weight $P_f(L)$ of a Lagrangian subspace~$L$ is closely connected with the correlation between the underlying function~$f$ and the stabilizer states associated with~$L$.
This is made precise by the following result:

\begin{proposition}
    If $f: \F_2^n \to \C$ is a nonzero function and $L\in \Lag(\F_2^{2n})$ is a Lagrangian, then
    \begin{equation} \label{eq:Pf_L4}
        P_f(L) = \sum_{\phi:\: \Lcal(\phi)=L} \bigg(\frac{|\langle f, \phi\rangle|}{\|f\|_2}\bigg)^4,
    \end{equation}
    where the sum is over distinct representatives of the set $\Stab(\F_2^n)/\U(1)$ whose associated Lagrangian is~$L$.
\end{proposition}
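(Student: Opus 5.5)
We expand $P_f(L)$ in the Weyl basis and compare it with the fourth moments of the coefficients of $f$ in the single-Lagrangian basis $\Stab_L$. By homogeneity we may assume $\|f\|_2=1$. By \eqref{eq:FourierWeyl}, $|\widehat{\Delta_af}(b)|^2=|\langle f, W(a,b)f\rangle|^2$, so
$$P_f(L)=\frac{1}{2^n}\sum_{u\in L}|\langle f, W(u)f\rangle|^2 .$$

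By \cref{prop:Lag_stab_basis}, the stabilizer states $\phi$ with $\Lcal(\phi)=L$ form (after normalization) an orthonormal basis $\{\phi_\chi:\chi\in\widehat L\}$ of $L^2(\F_2^n)$, indexed via $\simeq_\B$ for a fixed basis $\B(L)$. Each $\phi_\chi$ is a joint eigenvector of $\Weyl(L)$:
$$W(u)\phi_\chi=i^{\gamma_{\B(L)}(u)}\chi(u)\,\phi_\chi \quad\text{for } u\in L.$$
Write $c_\chi=\langle f,\phi_\chi\rangle$, so that $f=\sum_\chi c_\chi\phi_\chi$ with $\sum_\chi|c_\chi|^2=1$. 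Because $W(u)$ is diagonal in this basis, for $u\in L$ we get
$$\langle f, W(u) f\rangle = \overline{i^{\gamma_{\B(L)}(u)}}\sum_{\chi\in\widehat L}|c_\chi|^2\,\overline{\chi(u)} .$$
The phase has modulus one, and the sign conventions in the inner product are irrelevant once we take absolute values. Hence $|\langle f,W(u)f\rangle|^2=|g(u)|^2$, where $g(u)=\sum_\chi |c_\chi|^2\overline{\chi(u)}$ is, up to normalization, the Fourier transform on $L$ of the function $\chi\mapsto|c_\chi|^2$.

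Apply Parseval on the group $L$, which has $|L|=2^n$ elements and $|\widehat L|=2^n$ characters:
$$\frac{1}{2^n}\sum_{u\in L}|g(u)|^2=\sum_{\chi,\chi'}|c_\chi|^2|c_{\chi'}|^2\,\frac{1}{2^n}\sum_{u\in L}\overline{\chi(u)}\chi'(u)=\sum_\chi|c_\chi|^4 ,$$
using orthogonality of characters. This gives $P_f(L)=\sum_{\phi:\,\Lcal(\phi)=L}|\langle f,\phi\rangle|^4$ with unit-norm representatives. Restoring $\|f\|_2$ yields \eqref{eq:Pf_L4}.

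The main point requiring care is the phase bookkeeping. The eigenvalues of $W(u)$ on $\phi_\chi$ carry the factor $i^{\gamma_{\B(L)}(u)}$, which is not itself a character. It is, however, common to all $\chi$ for each fixed $u$, so it factors out and disappears under the absolute value. One must also confirm that distinct representatives of $\Stab(\F_2^n)/\U(1)$ with Lagrangian $L$ correspond exactly to the $2^n$ characters of $L$, so that the index set is the full dual group needed for Parseval. This is given by the bijection following \cref{def:identification}.
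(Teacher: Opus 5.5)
Your proof is correct and is essentially the paper's argument read from the dual side. Both proofs use the fact that $\chi\mapsto|\langle f,\phi_\chi\rangle|^2$ and $u\mapsto\langle f,W(u)f\rangle$ are Fourier transforms of each other on $L$, up to the $\chi$-independent phase, and then apply Parseval (orthogonality of characters on $L$). The only difference is how that relation is derived: you diagonalize $W(u)$ in the orthonormal basis $\Stab_L$, whereas the paper computes $|\langle\phi_\chi,f\rangle|^2=\E_a\langle\Delta_a\phi_\chi,\Delta_af\rangle$ and inserts $\widehat{\Delta_a\phi_\chi}(b)=i^{\tau(a,b)}\chi(a,b)\one_L(a,b)$, so it never expands $f$ in that basis.
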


\begin{proof}
Fix an arbitrary basis~$\B(L)$ for~$L$.
Using our identification~$\simeq_\B$ (\cref{def:identification}), we can write the set we are summing over by $\{\phi_\chi:\: \chi\in \widehat{L}\}$, where each~$\phi_\chi$ denotes a stabilizer state satisfying $\phi_\chi \simeq_\B (L,\chi)$.

For convenience, let us denote by $\tau: \F_2^{2n} \to \Z_4$ the function given by
$$\tau(a,b) = |a\circ b| + \gamma_{\B(L)}(a,b) \mod 4,$$
so that (by equation~\eqref{eq:FourierWeyl}) we can write
$$\widehat{\Delta_a \phi_\chi}(b) = i^{\tau(a,b)} \chi(a,b) \one_L(a,b).$$
We then have
\begin{align*}
    |\langle \phi_\chi, f\rangle|^2
    &= \E_{a\in \F_2^n} \langle \Delta_a\phi_\chi, \Delta_af\rangle \\
    &= \E_{a\in \F_2^n} \sum_{b\in \F_2^n} \overline{\widehat{\Delta_a\phi_\chi}(b)} \widehat{\Delta_af(b)} \\
    &= \E_{(a,b) \in L} i^{-\tau(a,b)} \chi(a,b) \widehat{\Delta_af(b)},
\end{align*}
from which we conclude
$$|\langle \phi_\chi, f\rangle|^4 = \E_{(a,b), (c,d) \in L} i^{-\tau(a,b) +\tau(c,d)} \chi(a+c, b+d) \widehat{\Delta_af(b)} \overline{\widehat{\Delta_cf(d)}}.$$
Summing over all characters $\chi\in \widehat{L}$ and using the orthogonality of characters, we obtain
\begin{align*}
    \sum_{\chi\in \widehat{L}} |\langle \phi_\chi, f\rangle|^4
    &= \E_{(a,b), (c,d) \in L} i^{-\tau(a,b) +\tau(c,d)} 2^n \one\big[(a+c, b+d)=(0,0)\big] \widehat{\Delta_af(b)} \overline{\widehat{\Delta_cf(d)}} \\
    &= \E_{(a,b) \in L} \big|\widehat{\Delta_af(b)}\big|^2.
\end{align*}
This final expression is precisely $\|f\|_2^4 P_f(L)$, finishing the proof.
\end{proof}

This last proposition shows that the characteristic distribution~$P_f$ is biased towards the Lagrangian associated with a stabilizer state that correlates well with~$f$.
This makes the characteristic distribution relevant for the $U^3$-inverse theorem, as is made clearer in the following simple result:

\begin{lemma} \label{lem:Lcorr}
Let~$\phi:\F_2^n\to\C$ be a stabilizer state and let~$L = \Lcal(\phi)$ be its Lagrangian subspace.
Then, for any nonzero function $f:\F_2^n\to \C$, we have that
\begin{equation*}
    \bigg(\frac{|\langle f, \phi\rangle|}{\|f\|_2}\bigg)^4
    \leq
    P_f(L)
    \leq 
    \bigg(\frac{\|f\|_{U^3}}{\|f\|_2}\bigg)^4.
\end{equation*}
\end{lemma}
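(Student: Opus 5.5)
The lower bound follows directly from the proposition just proved, i.e.\ identity~\eqref{eq:Pf_L4}. That identity writes $P_f(L)$ as a sum of fourth powers of normalized correlations $|\langle f,\phi'\rangle|/\|f\|_2$. The sum runs over representatives of the stabilizer states $\phi'$ with $\Lcal(\phi')=L$, and our $\phi$ is one of these (up to a phase, which does not affect $|\langle f,\phi\rangle|$). Every term in the sum is nonnegative. Dropping all terms except the one for~$\phi$ therefore gives $(|\langle f,\phi\rangle|/\|f\|_2)^4 \le P_f(L)$.

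For the upper bound, I would express everything through the Weyl operators. By Definition~\ref{def:char_dist} and~\eqref{eq:FourierWeyl},
\[
\|f\|_2^4\, P_f(L) = \frac{1}{2^n}\sum_{u\in L} |\langle f, W(u)f\rangle|^2 .
\]
Since $|L| = 2^n$, this is the average $\E_{u\in L} |\langle f,W(u)f\rangle|^2$ over the Lagrangian. Cauchy--Schwarz (or Jensen) over the uniform measure on~$L$ then gives
\[
\Big(\E_{u\in L}|\langle f,W(u)f\rangle|^2\Big)^2 \le \E_{u\in L}|\langle f,W(u)f\rangle|^4 = \frac{1}{2^n}\sum_{u\in L}|\langle f,W(u)f\rangle|^4 .
\]
The right-hand side is a subsum of the full sum over $u\in\F_2^{2n}$ in~\eqref{eq:U3Weyl}, so it is at most $\|f\|_{U^3}^8$. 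Combining the two displays yields $\|f\|_2^8 P_f(L)^2 \le \|f\|_{U^3}^8$. Taking square roots gives $P_f(L)\le (\|f\|_{U^3}/\|f\|_2)^4$.

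There is no serious obstacle. The only point to check is the normalization: the factor $2^{-n}$ in~\eqref{eq:U3Weyl} has to match the averaging over $L$ exactly, and this is precisely the fact $|L|=2^n$, which holds because $L$ is Lagrangian.
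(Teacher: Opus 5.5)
Your proof is correct and follows the paper's argument. The lower bound comes from dropping all but one nonnegative term in \eqref{eq:Pf_L4}; the upper bound comes from Cauchy--Schwarz over $L$ (using $|L|=2^n$) and then enlarging the fourth-power sum to all of $\F_2^{2n}$. The only cosmetic difference is that you write the terms as $|\langle f, W(u)f\rangle|^2$ and invoke \eqref{eq:U3Weyl}, whereas the paper works directly with $|\widehat{\Delta_a f}(b)|^2$ and \eqref{eq:U3Fourier}; these are identical quantities by \eqref{eq:FourierWeyl}.
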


\begin{proof}
The first inequality follows immediately from equation~\eqref{eq:Pf_L4}.
For the second inequality, apply Cauchy-Schwarz to conclude that
$$\|f\|_2^4 P_f(L) = \frac{1}{2^n} \sum_{(a,b) \in L} \big|\widehat{\Delta_af}(b)\big|^2 \leq \bigg(\frac{1}{2^n} \sum_{(a,b) \in L} \big|\widehat{\Delta_af}(b)\big|^4 \bigg)^{1/2}.$$
This last expression is clearly bounded by
$$\bigg(\frac{1}{2^n} \sum_{a,b \in \F_2^n} \big|\widehat{\Delta_af}(b)\big|^4 \bigg)^{1/2} = \|f\|_{U^3}^4,$$
where we used identity~\eqref{eq:U3Fourier}.
The result follows.
\end{proof}

The maximal characteristic weight of a Lagrangian subspace is thus sandwiched between the~$U^3$ norm of~$f$ and its maximal correlation with a stabilizer state, making it a good proxy when investigating the inverse theorem.
To complement this, we also have an ``integration lemma,'' which allows one to pass from a high-weight Lagrangian to a correlating stabilizer state:\footnote{We note that this fact was already known in the quantum information literature~\cite{Gross2021}.}

\begin{lemma}[Integration] \label{lem:integration}
    For any nonzero function $f:\F_2^n\to \C$ and any Lagrangian $L\in \Lag(\F_2^{2n})$, we have that
    $$P_f(L) \leq \max_{\phi:\: \Lcal(\phi)=L} \bigg(\frac{|\langle f, \phi\rangle|}{\|f\|_2}\bigg)^2.$$
\end{lemma}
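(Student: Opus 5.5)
The plan is to use the identity \eqref{eq:Pf_L4} from the preceding proposition together with the fact that $\Stab_L$ is an orthonormal basis (\cref{prop:Lag_stab_basis}). By that proposition,
\begin{equation*}
    P_f(L) = \sum_{\phi:\: \Lcal(\phi)=L} \bigg(\frac{|\langle f, \phi\rangle|}{\|f\|_2}\bigg)^4,
\end{equation*}
where the sum runs over the $2^n$ stabilizer states in the single-Lagrangian basis $\Stab_L$, taken up to phases.

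The key step is the elementary bound $\sum_\phi c_\phi^4 \le (\max_\phi c_\phi^2)\sum_\phi c_\phi^2$ with $c_\phi = |\langle f,\phi\rangle|/\|f\|_2$. This reduces the claim to showing that $\sum_{\phi\in\Stab_L} c_\phi^2 \le 1$.

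For that sum I will use Parseval's identity in $L^2(\F_2^n)$ with respect to the basis $\Stab_L$. Each stabilizer state has $\|\phi\|_2=1$ by definition, and \cref{prop:Lag_stab_basis} shows that $\Stab_L$ is an orthonormal basis of $L^2(\F_2^n)$. Hence $\sum_{\phi\in\Stab_L}|\langle f,\phi\rangle|^2 = \|f\|_2^2$, so the sum of the $c_\phi^2$ equals exactly $1$. Combining this with the previous step gives $P_f(L)\le \max_\phi c_\phi^2$, which is the desired inequality.

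I expect no real obstacle here. The only point needing care is the normalization: the paper uses the averaged inner product $\langle f,g\rangle = \E_x f(x)\overline{g(x)}$, and stabilizer states are normalized to $\|\phi\|_2=1$ in that same norm. With both conventions aligned, Parseval holds without any extra factors of $2^n$.
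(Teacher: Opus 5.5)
Your proposal is correct and follows essentially the same route as the paper: you start from identity~\eqref{eq:Pf_L4}, bound the sum of fourth powers by the maximum square times the sum of squares, and evaluate that sum of squares as $\|f\|_2^2$ by Parseval in the orthonormal basis $\Stab_L$ from \cref{prop:Lag_stab_basis}. Your remark on normalization is also right: stabilizer states are unit vectors under the averaged inner product, so Parseval holds with no extra factor of $2^n$.
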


\begin{proof}
From \cref{prop:Lag_stab_basis}, we know that the stabilizer states whose Lagrangian is~$L$ form (scalar multiples of) an orthonormal basis.
We then conclude from equation~\eqref{eq:Pf_L4}~that
\begin{align*}
    \|f\|_2^4 P_f(L)
    &= \sum_{\phi:\: \Lcal(\phi)=L} |\langle f, \phi\rangle|^4 \\
    &\leq \Big(\max_{\phi:\: \Lcal(\phi)=L} |\langle f, \phi\rangle|^2\Big) \cdot \sum_{\phi:\: \Lcal(\phi)=L} |\langle f, \phi\rangle|^2 \\
    &= \Big(\max_{\phi:\: \Lcal(\phi)=L} |\langle f, \phi\rangle|^2\Big) \cdot \|f\|_2^2,
\end{align*}
and the result follows.
\end{proof}

These results inform our algorithmic strategy for the $U^3$-inverse theorem.
Given a bounded function $f: \F_2^n \to \C$ with high $U^3$ norm, we first find a ``high-weight'' Lagrangian~$L$ by sampling from a probability distribution closely related to~$P_f$;
such a high-weight Lagrangian must exist due to the (existential) $U^3$-inverse theorem and \cref{lem:Lcorr}.
We then find a stabilizer state~$\phi$ whose associated Lagrangian is~$L$ and whose correlation $|\langle f, \phi\rangle|$ is high;
this is possible due to \cref{lem:integration}.
Finally, we ``round'' the obtained stabilizer state~$\phi$ to a quadratic phase function~$(-1)^{q(\cdot)}$ without losing much in terms of $f$-correlation, which is possible due to the boundedness of~$f$.

\subsubsection{In odd characteristics}
With the exception of the uncertainty principle, everything else generalizes trivially to the odd-characteristic setting.
In this case, the \emph{characteristic distribution}~$P_f$ of a function $f: \F_p^n \to \C$ is defined over~$\F_p^{2n}$ by
$$P_f(a,b) = \frac{1}{p^n \|f\|_2^4} |\widehat{\Delta_af}(b)|^2.$$
As previously remarked, this distribution is natural to consider given the well-known connection between the quadratic structure of~$f$ and the large Fourier coefficients $|\widehat{\Delta_af}(b)|$ of its discrete derivatives \cite{Gowers1998}.

The ``characteristic weight'' of a Lagrangian subspace $L\in \Lag(\F_p^{2n})$ is closely connected with the correlation between~$f$ and the stabilizer states associated with~$L$:
$$P_f(L) = \sum_{\phi:\: \Lcal(\phi)=L} \bigg(\frac{|\langle f, \phi\rangle|}{\|f\|_2}\bigg)^4,$$
where the sum is over distinct representatives of the set $\Stab(\F_p^n)/\U(1)$.
From this equation, one easily sees how the characteristic weight of Lagrangians is related to the $U^3$-inverse theorem:
we have
\begin{equation*}
    \max_{\phi:\: \Lcal(\phi)=L} \bigg(\frac{|\langle f, \phi\rangle|}{\|f\|_2}\bigg)^4
    \leq
    P_f(L)
    \leq 
    \bigg(\frac{\|f\|_{U^3}}{\|f\|_2}\bigg)^4.
\end{equation*}
The (polynomial) $U^3$-inverse theorem under $L^2$ normalization posits that
\begin{equation} \label{eq:inverse_Fp}
    \max_{\phi \in \Stab(\F_p^n)} \frac{|\langle f, \phi\rangle|}{\|f\|_2} \geq \poly\bigg(\frac{\|f\|_{U^3}}{\|f\|_2} \bigg),
\end{equation}
and thus the maximum characteristic weight of a Lagrangian is also polynomially related to $\|f\|_{U^3}/\|f\|_2$.

The characteristic weight of Lagrangian subspaces is, however, much better behaved than the left-hand side of equation~\eqref{eq:inverse_Fp}, and it is (implicitly) used in the known proofs of the $U^3$-inverse theorem to arrive at the desired correlation bound.
To close the loop, we have the ``integration inequality''
\begin{equation} \label{eq:integration}
    P_f(L) \leq \max_{\phi:\: \Lcal(\phi)=L} \bigg(\frac{|\langle f, \phi\rangle|}{\|f\|_2}\bigg)^2,
\end{equation}
which allows us to pass from high Lagrangian weight to high stabilizer correlation.

Finally, we remark on a weaker version of the uncertainty principle (\cref{lem:uncertainty1}) which holds in all characteristics.
As shown by Gross, Nezami and Walter \cite[Lemma~3.10]{Gross2021} in the setting of quantum information theory, if $[(a,b),\, (c,d)] \neq 0$ then
$$|\widehat{\Delta_af}(b)|^2 + |\widehat{\Delta_cf}(d)|^2 \leq \bigg(2 - \frac{1}{4p^2}\bigg) \|f\|_2^4.$$
This is smaller than the maximum $2\|f\|_2^4$ that can be attained when $[(a,b),\, (c,d)] = 0$, for instance in the case where~$f$ is a stabilizer state and $(a,b)$, $(c,d) \in \Lcal(f)$.

\subsection{Connection to previous work}

The original proof of the $U^3$-inverse theorem over~$\F_p^n$ by Green and Tao \cite{GreenTao2008} can be cleanly expressed through the perspective developed in this section, as we now show.
In that setting, one starts with a bounded function $f: \F_p^n \to \C$ having high $U^3$ norm and wishes to show that~$f$ correlates with a quadratic phase function $\omega_p^{q(\cdot)}$.
This proceeds by studying the set of pairs $(a, b) \in \F_p^{2n}$ on which the characteristic distribution $P_f(a,b) \propto |\widehat{\Delta_af}(b)|^2$ is large, and showing that this set satisfies some strong linearity properties;
this is the main part of the argument, and closely follows Gowers's original approach \cite{Gowers1998}.

The linear property one arrives at in the end of this argument is that there exists a linear subspace $V \leq \F_p^{2n}$ of size roughly $p^n$ whose characteristic weight $P_f(V)$ is large.
In the approach followed by Gowers and by Green and Tao, this subspace is a ``graph'' $V = \{(x, Mx):\, x\in W\}$ for some subspace $W\leq \F_p^n$ of bounded codimension, a property that can be enforced due to boundedness of~$f$.\footnote{In the $L^2$ setting this is no longer true, as can be seen by considering the function $f = p^{n/2} \one_{\{0\}}$, whose characteristic distribution~$P_f$ is supported on $\{0^n\}\times \F_p^n$.}
The main step missing from Gowers's argument---later provided by Green and Tao---was essentially to show that the subspace~$V$ thus obtained is \emph{isotropic}, which translates to the matrix~$M$ in its definition being symmetric (on~$W$).
This ultimately allows one to ``integrate'' the linear behavior of the discrete derivative to arrive at a quadratic behavior for the original function~$f$, which is encapsulated by inequality~\eqref{eq:integration} above.
Green and Tao realized the importance of isotropy in this context, which is what led them to conjecture a link to symplectic geometry.

It is interesting to note that their original $U^3$-inverse theorem \cite[Theorem~2.3]{GreenTao2008} first provides correlation of~$f$ with \emph{stabilizer states}, from which they later conclude correlation with a quadratic phase function~$\omega_p^q$.
In fact, their proof shows the existence of a subspace $V = \{(x, Mx):\, x\in W\}$, where $W\leq \F_p^n$ is a subspace of bounded codimension and $M\in \F_p^{n\times n}$ is symmetric (self-adjoint) on~$W$, for which $P_f(V)$ is large.
This subspace~$V$ is contained inside the Lagrangian
$$L = \big\{(x,\, Mx+b):\: x\in W,\, b\in W^{\perp}\big\},$$
whose $P_f$-weight is then large as well.
The stabilizer states associated with this Lagrangian are supported on the cosets of~$W$, and share the same ``quadratic part'' given by the matrix~$M$ (see equation~\eqref{eq:explicit_odd} below).
What their inverse theorem shows is that, on average over cosets $y+W$, the function~$f$ correlates well with a stabilizer state whose Lagrangian is~$L$ and whose support is $y+W$.
From there, one can obtain ``global'' quadratic correlation via a simple averaging argument.

\subsection{Explicit formulas}

We now derive explicit descriptions for the class of stabilizer states and for ``neighbor'' stabilizer states to be defined below.
We note that (most of) the first result can be obtained by combining a theorem of Eisner and Tao \cite[Theorem~1.4]{eisner2012large} with the classification of non-classical quadratic phase functions by Tao and Ziegler \cite[Lemma~1.7]{TaoZiegler2012};
in the quantum setting, a result of this type was first obtained by Dehaene and De Moor~\cite{dehaene2003clifford}.
We provide a self-contained proof more in line with the techniques developed in this section.

\begin{lemma}[Description of stabilizer states] \label{lem:stab_explicit}
    Every stabilizer state $\phi \in \Stab(\F_2^n)$ can be written in the form
    \begin{equation} \label{eq:stab_explicit}
        \phi(x) = \alpha \sqrt{2^{n-\dim(V)}} \one_V(x+r) (-1)^{(x+r)^\tp A(x+r) + s\cdot(x+r)} i^{|d\circ (x+r)|},
    \end{equation}
    where $\alpha\in \U(1)$, $V\leq \F_2^n$ is a subspace, $A\in \F_2^{n\times n}$ is a matrix and $r, s , d \in \F_2^n$.
    Conversely, every function of the form~\eqref{eq:stab_explicit} is a stabilizer state, and its associated Lagrangian is
    \begin{equation} \label{eq:Lagrangian_explicit}
        \Lcal(\phi) = \big\{(h,\, Mh+w):\: h\in V,\, w\in V^\perp\big\} \quad \text{where} \quad M = A+A^\tp+\Diag(d).
    \end{equation}
\end{lemma}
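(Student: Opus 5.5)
Two directions. For the converse, I will compute $\widehat{\Delta_a\phi}(b)$ directly and apply \cref{prop:stab_Lagrangian}. For the forward direction, I will show that a stabilizer state is determined up to phase by $(\Lcal(\phi),\chi)$, and then count that the family \eqref{eq:stab_explicit} meets every Lagrangian-character pair.

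\textbf{Converse.} By translation invariance of $|\widehat{\Delta_a\phi}(b)|$ (only a phase $(-1)^{b\cdot r}$ appears), we may take $r=0$. Since $\phi$ is supported on $V$, $\Delta_a\phi$ vanishes unless $a\in V$. For $a\in V$ and $x\in V$, expand
\[
(x+a)^\tp A(x+a)-x^\tp Ax = a^\tp(A+A^\tp)x + a^\tp A a .
\]
For the $i$-power, use the identity $|d\circ(x+a)|-|d\circ x| \equiv |d\circ a| - 2|d\circ a\circ x| \pmod 4$, where $2|d\circ a\circ x|$ corresponds to the sign $(-1)^{(\Diag(d)a)\cdot x}$. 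So $\Delta_a\phi(x) = c_a\,2^{n-\dim V}\one_V(x)(-1)^{(Ma)\cdot x}$ with $|c_a|=1$ and $M=A+A^\tp+\Diag(d)$. Its Fourier coefficient at $b$ is $c_a\,2^{n-\dim V}\,\E_x \one_V(x)(-1)^{(Ma+b)\cdot x}$. This has modulus $1$ exactly when $b\in Ma+V^\perp$, and $0$ otherwise.

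Hence $|\widehat{\Delta_a\phi}(b)| = \one_L(a,b)$ with $L$ as in \eqref{eq:Lagrangian_explicit}. Since $M$ is symmetric, $L$ is a Lagrangian by the characterization stated earlier. By \cref{prop:stab_Lagrangian}, $\phi$ is a stabilizer state with $\Lcal(\phi)=L$. The normalization $\|\phi\|_2=1$ is immediate.

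\textbf{Forward direction.} Let $\phi$ be a stabilizer state. By the characterization, $\Lcal(\phi)=\{(h,Mh+w): h\in V, w\in V^\perp\}$ for some subspace $V$ and symmetric $M$. I need to realize this $M$ in the form $A+A^\tp+\Diag(d)$. Over $\F_2$, take $d$ to be the diagonal of $M$ and $A$ to be the strict upper triangle of $M$. (Only $M$ restricted to $V$ matters, modulo $V^\perp$, so any symmetric representative works.) This gives a function $\phi_0$ of the form \eqref{eq:stab_explicit} with $r=s=0$ and $\Lcal(\phi_0)=\Lcal(\phi)$.

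By \cref{prop:Lag_stab_basis} and \eqref{eq:Weyl_action}, the Heisenberg--Weyl group acts transitively on $\Stab_L$. So $\phi \propto W(v)\phi_0$ for some $v=(c,e)\in\F_2^{2n}$. Now $W(c,e)\phi_0(x)\propto(-1)^{e\cdot x}\phi_0(x+c)$. Setting $r=c$ and rewriting $(-1)^{e\cdot x}=(-1)^{e\cdot(x+r)}\cdot(\text{const})$ gives the form \eqref{eq:stab_explicit} with $s=e$. This completes both directions.

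\textbf{Main obstacle.} The delicate step is the mod-$4$ bookkeeping of the $i^{|d\circ x|}$ term in the converse. I expect this identity to give the $\Diag(d)$ contribution to $M$, while $A+A^\tp$ has zero diagonal. A second point to check is that \cref{prop:stab_Lagrangian}'s characterization applies without separately verifying the $U^3$ norm, which holds because the proposition is an equivalence.
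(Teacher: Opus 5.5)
Your proposal is correct and takes essentially the same route as the paper. Both compute $\widehat{\Delta_a\phi_0}(b)$ explicitly (including the mod-$4$ identity that turns the $i^{|d\circ x|}$ factor into the $\Diag(d)$ part of $M$), invoke \cref{prop:stab_Lagrangian}, and then use transitivity of the Weyl operators on $\Stab_L$ to reach every stabilizer state. The differences are cosmetic: you handle $r,s$ in the converse by the translation and modulation invariance of $|\widehat{\Delta_a\phi}(b)|$ rather than by writing the function as $W(r,s)\phi_0$, and you spell out how an arbitrary symmetric $M$ decomposes as $A+A^\tp+\Diag(d)$, a step the paper leaves implicit.
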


\begin{proof}
Consider the simpler case where $r=s=0$, given by
$$\phi_0(x) := \alpha \sqrt{2^{n-\dim(V)}} \one_V(x) (-1)^{x^\tp Ax} i^{|d\circ x|}.$$
One can check that its multiplicative derivative in direction $a\in \F_2^n$ is given by
\begin{equation*}
    \Delta_a \phi_0(x)
    = 2^{n-\dim(V)} \one_V(x) \one_V(a) (-1)^{a^\tp (A+A^\tp)x + a^\tp Aa} i^{|d\circ a|} (-1)^{(d\circ a)\cdot x}.
\end{equation*}
Denote $M = A+A^\tp + \Diag(d)$, so that $a^\tp (A+A^\tp)x + (d\circ a)\cdot x = a^\tp Mx$.
Then
\begin{align*}
    \widehat{\Delta_a \phi_0}(b)
    &= 2^{n-\dim(V)} \one_V(a) i^{|d\circ a|} (-1)^{a^\tp Aa} \E_{x\in \F_2^n} \one_V(x) (-1)^{a^\tp Mx} (-1)^{b\cdot x} \\
    &= \one_V(a) i^{|d\circ a|} (-1)^{a^\tp Aa} \E_{x\in V} (-1)^{(Ma+b) \cdot x} \\
    &= i^{|d\circ a|} (-1)^{a^\tp Aa} \one_V(a) \one_{V^\perp}(Ma+b).
\end{align*}
Denoting
$L = \big\{(h,\, Mh+w):\: h\in V,\, w\in V^\perp\big\}$,
we see that~$L$ is a Lagrangian subspace and $|\widehat{\Delta_a \phi_0}(b)| = \one_L(a,b)$, so~$\phi_0$ is a stabilizer state with $\Lcal(\phi_0) = L$.

Finally, note that for all $r, s \in \F_2^n$ we have
$$W(r,s) \phi_0(x) = \alpha i^{|r\circ s|} \sqrt{2^{n-\dim(V)}} \one_V(x+r) (-1)^{(x+r)^\tp A(x+r) + s\cdot(x+r)} i^{|d\circ (x+r)|},$$
which is of the form~\eqref{eq:stab_explicit}.
We have seen in \cref{sec:extremizers} that all functions of the form $W(v)\phi_0$ for $v\in \F_2^{2n}$ are stabilizer states with the same Lagrangian~$L$, and that they form all such stabilizer states (up to phases).
Since every Lagrangian subspace can be written in the form~\eqref{eq:Lagrangian_explicit}, it follows that all stabilizer states can be written in the form~\eqref{eq:stab_explicit}, as wished.
\end{proof}

We remark that we can always assume, in equation~\eqref{eq:stab_explicit} above, that either $d=0$ or $d\notin V^\perp$.
Indeed, if $d\in V^{\perp}$, then the function $x \mapsto i^{|d\circ x|}$ over~$x\in V$ is a quadratic phase function taking values in $\pmset{}$, and thus can be absorbed into the ``classical part'' $(-1)^{x^\tp Ax + s\cdot x}$.
This technical remark will be useful for us in our algorithm.

Finally, we will need a description of ``neighbor'' stabilizer states, meaning two non-collinear stabilizer states~$\phi$ and~$\phi'$ whose inner product $|\langle\phi, \phi'\rangle| \neq 1$ is maximal.
By \cref{prop:inner_product_intersection}, we see that the maximum value of this inner product is $1/\sqrt{2}$.

\begin{lemma}[Description of neighbors] \label{lem:neighbors}
    Let $\phi, \phi' \in \Stab(\F_2^n)$ be stabilizer states such that $|\langle\phi, \phi'\rangle| = 1/\sqrt{2}$.
    Then, for any $v\in \Lcal(\phi')\setminus \Lcal(\phi)$, there exist $\sigma\in \pmset{}$ and $\alpha\in \U(1)$ such that
    $$\phi' = \alpha \bigg(\frac{I+ \sigma W(v)}{\sqrt{2}}\bigg) \phi.$$
\end{lemma}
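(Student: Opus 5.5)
Write $L=\Lcal(\phi)$ and $L'=\Lcal(\phi')$. The plan is to first pin down the geometry of $L\cap L'$, then check that $\psi:=\frac{I+\sigma W(v)}{\sqrt2}\phi$ is a unit-norm stabilizer state with Lagrangian $L'$, and finally match its character with that of $\phi'$ so that \cref{prop:Lag_stab_basis} forces $\psi\propto\phi'$.

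\textbf{Geometry.} Choose bases $\B(L),\B(L')$ that agree on a basis of $L\cap L'$. By \cref{prop:inner_product_intersection}, the hypothesis $|\langle\phi,\phi'\rangle|^2=1/2$ forces $|L\cap L'|=2^{n-1}$, together with agreement of the characters of $\phi,\phi'$ on $L\cap L'$. Hence $L'=(L\cap L')\oplus\{0,v\}$ for any $v\in L'\setminus L$. Since $v\notin L=L^{\perp}$, there is some $u\in L$ with $[v,u]=1$, while $[v,w]=0$ for all $w\in L\cap L'$.

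\textbf{Construction of $\psi$.} Fix $\sigma\in\pmset{}$. Because $W(v)$ anticommutes with $W(u)$, we get $W(v)\phi\simeq(L,(-1)^{[v,\cdot]}\chi)$ by \eqref{eq:Weyl_action}. This character differs from $\chi$ at $u$, so $W(v)\phi\perp\phi$ by orthogonality in $\Stab_L$ (\cref{prop:Lag_stab_basis}), and therefore $\|\psi\|_2=1$. For $w\in L\cap L'$, the operator $W(w)$ commutes with $W(v)$ and has $\phi$ as an eigenvector, so $\psi$ is an eigenvector of $W(w)$ with the same eigenvalue as $\phi$. Since $W(v)^2=I$,
$$W(v)\psi=\tfrac{1}{\sqrt2}\big(W(v)+\sigma I\big)\phi=\sigma\psi,$$
so $\psi$ is also a $\sigma$-eigenvector of $W(v)$. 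Thus $\psi$ is a joint unit eigenvector of $\{W(x):x\in L'\}$. Taking the basis of $L\cap L'$ together with $v$ as a basis of $L'$, the eigenspace computation in \cref{prop:Lag_stab_basis} shows this joint eigenspace is one-dimensional and spanned by a stabilizer state with Lagrangian $L'$. Hence $\psi\in\Stab_{L'}$.

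\textbf{Matching $\phi'$.} The state $\phi'$ also lies in $\Stab_{L'}$. Its eigenvalues on $L\cap L'$ agree with those of $\phi$, by the character condition from \cref{prop:inner_product_intersection} and the fact that the $\gamma_\B$ phases coincide on $L\cap L'$. Its eigenvalue on $W(v)$ is some $\sigma'\in\pmset{}$, since $W(v)$ is Hermitian and unitary. Choosing $\sigma=\sigma'$, the states $\psi$ and $\phi'$ have identical eigenvalues on a basis of $L'$, so by the uniqueness in \cref{prop:Lag_stab_basis} we get $\phi'=\alpha\psi$ with $\alpha\in\U(1)$.

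The main obstacle is this matching step: translating ``$\chi_{|L\cap L'}=\chi'_{|L\cap L'}$'' into ``$\phi$ and $\phi'$ have the same eigenvalues for each $W(w)$ with $w\in L\cap L'$''. This requires care with the $i^{\gamma_\B}$ phase conventions, which is exactly why the bases must agree on $L\cap L'$. If it proves awkward, there is a direct alternative. Decompose $\phi'$ in the orthonormal basis of joint eigenvectors of $\{W(w):w\in L\cap L'\}$ with prescribed eigenvalues; this space is $2$-dimensional and spanned by $\phi$ and $W(v)\phi$. Then $\phi'=c_1\phi+c_2W(v)\phi$, and the $W(v)$-eigenvector condition forces $c_2=\sigma c_1$, with normalization giving $|c_1|=1/\sqrt2$.
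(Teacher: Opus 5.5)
Your proof is correct, but it takes a genuinely different and heavier route than the paper's.

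\textbf{Your route.} You work through the geometry. First, \cref{prop:inner_product_intersection} gives $|L\cap L'|=2^{n-1}$ and $L'=(L\cap L')\oplus\{0,v\}$. You then check that $\psi=\tfrac{I+\sigma W(v)}{\sqrt2}\phi$ is a unit joint eigenvector of $\{W(x):x\in L'\}$. Finally, you conclude from the one-dimensionality of the joint eigenspaces in \cref{prop:Lag_stab_basis}.

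\textbf{The paper's route.} The paper never looks at $L\cap L'$. It sets $\sigma:=\langle\phi',W(v)\phi'\rangle\in\pmset{}$, so that $\Pi_v^\sigma\phi'=\phi'$ for $\Pi_v^\sigma=(I+\sigma W(v))/2$. Self-adjointness of $\Pi_v^\sigma$ gives $|\langle\Pi_v^\sigma\phi,\phi'\rangle|=|\langle\phi,\phi'\rangle|=1/\sqrt2$. Since $v\notin\Lcal(\phi)$, we have $\langle\phi,W(v)\phi\rangle=0$, which gives $\|\Pi_v^\sigma\phi\|_2=1/\sqrt2$. Hence equality holds in Cauchy--Schwarz, so $\Pi_v^\sigma\phi\propto\phi'$, and normalizing finishes. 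This argument uses only two facts:
\[
v\notin\Lcal(\phi)\Rightarrow\langle\phi,W(v)\phi\rangle=0,
\qquad
v\in\Lcal(\phi')\Rightarrow W(v)\phi'=\pm\phi'.
\]
It needs no basis-dependent identification $\simeq_\B$ and no uniqueness result.

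\textbf{What your route buys.} In exchange, you get more structure. Your argument shows $\Lcal(\phi')=\{x\in L:[x,v]=0\}\oplus\{0,v\}$. The same construction, with $\{x\in L:[x,v]=0\}$ in place of $L\cap L'$, also proves the converse: $\tfrac{I\pm W(v)}{\sqrt2}\phi$ is a neighbor stabilizer state of $\phi$ for every $v\notin L$. So you obtain a complete description of the neighbors of $\phi$.

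\textbf{On the step you flagged.} The matching step you called the main obstacle needs no $i^{\gamma_\B}$ bookkeeping. For $w\in L\cap L'$, both $\phi$ and $\phi'$ are eigenvectors of the Hermitian operator $W(w)$. Since $\langle\phi,\phi'\rangle\neq0$, their eigenvalues must coincide.
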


\begin{proof}
Fix any $v\in \Lcal(\phi')\setminus \Lcal(\phi)$, and note that $\langle\phi,\, W(v)\phi\rangle = 0$.
Denote
$$\sigma = \langle\phi',\, W(v)\phi'\rangle \in \pmset{},$$ so that~$\phi'$ is in the $\sigma$-eigenspace of~$W(v)$, and let $\Pi_v^{\sigma} = (I+ \sigma W(v))/2$ be the projection onto this eigenspace.
We will first show that $\Pi_v^\sigma \phi$ is proportional to~$\phi'$.

Since~$\Pi_v^{\sigma}$ is self-adjoint, we obtain from the lemma's assumption that
$$|\langle\Pi_v^{\sigma} \phi,\, \phi'\rangle| = |\langle\phi,\, \Pi_v^{\sigma} \phi'\rangle| = |\langle\phi,\, \phi'\rangle| = \frac{1}{\sqrt{2}}.$$
Moreover,
$$\langle\Pi_v^{\sigma} \phi,\, \Pi_v^{\sigma} \phi\rangle = \frac{\big\langle\phi+ \sigma W(v)\phi,\, \phi+ \sigma W(v)\phi\big\rangle}{4} = \frac{2+ 2\sigma \langle\phi,\, W(v)\phi\rangle}{4} = \frac{1}{2},$$
and thus $\|\Pi_v^{\sigma} \phi\|_2 = 1/\sqrt{2}$.
We conclude that $|\langle\Pi_v^{\sigma} \phi,\, \phi'\rangle| = \|\Pi_v^{\sigma} \phi\|_2 \|\phi'\|_2$, and so by the equality case of the Cauchy-Schwarz inequality, $\Pi_v^{\sigma} \phi$ is proportional to~$\phi'$.

It follows that there exists some $\alpha\in \U(1)$ such that
$$\phi' = \alpha \frac{\Pi_v^{\sigma} \phi}{\|\Pi_v^{\sigma} \phi\|_2} = \alpha \frac{(I+\sigma W(v)) \phi}{\sqrt{2}},$$
as wished.
\end{proof}

\subsubsection{In odd characteristics}
Over~$\F_p^n$ for~$p$ odd, the stabilizer states can be written as
\begin{equation} \label{eq:explicit_odd}
    \phi(x) = \alpha \sqrt{p^{n-\dim(V)}} \one_V(x+r) \omega_p^{(x+r)^\tp M(x+r)/2 + s\cdot(x+r)},
\end{equation}
where $\alpha\in \U(1)$, $V\leq \F_p^n$ is a subspace, $M\in \F_p^{n\times n}$ is a symmetric matrix and $r, s \in \F_p^n$.
Moreover, every function of the form above is a stabilizer state, and its associated Lagrangian subspace is
$$\Lcal(\phi) = \big\{(h,\, Mh+w):\: h\in V,\, w\in V^\perp\big\}.$$

The maximum correlation between two linearly independent stabilizer states $\phi, \phi'$ is $|\langle\phi, \phi'\rangle| = 1/\sqrt{p}$.
If this maximum is attained, then for any $v\in \Lcal(\phi')\setminus \Lcal(\phi)$ there exist $\alpha\in \U(1)$ and a $p$-th root of unity~$\sigma$ such that
$$\phi' = \frac{\alpha}{\sqrt{p}} \sum_{j=0}^{p-1} \sigma^j W(v)^j \phi.$$

\section{Finding high-weight Lagrangians}
\label{sec:finding_Lagrangians}

This section establishes the central component of \cref{thm:quadratic_GL} (the Quadratic Goldreich--Levin theorem).
The following version of the original Goldreich--Levin algorithm, which is a special case of \cite[Theorem~4.3]{kim2023cubic}, serves both as subroutine and as a template for a new subroutine that we use in the quadratic setting.

\begin{theorem}[Goldreich--Levin algorithm] \label{thm:GL}
    Let $f:\F_2^n\to \C$ be a 1-bounded function, let~$\delta>0$ and $0<\tau\leq 1$. 
    There is a randomized algorithm that, with probability at least~$1-\delta$, returns a list $L\subseteq \F_2^n$ such that:
    \begin{itemize}
        \item If $|\widehat{f}(b)| \geq \tau$, then $b\in L$;
        \item For every $b\in L$, we have $|\widehat{f}(b)| \geq \tau/2$.
    \end{itemize}
    This algorithm makes $n\log n\, \poly(\log(1/\delta)/\tau)$ queries to~$f$ and runs in time \\
    $n^2 \log n\, \poly(\log(1/\delta)/\tau)$.
\end{theorem}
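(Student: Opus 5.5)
This is the classical Goldreich--Levin / Kushilevitz--Mansour algorithm, and we prove it by prefix-tree search over Fourier weights. For $k\in\{0,\dots,n\}$ and a prefix $\beta\in\F_2^k$, define the bucket weight
$$W_k(\beta) := \sum_{\gamma\in\F_2^{n-k}} |\widehat f(\beta,\gamma)|^2 .$$
Writing $x=(y,z)$ with $y\in\F_2^k$ and $z\in\F_2^{n-k}$, Parseval applied to the partial Fourier transform in the first $k$ coordinates gives the estimator identity
$$W_k(\beta) = \E_{z,\,y,\,y'} f(y,z)\overline{f(y',z)}(-1)^{\beta\cdot(y+y')}.$$
Since $f$ is 1-bounded, each sample of the integrand has modulus at most~$1$. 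By Hoeffding, $O(\log(1/\delta')/\eta^2)$ pairs of queries therefore estimate $W_k(\beta)$ to additive error $\eta$ with failure probability $\delta'$.

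The algorithm maintains a list of surviving prefixes, starting with $\{\emptyset\}$. At level $k$ it extends each surviving prefix by $0$ and by $1$, estimates the weight of each extension to accuracy $\eta=\tau^2/8$, and keeps those whose estimate is at least $\tau^2/2$.
\begin{itemize}
\item Any $b$ with $|\widehat f(b)|\ge\tau$ has every prefix of weight at least $\tau^2$, so it is never discarded.
\item Every kept prefix has true weight at least $\tau^2/2-\eta\ge\tau^2/4$. Since $\sum_\beta W_k(\beta)=\|f\|_2^2\le1$, at most $4/\tau^2$ prefixes survive at each level.
\end{itemize}
The total number of estimates is therefore $O(n/\tau^2)$.

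The final pruning at level $n$ needs care, since there $W_n(b)=|\widehat f(b)|^2$. We estimate $\widehat f(b)=\E_x f(x)(-1)^{b\cdot x}$ directly to accuracy $\tau/8$, keep $b$ if the estimate is at least $3\tau/4$, and otherwise discard it. Every $b$ with $|\widehat f(b)|\ge\tau$ is then kept, and every kept $b$ satisfies $|\widehat f(b)|\ge 3\tau/4-\tau/8\ge\tau/2$, which gives both bullets of the statement.

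For the failure probability, take a union bound over the $O(n/\tau^2)$ estimates, setting $\delta'=\delta\tau^2/(Cn)$. Each estimate then costs $O(\log(n/(\delta\tau))/\tau^4)$ queries, for a total of $n\log n\,\poly(\log(1/\delta)/\tau)$ queries. Each query point and each character sign $(-1)^{\beta\cdot(y+y')}$ costs $O(n)$ time to form, which gives the stated running time $n^2\log n\,\poly(\log(1/\delta)/\tau)$.

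The only delicate step is the union-bound bookkeeping, which is what produces the $\log n$ factor rather than a larger one. We also need to make sure that, conditioned on all estimates succeeding, the number of surviving prefixes stays bounded by $O(1/\tau^2)$ at every level. This is needed so the estimate count, and hence the union bound itself, does not blow up. The bound follows from the per-level argument above, since it is applied only on the success event.
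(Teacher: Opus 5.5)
Your argument is correct. The paper gives no proof of this statement; it simply imports it as a special case of \cite[Theorem~4.3]{kim2023cubic}. You instead give the standard Kushilevitz--Mansour-style bucket-splitting proof, which makes the result self-contained. Its ingredients are:
\begin{itemize}
\item the partial-Parseval identity for the bucket weights $W_k(\beta)$;
\item the fact that every prefix of a heavy $b$ keeps weight at least $\tau^2$;
\item the bound of $O(1/\tau^2)$ survivors per level, coming from $\sum_\beta W_k(\beta)=\|f\|_2^2\le 1$;
\item a union bound over $O(n/\tau^2)$ estimates, which is exactly where the $\log n$ factor comes from.
\end{itemize}
Your accounting gives $O\big(n(\log n+\log(1/\delta)+\log(1/\tau))\tau^{-6}\big)$ queries, which sits comfortably inside the stated bound. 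The citation buys brevity; your version shows where the bounds come from.

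Two small remarks.

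First, the separate final pruning step is redundant. A length-$n$ prefix that survives the threshold test already satisfies $|\widehat f(b)|^2\ge \tau^2/2-\tau^2/8=3\tau^2/8$, hence $|\widehat f(b)|\ge\tau/2$.

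Second, the query and time bounds in the statement are worst-case, but your survivor count is only controlled on the success event. On the failure event the tree could in principle grow exponentially. Fix this by letting the algorithm abort (say, output $\emptyset$) whenever more than $4/\tau^2$ prefixes survive at some level. This rule never triggers on the success event. It also caps the number of estimates deterministically at $O(n/\tau^2)$. Since each estimate uses fresh samples, each fails with probability at most $\delta'$ conditional on the history, so the union bound over estimate slots becomes fully rigorous.
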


This ``list-decoding'' version of the Goldreich--Levin algorithm thus returns, with high probability, a complete list of  linear phase functions that have constant correlation with~$f$.
This is possible in $\poly(n)$-time because there are only a constant number of such linear phases, due to Parseval's identity.

Our Quadratic Goldreich--Levin algorithm will use a similar list-decoding procedure, where we obtain a list of stabilizer states which have high correlation with~$f$.
However, in the quadratic setting, we no longer have an analogue of Parseval's identity, and in fact there can be $\exp(n)$-many stabilizer states (or even quadratic phase functions) that have high correlation with~$f$.
Obtaining a complete list is therefore infeasible in polynomial time.
Instead, we limit our search to stabilizer states whose correlation with~$f$ is both non-negligible and nearly maximal among their ``neighbors.''
Surprisingly, there turns out to exist only a bounded number of such approximate local maximizers.

The following definition from~\cite{chen2024stabilizer} formalizes the notion of approximate local maximizers, and will be crucial for our arguments.
It is based on the fact that $|\langle\phi,\phi'\rangle|^2 \leq 1/{2}$ for any two linearly independent stabilizer states~$\phi, \phi'$ (this follows from \cref{prop:inner_product_intersection}).
Thus, two stabilizer states can be considered neighbors if they satisfy $|\langle\phi,\phi'\rangle|^2 = 1/2$.

\begin{definition}[Approximate local maximizer]
    Let~$f:\F_2^n \to \C$ be a function and $\gamma>0$ be a positive parameter.
    A stabilizer state $\phi\in \Stab(\F_2^n)$ is a \emph{$\gamma$-approximate local maximizer of correlation for~$f$} if it satisfies
    \begin{equation*}
        |\langle f, \phi\rangle|^2
        \geq
        \gamma \max_{\substack{\phi'\in \Stab(\F_2^n),\\ |\langle\phi,\phi'\rangle|^2 = 1/2}} |\langle f, \phi'\rangle|^2.
    \end{equation*}
\end{definition}

In this section, we develop the main component of an algorithm which identifies approximate local maximizers that correlate with~$f$.
More precisely, the main result of this section is an algorithm which, with non-negligible probability, recovers the Lagrangian subspace associated with a $\gamma$-approximate local maximizer of correlation~$\phi$ satisfying $|\langle f, \phi\rangle| \geq \tau$, where~$\phi$ is fixed but unknown.
(Recall the definition of~$\Lcal(\phi)$ from \cref{sec:extremizers}.)

\begin{theorem}[Lagrangian sampling] \label{thm:lagrangian}
For every $\gamma>1/2$ and $\tau\in (0,1)$, there exists a randomized algorithm {\sc LagrangianSampling} such that the following holds. 
Let $f:\F_2^n\to\C$ be a 1-bounded function, and let~$\phi$ be a stabilizer state that is a $\gamma$-approximate local maximizer for~$f$ and satisfies $|\langle f, \phi\rangle|\geq \tau$.
Then, {\sc LagrangianSampling} produces a basis for a subspace~$L\leq \F_2^{2n}$ such that
$$\Pr[L = \Lcal(\phi)] \geq \big((\gamma - \tfrac{1}{2}) \tau\big)^{O(\log(1/\tau))}.$$
Moreover, the algorithm makes $n^2 \log n\, \poly\big((\gamma - \tfrac{1}{2})^{-1} \tau^{-1}\big)$ queries to~$f$ and runs in time $n^3 \log n\, \poly\big((\gamma - \tfrac{1}{2})^{-1} \tau^{-1}\big)$.
\end{theorem}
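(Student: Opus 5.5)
The plan is to dequantize the self-correcting stabilizer learning procedure of Chen, Gong, Ye and Zhang: we build $\Lcal(\phi)$ one dimension at a time by sampling from (an approximation of) the characteristic distribution of suitably restricted versions of $f$, and we use the local-maximizer hypothesis to argue that each new sample lands in $\Lcal(\phi)$ with non-negligible probability.

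\textbf{Sampling primitive.} The basic primitive is classical approximate sampling from $P_g$ (\cref{def:char_dist}) for $g$ obtained from $f$ by projecting onto joint eigenspaces of already-found commuting Weyl operators. Concretely, given an isotropic subspace $U=\vspan(u_1,\dots,u_k)\le\Lcal(\phi)$ and signs $\sigma_i=\langle\phi,W(u_i)\phi\rangle$, set $g=\Pi^\sigma_U f$ with $\Pi^\sigma_U=\prod_i (I+\sigma_iW(u_i))/2$. Each query to $g$ costs $2^k$ queries to $f$, so we keep $k$ effectively bounded by working in a quotient, as explained below.

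We cannot sample $P_g$ exactly. Instead we sample $a\in\F_2^n$ uniformly and run the Goldreich--Levin algorithm (\cref{thm:GL}) on $\Delta_a g$, which is $1$-bounded after normalization, with threshold $\tau'=\poly(\tau)$. This yields the list of heavy $b$'s for that $a$, from which we output a uniformly random element. This produces a distribution $Q$ that dominates $P_g$ restricted to heavy points up to $\poly(\tau)$ factors. The cost is $n\log n\,\poly(1/\tau)$ queries per sample, and $O(n)$ samples give the stated $n^2\log n$ query count and $n^3\log n$ time (the extra factor of $n$ comes from Gaussian elimination).

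\textbf{Key step (the main obstacle).} The central claim is that if $U\subsetneq\Lcal(\phi)$ and $\phi$ is a $\gamma$-approximate local maximizer with $|\langle f,\phi\rangle|\ge\tau$, then the weight that $P_g$ (and hence $Q$) places on $\Lcal(\phi)\cap U^{\perp}\setminus U$ is at least $\poly((\gamma-\tfrac12)\tau)$. Moreover, vectors in $U^\perp$ outside $\Lcal(\phi)$ must not dominate, and bounding them is where local maximality enters. To control them, take $v\in U^\perp\setminus\Lcal(\phi)$. By \cref{lem:neighbors}, the states $\phi'=(I\pm W(v))\phi/\sqrt2$ are exactly the neighbors of $\phi$ in the relevant directions. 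The hypothesis $|\langle f,\phi\rangle|^2\ge\gamma|\langle f,\phi'\rangle|^2$ then bounds $|\langle f, W(v)\phi\rangle|$ in terms of $|\langle f,\phi\rangle|$, via $|\langle f,\phi\rangle\pm\langle f,W(v)\phi\rangle|^2\le 2\gamma^{-1}|\langle f,\phi\rangle|^2$. This gives a margin precisely when $\gamma>1/2$. Averaging this over $v$ and combining with the identity \eqref{eq:Pf_L4} and \cref{lem:Lcorr} applied to $g$ should show that the $P_g$-mass of $\Lcal(\phi)\setminus U$ is at least a $\poly(\gamma-\tfrac12)$ fraction of the mass of $U^\perp\setminus U$. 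Making this averaging argument rigorous is the step I expect to be hardest. I would first try expanding $|\langle g,\psi\rangle|^4$ over the single-Lagrangian basis containing $\phi$ and the bases of its neighbors.

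\textbf{Iteration and success probability.} Each successful round adds a new vector of $\Lcal(\phi)$ to $U$ and increases $\dim U$ by one. To obtain only $O(\log(1/\tau))$ ``guessing'' rounds, rather than $n$, the plan is as follows. The guessing rounds are the ones where we must also guess the sign $\sigma_i$ and succeed with probability $\poly(\cdot)$. Once $|\langle g,\phi\rangle|/\|g\|_2$ exceeds a constant, which happens after $O(\log(1/\tau))$ restrictions because each useful restriction roughly doubles the normalized correlation ($\|g\|_2^2$ halves while $\langle g,\phi\rangle$ is preserved), the mass of $P_g$ on $\Lcal(\phi)$ is at least a constant. From that point we can take $O(n)$ further samples and keep the isotropic ones that agree. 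This caps the probability loss at $((\gamma-\tfrac12)\tau)^{O(\log(1/\tau))}$, and the final span equals $\Lcal(\phi)$ once its dimension reaches $n$.
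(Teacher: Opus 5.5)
There is a genuine gap, and it is exactly the step you flagged: the smoothness claim is false for the characteristic distribution $P_g$.

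Take $\phi\equiv 1$, so that $\Lcal(\phi)=\F_2^n\times\{0\}$. Fix $u\neq 0$ and let $f(x)=\tfrac12\bigl(1+e^{i\pi/4}(-1)^{u\cdot x}\bigr)$. Then $f$ is $1$-bounded, $\langle f,\phi\rangle=\tfrac12$ and $\|f\|_2^2=\tfrac12$. The neighbors of $\phi$ are $(1+c\chi_b)/\sqrt2$ with $b\neq 0$ and $c\in\{\pm1,\pm i\}$. The best of them has $|\langle f,\phi'\rangle|^2=(2+\sqrt2)/8$, so $\phi$ is a $\gamma$-approximate local maximizer with $\gamma=2-\sqrt2>\tfrac12$.

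However, $\widehat{\Delta_a f}(0)=\tfrac14\bigl(1+(-1)^{u\cdot a}\bigr)$. Hence $P_f$ gives zero mass to $\Lcal(\phi)\setminus T$, where $T=u^\perp\times\{0\}$ is a hyperplane of $\Lcal(\phi)$; the rest of $P_f$ sits on $\F_2^n\times\{u\}$. Since $f$ is $u^\perp$-invariant and all relevant signs are $+1$, we have $\Pi_U^\sigma f=f$ for every $U\le T$. So sampling $P_g$ never produces a vector of $\Lcal(\phi)\setminus T$. Neither does your Goldreich--Levin approximation, since $0$ is not a heavy coefficient of $\Delta_a f$ when $u\cdot a=1$. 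Your procedure therefore outputs $\Lcal(\phi)$ with probability essentially zero.

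The same example refutes your final stage. The normalized correlation is already $1/\sqrt2$, so $P_f(\Lcal(\phi))\ge 1/4$, yet the samples never span $\Lcal(\phi)$. The structural reason is this: $P_g$ restricted to $\Lcal(\phi)$ depends only on the magnitudes $|\langle g,\psi\rangle|^2$ for $\psi\in\Stab_{\Lcal(\phi)}$. Local maximality with $\gamma>\tfrac12$, however, permits two such states of equal magnitude with relative phase $e^{i\pi/4}$ (here $\phi$ and $\chi_u$). So no averaging of the neighbor inequality over the single-Lagrangian basis can yield your claim, as the paper itself remarks just before Lemma~\ref{lem:terrible}.

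The missing idea is Bell difference sampling: sample from $Q_g=P_g*P_g$ instead. The convolution imports mass from points outside $\Lcal(\phi)$; in Lemma~\ref{lem:terrible} these are the $\Delta_u$ terms, which see the phase of $\overline{f(0)}^2f(u)^2$. This gives $Q_g(\Lcal(\phi)\setminus T)\ge\tfrac14(\gamma-\tfrac12)^2\tau^8$ for every proper $T\lneq\Lcal(\phi)$ (Lemma~\ref{lem:Qfsmooth}). In the example above, $Q_f(\Lcal(\phi)\setminus T)=1/8$.

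The iteration must also be organized differently. Your claim that each useful restriction roughly halves $\|g\|_2^2$ is wrong. In fact $\|\Pi^\sigma_v g\|_2^2=\tfrac12\|g\|_2^2+\tfrac{\sigma}{2}\langle g,W(v)g\rangle$, which equals $\|g\|_2^2$ for every $v\in T$ in the example. The paper instead uses the isotropic spectral set $\Spec(g)$ as a checkable filter, together with the dichotomy of Lemma~\ref{lem:nonrobust}:
\begin{itemize}
\item either $g$ $\eps$-robustly generates $\Lcal(\phi)$, in which case the span of the $\Spec$-filtered $Q_g$-samples is exactly $\Lcal(\phi)$ (Lemma~\ref{lem:robustgen});
\item or $Q_g(\Lcal(\phi)\setminus\Spec(g))\ge\eps$, in which case projecting along a sampled $v\in\Lcal(\phi)\setminus\Spec(g)$ with a guessed sign multiplies $|\langle g,\phi\rangle|^2/\|g\|_2^2$ by $1.08$ while preserving local maximality (Lemma~\ref{lem:boost}).
\end{itemize}
Guessing the stopping index $s\in\{0,\dots,t\}$ with $t=O(\log(1/\tau))$ then yields the $\frac{2}{3(t+1)}(\eps/2)^{t+1}$ bound of Theorem~\ref{thm:ideal_Lsampling}. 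Correspondingly, the query-based sampler must approximate $\|f\|_2^8Q_f$ (via $\nu_f*\nu_f$, Lemma~\ref{lem:sampling}) rather than $P_f$.
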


The algorithm of \cref{thm:lagrangian} is based on the intuition that samples from the characteristic distribution~$P_f(a, b) \propto \big|\widehat{\Delta_af}(b)\big|^2$ are biased towards elements from~$\mathcal L(\phi)$, as shown in \cref{lem:Lcorr}.
For technical reasons, we will instead use a smoothened version of the characteristic distribution, given by its self-convolution:

\begin{definition}[Convoluted distribution]
    For a nonzero function $f:\F_2^n\to \C$, define its \emph{convoluted distribution}~$Q_f$ by
    \begin{equation*}
        Q_f = P_f*P_f,
    \end{equation*}
    where~$P_f$ is the characteristic distribution of~$f$ (\cref{def:char_dist}).
\end{definition}

Since a Lagrangian~$L$ is a subspace, it follows easily that
\begin{equation*}
Q_f(L) \geq P_f(L)^2.
\end{equation*}
Hence, if~$f$ correlates with a stabilizer state~$\phi$, then \cref{lem:Lcorr} shows that $\Lcal(\phi)$ has large mass according to both~$P_f$ and~$Q_f$.

Sampling from the convoluted distribution~$Q_f$ is known in quantum information theory as \emph{Bell difference sampling}~\cite{Gross2021}.
Indeed, \cref{thm:lagrangian} is essentially obtained from a ``dequantization'' of a Bell difference sampling-based quantum algorithm due to Chen, Gong, Ye and Zhang~\cite{chen2024stabilizer}.
The main difference between our algorithms is the analytic space they operate on:
their algorithm operates on a Hilbert space~$L^2$, where one assumes $\|f\|_2 = 1$ and admissible quantum operations allow for unitary transformations and sample access from~$Q_f$.
By contrast, our algorithm operates on~$L^\infty$, where one assumes $\|f\|_\infty \leq 1$ and is given query access to the function $x \mapsto f(x)$, while being able to perform simple arithmetic~operations.

\subsection{Sampling a good Lagrangian subspace} \label{sec:Lagrangian}
Towards proving \cref{thm:lagrangian}, we first work in an idealized setting where we assume that we have sample access to the convoluted distribution~$Q_f$.
Once this is achieved, we show how such samples can be approximately simulated using  query access to the given function~$f$.

Recall that our goal is to give an algorithm that, with high probability, returns the Lagrangian of a fixed (but arbitrary and unknown) approximate local maximizer~$\phi$ that has non-negligible correlation with~$f$.
A problem we encounter is that, since we do not know~$\phi$, we have no way to certify if a sample from~$Q_f$ belongs to~$\mathcal L(\phi)$.
A key idea of~\cite{chen2024stabilizer} is to instead aim for a set that does allow for easy membership verification.

\begin{definition}[Spectral set]
    For a function $f:\F_2^n\to \C$, define
    \begin{equation*}
        \Spec(f) = \big\{(a,b)\in \F_2^n\times\F_2^n : |\widehat{\Delta_af}(b)|^2 \geq 0.7 \|f\|_2^4\big\}.
    \end{equation*}
\end{definition}

Due to the uncertainty principle (\cref{lem:uncertainty1}), the spectral set is isotropic.
(The constant 0.7 is arbitrary, any other constant $0.5 < c < 1$ would do.)
Intuition for why it provides useful information is given by the following fact:
if~$f$ \emph{equals} the stabilizer state~$\phi$, then the spectral set equals~$\mathcal L(\phi)$ and~$Q_f$ is the uniform probability distribution over~$\mathcal L(\phi)$.
In this case, we can efficiently generate~$\mathcal L(\phi)$ by sampling~$\Theta(n)$ times from~$Q_f$ and taking the linear span of those samples.
If~$f$ is not itself a stabilizer state, then the spectral set might no longer equal~$\mathcal L(\phi)$, but it will still serve as a useful object to guide our algorithm.

The advantage of working with the spectral set is that we can easily estimate the value of $|\widehat{\Delta_af}(b)|^2$ for any $a, b\in \F_2^n$, and thus we can approximately check whether a given pair $(a, b)$ belongs to that set.
Our estimation procedure for the Fourier coefficients of a bounded function is given in the following simple lemma:

\begin{lemma}[Fourier estimation] \label{lem:fourest}
    Let $\eps, \delta>0$.
    There is a randomized algorithm $\FourEst_{\eps,\delta}$ that, given $b\in \F_2^n$ and query access to a $1$-bounded function $g:\F_2^n\to \C$, returns a random value $c\in \C$ such that
    $$\Pr\big[|c - \widehat{g}(b)|\leq \eps\big] \geq 1-\delta.$$
    This algorithm makes $O(\frac{1}{\eps^2}\log(1/\delta))$ queries to~$g$ and runs in time $O(\frac{1}{\eps^2} n\log(1/\delta))$.
\end{lemma}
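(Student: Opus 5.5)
The plan is the standard Monte Carlo estimator, made robust with a median-of-means amplification. Write $\widehat{g}(b) = \E_{x\in\F_2^n} g(x)(-1)^{b\cdot x}$. Draw $m = \lceil 4/\eps^2\rceil$ independent uniform points $x_1,\dots,x_m\in\F_2^n$, query $g$ at each, and form the empirical average $Z = \frac{1}{m}\sum_{j} g(x_j)(-1)^{b\cdot x_j}$. Each summand is an independent copy of a complex random variable $Y$ with $\E Y = \widehat{g}(b)$ and $|Y|\le 1$, because $g$ is 1-bounded. Hence $\E|Z-\widehat{g}(b)|^2 \le 1/m \le \eps^2/4$. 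Chebyshev's inequality then gives $\Pr[|Z-\widehat{g}(b)|>\eps]\le 1/4$.

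To push the failure probability from $1/4$ down to $\delta$, repeat this independently $k = O(\log(1/\delta))$ times, obtaining $Z_1,\dots,Z_k$, and output a suitable median. This is the only step needing care, since the values are complex. The simplest route is to take coordinatewise medians of the real and imaginary parts separately, each estimated to accuracy $\eps/\sqrt2$. Concretely, use $m=\lceil 8/\eps^2\rceil$ so that each of $\operatorname{Re} Z_\ell$ and $\operatorname{Im} Z_\ell$ is within $\eps/\sqrt2$ of its target with probability at least $3/4$.

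By a Chernoff bound, the median of the real parts fails only if at least half of the $k$ trials fail, which has probability $e^{-\Omega(k)}\le\delta/2$. The same holds for the imaginary parts, and a union bound yields $|c-\widehat{g}(b)|\le\eps$ with probability at least $1-\delta$. Alternatively, one can output any $Z_\ell$ that lies within $2\eps$ of more than half the others and rescale $\eps$; either choice works.

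For the complexity, the total number of queries is $mk = O(\eps^{-2}\log(1/\delta))$. Each sample costs $O(n)$ time: $O(n)$ to generate $x_j$ and $O(n)$ to compute the inner product $b\cdot x_j$. The medians cost $O(k\log k)$, which is dominated by the sampling. So the running time is $O(\eps^{-2}n\log(1/\delta))$, as claimed. I expect no real obstacle here; the main point to get right is the median amplification for complex values, which is handled by splitting into real and imaginary parts.
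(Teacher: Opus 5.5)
Your proof is correct, but it takes a different route from the paper's. The paper does no amplification step. It takes a single empirical mean of $m = O(\eps^{-2}\log(1/\delta))$ samples $g(x_i)(-1)^{b\cdot x_i}$ and applies Hoeffding's inequality directly. This uses the boundedness of the summands to get exponential concentration in one stage.

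You instead use only the second-moment bound $\E|Y|^2\le 1$, via Chebyshev, to get constant success probability per batch. You then boost the confidence with a Chernoff bound on the number of failed batches. Complex values are handled by taking medians of the real and imaginary parts separately, or by the ``close to a majority'' selection rule.

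The trade-off is as follows. The paper's algorithm is simpler, and its output is just the unbiased sample mean. Your median-of-means needs a little more bookkeeping, but it only requires $\E_x|g(x)|^2\le 1$ rather than $\|g\|_\infty\le 1$. So it would survive if the $1$-boundedness hypothesis were weakened to an $L^2$ bound, where Hoeffding is no longer available. Both routes give $O(\eps^{-2}\log(1/\delta))$ queries and $O(\eps^{-2}n\log(1/\delta))$ time.

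Incidentally, your explicit constants are more careful than the paper's. For summands in the unit disk, splitting into real and imaginary parts (each ranging over $[-1,1]$), Hoeffding gives a tail of $4\exp(-\eps^2 m/4)$, not the stated $4\exp(-2\eps^2 m)$. This does not affect the asymptotic bound.
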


\begin{proof}
Let $m \geq 2$ be an integer, let $x_1,\dots,x_m$ be independent uniformly distributed $\F_2^n$-valued random variables, and let $X_i = g(x_i)(-1)^{b\cdot x_i}$ for each $i\in[m]$.
Then $\E[X_i] = \widehat{g}(b)$ for each $i\in [m]$.
Letting $\overline X = m^{-1}(X_1 + \cdots + X_m)$, it follows from Hoeffding's inequality~that
\begin{equation*}
    \Pr\big[\big|\overline{X} - \E\overline{X}\big| > \eps\big] \leq 4\exp(-2\eps^2m).
\end{equation*}
Thus, by taking $m=O(\frac{1}{\eps^2}\log(1/\delta))$, the quantity $c = \overline{X}$ satisfies the requirement of the lemma with the desired probability.
\end{proof}

Using Fourier estimation, we can implement a post-selection procedure on samples from~$Q_f$ that yields an approximate sampler from~$Q_f$  conditioned on lying in $\Spec(f)$.
Taking inspiration from the 100\% case where~$f$ is a stabilizer state, we will then generate a random set~$F\subseteq \F_2^n\times\F_2^n$ of~$\Theta(n)$ such samples.
We show that, with good probability, $\vspan(F)$ will then cover all but a tiny fraction of the whole spectral set.
The following notion makes this idea precise.

\begin{definition}[Approximate spectral set]
    Let $f:\F_2^n\to \C$ be a nonzero function.
    A set $S\subseteq \F_2^{2n}$ is an $\eps$-approximate spectral set for~$f$ if 
    \begin{equation*}
        Q_f\big(\Spec(f)\setminus S\big) \leq \eps.
    \end{equation*}
\end{definition}

We proceed with a case analysis.
The easy case covers the situation where the span of every approximate spectral set contains~$\mathcal L(\phi)$, which we refer to as \emph{robust Lagrangian generation}.
In this case,~$\mathcal L(\phi)$ can be generated simply by taking the linear span of our randomly sampled set~$F$.
The complementary case is more challenging and builds on an ``energy-increment''   or ``boosting'' procedure introduced in~\cite{chen2024stabilizer}.
The next two subsections cover these two cases in detail.

\subsubsection{Robust Lagrangian generation}

The first case is characterized by the definition~below.

\begin{definition}[Robust generation]
    Let~$f:\F_2^n\to \C$ be a nonzero function, $L \leq \F_2^{2n}$ be a Lagrangian subspace and $0<\eps<1$.
    We say that~$f$ \emph{$\eps$-robustly generates~$L$} if $L \leq \vspan(F)$ for every $\eps$-approximate spectral set~$F$.
\end{definition}

If~$f$ $\eps$-robustly generates~$\Lcal(\phi)$, then it is easy to learn a basis of~$\Lcal(\phi)$ by sampling $O(n/\eps)$ pairs $(a, b) \sim Q_f$.
This is because the span of such a sample is an approximate spectral set with good probability.
This was essentially proven in~\cite{chen2024stabilizer}, but we provide a proof here for completeness.

\begin{lemma}
\label{lem:robustgen}
    Let $f:\F_2^n\to \C$ be a $1$-bounded function and let $\eps, \tau>0$.
    Suppose that $\|f\|_2\geq \tau$ and that~$f$ $\eps$-robustly generates a Lagrangian subspace~$L$.
    Then, there is a randomized algorithm that uses $O(n/\eps)$ samples from~$Q_f$, makes $O\big(\tfrac{1}{\eps \tau^8} n\log \tfrac{n}{\eps}\big)$ random queries to~$f$, and returns a basis for a random subspace~$L'\leq \F_2^{2n}$ such that
    $$\Pr[L'=L] \geq 2/3.$$
    This algorithm runs in time $O\big(\tfrac{1}{\eps} n^3 + \tfrac{1}{\eps \tau^8} n^2\log \tfrac{n}{\eps}\big)$.
\end{lemma}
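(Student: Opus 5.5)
The algorithm draws $m = C n/\eps$ independent samples $(a_1,b_1),\dots,(a_m,b_m)\sim Q_f$. For each sample it runs a membership test for $\Spec(f)$ and keeps the samples that pass. It then outputs a basis of the span $L'$ of the kept samples, computed by Gaussian elimination in time $O(mn^2)=O(n^3/\eps)$. Two things must be shown. First, with good probability every kept sample lies in an isotropic set, so that $L'$ is isotropic; combined with $L\le L'$ and $\dim L = n$, this forces $L'=L$. Second, with good probability the kept set is an $\eps$-approximate spectral set, so that robust generation gives $L\le L'$.

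\emph{Membership test.} Testing whether $(a,b)\in\Spec(f)$ requires comparing $|\widehat{\Delta_a f}(b)|^2$ against $0.7\|f\|_2^4$. We use \cref{lem:fourest} applied to the 1-bounded function $g=\Delta_a f$ (one query to $g$ costs two queries to $f$) to estimate $\widehat{\Delta_a f}(b)$ to additive accuracy $\eta$. We also estimate $\|f\|_2^2$ by random sampling to a similar accuracy. The threshold is placed between, say, $0.6\|f\|_2^4$ and $0.7\|f\|_2^4$. Since $\|f\|_2^4\ge\tau^4$, taking $\eta=c\tau^4$ suffices, at a cost of $O(\tau^{-8}\log(1/\delta))$ queries per test. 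Setting $\delta = \eps/(100\,n)$ and union bounding over all $m$ tests, all tests are simultaneously correct with probability at least $0.9$, and the total query count is $O(\frac{n}{\eps\tau^8}\log\frac n\eps)$, matching the statement. On this event, every kept sample lies in the set $\{(a,b): |\widehat{\Delta_a f}(b)|^2 > 0.6\|f\|_2^4\}$, and every sample in $\Spec(f)$ is kept. Any two elements of this larger set must commute: by \cref{lem:uncertainty1}, two symplectically non-orthogonal elements would contribute more than $1.2\|f\|_2^4 > \|f\|_2^4$. Hence the kept set is isotropic, and so is its span $L'$. This is why the constant $0.7$ needs slack above $1/2$.

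\emph{Coverage.} Let $S_j$ denote the span of the first $j$ kept samples that lie in $\Spec(f)$. Call step $j$ \emph{bad} if $Q_f(\Spec(f)\setminus S_{j-1})>\eps$. At a bad step, the fresh sample lands in $\Spec(f)\setminus S_{j-1}$ with probability greater than $\eps$; when it does, it is kept (on the good event), and the dimension of the span increases. The dimension can increase at most $2n$ times. So if the final span is not an $\eps$-approximate spectral set, every step was bad, and among $m=Cn/\eps$ Bernoulli trials each with success probability greater than $\eps$ there were fewer than $2n$ successes. By a Chernoff bound (a stochastic-domination argument over the adaptive process), this has probability at most $0.05$ once $C$ is a large constant. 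On the complementary event, the kept set $F$ satisfies $Q_f(\Spec(f)\setminus F)\le\eps$, and robust generation gives $L\le\vspan(F)=L'$. Together with isotropy, $L'=L$. The total failure probability is at most $0.15<1/3$.

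The main obstacle is the membership-test step: it must simultaneously certify isotropy for all kept points, which relies on the uncertainty principle with threshold margin, and miss no true spectral point, while keeping the $\tau^{-8}$ dependence coming from the normalization by $\|f\|_2^4$.
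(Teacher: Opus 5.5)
Your proof is correct and matches the paper's proof everywhere except the coverage step. The shared parts are the Fourier-estimation filter (normalised by an estimate of $\|f\|_2^2$, hence the $\tau^{-8}$ cost), the use of \cref{lem:uncertainty1} to make the kept set isotropic, and the conclusion from robust generation together with maximality of the Lagrangian $L$.

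For coverage, the paper proceeds in three steps:
\begin{enumerate}
\item It observes that $p=Q_f(\Spec(f))>\eps$, since otherwise $\emptyset$ would be an $\eps$-approximate spectral set.
\item It uses a Chernoff bound to obtain $|T|\ge pm/2$ samples from the conditional distribution $\one_{\Spec(f)}Q_f/p$.
\item It cites Lemma~4.21 of \cite{chen2024stabilizer} to show that their span covers a $(1-\eps/p)$-fraction of that conditional distribution.
\end{enumerate}
You instead argue directly on the raw $Q_f$-samples. As long as the span of the spectral samples seen so far is not yet an $\eps$-approximate spectral set, each fresh sample raises its dimension with conditional probability greater than $\eps$. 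That dimension is at most $2n$ (in fact at most $n$, since $\Spec(f)$ is isotropic). A binomial lower-tail bound via stochastic domination then finishes. Your version is self-contained and needs neither $p>\eps$ nor the conditioning; it is essentially an inlined proof of the cited lemma.

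Two small fixes are needed. First, with $\delta=\eps/(100n)$, the union bound over $m=Cn/\eps$ tests costs $C/100$, which exceeds your budget for a large $C$. Either take $\delta=1/(10m)$ as the paper does, or keep $C\le 10$, which still suffices for your tail bound. Second, the $\eps$-approximate spectral set is $\vspan(F)$, not the finite set $F$ itself. That is the set you should feed to the robust-generation hypothesis.
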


\begin{proof}
Let $S\subseteq \F_2^{2n}$ be a random set of $m = O(n/\eps)$ independent $Q_f$-samples and let $T = S\cap\Spec(f)$.
We first show that, with probability at least~$0.9$, the set $\vspan(T)$ is an $\eps$-approximate spectral set.

Denote $p = Q_f(\Spec(f))$.
We must have that $p > \eps$, since otherwise the empty set would be an approximate spectral set, in contradiction with the assumption that ~$f$ $\eps$-robustly generates a Lagrangian subspace.
Note that the elements of~$T$ are distributed independently according to the conditional distribution $R_f = \one_{\spec(f)}\cdot Q_f/p$.
By the Chernoff bound, we have that $|T| \geq (pm)/2$ with probability at least~$0.95$;
with our choice of~$m$, we may assume that
$$\frac{pm}{2} \geq \frac{4n+10}{\eps/p}.$$
Conditioned on this number of $R_f$-sampled points, the set $\vspan(T)$ will cover a $(1-\eps/p)$-fraction of the $R_f$-mass with probability at least~$0.95$
(this fact is given by \cite[Lemma~4.21]{chen2024stabilizer}).
We conclude that, with probability at least~$0.9$, we have $R_f(\vspan(T)) \geq 1-\eps/p$.
In this case,
\begin{equation*}
    Q_f\big(\spec(f)\setminus \vspan(T)\big) =
    p\,R_f\big(\spec(f)\setminus \vspan(T)\big)
    \leq
    \eps,
\end{equation*}
showing that $\vspan(T)$ is an $\eps$-approximate spectral set.

By boundedness of~$f$, we can estimate the value of~$\|f\|_2^2$ up to a $\tau^4/100$ additive error using $O(1/\tau^8)$ random queries to~$f$;
we obtain a real number $0<r\leq 1$ such that
$$\Pr\big[ \big|r- \|f\|_2^2\big| \leq \tau^4/100\big] \geq 0.9.$$
Whenever this event holds, we have that $\big|r^2 - \|f\|_2^4\big| \leq \tau^4/50 \leq \|f\|_2^4/50$.

Now, for each $(a,b)\in S$, run the algorithm $\FourEst_{\eps_1,\delta_1}$ from Lemma~\ref{lem:fourest} on input $(\Delta_af, b)$ with parameters $\eps_1 = \tau^4/100$ and $\delta_1 = 1/(10m)$.
By the union bound, with probability at least~$0.9$, all values~$c_{a,b}$ estimated by this algorithm will be within $\tau^4/100$ of the true value $\widehat{\Delta_af}(b)$.
In this case, we obtain the estimate
$$\big||c_{a,b}|^2 - |\widehat{\Delta_af}(b)|^2\big| \leq \tau^4/50 \leq \|f\|_2^4/50 \quad \text{for all $(a,b) \in S$.}$$
Combining the two estimates above, we conclude that
$$\frac{|\widehat{\Delta_af}(b)|^2}{\|f\|_2^4} - 0.1 < \frac{|c_{a,b}|^2}{r^2} < \frac{|\widehat{\Delta_af}(b)|^2}{\|f\|_2^4} + 0.1$$
holds for all $(a,b)\in S$ with probability at least~$0.8$.

Let~$F\subseteq S$ be the set of pairs $(a,b)$ for which we have~$|c_{a,b}|^2/r^2 \geq 0.6$.
By the above, with probability at least~$0.8$, we have
$$\big\{(a,b)\in S:\: |\widehat{\Delta_af}(b)|^2 \geq 0.7 \|f\|_2^4\big\} \subseteq F \subseteq \big\{(a,b)\in S:\: |\widehat{\Delta_af}(b)|^2 > 0.5 \|f\|_2^4\big\}.$$
The leftmost set is precisely $S\cap\Spec(f) = T$, while the rightmost set is isotropic by the uncertainty principle (\cref{lem:uncertainty1}).
As $\vspan(T)$ is an $\eps$-approximate spectral set with probability at least~$0.9$, it then follows that the set~$F$ is an isotropic $\eps$-approximate spectral set for~$f$ with probability at least~$0.7$.
Since~$f$ $\eps$-robustly generates~$L$, we get that~$\vspan(F) = L$ in this case.
We then return a basis for~$F$, which can be found in $O(mn^2)$ time via Gaussian elimination.
\end{proof}

\subsubsection{Non-robust Lagrangian generation implies energy increment.}

If~$f$ \emph{does not} generate~$\mathcal L(\phi)$ robustly, then there is a simple way to obtain an ``energy increment'' given by an increase of the normalized correlation with~$\phi$.
This is obtained by replacing~$f$ with the projection of~$f$ onto the appropriate eigenspace of a Weyl operator $W(a,b)$ associated with~$\phi$, as explained below.

Given $a, b\in \F_2^n$ and $\sigma\in \pmset{}$, let $V_{a,b}^{\sigma}$ denote the $\sigma$-eigenspace of the Weyl operator $W(a,b)$
(which is defined in \cref{def:Weyl}).
This space can be  explicitly written as
\begin{equation*}
    V_{a,b}^{\sigma} = \big\{g: \F_2^n\to \C\mid g(x+a) = \sigma i^{|a\circ b|} (-1)^{b\cdot x} g(x)\:\:\text{for all $x\in \F_2^n$}\big\}.
\end{equation*}
It follows readily from the definition of the Lagrangian $\Lcal(\phi)$ that for each $(a,b)\in \mathcal L(\phi)$, there is  a unique $\sigma\in \pmset{}$ such that $\phi\in V_{a,b}^\sigma$.
Furthermore, the projection $\Pi_{a,b}^{\sigma}f$ of a function~$f$ to~$V_{a,b}^{\sigma}$ is given by 
\begin{equation*}
    \Pi_{a,b}^{\sigma}f = \frac{f + \sigma W(a,b)f}{2}.
\end{equation*}

\begin{lemma}[Energy increment] \label{lem:boost}
    Let~$f:\F_2^n\to \C$ be a function and suppose~$\phi\in V_{a,b}^{\sigma}$ is a stabilizer state.
    If $(a,b)\not\in \Spec(f)$, then the function $f' := \Pi_{a,b}^{\sigma}f$ satisfies
    \begin{equation*}
        \frac{|\langle f',\phi\rangle|^2 }{\|f'\|_{2}^2} \geq 1.08\frac{|\langle f, \phi\rangle|^2}{\|f\|_{2}^2}.
    \end{equation*}
    Moreover, if~$\phi$ is a $\gamma$-approximate local maximizer for~$f$, then it is also a $\gamma$-approximate local maximizer for~$f'$.
\end{lemma}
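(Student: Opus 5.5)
Since $\Pi := \Pi_{a,b}^{\sigma}$ is an orthogonal projection and $\phi \in V_{a,b}^{\sigma}$, we have $\Pi\phi = \phi$. Hence
$$\langle f', \phi\rangle = \langle \Pi f, \phi\rangle = \langle f, \Pi\phi\rangle = \langle f, \phi\rangle .$$
The numerator therefore does not change, and everything reduces to showing $\|f'\|_2^2 \leq \|f\|_2^2/1.08$.

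We compute this norm directly. Since $W(a,b)$ is Hermitian and unitary,
$$\|f'\|_2^2 = \langle f, \Pi f\rangle = \frac{\|f\|_2^2 + \sigma \langle f, W(a,b) f\rangle}{2}.$$
By equation~\eqref{eq:FourierWeyl}, $|\langle f, W(a,b)f\rangle| = |\widehat{\Delta_a f}(b)|$. The assumption $(a,b)\notin\Spec(f)$ gives $|\widehat{\Delta_a f}(b)|^2 < 0.7\|f\|_2^4$, so $|\langle f, W(a,b) f\rangle| < \sqrt{0.7}\,\|f\|_2^2$. Therefore
$$\|f'\|_2^2 < \frac{1+\sqrt{0.7}}{2}\,\|f\|_2^2 \approx 0.9183\,\|f\|_2^2 .$$
Since $1/0.9183 > 1.08$, this yields the claimed increment.

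The only degenerate case is $f' = 0$. Then $\langle f, \phi\rangle = 0$ and the inequality is trivial, or vacuous if one divides by zero; we will simply note that we assume $f' \neq 0$ whenever $\langle f,\phi\rangle \neq 0$.

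For the second claim, fix any neighbor $\phi'$ of $\phi$, meaning $|\langle \phi, \phi'\rangle|^2 = 1/2$. We must bound $|\langle f', \phi'\rangle|$ by the corresponding quantity for $f$ on some neighbor of $\phi$. We have $\langle f', \phi'\rangle = \langle f, \Pi\phi'\rangle$, and we split into two cases.

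\textbf{Case $(a,b) \in \Lcal(\phi')$.} Then $\phi'$ is an eigenvector of $W(a,b)$, so $\Pi\phi'$ is either $\phi'$ or $0$. Hence $|\langle f', \phi'\rangle| \leq |\langle f, \phi'\rangle|$.

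\textbf{Case $(a,b) \notin \Lcal(\phi')$.} Apply \cref{lem:neighbors} with the roles of $\phi$ and $\phi'$ swapped, taking $v = (a,b) \in \Lcal(\phi)\setminus\Lcal(\phi')$. It gives $\phi = \alpha\,(I + \sigma' W(a,b))\phi'/\sqrt{2}$ for some sign $\sigma'$ and phase $\alpha$. Since $\phi \in V^{\sigma}_{a,b}$, the sign must be $\sigma' = \sigma$. Hence $\Pi\phi' = \bar\alpha\,\phi/\sqrt{2}$, and so
$$|\langle f', \phi'\rangle|^2 = \tfrac12 |\langle f, \phi\rangle|^2 \leq |\langle f,\phi\rangle|^2 .$$

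In both cases the numerator $|\langle f',\phi\rangle| = |\langle f,\phi\rangle|$ is unchanged. So the maximum of $|\langle f', \phi'\rangle|^2$ over neighbors $\phi'$ is at most the maximum of $|\langle f, \phi'\rangle|^2$ over neighbors, or at most $\tfrac12|\langle f,\phi\rangle|^2$.

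The main obstacle is the second case. The bound $\tfrac12 |\langle f,\phi\rangle|^2$ must be compared with $\gamma^{-1}|\langle f',\phi\rangle|^2 = \gamma^{-1}|\langle f,\phi\rangle|^2$. This comparison requires $\gamma \le 2$, which holds whenever $\gamma \le 1$, the only meaningful regime. Otherwise we will just note that the maximizer property is preserved with the same $\gamma$, since the case bound is $\tfrac12|\langle f,\phi\rangle|^2 \le \gamma^{-1}|\langle f,\phi\rangle|^2$ for $\gamma\le 2$.
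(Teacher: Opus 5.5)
Your proof is correct and follows the paper's argument. The norm bound $\|f'\|_2^2 \le \tfrac12(1+\sqrt{0.7})\|f\|_2^2$ is the same, and your split on whether $(a,b)\in\Lcal(\phi')$ is equivalent to the paper's split on whether $W(a,b)$ commutes with the $W(c,d)$ supplied by \cref{lem:neighbors}. The only cosmetic difference is that you apply that lemma with the roles of $\phi,\phi'$ swapped, obtaining $\Pi_{a,b}^{\sigma}\phi'=\bar\alpha\phi/\sqrt2$ directly. Your version also needs only $\gamma\le 2$ rather than the paper's $\gamma\le 1$. The fact $(a,b)\in\Lcal(\phi)$, which you use implicitly, follows at once from $W(a,b)\phi=\sigma\phi$ and \eqref{eq:FourierWeyl}.
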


\begin{proof}
Since $\phi\in V_{a,b}^\sigma$, we have that $\Pi_{a,b}^\sigma\phi = \phi$, and so
\begin{equation*}
    \langle f',\phi\rangle = \langle \Pi_{a,b}^\sigma f,\phi\rangle = \langle f,\Pi_{a,b}^\sigma\phi\rangle = \langle f, \phi\rangle. 
\end{equation*}
We also have that
\begin{align*}
    \|f'\|_2^2
    &= \frac{1}{4} \big\langle f+\sigma W(a,b)f,\, f+\sigma W(a,b)f\big\rangle \\
    &\leq \frac{1}{2}\|f\|_2^2 + \frac{1}{2}|\widehat{\Delta_af}(b)| \\
    &\leq \frac{1}{2}\big(1 + \sqrt{0.7}\big) \|f\|_2^2 \\
    &\leq 0.92 \|f\|_2^2,
\end{align*}
where we used identity~\eqref{eq:FourierWeyl} for the first inequality.
This implies the first claim.

Now suppose that~$\phi$ is a $\gamma$-approximate local maximizer for~$f$.
By \cref{lem:neighbors}, any $\phi'\in\Stab(\F_2^n)$ satisfying $|\langle \phi,\phi'\rangle| = 1/\sqrt{2}$ has the form $\tfrac{1}{\sqrt{2}}(I + \sigma' W(c,d))\phi$ for some $\sigma' \in \pmset{}$ and $c,d\in \F_2^n$.
Let then $M = \tfrac{1}{\sqrt{2}}(I + \sigma' W(c,d))$ and $\phi' = M\phi$.

There are two cases to consider.
If $[(a,b),(c,d)] = 0$, then $\Pi_{a,b}^\sigma$ and $M$ commute and we get that
\begin{align*}
    \langle f',\phi'\rangle = \langle f, \Pi_{a,b}^\sigma M\phi\rangle
    =\langle f, \phi'\rangle.
\end{align*}
This gives
\begin{align*}
    |\langle f',\phi'\rangle|^2 &=|\langle f, \phi'\rangle|^2
    \leq \frac{1}{\gamma}|\langle f, \phi\rangle|^2
    = \frac{1}{\gamma}|\langle f',\phi\rangle|^2.
\end{align*}
If $[(a,b),(c,d)] = 1$, then $\Pi_{a,b}^\sigma M\phi = \frac{1}{\sqrt{2}}\phi$ and so $\langle f',\phi'\rangle = \frac{1}{\sqrt{2}}\langle f',\phi\rangle$.
Since~$\gamma\leq 1$, this implies that
\begin{equation*}
    |\langle f',\phi'\rangle|^2 \leq \frac{1}{\gamma}|\langle f',\phi\rangle|^2
\end{equation*}
in this case as well, finishing the proof.
\end{proof}

The idea now is to iteratively apply this energy increment step until a function has been found that robustly generates~$\mathcal L(\phi)$, at which point the algorithm from Lemma~\ref{lem:robustgen} can be used to find~$\mathcal L(\phi)$ with good probability.
The key observation is that, if~$f$ does not robustly generate~$\Lcal(\phi)$, then it is not hard to find a projection that increases the energy as in Lemma~\ref{lem:boost}.
If we start with a function satisfying $|\langle f, \phi\rangle|/\|f\|_2 \geq \tau$, then the energy can only be boosted at most $t= O(\log(1/\tau))$ times;
hence, if we choose~$t'$ uniformly at random from~$\{0,\dots,t\}$ and boost~$t'$ times, then with probability at least~$1/t$ we will have obtained a projection of~$f$ that robustly generates~$\mathcal L(\phi)$.

The following lemma justifies the key observation above:
if $f$ does not robustly generate~$\Lcal(\phi)$, then a sample from~$Q_f$ yields a pair $(a,b)\in \mathcal L(\phi)\setminus \spec(f)$ with non-negligible probability.
Flipping a coin to choose a sign~$\sigma$ then gives a triple $(a,b,\sigma)$ enabling an energy boost with non-negligible probability.

\begin{lemma} \label{lem:nonrobust}
    Let~$\gamma\in (\tfrac{1}{2}, 1)$, $\tau>0$ and  $\eps = (\gamma  -\frac{1}{2})^2\tau^8/8$.
    Let~$f:\F_2^n\to\C$ be a function and let~$\phi$ be a $\gamma$-approximate local maximizer of correlation for~$f$ such that $|\langle f, \phi\rangle| \geq \tau$.
    Suppose that~$f$ does not $\eps$-robustly generate~$\mathcal L(\phi)$.
    Then,
    \begin{equation*}
        Q_f\big(\mathcal L(\phi)\setminus \spec(f)\big) \geq \eps.
    \end{equation*}
\end{lemma}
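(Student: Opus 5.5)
The plan is to turn the failure of robust generation into a concrete hyperplane of $\Lcal(\phi)$ that carries little $Q_f$-mass inside $\Spec(f)$. Then I will show, using the local-maximizer hypothesis, that this hyperplane's complement in $\Lcal(\phi)$ must carry substantial $Q_f$-mass. Write $L=\Lcal(\phi)$.

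Since $f$ does not $\eps$-robustly generate $L$, there is an $\eps$-approximate spectral set $F$ with $L\not\leq \vspan(F)$. Put $S=\vspan(F)$. Then $L\cap S$ is a proper subspace of $L$, so it lies in some hyperplane $H$ of $L$, and $L\setminus H\subseteq L\setminus S$. Because $S$ contains $F$, we have $Q_f(\Spec(f)\setminus S)\leq\eps$. Hence
\[
Q_f(L\setminus\Spec(f))\geq Q_f(L\setminus S)-Q_f\big((L\setminus S)\cap\Spec(f)\big)\geq Q_f(L\setminus H)-\eps .
\]
It therefore suffices to prove $Q_f(L\setminus H)\geq 2\eps=(\gamma-\tfrac12)^2\tau^8/4$.

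To bound $Q_f(L\setminus H)$ from below, I will describe $H$ symplectically. Since $L$ is Lagrangian, every hyperplane of $L$ has the form $H=\{u\in L:[v,u]=0\}$ for some $v\notin L$. The subspace $L'=H+\vspan\{v\}$ is a Lagrangian with $\dim(L\cap L')=n-1$.

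The joint eigenspace of $\Weyl(H)$ containing $\phi$ is two-dimensional, with orthonormal basis $\phi$ and $\phi''=W(v)\phi$. By \cref{lem:neighbors}, the neighbors of $\phi$ inside it are the states $(\phi+\zeta\phi'')/\sqrt2$ with $\zeta\in\{\pm1,\pm i\}$, up to phases. Let $\alpha=\langle f,\phi\rangle$ and $\beta=\langle f,\phi''\rangle$. The $\gamma$-approximate local maximizer condition gives
\[
|\alpha|^2\geq\tfrac{\gamma}{2}\max_\zeta|\alpha+\zeta\beta|^2\geq\tfrac{\gamma}{2}\big(|\alpha|+|\beta|/\sqrt2\big)^2 ,
\]
choosing $\zeta$ to align the phases as well as possible. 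Since $\gamma>1/2$ only gives slack $\gamma-\tfrac12$, I will aim for an inequality of the form $|\alpha|^2-|\beta|^2\gtrsim(\gamma-\tfrac12)|\alpha|^2\geq(\gamma-\tfrac12)\tau^2$. Pinning down exactly how this follows from the displayed inequality is the first step to check carefully.

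Next I convert this imbalance into characteristic weight on $L\setminus H$. Arguing as in the proof of \eqref{eq:Pf_L4}, the $P_f$-weight of $L$ decomposes over the stabilizer basis of $L$, while $H$ has weight decomposing over projections onto the two-dimensional eigenspaces of $\Weyl(H)$. The elements of $L\setminus H$ are exactly those $u\in L$ whose Weyl operator has opposite signs on $\phi$ and $\phi''$. Expanding then gives
\[
\|f\|_2^4\big(P_f(u+H)\text{ over the nontrivial coset}\big)\ \gtrsim\ \sum_{\text{eigenspaces}}\big(|\alpha_\chi|^2-|\beta_\chi|^2\big)^2\cdot\tfrac{|H|}{2^n}.
\]
All terms are squares, so I drop all but the one containing $\phi$; using $\|f\|_2\leq1$ this yields $P_f(L\setminus H)\gtrsim(\gamma-\tfrac12)^2\tau^4$. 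Finally, since $L\setminus H$ is the nontrivial coset of $H$ in $L$, the convolution satisfies
\[
Q_f(L\setminus H)\geq 2P_f(H)P_f(L\setminus H).
\]
Bounding $P_f(H)\geq P_f(L)-P_f(L\setminus H)$, or more simply using the square $P_f(L\setminus H)^2$ together with $P_f(L)\geq\tau^4$ from \cref{lem:Lcorr}, should give the $\tau^8$ scaling.

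The main obstacle is the middle computation: obtaining the exact identity for $P_f(L\setminus H)$ in terms of the $2\times2$ blocks $(\alpha_\chi,\beta_\chi)$, with the correct $\gamma_{\B}$-phase bookkeeping, and then extracting the constant $(\gamma-\tfrac12)^2\tau^8/8$ from the local-maximizer inequality. If the direct identity is messy, I would instead bound $Q_f(L\setminus H)$ via \cref{prop:inner_product_intersection} with a basis $\B(L)$ adapted to $H$.
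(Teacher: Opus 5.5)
Your opening reduction matches the paper's. A non-robustness witness $F$ gives a hyperplane $H$ of $L=\Lcal(\phi)$ with $Q_f(L\setminus\Spec(f))\geq Q_f(L\setminus H)-\eps$, so everything rests on $Q_f(L\setminus H)\geq\tfrac14(\gamma-\tfrac12)^2\tau^8$, which is the paper's \cref{lem:Qfsmooth}. Your description of the block $\vspan\{\phi,\phi''\}$, its four neighbours $(\phi+\zeta\phi'')/\sqrt2$, and your expression of $P_f(L\setminus H)$ through the block imbalances $|\alpha_\chi|^2-|\beta_\chi|^2$ are also correct.

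The gap is the step you flagged. The local-maximizer condition does \emph{not} give $|\alpha|^2-|\beta|^2\gtrsim(\gamma-\tfrac12)|\alpha|^2$, and in fact $P_f(L\setminus H)$ can vanish. If $\beta=e^{i\pi/4}\alpha$, the best neighbour in the block has squared correlation $\tfrac{2+\sqrt2}{2}|\alpha|^2$, so the condition only forces $\gamma\leq 2-\sqrt2\approx0.586$. A concrete instance:
\begin{itemize}
\item Take $n=1$, $f(0)=1$, $f(1)=e^{i\pi/4}$ and $\phi=\sqrt2\,\one_{\{0\}}$, so $L=\{0\}\times\F_2$ and $H=\{0\}$.
\item Then $|\langle f,\phi\rangle|^2=\tfrac12$, while every neighbour has squared correlation at most $\tfrac{2+\sqrt2}{4}$. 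So $\phi$ is a $\gamma$-approximate local maximizer for every $\gamma\in(\tfrac12,2-\sqrt2]$.
\item But $\Delta_0 f\equiv1$, so $P_f(0,1)=0$, and your lower bound $2P_f(H)P_f(L\setminus H)$ is zero.
\item Nevertheless $Q_f(0,1)=2P_f(1,0)P_f(1,1)=\tfrac18$.
\end{itemize}
This is the failure of smoothness for $P_f$ that the paper warns about. Separately, a term $P_f(L\setminus H)^2$ contributes to $Q_f(H)$, not to $Q_f(L\setminus H)$.

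The missing idea is the contribution to $Q_f(L\setminus H)$ from the coset $v+L$, which lies \emph{off} $L$. Since $(v+H)+(L\setminus H)=v+(L\setminus H)$, you have
\begin{equation*}
Q_f(L\setminus H)\geq 2P_f(H)\,P_f(L\setminus H)+2P_f(v+H)\,P_f\big(v+(L\setminus H)\big).
\end{equation*}
The second product encodes the relative phase of $\alpha$ and $\beta$, which is exactly what the neighbour condition controls when $|\alpha|\approx|\beta|$. In your block language, set $N=|\alpha|^2+|\beta|^2$, $z=|\alpha|^2-|\beta|^2$ and $p+iq=2\bar\alpha\beta$. Then:
\begin{itemize}
\item $p^2+q^2+z^2=N^2$, and the four neighbours give $N+z\geq\gamma\big(N+\max(|p|,|q|)\big)$.
\item After discarding the other blocks, the two products are bounded below by multiples of $z^2N^2$ and $p^2q^2$.
\item Either $|z|\geq(\gamma-\tfrac12)N$, or $\max(|p|,|q|)<N/(1+2(\gamma-\tfrac12))$, which forces $\min(p^2,q^2)\gtrsim(\gamma-\tfrac12)N^2$.
\item With $N\geq|\alpha|^2\geq\tau^2$ and $\|f\|_2\leq1$, either case closes the argument.
\end{itemize}

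This dichotomy is exactly the paper's \cref{lem:terrible}. After a Clifford change of frame making $\phi=2^{n/2}\one_{\{0\}}$ and $v=(u,0)$, it keeps the convolution terms with $c\in\{0,u\}$. The first term of \eqref{eq:circles} is your imbalance term. The second comes from $c=u$, i.e.\ the coset $v+L$, and vanishes only when $y=f(u)^2/f(0)^2$ is real. Excluding $y$ from the disks $\pm1+(\gamma-\tfrac12)\D$ makes one of the two terms large.
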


The proof of Lemma~\ref{lem:nonrobust} will occupy the next few pages.
It relies on the observation that---in the non-robust setting---there must be an approximate spectral set~$F$ such that $\Lcal(\phi) \cap\vspan(F)$ is a strict subspace of~$\Lcal(\phi)$.
It then uses the crucial fact that, if~$\phi$ is an approximate local maximizer of correlation for~$f$, then the convoluted distribution~$Q_f$ is smoothly distributed over cosets of subspaces of~$\mathcal L(\phi)$.
(This is where using~$P_f$ would not work, as this is not necessarily the case for~$P_f$.)
The lemmas below make precise what this smooth distribution property is, first in a special case and then in full generality.

\begin{lemma} \label{lem:terrible}
    Let $f:\F_2^n\to \C$ be a function and suppose that $\phi = 2^{n/2}\one_{\{0\}}$ is a $\gamma$-approximate local maximizer for~$f$.
    Let $V \leq \F_2^n$ be a subspace of codimension~1.
    Then
    \begin{equation*}
        Q_f(\{0^n\}\times (\F_2^n\setminus V)) \geq \tfrac{1}{4} \big(\gamma - \tfrac{1}{2}\big)^2|\langle f, \phi\rangle|^8.
    \end{equation*}
\end{lemma}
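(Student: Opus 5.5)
Our plan is to pass to the symplectic Fourier transform of $Q_f$. Since $\Lcal(\phi)=\{0^n\}\times\F_2^n=:L$ is a Lagrangian, we have $L^\perp = L$ with respect to $[\cdot,\cdot]$. A codimension-one subspace $V\leq\F_2^n$ has the form $V=w^\perp$ for some $w\neq 0$, and with $z:=(w,0)$ we get $\{0^n\}\times(\F_2^n\setminus V)=\{u\in L: [z,u]=1\}$. Writing $\one[[z,u]=1]=\tfrac12(1-(-1)^{[z,u]})$ gives
\begin{equation*}
Q_f\big(\{0^n\}\times(\F_2^n\setminus V)\big)=\tfrac12\Big(\sum_{u\in L}Q_f(u)-\sum_{u\in L}(-1)^{[z,u]}Q_f(u)\Big).
\end{equation*}
By Poisson summation over $L=L^\perp$, each sum is an average over $L$, respectively over the coset $z+L$, of the symplectic Fourier transform $\widehat{Q_f}(y)=\sum_u(-1)^{[y,u]}Q_f(u)$. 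Since $Q_f=P_f*P_f$, we have $\widehat{Q_f}=\widehat{P_f}^{\,2}$. So the first step is to compute $\widehat{P_f}$.

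The second step is the identity for $\widehat{P_f}$. Using $P_f(u)\propto|\langle f,W(u)f\rangle|^2$, these inner products are real because the $W(u)$ are Hermitian. The identity $\sum_u W(u)\otimes W(u)\propto$ (swap composed with partial transpose) should then give $\widehat{P_f}(y)\propto|\langle \overline f, W(y)f\rangle|^2$, normalised by $\|f\|_2^4$. This is the classical analogue of the known Bell-sampling identity of Gross--Nezami--Walter, and it is a routine check with the Weyl multiplication rule. Consequently the target quantity equals
\begin{equation*}
\tfrac12\Big(\E_{b}\big|\widehat{f^2}(b)\big|^4-\E_{b}\big|\widehat{f\,\tau_wf}(b)\big|^4\Big)
\end{equation*}
up to explicit normalisation. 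Here $y=(0,b)$ gives $\langle\overline f,W(0,b)f\rangle=\widehat{f^2}(b)$, and $y=(w,b)$ gives the Fourier coefficient of $x\mapsto f(x)f(x+w)$ up to a unimodular phase. Up to normalisation, this is a difference of $U^2$-norms, $\|f^2\|_{U^2}^4-\|f\,\tau_wf\|_{U^2}^4$.

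The third step, which we expect to be the main obstacle, is to lower bound this difference using only the local-maximizer hypothesis. By \cref{lem:neighbors} with $v=(w,d)$, the neighbours of $\phi=2^{n/2}\one_{\{0\}}$ are $2^{(n-1)/2}(\one_{\{0\}}+\omega\one_{\{w\}})$ with $\omega\in\{\pm1,\pm i\}$. The hypothesis therefore reads
\begin{equation*}
|f(0)|^2\geq\tfrac{\gamma}{2}\,\big|f(0)+\omega f(w)\big|^2\quad\text{for all }\omega\in\{\pm1,\pm i\}.
\end{equation*}
Since $\max_\omega|f(0)+\omega f(w)|^2\geq|f(0)|^2+|f(w)|^2+\sqrt2|f(0)||f(w)|$, this forces a quantitative gap $|f(0)|^2-|f(w)|^2\gtrsim(\gamma-\tfrac12)|f(0)|^2$ in an appropriate combined sense. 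We would try to exploit this by expanding both fourth moments as counts of additive quadruples, $\sum_{x_1+x_2=x_3+x_4}$. In the first sum the terms with all $x_i=0$ contribute $|f(0)|^8$. In the second sum the analogous diagonal terms contribute only $|f(0)f(w)|^4$. We would then try to show, by a Cauchy--Schwarz/rearrangement comparison, that the off-diagonal contributions of $f\,\tau_wf$ are dominated by those of $f^2$; here we expect $\|f\tau_wf\|_{U^2}\leq\|f^2\|_{U^2}$ via $|f(x)f(x+w)|\leq\tfrac12(|f(x)|^2+|f(x+w)|^2)$. The difference would then be at least $\big(|f(0)|^4-|f(0)f(w)|^2\big)^2$-type terms, which the neighbour condition bounds below by $\tfrac14(\gamma-\tfrac12)^2|f(0)|^8$. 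After normalisation this is $\tfrac14(\gamma-\tfrac12)^2|\langle f,\phi\rangle|^8$, because $|\langle f,\phi\rangle|=2^{-n/2}|f(0)|$. If the monotonicity comparison fails as stated, the fallback is to restrict attention to the $a\in\{0,w\}$ fibres of $Q_f$ directly. There the asymmetries $|f(0)|^2-|f(w)|^2$ (from $a=0$) and $\mathrm{Im}\big(f(w)\overline{f(0)}\big)$ (from $a=w$) are exactly what the four-phase neighbour condition controls.
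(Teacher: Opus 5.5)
Your Step~1 (Poisson summation over $L=L^\perp$ and $\widehat{Q_f}=\widehat{P_f}^{\,2}$) is fine, but Step~2 is wrong, and Step~3 is not an argument.

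\emph{Step~2.} Because the $W(u)$ are Hermitian, $\sum_u W(u)\otimes W(u)$ is $2^n$ times the swap, with no partial transpose. Together with $W(y)W(u)W(y)=(-1)^{[y,u]}W(u)$, this gives $\widehat{P_f}=2^nP_f$: the characteristic distribution is self-dual. The expression $|\langle\bar f,W(y)f\rangle|^2$ is the Bell-sampling distribution, not $\widehat{P_f}$. The correct reduction is
\[
Q_f\big(\{0^n\}\times(\F_2^n\setminus V)\big)=2^{n-1}\Big(\sum_b P_f(0,b)^2-\sum_b P_f(w,b)^2\Big)=\frac{\||f|^2\|_{U^2}^4-\|\bar f\,\tau_wf\|_{U^2}^4}{2^{n+1}\|f\|_2^8},
\]
with $|f|^2$ and $\bar f\,\tau_wf$ in place of your $f^2$ and $f\,\tau_wf$. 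Your version fails for complex $f$. Take $n=1$, $w=1$, $f(0)=1$, $f(1)=e^{i\pi/4}$. Then $\E_b|\widehat{f^2}(b)|^4=\tfrac14<\tfrac12=\E_b|\widehat{f\,\tau_wf}(b)|^4$, so your formula gives a negative probability, while the true value is $\tfrac18$. In particular, the inequality $\|f\,\tau_wf\|_{U^2}\le\|f^2\|_{U^2}$ you planned to use is false.

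\emph{Step~3.} This is the whole content of the lemma, and both premises of your plan fail even after the correction.
\begin{itemize}
\item Removing the all-zero quadruple does not leave a dominated remainder. For $f(0)=1$, $f(1)=\tfrac12$ on $\F_2$, the rest of $\|f^2\|_{U^2}^4$ is $\tfrac{97}{2048}$, against $\tfrac{112}{2048}$ for $\|f\,\tau_wf\|_{U^2}^4$. Only the global inequality survives, and it gives merely $\ge 0$.
\item The hypothesis forces no modulus gap. In the complex example above, $\phi$ is a $\gamma$-approximate local maximizer for every $\gamma\le 2-\sqrt2$, yet $|f(0)|=|f(1)|$. So your $(|f(0)|^4-|f(0)f(w)|^2)^2$-type terms vanish, and all the mass comes from the phase through $\mathrm{Im}\big(\overline{f(0)}^2f(w)^2\big)$. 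This is a product of the real and imaginary parts of $\overline{f(0)}f(w)$, which is why all four neighbour phases are needed.
\end{itemize}

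\emph{The missing idea.} You need to write the target as a sum of \emph{nonnegative} terms, so that everything except the contributions at $\{0,w\}$ can be discarded. The paper does this by splitting the $d$-sum of the convolution over $V$ and $V+w$:
\[
\|f\|_2^8\,Q_f\big(\{0^n\}\times(\F_2^n\setminus V)\big)=\frac{2}{4^n}\sum_c\Big(\sum_{y\notin V}|\widehat{\Delta_cf}(y)|^2\Big)\Big(\sum_{z\in V}|\widehat{\Delta_cf}(z)|^2\Big).
\]
It keeps $c\in\{0,w\}$, then keeps $x\in\{0,w\}$ inside each factor, and reaches
\[
\|f\|_2^8\,Q_f\big(\{0^n\}\times(\F_2^n\setminus V)\big)\ge\frac{\big(|f(0)|^4-|f(w)|^4\big)^2}{2^{4n+1}}+\frac{2\,\mathrm{Im}\big(\overline{f(0)}^2f(w)^2\big)^2}{2^{4n}}.
\]
It finishes with a case analysis on $y=f(w)^2/f(0)^2$, which the hypothesis keeps out of the $(\gamma-\tfrac12)$-discs around $\pm1$. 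Your fallback points in this direction but carries out none of it.

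Your corrected Fourier route can also be completed. The Cauchy--Schwarz defect
\[
\||f|^2\|_{U^2}^4-\||f|\,\tau_w|f|\|_{U^2}^4=\tfrac12\E_{x,a,b}\big(F(x,a,b)-F(x+w,a,b)\big)^2,\qquad F(x,a,b)=|f(x)f(x+a)f(x+b)f(x+a+b)|,
\]
is a sum of nonnegative terms, and so is the phase defect $\||f|\,\tau_w|f|\|_{U^2}^4-\|\bar f\,\tau_wf\|_{U^2}^4$. Keeping $x\in\{0,w\}$ with $a=b=0$, resp.\ $a=b=w$, reproduces exactly the two terms above. The endgame on $y$ is still required.
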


\begin{proof}
Let~$u\in \F_2^n\setminus\{0\}$ be such that~$V = \{u\}^\perp$.
  We begin by showing that
  \begin{equation}\label{eq:gammaballs}
      y:= \frac{f(u)^2}{f(0)^2}\not\in \{-1,1\} + \big(\gamma - \tfrac{1}{2}\big)\D.
  \end{equation}
  Indeed, since~$\phi$ is a $\gamma$-approximate local maximizer of correlation for~$f$, it follows that
  \begin{equation*}
      2^{-n}|f(0)|^2 \geq \gamma\max_{a\in \Z_4}\big|\langle f, 2^{(n-1)/2}(\one_{\{0\}} + i^a\one_{\{u\}})\rangle\big|^2.
  \end{equation*}
  In turn, this implies that
  \begin{equation*}
      \frac{f(u)}{f(0)} \not\in \{1,i,-1,-i\} + \big(\gamma - \tfrac{1}{2}\big)\D,
  \end{equation*}
  which gives~\eqref{eq:gammaballs}.

Since~$V$ only has one nontrivial coset, we can write $\F_2^n\setminus V = V+w$ for some $w\not\in V$.
The quantity we wish to bound may be written as
 \begin{align*}
      \sum_{y\in V}Q_f(0,y+w) 
      &= \sum_{y \in V}\sum_{c,d\in \F_2^n}P_f(c,d)P_f(c,y+w+d)\\
      &=\frac{2}{4^n}\sum_{c,d\in \F_2^n}|\widehat{\Delta_cf}(d)|^2\sum_{y \in V}|\widehat{\Delta_cf}(y+w+d)|^2\\
      &=2\sum_{c\in \F_2^n}\Big(\sum_{y\not\in V}\frac{|\widehat{\Delta_cf}(y)|^2}{2^n}\Big)
      \Big(\sum_{z\in V}\frac{|\widehat{\Delta_cf}(z)|^2}{2^n}\Big),
  \end{align*}
  where the last identity is obtained by splitting the over~$d\in \F_2^n$ up into a sum over~$V$ and a sum over~$V+w$.
  Keeping only the terms $c\in \{0,u\}$, we get that this is bounded from below by
  \begin{equation}\label{eq:foursums}
      2\Big(\sum_{y\not\in V}\frac{|\widehat{\Delta_0f}(y)|^2}{2^n}\Big)
      \Big(\sum_{z\in V}\frac{|\widehat{\Delta_0f}(z)|^2}{2^n}\Big)
      +
      2\Big(\sum_{y\not\in V}\frac{|\widehat{\Delta_uf}(y)|^2}{2^n}\Big)
      \Big(\sum_{z\in V}\frac{|\widehat{\Delta_uf}(z)|^2}{2^n}\Big).
  \end{equation}
  Expanding the definition of the Fourier transforms of the multiplicative derivatives gives that the first two of the above four sums  are bounded as follows
  \begin{align*}
     \sum_{y\not\in V}\frac{|\widehat{\Delta_0f}(y)|^2}{2^n}
     &=
     \frac{1}{2^{n+2}}\Exp_{x\in \F_2^n}\big(|f(x)|^2 - |f(x+u)|^2\big)^2\geq \frac{1}{2^{2n+1}}\big(|f(0)|^2 - |f(u)|^2\big)^2, \\
     \sum_{z\in V}\frac{|\widehat{\Delta_0f}(z)|^2}{2^n}
     &=
     \frac{1}{2^{n+2}}\Exp_{x\in \F_2^n}\big(|f(x)|^2 + |f(x+u)|^2\big)^2\geq \frac{1}{2^{2n+1}}\big(|f(0)|^2 + |f(u)|^2\big)^2.
  \end{align*}
Similarly, the last two of the sums in~\eqref{eq:foursums} are bounded by
    \begin{align*}
     \sum_{y\not\in V}\frac{|\widehat{\Delta_uf}(y)|^2}{2^n}
     &=
     \frac{1}{2^{n+1}}\Exp_{x\in \F_2^n}\big(|f(x)|^2|f(x+u)|^2 - \overline{f(x)}^2f(x+u)^2\big)\\
     &\geq \frac{1}{2^{2n}}\big(|f(0)|^2|f(u)|^2 - \Re\big(\overline{f(0)}^2f(u)^2\big)\big), \\
     \sum_{z\in V}\frac{|\widehat{\Delta_uf}(z)|^2}{2^n}
     &=
     \frac{1}{2^{n+1}}\Exp_{x\in \F_2^n}\big(|f(x)|^2|f(x+u)|^2 + \overline{f(x)}^2f(x+u)^2\big)\\
     &\geq \frac{1}{2^{2n}}\big(|f(0)|^2|f(u)|^2 + \Re\big(\overline{f(0)}^2f(u)^2\big)\big).
  \end{align*}

Combining these bounds gives that~\eqref{eq:foursums} is bounded from below by
\begin{equation}\label{eq:circles}
    \frac{1}{2^{4n+1}}|f(0)|^8(1-|y|)^2(1+|y|)^2
    +
    \frac{1}{2^{4n-1}}|f(0)|^8\big(|y| - \Re(y)\big)^2\big(|y| + \Re(y)\big)^2.
\end{equation}
Note that~$|f(0)|^8/2^{4n} = |\langle f, \phi\rangle|^8$.
We bound~\eqref{eq:circles} from below by using that the forbidden region of~$y$ in the complex plane given by~\eqref{eq:gammaballs} contains two segments of a narrow annulus around the complex unit circle (see Figure~\ref{fig:circles}).

\begin{center}
    \begin{figure}[ht]
        \includegraphics[width=6cm]{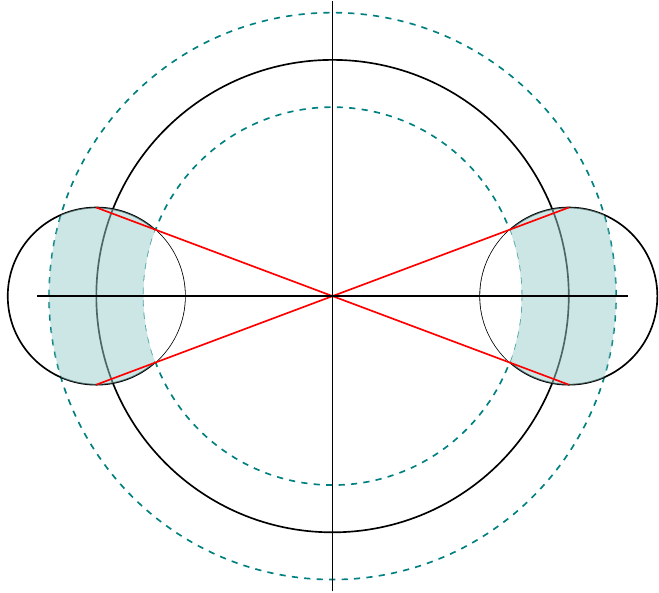}
    \caption{Forbidden regions for~$y$.}\label{fig:circles}
    \end{figure}
\end{center}

Choose the angles between the straight lines and the horizontal axis to be such that the distance from the origin to the small circles equals $r = \sqrt{1 - (\gamma-1/2)^2}$.

If~$y$ lies outside of the annulus, then the first term of~\eqref{eq:circles} is at least $\frac{1}{4}(\gamma - 1/2)^2|\langle f, \phi\rangle|^8$.
If~$y$ lies inside the annulus but outside of the small circles, then elementary trigonometry shows that the second term of~\eqref{eq:circles} is at least $\frac{1}{4}(\gamma - 1/2)^2|\langle f, \phi\rangle|^8$.
\end{proof}

\begin{lemma}[Smoothness over cosets] \label{lem:Qfsmooth}
    Let $f: \F_2^n\to \C$.
    For $\gamma \in (\tfrac{1}{2}, 1)$, let~$\phi$ be a $\gamma$-approximate local maximizer for~$f$ such that $|\langle f, \phi\rangle| \geq\tau$.
    Then, for every proper subspace $T\lneq \mathcal L(\phi)$, we have
    \begin{equation*}
        Q_f\big(\mathcal L(\phi)\setminus T\big) \geq \tfrac{1}{4} \big(\gamma - \tfrac{1}{2}\big)^2 \tau^8.
    \end{equation*}
\end{lemma}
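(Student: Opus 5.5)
The plan is to reduce the general statement to the special case treated in \cref{lem:terrible} using the symmetries of the $U^3$ norm from \cref{thm:isoU3}. The key observation is that a unitary isometry $M = \alpha\sigma(S)W(v)$ acts on characteristic distributions in a controlled way. Since $\sigma(S)W(u)\sigma(S)^* \propto W(Su)$ with a unimodular constant, equations~\eqref{eq:FourierWeyl} and~\eqref{eq:U3Weyl} give
\[
P_{Mf}(Su) = P_f(u) \quad \text{for all } u\in\F_2^{2n}.
\]
Because the map $u\mapsto Su$ is linear, this transfers to the convolution, so $Q_{Mf}(SA) = Q_f(A)$ for every set~$A$. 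Unitarity preserves $\langle f,\phi\rangle$ and the inner products between stabilizer states. Hence $M\phi$ is again a $\gamma$-approximate local maximizer for~$Mf$, with the same correlation, and $\Lcal(M\phi) = S\Lcal(\phi)$.

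First, I reduce to a codimension-one subspace. Since $T$ is a proper subspace of $\Lcal(\phi)$, it lies inside some hyperplane $T'$ of $\Lcal(\phi)$. Then $\Lcal(\phi)\setminus T \supseteq \Lcal(\phi)\setminus T'$, so it suffices to treat $\dim T = n-1$.

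Second, I pick a symplectic map $S$ sending $\Lcal(\phi)$ to $L_0 := \{0^n\}\times\F_2^n$. This is possible because $\Sp(\F_2^{2n})$ acts transitively on Lagrangians. The image $ST$ is then a hyperplane of $L_0$, namely $\{0\}\times V$ for some $V\leq\F_2^n$ of codimension~$1$.

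The stabilizer states with Lagrangian $L_0$ are, up to phase, the functions $2^{n/2}\one_{\{r\}}$ for $r\in\F_2^n$; this follows from \cref{lem:stab_explicit} with $V=\{0\}$. Composing with a suitable translation $W(r,0)$, I can therefore choose $M=\alpha\sigma(S)W(v)$ so that $M\phi = 2^{n/2}\one_{\{0\}}$ up to a phase. Note that $W(v)$ does not change the Lagrangian, so this correction keeps $\Lcal(M\phi)=L_0$.

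Third, I apply \cref{lem:terrible} to $g := Mf$, with $\phi_0 = 2^{n/2}\one_{\{0\}}$ and the hyperplane $V$. This gives
\[
Q_g\big(\{0\}\times(\F_2^n\setminus V)\big) \geq \tfrac14\big(\gamma-\tfrac12\big)^2\,|\langle g,\phi_0\rangle|^8 \geq \tfrac14\big(\gamma-\tfrac12\big)^2\tau^8.
\]
Pulling back by $S$ identifies the left-hand side with $Q_f(\Lcal(\phi)\setminus T)$, which is the claim.

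The main obstacle is justifying the invariance $Q_{Mf}(SA)=Q_f(A)$. One also has to make sure that no hidden normalisation issue arises: the statement does not assume $\|f\|_2 = 1$, and \cref{lem:terrible} is phrased for general $f$. Both points go through because $\|Mf\|_2=\|f\|_2$ and the Weyl coefficients are only multiplied by unit-modulus phases. A secondary point is that \cref{lem:terrible} does not use 1-boundedness, which matters because $Mf$ need not be bounded. Reading its proof confirms this, since only $|f(0)|^8/2^{4n}=|\langle f,\phi\rangle|^8$ enters.
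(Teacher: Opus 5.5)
Your proposal is correct and follows essentially the same route as the paper. The paper transports $\phi$ to $2^{n/2}\one_{\{0\}}$ via $\alpha W(v)\sigma(S)$ (your ordering $\alpha\sigma(S)W(v)$ is equivalent up to a phase and a change of $v$), uses $Q_f(X)=Q_{Uf}(SX)$, and invokes \cref{lem:terrible}. Your reduction to a hyperplane $T'\supseteq T$ via $\Lcal(\phi)\setminus T\supseteq\Lcal(\phi)\setminus T'$, and your explicit check of the invariance of $P_f$ and $Q_f$, fill in steps the paper leaves implicit: it applies \cref{lem:terrible}, which is stated only for codimension-one $V$, to an arbitrary proper subspace $V$.
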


\begin{proof}
As the symplectic group acts transitively on the Lagrangian subspaces, there exists a symplectic map $S\in \Sp(\F_2^{2n})$ such that $S\Lcal(\phi) = \{0^n\} \times \F_2^n$.
From \cref{lem:semi_rep}, there exists a unitary $\sigma(S) \in \U(\F_2^n)$ satisfying
$$\sigma(S) W(x) \sigma(S)^* \propto W(Sx) \quad \text{for all $x\in \F_2^{2n}$.}$$
As seen in \cref{sec:extremizers}, $\sigma(S) \phi$ is then a stabilizer state with associated Lagrangian $S\Lcal(\phi) = \{0^n\} \times \F_2^n$.
Finally, since the Weyl operators act transitively (up to phases) on stabilizer states sharing the same Lagrangian (see equation~\eqref{eq:Weyl_action}), and since $2^{n/2}\one_{\{0\}}$ is a stabilizer state with Lagrangian $\{0^n\} \times \F_2^n$, we conclude there exist $\alpha \in \U(1)$ and $v\in \F_2^{2n}$ such that $\alpha W(v) \sigma(S) \phi = 2^{n/2}\one_{\{0\}}$.

Denote $U = \alpha W(v) \sigma(S)$, so that~$U$ is a unitary map that takes stabilizer states to stabilizer states (see \cref{thm:isoU3}).
It follows that $U\phi = 2^{n/2}\one_{\{0\}}$ is a $\gamma$-approximate local maximizer of correlation for~$Uf$, and
\begin{equation*}
    Q_f(X) = P_f * P_f(X) = P_{Uf} * P_{Uf}(SX) = Q_{Uf}(SX)
\end{equation*}
for any set $X\subseteq \F_2^{2n}$.
Finally, note that for $T\lneq \Lcal(\phi)$ we have $ST \lneq S\Lcal(\phi) = \{0^n\} \times \F_2^n$, and thus there exists a subspace $V\lneq \F_2^n$ such that $ST = \{0^n\} \times V$.
We conclude that
\begin{equation*}
    Q_f\big(\mathcal L(\phi)\setminus T\big)
    =
    Q_{Uf}\big(\{0^n\}\times (\F_2^n\setminus V) \big),
\end{equation*}
and the result follows from Lemma~\ref{lem:terrible}.
\end{proof}

The proof of Lemma~\ref{lem:nonrobust} now follows immediately from the above lemma. If~$f$ does not $\eps$-robustly generate~$\mathcal L(\phi)$, then there is an $\eps$-approximate spectral set~$F$ such that $\mathcal L(\phi)\cap \vspan(F)$ is a proper subspace of~$\mathcal L(\phi)$.
It then follows from Lemma~\ref{lem:Qfsmooth} that
\begin{align*}
    Q_f\big(\mathcal L(\phi)\setminus \spec(f)\big) 
    &\geq Q_f(\mathcal L(\phi)\setminus \vspan(F)) - Q_f\big(\spec(f)\setminus \vspan(F)\big)\\
    &\geq \tfrac{1}{4} \big(\gamma - \tfrac{1}{2}\big)^2\tau^8 - \eps,
\end{align*}  
finishing the proof by our choice of~$\eps$.

\subsubsection{Sampling the desired Lagrangian}

Putting the above ideas together gives the following algorithm.

\begin{algorithm}[H]
\caption{\textsc{LagrangianSampling}}
\DontPrintSemicolon
\SetKwInOut{Input}{Input}
\Input{1-bounded function $f:\F_2^n\to\C$, $\gamma>1/2$ and $\tau>0$}
\medskip
Set $t = \lceil\log_{1.08}(1/\tau)\rceil$\;
Set $f_0 = f$\;
\For{$i\in [t]$}{
$(a_i,b_i) \gets Q_{f_{i-1}}$ \tcp*{sample from the convoluted distribution}
$\sigma_i \gets \mathrm{Uniform}\{+1,-1\}$\tcp*{sample a random sign}
Set $f_i = \Pi_{a_i,b_i}^{\sigma_i}f_{i-1}$\tcp*{generate a projection of $f_{i-1}$}
}
$s \gets \mathrm{Uniform}\{0,1,\dots,t\}$\tcp*{sample a random iterate index}
\Return{A basis obtained by the algorithm from Lemma~\ref{lem:robustgen} on input~$f_s$ with parameters $\eps = (\gamma -\frac{1}{2})^2 \tau^8/8$ and $\tau$}
\end{algorithm}

An analysis of this algorithm gives the following idealized analogue of \cref{thm:lagrangian}.

\begin{theorem}[Lagrangian sampling]
\label{thm:ideal_Lsampling}
Let~$f:\F_2^n\to\C$ be a 1-bounded function, and let~$\phi$ be a $\gamma$-approximate local maximizer of correlation for~$f$ that satisfies $|\langle f, \phi\rangle|\geq \tau$.
Denote $t = \lceil\log_{1.08}(1/\tau)\rceil$ and $\eps = (\gamma -\frac{1}{2})^2\tau^8/8$.
Then, $\textsc{LagrangianSampling}(f,\gamma,\tau)$ returns a basis for a subspace $L\leq \F_2^{2n}$ such that
$$\Pr[L = \Lcal(\phi)] \geq \frac{2}{3(t+1)} \Big(\frac{\eps}{2}\Big)^{t+1} = \big((\gamma - \tfrac{1}{2})\tau\big)^{O(\log(1/\tau))}.$$
This algorithm takes $O(n/\eps)$ samples from~$Q_{f_i}$ for some $i\in [t]$, makes $O\big(\tfrac{1}{\eps \tau^9} n \log\tfrac{n}{\eps}\big)$ queries to~$f$, and runs in $O\big(\tfrac{1}{\eps} n^3 + \tfrac{1}{\eps \tau^9} n^2 \log\tfrac{n}{\eps}\big)$ time.
\end{theorem}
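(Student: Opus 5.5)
We analyse \textsc{LagrangianSampling} by tracking the normalized energy $E(g) := |\langle g,\phi\rangle|^2/\|g\|_2^2$ along the iterates $f_0=f, f_1,\dots,f_t$. We call an iteration $i$ \emph{successful} if $(a_i,b_i)\in\Lcal(\phi)\setminus\Spec(f_{i-1})$ and $\sigma_i$ is the unique sign with $\phi\in V_{a_i,b_i}^{\sigma_i}$. In that case, \cref{lem:boost} gives $E(f_i)\geq 1.08\,E(f_{i-1})$, and $\phi$ remains a $\gamma$-approximate local maximizer for $f_i$.

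Two invariants are needed for the hypotheses of the later lemmas. First, $\langle f_i,\phi\rangle=\langle f,\phi\rangle$ along successful steps, since the projections fix $\phi$. Hence $|\langle f_i,\phi\rangle|\geq\tau$. Second, $f_i$ is $1$-bounded, because $\Pi^\sigma_{a,b}$ averages $f$ with a unimodular multiple of a translate of $f$. Since $E(f_0)\geq\tau^2$ and $E\leq 1$ always, there can be at most $\log_{1.08}(1/\tau^2)$ successful boosts. I would check the constant in $t$ against this bound, possibly replacing $\tau^2$ by $\tau$ if the energy is measured by the non-squared ratio. Therefore, along any run consisting only of successful steps, there is a first index $s^*\leq t$ at which $f_{s^*}$ $\eps$-robustly generates $\Lcal(\phi)$. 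Otherwise a further successful step would push $E$ above $1$.

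The probability bound then comes from a good event. Consider the event that iterations $1,\dots,s^*$ are all successful, that the random index satisfies $s=s^*$, and that the algorithm of \cref{lem:robustgen} run on $f_{s^*}$ succeeds.

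For each $i\leq s^*$, $f_{i-1}$ does not $\eps$-robustly generate $\Lcal(\phi)$, and $\phi$ is a $\gamma$-approximate local maximizer with correlation at least $\tau$. So \cref{lem:nonrobust} gives $Q_{f_{i-1}}(\Lcal(\phi)\setminus\Spec(f_{i-1}))\geq\eps$, and the random sign is correct with probability $1/2$. Each step therefore succeeds with conditional probability at least $\eps/2$.

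Next, $\Pr[s=s^*]=1/(t+1)$. \cref{lem:robustgen} applies since $\|f_{s^*}\|_2\geq|\langle f_{s^*},\phi\rangle|\geq\tau$, and it succeeds with probability $2/3$. Multiplying and using $s^*\leq t$ gives $\frac{2}{3(t+1)}(\eps/2)^{t+1}$. With $\eps=(\gamma-\tfrac12)^2\tau^8/8$ and $t=O(\log(1/\tau))$, this is $((\gamma-\tfrac12)\tau)^{O(\log 1/\tau)}$.

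The main subtlety is that $s^*$ is a random variable that depends on the run, while $s$ is drawn independently. This is harmless because $s$ is independent of the iterates, so $\Pr[s=s^*\mid\text{run}]=1/(t+1)$ for every run. A second point to handle carefully is that later, irrelevant iterations beyond $s^*$ do not matter. That is the case, since only $f_s$ is used.

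For the complexity, the only work beyond sampling is the call to \cref{lem:robustgen}. A query to $f_s$ costs at most $2^s\leq 2^t=\tau^{-O(1)}$ queries to $f$, since each projection doubles the query cost. With $t=\log_{1.08}(1/\tau)$ this overhead is $\tau^{-\log_{1.08}2}$, which is roughly $\tau^{-9}$. Plugging this into the bounds of \cref{lem:robustgen} gives the stated query and time counts, with the $O(n^3/\eps)$ Gaussian elimination term unchanged.
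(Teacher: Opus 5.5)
Your argument is the paper's argument. It runs a chain of energy boosts, each won with probability at least $\eps/2$ from \cref{lem:nonrobust} and the random sign, until some iterate robustly generates $\Lcal(\phi)$. It then uses a $1/(t+1)$ chance that the independent index $s$ hits that iterate, and a $2/3$ chance from \cref{lem:robustgen}. Your bookkeeping via the first robustly generating index $s^*$ is equivalent to the paper's events $\mathrm{succ}_i$, and arguably cleaner. The invariants you check are exactly what is needed: $\langle f_i,\phi\rangle=\langle f,\phi\rangle$, $1$-boundedness, and $\|f_{s^*}\|_2\geq\tau$.

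\textbf{The constant in $t$.} The point you left open is a real slip, and the paper's proof has the same one at the step ``because the energy is capped by 1''. The starting energy $E(f_0)=|\langle f,\phi\rangle|^2/\|f\|_2^2$ can be as small as $\tau^2$, for instance when $\|f\|_2=1$. Each boost multiplies $E$ by only $1.08$, and the bound $\|f'\|_2^2\leq\tfrac12(1+\sqrt{0.7})\|f\|_2^2$ cannot be improved in general. So the cap $E\leq 1$ only forces robust generation after about $\log_{1.08}(\tau^{-2})=2\log_{1.08}(1/\tau)$ boosts. With $t=\lceil\log_{1.08}(1/\tau)\rceil$, your claim $s^*\leq t$ is therefore not justified.

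Your suggested remedy does not help. With the unsquared ratio the start is $\tau$, but the per-step factor drops to $\sqrt{1.08}$, which gives the same count. The fix is to run the loop for $t=\lceil 2\log_{1.08}(1/\tau)\rceil$ iterations. Your argument then goes through verbatim and gives $\frac{2}{3(t+1)}(\eps/2)^{t+1}$ for this $t$, which is still $((\gamma-\tfrac12)\tau)^{O(\log(1/\tau))}$.

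\textbf{The complexity count.} Plugging in does not give the stated numbers. \cref{lem:robustgen} already makes $O(\frac{1}{\eps\tau^8}n\log\frac{n}{\eps})$ queries to $f_s$. Multiplying by the overhead $2^t$ gives roughly $\tau^{-17}$ in the denominator, or about $\tau^{-26}$ with the corrected $t$, rather than $\tau^{-9}$. The paper's accounting here is equally loose. Only the $\poly(1/\tau)$ form is used later, in \cref{thm:lagrangian}, so nothing downstream breaks.
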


\begin{proof}
Given functions $g,g':\F_2^n\to \C$, define the following conditions:
\begin{itemize}
    \item Base condition $\mathsf{BC}(g)$: $\|g\|_\infty\leq 1$, $\phi$ is a $\gamma$-approximate local maximizer of correlation for~$g$ and $|\langle g,\phi\rangle| \geq \tau$.
    \item Robust generation $\mathsf{RG}(g)$: $\mathsf{BC}(g)$ holds and $g$ $\eps$-robustly generates~$\mathcal L(\phi)$.
    \item Energy increment $\mathsf{EI}(g,g')$: $\frac{|\langle g',\phi\rangle|^2}{\|g'\|_2^2} \geq 1.08\frac{|\langle g,\phi\rangle|^2}{\|g\|_2^2}$ and $\mathsf{BC}(g), \mathsf{BC}(g')$ hold.
\end{itemize}

For each $i\in\{0,1,\dots,t-1\}$ consider the success event 
\begin{equation*}
    \mathrm{succ}_i = \Big(\bigwedge_{j=0}^{i}\mathsf{EI}(f_j,f_{j+1})\Big)
    \vee
    \bigvee_{j=0}^i\mathsf{RG}(f_j).
\end{equation*}
Because the energy is capped by~1, we have that $\mathrm{succ}_t = \bigvee_{j=0}^t\mathsf{RG}(f_j)$.
Thus, the final success event $\mathrm{succ}_t$ implies that one of the~$f_i$ $\eps$-robustly generates~$\mathcal L(\phi)$.

By Lemma~\ref{lem:boost} and Lemma~\ref{lem:nonrobust}, we have that 
\begin{equation*}
    \Pr\big[\mathrm{succ}_{i+1}\mid\mathrm{succ}_i\big]
    \geq\Pr\big[\mathsf{EI}(f_{i+1},f_{i+2})\vee \mathsf{RG}(f_{i+1})\mid \mathsf{BC}(f_{i+1})\big]\geq \frac{\eps}{2}.
\end{equation*}
It then follows that
\begin{align*}
    \Pr\big[\mathrm{succ}_t\big]
    &=
    \Pr[\mathrm{succ}_0]\prod_{i=0}^{t-1}\Pr\big[\mathrm{succ}_{i+1}\mid\mathrm{succ}_i\big] \geq \Big(\frac{\eps}{2}\Big)^{t+1}.
\end{align*}

Conditioned on the event~$\mathrm{succ}_t$, we have that with probability at least $1/(t+1)$ the function~$f_s$ $\eps$-robustly generates~$\mathcal L(\phi)$.
In that event, the algorithm returns~$\mathcal L(\phi)$ with probability at least~$2/3$.
This proves the probability bound.

The sample complexity of the algorithm follows from that of \cref{lem:robustgen}.
The time and query complexities also follow, since
\begin{align*}
    f_j(x) &= \frac{f_{j-1}(x) + \sigma_j (W(a_j, b_j)f_{j-1})(x)}{2} \\
    &= \frac{f_{j-1}(x) + \sigma_j (-i)^{|a_j\circ b_j|}(-1)^{b_j\cdot x} f_{j-1}(x+a_j)}{2}
\end{align*}
and thus a query to~$f_j$ can be made using~$2^j$ queries to $f_0=f$ and $O(2^j n)$ time.
\end{proof}

\subsection{Approximate sampling from the convoluted distribution} \label{sec:Convoluted}

Next we give an algorithmic procedure that allows us to approximately sample from the convoluted distribution~$Q_f$ when given query access to~$f$.

As a first step, note that sampling from $Q_f$ would be easy if we could sample from the simpler distribution~$P_f$.
However, doing so presents some difficulties:
by Parseval's identity we have
$$\sum_{b\in \F_2^n} P_f(a, b) = \sum_{b\in \F_2^n} \frac{|\widehat{\Delta_a f}(b)|^2}{2^n \|f\|_2^4} = \frac{\|\Delta_a f\|_2^2}{2^n \|f\|_2^4},$$
which can significantly vary with $a\in \F_2^n$.
As such, even if we can (approximately) sample from the marginal distribution $P_f(a, \cdot)/(\sum_{b} P_f(a, b))$ for a given $a\in \F_2^n$, there seems to be no easy way to sample~$a$ from a distribution proportional to $\|\Delta_a f\|_2^2$ while using few queries to~$f$.

Our solution is to ignore this difficulty and instead sample $a\in \F_2^n$ uniformly at random, followed by sampling~$b$ with probability close to $|\widehat{\Delta_a f}(b)|^2$.
We thereby obtain a sample $(a, b)$ from some probability distribution~$\nu_f$ that approximates the \emph{non-probability measure} $\|f\|_2^4\cdot  P_f$ in a \emph{fairly weak sense}.
Upon convolving~$\nu_f$ with itself, this distribution gets smoothened out and we obtain the following result:

\begin{lemma}[Convoluted sampling] \label{lem:sampling}
    Let $f: \F_2^n \to \C$ be a $1$-bounded function and $\xi>0$.
    There is a randomized sampling procedure that makes $n\log n\, \poly(1/\xi)$ queries to~$f$ and, with probability at least $1-1/n^2$, samples from a probability distribution~$\mu_f$ that satisfies
    \begin{equation*}
    \big|\mu_f(F) - \|f\|_2^8 Q_f(F)\big| \leq \frac{\xi |F|}{2^n} \quad \text{for all sets $F\subseteq \F_2^{2n}$.}
    \end{equation*}
    This sampling procedure takes $n^2 \log n\, \poly(1/\xi)$ time.
\end{lemma}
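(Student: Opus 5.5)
We sample $(a,b)$ and $(a',b')$ independently from a one-sided distribution $\nu_f$ and output their sum. The distribution $\nu_f$ is built as follows: choose $a\in\F_2^n$ uniformly, then sample $b$ with probability approximately $|\widehat{\Delta_a f}(b)|^2$ using the Goldreich--Levin list from \cref{thm:GL}.

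Concretely, run \cref{thm:GL} on the 1-bounded function $\Delta_a f$ with threshold $\tau=\xi/C$ and failure probability $\delta=1/(2n^3)$. This returns a list $\Lcal_a$ of size $O(1/\xi^2)$ by Parseval. For each $b\in \Lcal_a$, estimate $\widehat{\Delta_af}(b)$ to accuracy $\xi^2/C$ via \cref{lem:fourest}. Output $b$ with probability equal to the estimated $|c_b|^2$. With the leftover probability $1-\sum_b |c_b|^2$, output a uniformly random $b$. Since $\sum_b|\widehat{\Delta_af}(b)|^2=\|\Delta_af\|_2^2\le 1$, we can clip the estimates so that they sum to at most $1$.

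The key steps, in order:

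\begin{enumerate}
\item Write $\nu_f(a,b)=2^{-n}\bigl(|\widehat{\Delta_af}(b)|^2+e_a(b)\bigr)$. Here the error $e_a$ has three parts: an $\ell^1$-small part of size $O(\xi)$ coming from the estimation errors and the omitted coefficients, and a part uniform on $b$ with mass at most $1$. Note that $\|f\|_2^4P_f(a,b)=2^{-n}|\widehat{\Delta_af}(b)|^2$. Hence $\nu_f=\|f\|_2^4P_f+E$, where $E$ splits as $E_1+E_2$.
\item Bound the two error terms. The term $E_2$ is at most $2^{-n}\cdot 2^{-n}\cdot(\text{leftover})$ pointwise in $(a,b)$, so $E_2\le 4^{-n}$ pointwise. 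The term $E_1$ has pointwise size at most $2^{-n}\max_b|e_a(b)|$, which is small. However, its $\ell^1$ mass is only $O(\xi)$, not $O(\xi 2^{-n})$.
\item Convolve. Then
\begin{equation*}
\mu_f=\nu_f*\nu_f=\|f\|_2^8Q_f+2\|f\|_2^4P_f*E+E*E .
\end{equation*}
To bound $\mu_f(F)$ against $|F|$, it suffices to show that every cross term has $\ell^\infty$ norm at most $\xi 2^{-2n}\cdot 2^{n}=\xi/2^n$, because summing a pointwise bound over $F$ gives $\xi|F|/2^n$.
\item Apply Young's inequality, $\|g*h\|_\infty\le\|g\|_1\|h\|_\infty$. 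Every factor in the cross terms has $\ell^1$ norm $O(1)$. So it is enough that $\|E\|_\infty$ and $\|P_f\|_\infty\|f\|_2^4$ are both $O(\xi)/2^n$ after suitable choices. This is the main obstacle, because $\|f\|_2^4P_f(a,b)$ can be as large as $2^{-n}$, and then $P_f*E_1$ is only bounded by $2^{-n}\|E_1\|_1=O(\xi)2^{-n}$. That is exactly the required size, so this case works.
\item Handle the leftover-mass part: $E_2*\text{anything}\le 4^{-n}\cdot O(1)\cdot$, which is far below $\xi/2^n$ pointwise unless $\xi<2^{-n}$. In that regime the stated runtime permits exact computation.
\item Sum the failure probabilities. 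Two runs of Goldreich--Levin plus the estimates fail with total probability at most $1/n^2$. Conditioned on success, $\mu_f$ is the exact distribution of the procedure. The costs are two Goldreich--Levin calls, each using $n\log n\,\poly(1/\xi)$ queries and $n^2\log n\,\poly(1/\xi)$ time, which gives the claimed bounds.
\end{enumerate}

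The step I expect to need the most care is controlling $E_1$ pointwise. To make $\|E_1\|_\infty\le\xi 2^{-n}$ hold deterministically given success, I would define the output distribution so that, on the success event, each estimated coefficient is within $\xi$ of its true value.
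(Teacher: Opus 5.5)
Your construction is essentially the paper's: uniform $a$, a Goldreich--Levin list for $\Delta_af$, Fourier estimates, redistribution of the leftover mass, $\mu_f=\nu_f*\nu_f$, and pointwise control of the cross terms. However, the step you flag as the main obstacle is closed using a false claim.

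The part of $E_1$ coming from the coefficients left off the list is pointwise at most $\tau^2 2^{-n}$. Its $\ell^1$ mass, though, is $\E_a\sum_{b\notin\text{list}}|\widehat{\Delta_af}(b)|^2$, and this need not be $O(\xi)$. For a random $\pm1$-valued $f$, every coefficient of $\Delta_af$ with $a\neq0$ is $O(\sqrt{n}\,2^{-n/2})$, the list is empty, and this mass is essentially $1$. So the bound $\|\tilde P_f*E_1\|_\infty\le\|\tilde P_f\|_\infty\|E_1\|_1=O(\xi)2^{-n}$ in your step~4 (with $\tilde P_f:=\|f\|_2^4P_f$) does not hold.

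The obstacle is illusory: apply Young's inequality with the sup norm on the error instead. Since $\|\tilde P_f\|_1=\|f\|_2^4\le1$ and $\|E\|_1\le\|\nu_f\|_1+\|\tilde P_f\|_1\le2$,
\[
\big\|2\tilde P_f*E+E*E\big\|_\infty\le 2\|\tilde P_f\|_1\|E\|_\infty+\|E\|_1\|E\|_\infty\le 4\|E\|_\infty .
\]
So all you need is $\|E\|_\infty\le\xi 2^{-n}/4$, which your construction gives on the success event. Neither $\|\tilde P_f\|_\infty$ nor $\|E_1\|_1$ has to be small. This is exactly the paper's computation: it pairs the pointwise error $\xi/4$ (plus a small set of bad $a$'s) with $\sum_{c,d}\tilde P_f(c,d)\le1$.

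There are two smaller points.

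\emph{Clipping.} With $\tau=\xi/C$ the list may have $4C^2/\xi^2$ elements. Accuracy $\xi^2/C$ per coefficient then only bounds the total overshoot of the squared estimates by $O(C)$, and a rescaling clip could destroy the pointwise bound. Either use accuracy of order $\xi^3$ (the paper uses $\xi^4/64$ on the squares with a list of size $64/\xi^2$), or clip by subtracting the accuracy from each estimate and flooring at $0$.

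\emph{Tiny $\xi$.} The regime $\xi<2^{-n}$ cannot be rescued by exact computation. $\mu_f$ has mass $1$ whereas $\|f\|_2^8Q_f$ has mass $\|f\|_2^8$, so taking $F=\F_2^{2n}$ shows the statement itself needs $\xi2^n\ge1-\|f\|_2^8$. Simply assume $\xi\ge2^{-n}$, as the paper tacitly does (it needs $4/\xi$ spare points in $\F_2^n$).

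Your other deviations are fine and slightly simpler than the paper's. Spreading the leftover uniformly over $\F_2^n$ costs only $4^{-n}$ pointwise. Instead of the paper's random bad set and Chernoff bound, you need no conditioning at all: a failure probability $\delta$ at a given $a$ changes the averaged $\nu_a(b)\in[0,1]$ by at most $\delta$. So with $\delta=O(\xi)$ the unconditional output law is a fixed distribution satisfying the bound. If you condition on success instead, the $a$-marginal is skewed by a factor $1\pm O(\delta)$, so there too you need $\delta=O(\xi)$, not merely $1/(2n^3)$.
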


Note that, unless $\|f\|_2 = 1$, the expression $\|f\|_2^8 \cdot Q_f$ is \emph{not} a probability measure.
It would then be impossible for our samplable distribution~$\mu_f$ to approximate this measure in a more obvious way such as total variation distance.
However, since all the events that are important for our algorithm correspond to isotropic sets (and thus have size at most~$2^n$), the approximation given in Lemma~\ref{lem:sampling} is essentially just as good as total variation distance for our purposes.

\begin{proof}[ of Lemma~\ref{lem:sampling}]
Without loss of generality, we may assume that $\xi \leq 1/2$ and that~$1/\xi$ is an integer, so we do not need to deal with floor functions.
Let $\eta>0$ be a small number to be specified later on.
Given $a\in \F_2^n$, we can use the Goldreich--Levin algorithm (Theorem~\ref{thm:GL}) on $\Delta_a f$ to find a set $B_a \subseteq \F_2^n$ of size at most $64/\xi^2$ which, with probability at least $1-\eta$, satisfies
\begin{equation*}
    \big\{b\in \F_2^n:\: |\widehat{\Delta_a f}(b)| \geq \xi/4\big\} \subseteq B_a \subseteq \big\{b\in \F_2^n:\: |\widehat{\Delta_a f}(b)| \geq \xi/8\big\}.
\end{equation*}
This takes $n \log n\, \poly(\xi^{-1} \log(\eta^{-1}))$ queries to~$f$ and $n^2 \log n\, \poly(\xi^{-1} \log(\eta^{-1}))$ time.

Next, using  Lemma~\ref{lem:fourest}, one can make $\poly(\xi^{-1} \log(\eta^{-1}))$ queries to~$f$ to obtain nonnegative numbers $\{\lambda_a(b): b\in B_a\}$ such that, with probability at least $1-\eta$, we have
$$\big||\widehat{\Delta_a f}(b)|^2 - \lambda_a(b)\big| \leq \xi^4/64 \quad \text{for all $b\in B_a$}.$$
Then, with probability at least $1-\eta$, we have
\begin{equation*}
    \sum_{b\in B_a} \lambda_a(b) \leq \sum_{b\in B_a} \big(|\widehat{\Delta_a f}(b)|^2 + \xi^4/64\big) \leq \|\Delta_a f\|_2^2 + \frac{\xi^4 |B_a|}{64} \leq 1 + \xi^2.
\end{equation*}
If $\sum_{b\in B_a} \lambda_a(b) > 1+\xi^2$ (which happens with probability at most~$\eta$), replace each $\lambda_a(b)$ by zero.

Now we increase~$B_a$ arbitrarily to a superset $B_a' \subseteq \F_2^n$ of size $|B_a| + 4/\xi$, and define the function $\nu_a: \F_2^n \to [0, 1]$ by
$$\nu_a(b) = \frac{\lambda_a(b)}{1+\xi^2}\, \text{ if $b\in B_a$}, \quad \nu_a(b) = \frac{\xi}{4}\Big(1- \frac{1}{1+\xi^2} \sum_{b\in B_a} \lambda_a(b)\Big)\, \text{ if $b\in B_a' \setminus B_a$,}$$
and $\nu_a(b) = 0$ if $b\notin B_a'$.
It is clear that~$\nu_a$ is a probability measure with $|\supp(\nu_a)| \leq |B_a'| \leq 68/\xi^2$ and, with probability at least $1-2\eta$, it satisfies
$$\big|\nu_a(b) - |\widehat{\Delta_a f}(b)|^2\big| \leq \frac{\xi}{4} \quad \text{for all $b\in \F_2^n$.}$$

Define the probability distribution~$\nu_f$ on~$\F_2^{2n}$ by $\nu_f(a, b) = \nu_a(b)/2^n$.
This distribution is easy to sample from:
sample $a\in \F_2^n$ uniformly at random, then compute~$\nu_a$ on $\supp(\nu_a)$, then sample $b\in \supp(\nu_a)$ according to~$\nu_a$.
Each such sample requires $n \log n\, \poly(\xi^{-1} \log(\eta^{-1}))$ queries to~$f$ and $n^2 \log n\, \poly(\xi^{-1} \log(\eta^{-1}))$ time.

Define the random set
\begin{equation*}
    A = \big\{a\in \F_2^n:\: \big|\nu_a(b) - |\widehat{\Delta_a f}(b)|^2\big| > \xi/4\, \text{ for some $b\in \F_2^n$}\big\}.  
\end{equation*}
Since $\Pr[a\in A] \leq 2\eta$ independently for all $a\in \F_2^n$, we conclude from Chernoff's bound that $\Pr\big[|A| \geq 4\eta \cdot 2^n\big] \leq 1 - 1/n^2$.
Moreover, by boundedness of~$f$ and~$\nu_a$, we have
$$\big|\nu_a(b) - |\widehat{\Delta_a f}(b)|^2\big| \leq \frac{\xi}{4} + \one_A(a) \quad \text{for all $a, b\in \F_2^n$.}$$

Now let~$F\subseteq \F_2^{2n}$ be an arbitrary set.
Writing $\tilde{P}_f(a, b) := \|f\|_2^4 P_f(a, b) = 2^{-n} |\widehat{\Delta_a f}(b)|^2$, we~have
\begin{align*}
    \big|\tilde{P}_f * (\tilde{P}_f - \nu_f) (F)\big|
    &= \bigg| \sum_{c, d\in \F_2^n} \tilde{P}_f(c,d) \sum_{(a,b) \in F} \big(\tilde{P}_f(a+c, b+d) - \nu_f(a+c, b+d)\big) \bigg| \\
    &\leq\sum_{c, d\in \F_2^n} \tilde{P}_f(c,d) \sum_{(a,b) \in F} \frac{\big||\widehat{\Delta_{a+c}f}(b+d)|^2 - \nu_{a+c}(b+d)\big|}{2^n} \\
    &\leq\sum_{c, d\in \F_2^n} \tilde{P}_f(c,d) \sum_{(a,b) \in F} \frac{\xi/4 + \one_A(a+c)}{2^n} \\
    &\leq \frac{\xi}{4} \frac{|F|}{2^n} + \frac{1}{2^n} \sum_{(a,b) \in F} \sum_{c, d\in \F_2^n} \tilde{P}_f(c,d) \one_A(a+c) \\
    &= \frac{\xi}{4} \frac{|F|}{2^n} + \frac{1}{2^n} \sum_{(a,b) \in F} \sum_{c\in a+A} \sum_{d\in \F_2^n} \tilde{P}_f(c,d).
\end{align*}
Noting that
$$\sum_{d\in \F_2^n} \tilde{P}_f(c,d) = \frac{1}{2^n} \sum_{d\in \F_2^n} |\widehat{\Delta_c f}(d)|^2 = \frac{1}{2^n} \|\Delta_c f\|_2^2 \leq \frac{1}{2^n},$$
we conclude that
$$\big|\tilde{P}_f * (\tilde{P}_f - \nu_f) (F)\big| \leq \frac{\xi}{4} \frac{|F|}{2^n} + \frac{|F|}{2^n} \frac{|A|}{2^n}.$$
Similarly, we obtain
$$\big|\nu_f * (\tilde{P}_f - \nu_f)(F)\big| \leq \frac{\xi}{4} \frac{|F|}{2^n} + \frac{|F|}{2^n} \frac{|A|}{2^n},$$
and thus
$$\big|\nu_f * \nu_f(F) - \tilde{P}_f * \tilde{P}_f(F)\big| \leq \frac{\xi}{2} \frac{|F|}{2^n} + 2\frac{|F|}{2^n} \frac{|A|}{2^n}.$$

Taking $\eta = \xi/16$ and denoting $\mu_f = \nu_f * \nu_f$, we conclude that, with probability at least $1-1/n^2$, we have
\begin{equation}
    \big|\mu_f(F) - \|f\|_2^8 Q_f(F)\big| \leq \frac{\xi |F|}{2^n} \quad \text{for all $F\subseteq \F_2^{2n}$.}
\end{equation}
Note that we can sample from~$\mu_f$ by sampling independent pairs $(a,b)$, $(c,d)$ according to~$\nu_f$ and returning $(a+c,\, b+d)$.
The result follows.
\end{proof}

\subsection{Lagrangian sampling based on query access}

Finally, here we show how to combine the idealized setting from \cref{thm:ideal_Lsampling} with approximate $Q_f$-samples to obtain \cref{thm:lagrangian}.

Let $\eps = \frac{1}{8} \big(\gamma - \tfrac{1}{2}\big)^2\tau^8$ and $\xi = \frac{1}{2} \eps \tau^{8}$.
Let~$\mu_f$ be the random probability distribution from \cref{lem:sampling} with this parameter~$\xi$, and suppose that it satisfies the conclusion of the lemma whenever we sample from this distribution.
As the total number of samples we take is $n\, \poly\big((\gamma - \tfrac{1}{2})^{-1}\tau^{-1}\big)$, this holds with probability $1-o(1)$.

\emph{Robust generation.} 
We approximately implement the algorithm from Lemma~\ref{lem:robustgen} by substituting samples from~$Q_f$ by samples from~$\mu_f$. 
The number of samples we use now depends on the value $p = \mu_f\big(\Spec(f)\big)$.
By the relationship between~$\mu_f$ and~$Q_f$ and the fact that $\|f\|_2 \geq \tau$, an analysis similar to the proof of Lemma~\ref{lem:robustgen} shows that with a factor of $\poly(1/\tau)$ more samples from~$\mu_f$ we obtain a basis for a subspace $L\leq \F_2^{2n}$ such that $L = \mathcal L(\phi)$ with probability~$2/3$.
As each sample from~$\mu_f$ costs $n\log n\, \poly(1/\xi)$ queries to~$f$ and $n^2 \log n\, \poly(1/\xi)$ time, the query and time complexities of this algorithm are $n^2 \log n\,\poly(1/\xi)$ and $n^3\log n\,\poly(1/\xi)$, respectively.

\emph{Non-robust generation.}
If~$f$ does not $\eps$-robustly generate~$\mathcal L(\phi)$, we have from Lemma~\ref{lem:nonrobust} that
\begin{align*}
    \mu_f\big(\mathcal L(\phi)\setminus\Spec(f)\big) &\geq \tau^8 Q_f\big(\mathcal L(\phi)\setminus\Spec(f)\big) - \xi \geq \xi.
\end{align*}
As in the previous case, we approximately implement $\textsc{LagrangianSampling}(f_s, \xi, \tau)$ by substituting samples from~$Q_{f_s}$ with samples from~$\mu_{f_s}$.
We then obtain a basis for a subspace $L\leq \F_2^{2n}$ that satisfies~$L = \mathcal L(\phi)$ with probability at least $\big((\gamma - \tfrac{1}{2})\tau\big)^{O(\log(1/\tau))}$.

Note that we can query each projected function~$f_i$ using~$2^i$ queries to~$f$ and $O(2^in)$ time.
A sample from $\mu_{f_i}$ therefore costs~$n\log n\,\poly\big((\gamma - \tfrac{1}{2})^{-1} \tau^{-1}\big)$ queries to~$f$ and takes $n^2 \log n\, \poly\big((\gamma - \tfrac{1}{2})^{-1} \tau^{-1}\big)$ time.
The generation of~$f_1,\dots,f_t$ has the same order of complexity.
It follows that the total algorithm uses $n^2 \log n\, \poly\big((\gamma - \tfrac{1}{2})^{-1} \tau^{-1}\big)$ queries to~$f$ and runs in time $n^3 \log n\, \poly\big((\gamma - \tfrac{1}{2})^{-1} \tau^{-1}\big)$, finishing the proof of \cref{thm:lagrangian}.

\section{The Quadratic Goldreich--Levin theorem and its corollaries}
\label{sec:quadGL}

In this section, we use our Lagrangian sampling algorithm to obtain our optimal Quadratic Goldreich--Levin theorem (\cref{thm:quadratic_GL}) and its corollaries:
the $\PGI$ algorithm (\cref{thm:algoPGI}), the optimal self-corrector for Reed-Muller codes (\cref{cor:ReedMuller}), and the quadratic decomposition algorithm (\cref{cor:decomposition}).

We begin by giving a ``list-decoding'' algorithm for stabilizer states as mentioned in \cref{sec:finding_Lagrangians}, which will be crucial for proving our main results.
This is an algorithm that, given query access to a bounded function~$f$, with high probability returns all stabilizer states that are approximate local maximizers for~$f$ and have non-negligible correlation with~$f$. 
Note that this is only possible due to the notion of approximate local maximality, as there can be $\exp(n)$-many stabilizer states that have non-negligible correlation with~$f$.
This result can be regarded as a dequantization of the quantum procedure given by~\cite[Corollary~6.2]{chen2024stabilizer}.

\begin{theorem}[List-decoding stabilizer states] \label{thm:list}
    Let $f:\F_2^n \to \C$ be a 1-bounded function and let $\tau,\, \delta>0$ and $1/2<\gamma\leq 1$.
    There is a randomized algorithm that, when given query access to~$f$, returns a list of size $\log(\delta^{-1}) \big((\gamma-1/2) \tau\big)^{-O(\log(1/\tau))}$ which, with probability at least $1-\delta$, contains all $\gamma$-approximate local maximizers that have correlation at least~$\tau$ with~$f$.
    This algorithm makes $n^2 \log n\, \log(\delta^{-1}) \big((\gamma - \tfrac{1}{2}) \tau\big)^{-O(\log(1/\tau))}$ queries to~$f$ and has runtime $n^3 \log n\, \log(\delta^{-1}) \big((\gamma - \tfrac{1}{2}) \tau\big)^{-O(\log(1/\tau))}$.
\end{theorem}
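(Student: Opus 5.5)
The plan is to repeat \textsc{LagrangianSampling} (\cref{thm:lagrangian}) many times, and for each returned subspace recover the corresponding stabilizer states by an ordinary Goldreich--Levin list-decoding step on the cosets of the Lagrangian. Set $p := \big((\gamma-\tfrac12)\tau\big)^{C\log(1/\tau)}$, the success probability guaranteed by \cref{thm:lagrangian} for each fixed target $\phi$. I would run \textsc{LagrangianSampling}$(f,\gamma,\tau)$ independently $N = p^{-1}\cdot O(\log(1/\delta) + \log(1/(p\tau)))$ times. Each run outputs a basis of a subspace $L_j$; keep only those $L_j$ that are Lagrangian, which can be checked in $O(n^3)$ time by verifying isotropy and dimension~$n$.

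The main obstacle is that the set of targets $\phi$ is not known in advance, so a union bound needs a bound on how many targets there are. I would bound it by \cref{lem:Pf_L4}-type counting. Any $\gamma$-approximate local maximizer $\phi$ with $|\langle f,\phi\rangle|\ge\tau$ has $\Lcal(\phi)$ of weight $P_f(\Lcal(\phi))\ge \tau^4$ by \cref{lem:Lcorr}, using $\|f\|_2\le 1$. I would then try to show that only $\poly(1/\tau)$ Lagrangians can carry such targets. A crude route is to observe that the states in question are correlated with $f$ at level $\tau$ and are either equal or have $|\langle \phi,\phi'\rangle|^2 \le 1/2$. For a Gram-type bound to give $\poly(1/\tau)$ directly one would want small pairwise correlations, which need not hold here. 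If that fails, I would abandon the union bound over all targets. Instead I would use that each individual $\phi$ is missed with probability $(1-p)^N \le \delta'$, and control the list size separately; the statement only needs each target to be found with total failure $\delta$. For this I would run the repetition with failure parameter $\delta\cdot(\text{number of targets})^{-1}$, where the number of targets is bounded by applying the Lagrangian-sampling probability bound itself. The sampled $L$'s form a distribution, and each target Lagrangian has probability at least $p$, so there are at most $1/p$ distinct target Lagrangians.

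Next, for each surviving Lagrangian $L=\{(h,Mh+w):h\in V, w\in V^\perp\}$, I would write it in this form by linear algebra, and choose $A,d$ with $A+A^\tp+\Diag(d)=M$ on $V$, as in \cref{lem:stab_explicit}. The stabilizer states with Lagrangian $L$ are then $\phi_{r,s}(x)=\sqrt{2^{n-\dim V}}\one_V(x+r)(-1)^{(x+r)^\tp A(x+r)+s\cdot(x+r)}i^{|d\circ(x+r)|}$. By \cref{prop:Lag_stab_basis} these form an orthonormal basis, so $\sum_{r,s}|\langle f,\phi_{r,s}\rangle|^2\le 1$ and at most $\tau^{-2}$ of them have correlation at least $\tau$. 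To find them, I would demodulate $f$ by the quadratic phase, giving $g(x)=f(x)(-1)^{x^\tp Ax}i^{-|d\circ x|}$ up to the cross terms coming from $r$. This reduces the problem to finding the pairs (coset $r+V$, linear character $s$ modulo $V^\perp$) with large correlation. After a change of basis that makes $V$ a coordinate subspace, this is exactly a Fourier list-decoding problem for the function $g$ on $\F_2^n$: the cross term $(x+r)^\tp A(x+r)$ differs from $x^\tp Ax$ by a linear function of $x$ plus a constant, which only shifts $s$. I would therefore apply \cref{thm:GL} to $g$ with threshold $\tau/2$ and failure $\delta/(2N)$, and convert each large Fourier coefficient back to a state $\phi_{r,s}$. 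I expect the bookkeeping on the support $r+V$ to be routine, with the restriction to a coset absorbed into the character via $V^\perp$.

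Finally, I would output the union of all these states, after optionally pruning by estimating $|\langle f,\phi\rangle|$ with \cref{lem:fourest}. The list size is $N\cdot \tau^{-O(1)}$ and the complexity is $N$ times the cost of \cref{thm:lagrangian} plus \cref{thm:GL}, which matches the claimed bounds. The correctness step I regard as most delicate is the count of distinct target Lagrangians (at most $1/p$) that feeds the union bound.
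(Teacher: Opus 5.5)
The counting part of your plan is fine. The events $\{L=\Lcal(\phi)\}$ for distinct target Lagrangians are disjoint, so there are at most $1/p$ of them, and that is all the union bound needs; the Gram-matrix detour can be dropped.

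The gap is in the recovery step. You run \cref{thm:GL} on the unrestricted $g(x)=f(x)(-1)^{x^\tp Ax}i^{-|d\circ x|}$ at threshold $\tau/2$. However, a target $\phi_{r,s}$ only forces Fourier coefficients of $g\one_{r+V}$ of size $2^{-(n-\dim V)/2}|\langle f,\phi_{r,s}\rangle|$, not of size $\tau$. Concretely, let $\phi$ be a stabilizer state supported on a coset $r+V$ of codimension $c$, and put $\tau=2^{-c/2}$ and $f=\tau\phi$. Then $f$ is 1-bounded and $\langle f,\phi\rangle=\tau$. Moreover, $\phi$ is a $\gamma$-approximate local maximizer for every $\gamma\le 1$, since neighbors satisfy $|\langle f,\phi'\rangle|^2\le\tau^2/2$. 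Yet $g$ is $\one_{r+V}$ times a unimodular constant and a linear character, so all its nonzero Fourier coefficients have modulus $2^{-c}=\tau^2$. For $\tau<1/4$, the list from \cref{thm:GL} at threshold $\tau/2$ is therefore empty with high probability, and this target is missed.

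Even with the right threshold, a large Fourier coefficient of the unrestricted $g$ does not tell you $r$. The contributions of the different cosets $r+V$ are mixed by a Hadamard transform over $\F_2^n/V$, and the linear shift of $s$ you describe depends on $r$.

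The missing ingredient is the bounded-codimension observation used in the paper's \cref{lem:linear}. By 1-boundedness, $\tau\le|\langle f,\phi\rangle|\le 2^{-(n-\dim V)/2}$, so $n-\dim V\le 2\log(1/\tau)$. With this, the repair is:
\begin{enumerate}
\item Discard every sampled $L_j$ violating this bound; otherwise enumerating cosets for non-target $L_j$ can cost $2^{\Omega(n)}$.
\item Enumerate the at most $\tau^{-2}$ cosets $r+V$.
\item Run \cref{thm:GL} on $g\one_{r+V}$ at threshold $\tau^2$.
\end{enumerate}

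With that repair, your argument is a valid variant of the paper's. The paper attaches to each Lagrangian sample a single random stabilizer state, obtained from a random coset guess plus a random element of the GL list, with success probability at least $\tau^6/8$. It then counts target states rather than target Lagrangians, and repeats. You instead list-decode all $\tau$-correlated states of each sampled Lagrangian. This trades the paper's $\tau^{O(1)}$ loss in success probability for a $\tau^{-O(1)}$ factor in list size per sample, and the final bounds are the same.
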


The main ingredient for the proof of this result is \cref{thm:lagrangian}, which provides an efficient algorithm that---with non-negligible probability---returns the Lagrangian subspace $\Lcal(\phi)$ associated to some fixed (but unknown) $\gamma$-approximate local maximizer of correlation~$\phi$ satisfying $|\langle f, \phi\rangle| \geq \tau$.

We next show how to learn~$\phi$ from~$\Lcal(\phi)$ (with non-negligible probability).
Note that there are~$2^n$ stabilizer states associated to the Lagrangian subspace~$\Lcal(\phi)$, and several of them can satisfy the requirements for our unknown stabilizer state~$\phi$.
Our learning algorithm will then return a random such stabilizer state~$\psi$ whose probability of being picked depends only on its correlation with~$f$.

\begin{lemma}[Stabilizer sampling]\label{lem:linear}
    Let $f: \F_2^n \to \C$ be a $1$-bounded function and let~$\phi$ be a stabilizer state with $|\langle f, \phi\rangle| \geq \tau$.
    There is a randomized algorithm which, when given a basis $\{v_1, \dots, v_n\}$ for~$\Lcal(\phi)$, returns a random stabilizer state~$\psi$ such that
    $$\Pr[\psi = \phi] \geq \tau^6/8.$$
    This algorithm makes $n \log n\, \poly(1/\tau)$ queries to~$f$ and runs in time $O(n^3) + n^2\log n\, \poly(1/\tau)$.
\end{lemma}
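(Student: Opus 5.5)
The plan is to reduce the problem to an ordinary (linear) Goldreich--Levin problem on a single coset, by writing down one explicit reference stabilizer state with Lagrangian $L:=\Lcal(\phi)$. By \cref{prop:Lag_stab_basis} and equation~\eqref{eq:Weyl_action}, every stabilizer state with Lagrangian~$L$ is of the form $W(r,s)\phi_0$ for this reference state~$\phi_0$. So it suffices to guess the translation part~$r$, which selects a coset of a subspace, and then the character part~$s$, which is a linear phase on that coset.

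\textbf{Step 1 (linear algebra, $O(n^3)$ time).} From the basis $v_1,\dots,v_n$ of~$L$, compute $V=\pi_1(L)\le\F_2^n$ by Gaussian elimination, with $k:=\dim V$. Then $L\cap(\{0\}\times\F_2^n)=\{0\}\times V^\perp$. Pick a basis $h_1,\dots,h_k$ of~$V$ and lifts $(h_j,m_j)\in L$. Define a matrix~$M$ by $Mh_j=m_j$ on~$V$, extended by~$0$ on a complement of~$V$. This matrix is symmetric as a bilinear form on~$V$ because~$L$ is isotropic, and $L=\{(h,Mh+w):h\in V,\,w\in V^\perp\}$. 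Take~$A$ to be the strictly upper triangular part of~$M$ and $d=\mathrm{diag}(M)$, so that $A+A^\tp+\Diag(d)=M$. By \cref{lem:stab_explicit}, the function
$$\phi_0(x)=\sqrt{2^{n-k}}\,\one_V(x)(-1)^{x^\tp Ax}i^{|d\circ x|}$$
is a stabilizer state with $\Lcal(\phi_0)=L$. Hence, up to phases, $\phi=W(r^*,s^*)\phi_0$ for some $r^*,s^*$, and~$\phi$ is supported on the coset $r^*+V$.

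\textbf{Step 2 (guessing the coset).} The key observation uses boundedness of~$f$:
$$\tau\le|\langle f,\phi\rangle|\le\sqrt{2^{n-k}}\cdot 2^{k-n}=2^{-(n-k)/2},$$
so there are at most $2^{n-k}\le\tau^{-2}$ cosets of~$V$. Sample $r\in\F_2^n$ uniformly at random. Then $r\in r^*+V$ with probability $2^{-(n-k)}\ge\tau^2$.

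\textbf{Step 3 (recovering the character).} Given~$r$, identify $V\cong\F_2^k$ via the chosen basis and define the 1-bounded function
$$g_r(y)=f(y+r)\,\overline{(-1)^{y^\tp Ay}i^{|d\circ y|}},\qquad y\in V.$$
Each query to~$g_r$ costs one query to~$f$ and $O(n)$ time. If~$r$ lies in the correct coset, then the correlations $|\langle f,W(r,s)\phi_0\rangle|$, as~$s$ ranges over $\F_2^n/V^\perp\cong\widehat V$, equal $2^{-(n-k)/2}|\widehat{g_r}(s|_V)|$. In particular, $|\widehat{g_r}(s^*|_V)|\ge\tau 2^{(n-k)/2}\ge\tau$. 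Run \cref{thm:GL} on~$g_r$ with threshold~$\tau$ and failure probability~$1/2$. This returns a list of size at most $4/\tau^2$ (by Parseval) that contains~$s^*|_V$ with probability at least~$1/2$. Output $\psi=W(r,s)\phi_0$ for a uniformly random element~$s$ of the list, lifted to~$\F_2^n$. The costs are $k\log k\,\poly(1/\tau)\le n\log n\,\poly(1/\tau)$ queries and $n^2\log n\,\poly(1/\tau)$ time, plus $O(n^3)$ for Step~1.

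\textbf{Analysis and main obstacle.} Multiplying the success probabilities gives
$$\Pr[\psi=\phi]\ge\tau^2\cdot\tfrac12\cdot\tfrac{\tau^2}{4}=\tfrac{\tau^4}{8}\ge\tau^6/8.$$
Here~$\psi$ is identified with~$\phi$ up to phase, as in the paper's projective convention. The step I expect to need the most care is Step~1 together with the verification in Step~3. One has to check that the Lagrangian representation really produces a symmetric~$M$ on~$V$ and that the choice of~$A$ and~$d$ matches \cref{lem:stab_explicit}. One also has to track the phase $i^{|r\circ s|}$ and the $i^{|d\circ(x+r)|}$ factor carefully. The point is to confirm that the translated states $W(r,s)\phi_0$ correspond exactly to characters on the shifted coset, so that the correlation really equals $2^{-(n-k)/2}$ times a Fourier coefficient of a fixed function~$g_r$ that is independent of~$s$.
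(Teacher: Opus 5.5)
Your strategy is the paper's: extract $V$ and a symmetric $M$ from the basis, use $\|f\|_\infty\le 1$ to get $n-\dim V\le 2\log(1/\tau)$, guess the coset with probability at least $\tau^2$, divide out the quadratic part, run Goldreich--Levin, and output a random list element. One difference helps you: you normalize $g_r$ on $V$, whereas the paper uses $\one_{w+V}f$ on all of $\F_2^n$, where the relevant coefficient is only $\ge\tau^2$. This gives you the stronger bound $\tau^4/8$.

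\textbf{Step~1 fails as written.} Setting $Mh_j=m_j$ and extending by $0$ on a complement gives a matrix whose bilinear form is symmetric on $V\times V$. The matrix $M$ itself need not be symmetric, so $A+A^\tp+\Diag(d)=M$ is false in general, and $\phi_0$ can have the wrong Lagrangian.

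For example, take $n=2$, $V=\vspan\{(1,1)\}=V^\perp$ and $L=V\times V$. The lift $((1,1),(1,1))\in L$ with complement $\vspan\{e_1\}$ gives $M=\bigl(\begin{smallmatrix}0&1\\0&1\end{smallmatrix}\bigr)$. Then $A+A^\tp+\Diag(d)=\bigl(\begin{smallmatrix}0&1\\1&1\end{smallmatrix}\bigr)$ maps $(1,1)$ to $(1,0)$. So $\Lcal(\phi_0)$ contains $((1,1),(1,0))\notin L$, and the algorithm never outputs~$\phi$.

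The repair is cheap:
\begin{itemize}
\item Let $H$ be invertible with first columns $h_1,\dots,h_k$.
\item Let $G=(h_i\cdot m_j)_{i,j\le k}$; this is symmetric because $L$ is isotropic.
\item Take $M=H^{-\tp}\bigl(\begin{smallmatrix}G&0\\0&0\end{smallmatrix}\bigr)H^{-1}$.
\end{itemize}
This $M$ is symmetric, satisfies $Mh_j-m_j\in V^\perp$, and costs $O(n^3)$ to compute. With it, your choice of $A$ and $d$ is correct. The paper simply asserts that such a symmetric $M$ can be computed.

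\textbf{A smaller slip in Step~3.} The target frequency depends on the sampled representative. If $r=r^*+t$ with $t\in V$, then $\phi\propto W(r,s^*+Mt)\phi_0$. So Goldreich--Levin should be expected to return $(s^*+Mt)|_V$, not $s^*|_V$. This does not affect the probability bound.
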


\begin{proof}
Since~$\Lcal(\phi)$ is a Lagrangian subspace, we can write
\begin{equation} \label{eq:Lagrangian}
    \Lcal(\phi) = \big\{(h,\, Mh+w):\: h\in V,\, w\in V^{\perp}\big\}
\end{equation}
for a subspace $V\leq \F_2^n$ and some symmetric matrix $M\in \F_2^{n\times n}$.
Moreover, from \cref{lem:stab_explicit} we conclude that (up to phases)
\begin{equation} \label{eq:phi}
    \phi(x) = 2^{(n-\dim(V))/2} \one_{u+V}(x) (-1)^{x^{\mathsf T} Qx + c\cdot x} i^{|d\circ x|},
\end{equation}
where~$Q$ is the upper-triangular part of the matrix~$M$, $d$ is the diagonal of~$M$, and $c, u\in \F_2^n$ are vectors.

From the given basis $\{v_1, \dots, v_n\}$ of~$\Lcal(\phi)$ we can obtain, in $O(n^3)$ time, a basis for the subspace~$V$ and a matrix~$M$ such that identity~\eqref{eq:Lagrangian} holds.
To completely determine~$\phi$ as in equation~\eqref{eq:phi}, it only remains to find the correct coset~$u+V$ on which it is supported and its linear part~$(-1)^{c\cdot x}$.

Since~$f$ is bounded, the codimension of~$V$ is also bounded:
$$\tau \leq |\langle f, \phi\rangle| \leq 2^{(n-\dim(V))/2} \E_{x\in \F_2^n} |f(x)| \one_{u+V}(x) \leq 2^{-(n-\dim(V))/2},$$
which implies that $n-\dim(V) \leq 2\log(1/\tau)$.
It follows that there are at most~$1/\tau^2$ cosets of~$V$ on which~$\phi$ can be supported.
Choosing a uniformly random vector~$w\in \F_2^n$, with probability at least~$\tau^2$ we obtain the correct coset $w+V = u+V$.

Now suppose we have found the correct coset $w+V$, and consider the function~$g$ given~by
$$g(x) = \one_{w+V}(x) f(x)(-1)^{x^{\mathsf T} Qx} i^{-|d\circ x|}.$$
Letting~$c\in \F_2^n$ be the (unknown) vector given in equation~\eqref{eq:phi} above, we have that
$$|\widehat{g}(c)| = \big|\E_{x\in \F_2^n} f(x) \one_{w+V}(x) (-1)^{x^{\mathsf T} Qx} i^{-|d\circ x|} (-1)^{c\cdot x}\big| = 2^{-(n-\dim(V))/2} |\langle f, \phi\rangle| \geq \tau^2.$$
Applying the Goldreich--Levin algorithm (Theorem~\ref{thm:GL}) to the function~$g$ with $\delta = 1/2$ and~$\tau$ substituted by~$\tau^2$, we obtain a list~$B\subseteq \F_2^n$ of size at most~$4/\tau^4$ which, with probability at least~$1/2$, satisfies
$$\big\{b\in \F_2^n:\: |\widehat{g}(b)| \geq \tau^2\big\} \subseteq B \subseteq \big\{b\in \F_2^n:\: |\widehat{g}(b)| \geq \tau^2/2\big\}.$$
Taking an element~$b\in B$ uniformly at random, we then get~$b=c$ with probability at least~$\tau^4/8$. In conclusion, the (random) stabilizer state
$$\psi(x) := 2^{(n-\dim(V))/2} \one_{w+V}(x) (-1)^{x^{\mathsf T} Qx + b\cdot x} i^{|d\circ x|}$$
thus obtained will be equal to~$\phi$ with probability at least~$\tau^6/8$.
\end{proof}

Combining \cref{thm:lagrangian} and \cref{lem:linear}, we obtain an algorithm that does the following:
for any fixed $\gamma$-approximate local maximizer of correlation~$\phi$ for~$f$ satisfying $|\langle f, \phi\rangle| \geq \tau$, the algorithm returns~$\phi$ with probability at least $p = \big((\gamma - \tfrac{1}{2}) \tau\big)^{O(\log(1/\tau))}$.
Since this holds for any such $\gamma$-approximate local maximizer~$\phi$, there must be at most~$1/p$ of them.
Repeating the algorithm $O\big(\tfrac{1}{p} \log\tfrac{1}{p} \log\tfrac{1}{\delta}\big)$ times then gives a list that, with probability at least $1-\delta$, contains all of them.

This algorithm makes $n^2 \log n\, \log(1/\delta) \big((\gamma - \tfrac{1}{2})^{-1} \tau^{-1}\big)^{O(\log(1/\tau))}$ queries to~$f$ and runs in time $n^3 \log n\, \log(1/\delta) \big((\gamma - \tfrac{1}{2})^{-1} \tau^{-1}\big)^{O(\log(1/\tau))}$, finishing the proof of \cref{thm:list}.

\subsection{Quadratic Goldreich--Levin}
We now use the list-decoding algorithm given in Theorem~\ref{thm:list} to construct our Quadratic Goldreich--Levin algorithm (\cref{thm:quadratic_GL}).

\begin{proof}[ of \cref{thm:quadratic_GL}]
The main idea is to apply the algorithm from Theorem~\ref{thm:list} with suitably chosen parameters to obtain a bounded-size list containing all ``good'' stabilizer states, and then replace each of these good stabilizer states by a bounded number of (classical) quadratic phase functions.
Each such quadratic phase~$(-1)^q$ is obtained from its associated stabilizer state~$\phi$ by extending its support from (a coset of) a subspace~$V$ to the whole domain~$\F_2^n$.
We end the proof by showing that, with high probability, one of the quadratic phases thus obtained has almost-maximal correlation with~$f$;
by querying~$f$ a bounded number of times, we can estimate all of these correlations and pick up the highest one.

The full algorithm is given as follows:
\begin{enumerate}
    \item Apply the algorithm from Theorem~\ref{thm:list} with parameters $\tau = \eps$ and $\gamma = 1/2 + \eps^2$.
    We obtain a list~$L$ of size $\log(1/\delta) (1/\eps)^{O(\log(1/\eps))}$ which, with probability at least $1-\delta$, contains all stabilizer states that are $(1/2+\eps^2)$-approximate local maximizers of correlation for~$f$ and have correlation at least~$\eps$ with~$f$.
    
    \item Remove from~$L$ every stabilizer state whose support has codimension larger than $2\log(1/\eps)$.
    If~$L$ becomes empty after this step, end the algorithm and return the constant function $p \equiv 0$.
    Otherwise, initialize a list~$L'$ to be empty and continue the algorithm.
    
    \item For each stabilizer state $\phi\in L$, do the following:

    \noindent Write $\phi(x) = 2^{(n-d)/2} \one_{u+V}(x) (-1)^{q(x)} i^{|c\circ x|}$, where~$V$ is a subspace of dimension~$d$, $q: \F_2^n \to \F_2$ is a quadratic polynomial, and $u, c \in \F_2^n$ are vectors such that either $c=0$ or $c\notin V^\perp$.
    (This decomposition is possible due to \cref{lem:stab_explicit} and the remark following it.)
    \begin{itemize}
        \item[$(a)$] If $c=0$, add to~$L'$ the quadratic functions $x\mapsto q(x)+y\cdot x$ for all $y\in V^\perp$.

        \item[$(b)$] If $c\notin V^\perp$, let $U = \{c\}^\perp$ and let $v\in \F_2^n$ satisfy $c\cdot v = 1$, so that any $x\in\F_2^n$ has a unique representation of the form $x = z+bv$ for some $z\in U$ and $b\in \F_2$.
        By evaluating the map $x = z+bv \mapsto i^{|c\circ z| - 2|c\circ z\circ bv|}$ on all vectors $x\in \F_2^n$ with weight $|x| \leq 2$, find the polynomial $r\in \F_2[x_1,\dots,x_n]$ of degree at most~2 such that\footnote{Such a polynomial exists because the function $x = z+bv \mapsto i^{|c\circ z| - 2|c\circ z\circ bv|}$ is a non-classical quadratic phase function taking values in $\pmset{}$, and thus equals a classical quadratic phase $(-1)^{r(x)}$.}
        $(-1)^{r(z+bv)} = i^{|c\circ z| - 2|c\circ z\circ bv|}$.
        Add to~$L'$ the quadratic functions
        $$x\mapsto r(x)+q(x)+y\cdot x \quad \text{and} \quad x\mapsto r(x)+q(x)+ (y+c)\cdot x$$
        for all $y\in V^{\perp}$.
    \end{itemize}
    
    \item Query~$f$ at $m = \poly(\tfrac{1}{\eps} \log \tfrac{1}{\delta})$ randomly chosen points $x_1, \dots, x_m \in \F_2^n$ and compute
    $$\Est_q := \frac{1}{m} \sum_{j=1}^m f(x_j) (-1)^{q(x_j)}$$
    for all quadratic functions~$q$ in~$L'$.
    Output the one that attains the maximum value of~$|\Est_q|$.
\end{enumerate}

Note that, for each $\phi\in L$, the number of quadratic functions we add to~$L'$ at step~$(3)$ is at most~$2^{n-d+1}$.
Since $n-d \leq 2\log(1/\eps)$ because of step~$(2)$, it follows that the final list~$L'$ has size at most
$$2^{n-d+1} |L| \leq 2|L|/\eps^2 = \log(1/\delta) (1/\eps)^{O(\log(1/\eps))}.$$
In addition to the list-decoding subroutine from \cref{thm:list}, the most expensive step in this algorithm is step $(3)$, which takes $O(n^3 + n/\eps^2)$ time for each stabilizer state in the list~$L$.
The query and time complexities of the algorithm above thus match those stated in \cref{thm:quadratic_GL}.

Denote the (random) quadratic function output by this algorithm by~$p$.
We will show that, with probability at least~$1-2\delta$, this function satisfies
\begin{equation} \label{eq:high_corr}
    |\langle f,\, (-1)^{p(\cdot)}\rangle| > \|f\|_{u^3} - \eps,
\end{equation}
where $\|f\|_{u^3}$ (with a minuscule~$u$) denotes the maximum correlation $|\langle f,\, (-1)^{q(\cdot)}\rangle|$ of~$f$ with a quadratic polynomial $q\in \F_2[x_1, \dots, x_n]$.
This will complete the proof of the theorem.

Note that we may assume $\|f\|_{u^3} \geq \eps$, as otherwise any quadratic function will satisfy equation~$\eqref{eq:high_corr}$.
We can also assume that~$\eps \leq 1/100$, which will allow us to bound certain expressions more easily.
The heart of the argument is given in the following result:

\begin{lemma} \label{lem:find_quad}
    Assume that~$\eps \leq 1/100$ and~$\|f\|_{u^3} \geq \eps$.
    Then, with probability at least~$1-\delta$, there exists a quadratic function~$q$ in~$L'$ satisfying
    $$|\langle f,\, (-1)^{q(\cdot)}\rangle| \geq \|f\|_{u^3} - \eps/2.$$
\end{lemma}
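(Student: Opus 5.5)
Let $q^*$ be a quadratic polynomial attaining $\|f\|_{u^3} = |\langle f, (-1)^{q^*}\rangle| =: \beta \geq \eps$. The function $\phi_0 = (-1)^{q^*}$ is a stabilizer state with $V=\F_2^n$, but it need not be a $(1/2+\eps^2)$-approximate local maximizer for~$f$. The plan is a hill-climbing argument on the stabilizer graph, where neighbors are states $\phi'$ with $|\langle\phi,\phi'\rangle|^2 = 1/2$.

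Starting from $\phi_0$, if the current state $\phi_j$ is not a $(1/2+\eps^2)$-approximate local maximizer, move to a neighbor $\phi_{j+1}$ with $|\langle f,\phi_{j+1}\rangle|^2 > |\langle f,\phi_j\rangle|^2/(1/2+\eps^2)$. Since $\|f\|_2\le 1$, correlations are bounded by~$1$. Each step multiplies the squared correlation by roughly~$2$, so the process terminates after at most $O(\log(1/\eps))$ steps at a state $\phi$ that is a local maximizer. It has correlation $|\langle f,\phi\rangle| \geq \beta \geq \eps$, since correlation only increases.

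By \cref{thm:list}, with probability $1-\delta$ this $\phi$ lies in~$L$. The codimension bound from the proof of \cref{lem:linear} gives $n-d\le 2\log(1/\eps)$, so $\phi$ survives step~(2).

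It remains to show that some $q\in L'$ built from $\phi$ in step~(3) has $|\langle f,(-1)^q\rangle| \ge \beta-\eps/2$; in fact I expect a bound $\geq \beta$ outright. Write $\phi = 2^{(n-d)/2}\one_{u+V}\,\psi$, where $\psi$ is the phase part.

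In case~(a), $\psi=(-1)^q$ and the family $\{(-1)^{q+y\cdot x}: y\in V^\perp\}$ satisfies
\[ \E_{y\in V^\perp}(-1)^{y\cdot (x+u)} = \one_{u+V}(x). \]
This gives the identity $\one_{u+V}(-1)^q = \E_{y\in V^\perp}\pm(-1)^{q+y\cdot x}$, so $|\langle f,\phi\rangle|\le 2^{(n-d)/2}\max_y|\langle f,(-1)^{q+y\cdot x}\rangle|$. The factor $2^{(n-d)/2}\ge1$ is in the wrong direction, so this averaging alone fails. Instead I would use the stronger quantity: there is a global quadratic $(-1)^{q^*}$ with correlation $\beta$, and the local maximizer $\phi$ was reached by hill-climbing. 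A cleaner route is to observe that $\max_y |\langle f,(-1)^{q+y\cdot x}\rangle|$ is compared not with $|\langle f,\phi\rangle|$ but with~$\beta$.

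So I would choose the hill-climbing path more carefully. Restrict to paths where each step multiplies $\one_{u+V}$-support by a half-space, as in \cref{lem:neighbors} with $v$ a Weyl element. Then inductively track a quadratic phase $(-1)^{q_j}$ extending $\phi_j$, meaning it agrees on $u_j+V_j$ up to a linear twist in $V_j^\perp$, whose correlation with $f$ stays $\ge \beta - O(\eps^2)$ per step.

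The key estimate is as follows. If $\phi_{j+1} = \frac{1}{\sqrt2}(I\pm W(v))\phi_j$ restricts the support to a half, then $(-1)^{q_j}$ splits as a sum of two pieces, one on each half-coset. The two extensions $(-1)^{q_j}$ and $(-1)^{q_j + y\cdot x}$, with $y$ the new dual vector, have correlations whose average-of-absolute-values controls the piece on the chosen half. This keeps $\max$ over the $2^{n-d}$ extensions nondecreasing.

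In case~(b), the $i^{|c\circ x|}$ factor is handled by the same splitting along $U=\{c\}^\perp$. On each coset of $U$, the phase $i^{|c\circ x|}$ coincides with $(-1)^{r}$ or $(-1)^{r+c\cdot x}$, which is why both families are added to~$L'$.

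The main obstacle is exactly this step: showing that passing from a global quadratic to the local maximizer $\phi$ does not lose correlation among the classical extensions in~$L'$. The losses must be accumulated as at most $O(\eps^2)$ per step over $O(\log(1/\eps))$ steps, giving total loss $\le\eps/2$ since $\eps\le 1/100$. I would try first to prove the one-step monotonicity inequality
\[ \max_{y\in V_{j+1}^\perp}|\langle f,(-1)^{q_{j+1}+y\cdot x}\rangle|\ \ge\ \max_{y\in V_j^\perp}|\langle f,(-1)^{q_j+y\cdot x}\rangle| - \eps^2, \]
using the explicit form of neighbors in \cref{lem:neighbors} together with \cref{lem:stab_explicit}.
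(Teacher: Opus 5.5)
Your first half (hill-climbing from $\phi_0=(-1)^{q^*}$ with $\gamma=1/2+\eps^2$, landing in $L$ with probability $1-\delta$, and surviving step~(2)) matches the paper. The gap is the final step, exhibiting $q\in L'$ with $|\langle f,(-1)^q\rangle|\ge\beta-\eps/2$. You flag it as the main obstacle and leave it unproved, and your proposed repair does not work.

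\begin{itemize}
\item You cannot ``restrict to paths where each step halves the support.'' Failure of local maximality only hands you \emph{some} better neighbor $\frac{1}{\sqrt2}(I\pm W(v))\phi_j$. That neighbor may keep or even enlarge the support (if $v=(a,b)$ with $a\notin V_j$ the support doubles).
\item Intermediate states may be strictly non-classical, so your tracked $q_j$ need not exist.
\item No argument is given for the one-step inequality.
\item In case~(b), your claim that $i^{|c\circ x|}$ coincides with $(-1)^r$ or $(-1)^{r+c\cdot x}$ on each coset of $\{c\}^\perp$ is false. On the coset $z+v$ it equals $i^{|c\circ v|}(-1)^{r}=\pm i\,(-1)^{r}$, since $|c\circ v|$ is odd. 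One must instead use Cauchy--Schwarz over $b\in\F_2$ followed by Parseval, which loses a factor $2$ in squared correlation.
\end{itemize}

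The missing idea is that the averaging bound you discarded already suffices, provided you keep the quantitative gain from climbing rather than only $|\langle f,\phi_t\rangle|\ge\beta$.

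By construction, $|\langle f,\phi_t\rangle|^2>\gamma^{-t}\beta^2$. Since $|\phi_j|=2^{(n-d_j)/2}\one_{\supp\phi_j}$ and $|\langle\phi_j,\phi_{j+1}\rangle|^2=1/2$, Cauchy--Schwarz over the supports gives $|d_{j+1}-d_j|\le 1$. When $d_{j+1}=d_j-1$ this bound is tight, which forces $\phi_{j+1}$ to be proportional to $\phi_j$ on its (smaller) support. Because $\phi_0$ is classical with full support, this yields $t\ge n-d$. It also yields $t\ge n-d+1$ when $c\notin V^\perp$, since a path made only of dimension drops ends at a restriction of $\phi_0$, which is classical.

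Your own averaging over $y\in V^\perp$ then produces $y^*$ with
$|\E_x f(x)(-1)^{q(x)+y^*\cdot x}i^{-|c\circ x|}|^2> 2^{-(n-d)}\gamma^{-t}\beta^2\ge(2\gamma)^{-(n-d)}\beta^2=(1+2\eps^2)^{-(n-d)}\beta^2$.
Since $n-d\le 2\log(1/\eps)$ and $\eps\le 1/100$, this is at least $(1-\eps/2)^2\beta^2$. In case~(b), the factor $1/2$ lost in the $\F_2$-Parseval split is paid for by the extra climbing step.

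So the $2^{-(n-d)}$ loss from extending from the coset to all of $\F_2^n$ is cancelled by the factor $\gamma^{-1}\approx 2$ gained at each of at least $n-d$ steps. No step-by-step monotonicity along the path is needed.
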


\begin{proof}
Let $p^*: \F_2^n \to \F_2$ be a quadratic function attaining maximum correlation with~$f$:
$$\big|\E_{x\in \F_2^n} f(x) (-1)^{p^*(x)}\big| = \|f\|_{u^3}.$$
Consider the stabilizer state $\phi_0 := (-1)^{p^*(\cdot)}$, and denote $\gamma = 1/2+\eps^2$.
If~$\phi_0$ is a $\gamma$-approximate local maximizer of correlation with~$f$, then with probability at least~$1-\delta$ it will appear in the list~$L$
(and thus~$p^*$ will appear in~$L'$).

Now suppose~$\phi_0$ is not a $\gamma$-approximate local maximizer of correlation with~$f$.
There must then exist a ``neighbor'' stabilizer state~$\phi_1$ satisfying
$$|\langle\phi_0,\, \phi_1\rangle|^2 = 1/2 \quad \text{and} \quad |\langle f,\, \phi_1 \rangle|^2 > \gamma^{-1} |\langle f,\, \phi_0 \rangle|^2.$$
If~$\phi_1$ is a $\gamma$-approximate local maximizer, then it will appear in the list~$L$ with probability at least~$1-\delta$.
Otherwise, we can keep choosing stabilizer states~$\phi_{i+1}$ satisfying
$$|\langle\phi_i,\, \phi_{i+1}\rangle|^2 = 1/2 \quad \text{and} \quad |\langle f,\, \phi_{i+1} \rangle|^2 > \gamma^{-1} |\langle f,\, \phi_i \rangle|^2$$
until at last we arrive at some~$\phi_t$ which is a $\gamma$-approximate local maximizer of correlation with~$f$.
This must stop at some point because $|\langle f,\, \phi_0 \rangle| = \|f\|_{u^3} \geq \eps$ and we always have $|\langle f,\, \phi_i \rangle| \leq \|f\|_2$ by Cauchy-Schwarz.
The final stabilizer state~$\phi_t$ will then appear in list~$L$ with probability at least~$1-\delta$, and it satisfies
\begin{equation} \label{eq:corr_phi_t}
    |\langle f,\, \phi_t \rangle|^2 > \gamma^{-t} |\langle f,\, \phi_0 \rangle|^2 = (1/2+\eps^2)^{-t} \|f\|_{u^3}^2.
\end{equation}

Let us write
$$\phi_t(x) = 2^{(n-d)/2} \one_{u+V}(x) (-1)^{q^*(x)} i^{|c\circ x|},$$
where~$V$ is a subspace of dimension $d$, $q^*: \F_2^n \to \F_2$ is a quadratic polynomial and $u, c \in \F_2^n$ are vectors such that either~$c=0$ or $c\notin V^{\perp}$.
Since
$$\eps \leq |\langle f,\, \phi_0 \rangle| \leq |\langle f,\, \phi_t \rangle| \leq 2^{(n-d)/2} \E_{x\in \F_2^n} \one_{u+V}(x) |f(x)| = 2^{-(n-d)/2},$$
we conclude that $n-d \leq 2\log(1/\eps)$, and so~$\phi_t$ will not get removed in step~$(2)$ of the~algorithm.

Next, we relate the dimension~$d$ of~$V$ with the number~$t$ of steps we took until we arrived at~$\phi_t$.
For each $0\leq i\leq t$, denote by $\dim(\phi_i)$ the dimension of the subspace on which the $i$-th stabilizer state~$\phi_i$ is supported.
Since $|\langle\phi_i,\, \phi_{i+1}\rangle|^2 = 1/2$ while $|\phi_j(\cdot)| = 2^{(n-\dim(\phi_j))/2} \one_{\supp(\phi_j)}(\cdot)$, we conclude that
$\dim(\phi_{i+1}) \geq \dim(\phi_i)-1$.
Moreover, in the case where $\dim(\phi_{i+1}) = \dim(\phi_i)-1$, the two stabilizer states~$\phi_i$ and~$\phi_{i+1}$ must be proportional to one another inside the support of~$\phi_{i+1}$.
As~$\phi_0 = (-1)^{p^*(\cdot)}$ while~$\phi_t$ has a nontrivial non-classical component~$i^{|c\circ x|}$ if $c\notin V^{\perp}$, it follows that $\dim(\phi_t) \geq \dim(\phi_0) - t + \one_{c\notin V^{\perp}}$.
We conclude that $t \geq n-d + \one_{c\notin V^{\perp}}$.

Using the fact that $\E_{y\in V^{\perp}} (-1)^{y\cdot x} = \one_V(x)$, we see that
\begin{align*}
    |\langle f,\, \phi_t \rangle|^2
    &= 2^{n-d} \big|\E_{x\in \F_2^n} f(x) \one_{V}(x+u) (-1)^{q^*(x)} i^{-|c\circ x|}\big|^2 \\
    &= 2^{n-d} \big|\E_{y\in V^{\perp}} \E_{x\in \F_2^n} f(x) (-1)^{y\cdot (x+u)} (-1)^{q^*(x)} i^{-|c\circ x|}\big|^2 \\
    &\leq 2^{n-d} \E_{y\in V^{\perp}} \big|\E_{x\in \F_2^n} f(x) (-1)^{q^*(x) + y\cdot x} i^{-|c\circ x|}\big|^2,
\end{align*}
and thus there exists some $y^* \in V^{\perp}$ such that
\begin{equation} \label{eq:ystar}
    \big|\E_{x\in \F_2^n} f(x) (-1)^{q^*(x) + y^* \cdot x} i^{-|c\circ x|}\big|^2 \geq 2^{-(n-d)} |\langle f,\, \phi_t \rangle|^2.
\end{equation}
Recall that, if $\phi_t \in L$ (which happens with probability at least~$1-\delta$), then the quadratic functions $x\mapsto r(x) + q^*(x) + y^*\cdot x$ and $x\mapsto r(x) + q^*(x) + (y^*+c)\cdot x$ will both be in~$L'$
(we will recall the definition of $r\in \F_2[x_1, \dots, x_n]$ below).
It then suffices to show that one of these functions has correlation at least $\|f\|_{u^3}-\eps/2$ with~$f$.

We separate the proof into two cases:
$c=0$ and $c\notin V^{\perp}$.
If~$c=0$, then $r\equiv 0$, and we obtain from combining~\eqref{eq:corr_phi_t} and~\eqref{eq:ystar} that
$$\big|\E_{x\in \F_2^n} f(x) (-1)^{q^*(x) + y^* \cdot x}\big|^2 \geq 2^{-(n-d)} (1/2+\eps^2)^{-t} \|f\|_{u^3}^2.$$
Using that $n-d \leq 2\log(1/\eps)$ and $t\geq n-d$, we conclude that
$$2^{-(n-d)} (1/2+\eps^2)^{-t} \geq 2^{-(n-d)} (1/2+\eps^2)^{-(n-d)} \geq (1+2\eps^2)^{-2\log(1/\eps)}.$$
This last expression is at least $(1-\eps/2)^2$ when $\eps \leq 1/100$, which implies that
$$\big|\E_{x\in \F_2^n} f(x) (-1)^{q^*(x) + y^* \cdot x}\big| \geq \|f\|_{u^3} - \eps/2$$
as wished.

In the case where $c\notin V^{\perp}$, let~$U \leq \F_2^n$ be the subspace orthogonal to~$c$, and let~$v \in \F_2^n\setminus U$.
Any $x\in \F_2^n$ can be written in a unique way as $x = z+bv$, where $z\in U$ and $b\in \F_2$.
Note that $|c\circ (z+bv)| = |c\circ z| + |c\circ bv| - 2|c\circ z\circ bv|$.
Define $h:\F_2^n\to \C$ by
$$h(z+bv) = i^{|c\circ z|  - 2|c\circ z\circ bv|} \quad \text{where $z\in U$, $b\in \F_2$.}$$
Then~$h$ is a non-classical quadratic phase function taking values in $\pmset{}$, which implies it must be classical:
there exists a polynomial $r\in \F_2[x_1,\dots,x_n]$ of degree at most~2 such that $h(x) = (-1)^{r(x)}$ for all~$x\in \F_2^n$.

For any function $g: \F_2^n \to \C$, we have
\begin{align*}
    \big|\E_{x\in \F_2^n} g(x) i^{-|c\circ x|}\big|^2
    &= \big|\E_{b\in \F_2} \E_{z\in U} g(z+bv) i^{-|c\circ (z+bv)|} \big|^2 \\
    &= \big|\E_{b\in \F_2} \E_{z\in U} g(z+bv) (-1)^{r(z+bv)} i^{-|c\circ bv|} \big|^2.
\end{align*}
By the Cauchy-Schwarz inequality, this expression is at most
\begin{equation*}
    \E_{b\in \F_2}\big|\E_{z\in U} g(z+bv)(-1)^{r(z+bv)}\big|^2.
\end{equation*}
By Parseval's identity on~$\F_2$ we get
\begin{align*}
    \E_{b\in \F_2} |\E_{z\in U} g(z+bv) (-1)^{r(z+bv)}|^2
    &= \sum_{a\in \F_2} |\E_{b\in \F_2} \E_{z\in U} g(z+bv) (-1)^{r(z+bv) + ab}|^2 \\
    &= \sum_{a\in \F_2} |\E_{b\in \F_2} \E_{z\in U} g(z+bv) (-1)^{r(z+bv) + a c\cdot (z+bv)}|^2 \\
    &= |\E_{x\in \F_2^n} g(x)(-1)^{r(x)}|^2 + |\E_{x\in \F_2^n} g(x) (-1)^{r(x) + c\cdot x}|^2,
\end{align*}
from which we conclude that
$$|\E_{x\in \F_2^n} g(x)(-1)^{r(x)}|^2 + |\E_{x\in \F_2^n} g(x) (-1)^{r(x) + c\cdot x}|^2 \geq \big|\E_{x\in \F_2^n} g(x) i^{-|c\circ x|}\big|^2.$$

Using this last inequality for the function $g(x) = f(x) (-1)^{q^*(x) + y^*\cdot x}$, we obtain
\begin{align*}
    \max\big\{|\E_{x\in \F_2^n} f(x) (-1)^{r(x) + q^*(x) + y^* \cdot x}|^2,\: &|\E_{x\in \F_2^n} f(x) (-1)^{r(x)+q^*(x) + (y^*+c) \cdot x}|^2 \big\} \\
    &\geq \frac{1}{2} \big|\E_{x\in \F_2^n} f(x) (-1)^{q^*(x) + y^* \cdot x} i^{-|c\circ x|}\big|^2 \\
    &\geq \frac{1}{2^{n-d+1}} |\langle f,\, \phi_t \rangle|^2 \\
    &\geq \frac{1}{2^{n-d+1}} \Big(\frac{1}{1/2+\eps^2}\Big)^t \|f\|_{u^3}^2,
\end{align*}
where we used inequalities~\eqref{eq:ystar} and~\eqref{eq:corr_phi_t} respectively.
Since in this case we have $t\geq n-d+1$ and $n-d\leq 2\log(1/\eps)$, the last expression is at least
$$\Big(\frac{1}{1 + 2\eps^2}\Big)^{2\log(1/\eps)+1} \|f\|_{u^3}^2 \geq (1-\eps/2)^2 \|f\|_{u^3}^2,$$
where we use that~$\eps \leq 1/100$. 
This concludes the proof of the lemma.
\end{proof}

Using our bound on the size of the list~$L'$ and the Chernoff bound we conclude that, with probability at least~$1-\delta$, we have
$$\big|\Est_q - \langle f,\, (-1)^{q(\cdot)}\rangle\big| < \eps/4 \quad \text{for all $q\in L'$.}$$
Recall that we denote by~$p$ the random polynomial output by the algorithm.
If the above estimate holds, we get
$$|\langle f,\, (-1)^{p(\cdot)}\rangle| > |\Est_p| - \frac{\eps}{4} = \max_{q \in L'} |\Est_q| - \frac{\eps}{4} > \max_{q \in L'} |\langle f,\, (-1)^{q(\cdot)}\rangle| - \frac{\eps}{2}.$$
By Lemma~\ref{lem:find_quad}, we have that $\max_{q \in L'} |\langle f,\, (-1)^{q(\cdot)}\rangle| \geq \|f\|_{u^3} - \eps/2$ with probability at least $1-\delta$.
This implies that inequality~\eqref{eq:high_corr} holds with probability at least $1-2\delta$, finishing the proof of the theorem.
\end{proof}

\subsection{Algorithmic PGI}
Next, we use the polynomial Quadratic Goldreich--Levin algorithm to obtain an algorithmic polynomial inverse theorem for the Gowers $U^3$ norm.

\begin{proof}[ of \cref{thm:algoPGI}]
By \cref{thm:pgi}, there exists a constant $c>1$ such that the following holds:
whenever a 1-bounded function $f: \F_2^n \to \C$ satisfies $\|f\|_{U^3} \geq \gamma$, there exists a quadratic polynomial $q: \F_2^n \to \F_2$ with
$\big|\E_{x\in \F_2^n} f(x) (-1)^{q(x)}\big| \geq (\gamma/2)^c$.
Apply Theorem~\ref{thm:quadratic_GL} to~$f$ with $\eps = \gamma^c/2^{c+1}$ and $\delta = 1/3$;
we obtain a quadratic polynomial~$p$ which, with probability at least~$2/3$, satisfies
$$\big|\E_{x\in \F_2^n} f(x) (-1)^{p(x)}\big| > \big|\E_{x\in \F_2^n} f(x) (-1)^{q(x)}\big| - \frac{\gamma^c}{2^{c+1}} \geq (\gamma/2)^{c+1}.$$
The result follows.
\end{proof}

\subsection{Self-correcting Reed-Muller codes}

We next obtain an optimal self-corrector algorithm for quadratic Reed-Muller codes over $\F_2^n$ that is agnostic to the error rate:

\begin{corollary}[Optimal self-correction of quadratic Reed-Muller codes] \label{cor:ReedMuller}
    There is a query algorithm~$\A$ with the following guarantees.
    Given $\eps>0$ and query access to a Boolean function $f: \F_2^n\to \F_2$, $\A$ makes $n^2 \log n\cdot (1/\eps)^{O(\log 1/\eps)}$ queries to~$f$ and, with probability at least~$2/3$, outputs a quadratic polynomial $p: \F_2^n \to \F_2$ satisfying
    $$\dist(f,\, p) < \min_{q \text{ quadratic}} \dist(f,\, q) + \eps,$$
    where $\dist$ denotes the normalized Hamming distance.
    In addition to the $\widetilde{O}_{\eps}(n^2)$ queries, algorithm~$\A$ has runtime $\widetilde{O}_{\eps}(n^3)$.
\end{corollary}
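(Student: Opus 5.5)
The plan is to reduce to \cref{thm:quadratic_GL} by passing to the $\pm1$-valued function $F(x) := (-1)^{f(x)}$, which is 1-bounded and can be queried with one query to~$f$. For any Boolean $g$ we have
$$\langle F,\, (-1)^{g(\cdot)}\rangle = \E_{x\in \F_2^n} (-1)^{f(x)+g(x)} = 1 - 2\,\dist(f, g).$$
Minimizing $\dist(f,q)$ over quadratics is therefore the same as maximizing the \emph{signed} correlation $\langle F, (-1)^{q}\rangle$. \cref{thm:quadratic_GL}, however, controls only the absolute value $|\langle F, (-1)^{p}\rangle|$, so a sign issue has to be handled.

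To deal with the sign, note that $q$ quadratic implies $q+1$ quadratic, and $\langle F, (-1)^{q+1}\rangle = -\langle F, (-1)^q\rangle$. Hence
$$\max_{q}\, \big|\langle F, (-1)^q\rangle\big| = \max_q \langle F, (-1)^q\rangle = 1 - 2\min_q \dist(f,q).$$
Since $F$ is real-valued, the correlations are real. Run the algorithm of \cref{thm:quadratic_GL} on $F$ with accuracy $\eps$ and $\delta = 1/6$ to obtain~$p$ with $|\langle F,(-1)^p\rangle| \ge 1-2\min_q\dist(f,q) - \eps$. Then output whichever of $p$ and $p+1$ has positive correlation. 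To decide which, estimate $\langle F,(-1)^p\rangle$ by sampling $O(\eps^{-2})$ uniform points, accurate to within $\eps/2$ with probability $5/6$ by Hoeffding.

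The only subtle step is the case where the true correlation of $p$ is close to $0$, since the sign estimate may then be wrong. In that case, however, both $p$ and $p+1$ have correlation at least $-\eps/2$ in signed value, while the guarantee says $|\langle F,(-1)^p\rangle| \ge \max_q|\cdot| - \eps$. So whichever candidate is chosen has signed correlation at least $\max_q \langle F,(-1)^q\rangle - 2\eps$. In general, if the sign estimate is correct the choice has correlation $|\langle F,(-1)^p\rangle|$. If it is wrong, then $|\langle F,(-1)^p\rangle| \le \eps/2$, so the output loses at most $\eps$ compared with $|\langle F,(-1)^p\rangle|$.

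Translating back, the output $p'$ satisfies $\dist(f,p') \le \min_q\dist(f,q) + O(\eps)$. Running the whole procedure with $\eps/4$ in place of $\eps$ gives the strict bound $<\min_q \dist(f,q)+\eps$. A union bound over the two failure events gives success probability $\ge 2/3$.

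The complexity is dominated by \cref{thm:quadratic_GL}: $n^2\log n\,(1/\eps)^{O(\log 1/\eps)}$ queries and $n^3\log n\,(1/\eps)^{O(\log1/\eps)}$ time. The final sign estimation adds only $O(\eps^{-2})$ queries and $O(n^2\eps^{-2})$ time to evaluate the quadratic $p$ at the sample points.
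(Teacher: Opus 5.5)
Your proposal is correct and follows essentially the same route as the paper: pass to $(-1)^{f}$, apply \cref{thm:quadratic_GL} with $\delta = 1/6$ and accuracy a constant fraction of $\eps$, then spend $O(1/\eps^2)$ further queries to fix the sign, replacing $p$ by $p+1$ if needed. Your explicit treatment of the case where $\langle (-1)^{f}, (-1)^{p}\rangle$ is close to zero fills in a step that the paper's two-case distinction (correlation at least $\eps/8$ versus below $-\eps/8$) leaves implicit.
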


\begin{proof}
Query access to a Boolean function $f: \F_2^n \to \F_2$ gives query access to the bounded function $g(x) := (-1)^{f(x)}$.
Note that, for any Boolean function $q: \F_2^n \to \F_2$, we have
\begin{equation} \label{eq:corr_to_dist}
    \E_{x\in \F_2^n} g(x) (-1)^{q(x)} = 1 - 2\Pr_{x\in \F_2^n}[f(x) \neq q(x)] = 1-2\dist(f,\, q).
\end{equation}
Applying Theorem~\ref{thm:quadratic_GL} to~$g$ (with~$\eps$ substituted by~$\eps/4$ and $\delta=1/6$), we obtain a quadratic polynomial $p: \F_2^n \to \F_2$ which, with probability at least~$5/6$, satisfies
$$\big|\Exp_{x\in \F_2^n} g(x) (-1)^{p(x)}\big| > \max_{q \text{ quadratic}} \big|\Exp_{x\in \F_2^n} g(x) (-1)^{q(x)}\big| - \eps/4.$$
Using~$O(1/\eps^2)$ further queries to~$g$, we can differentiate (with probability at least~$5/6$) between the two cases
$$\Exp_{x\in \F_2^n} g(x) (-1)^{p(x)} \geq \eps/8 \quad \text{and} \quad \Exp_{x\in \F_2^n} g(x) (-1)^{p(x)} < -\eps/8;$$
in the latter case, we replace~$p$ by its negation~$\one+p$.
The guarantees stated in the corollary immediately follow from those of \cref{thm:quadratic_GL} together with equation~\eqref{eq:corr_to_dist}.
\end{proof}

\subsection{Quadratic decompositions}

Finally, we obtain our algorithmic structure-versus-randomness decomposition result by combining algorithmic~$\PGI$ with the framework developed by Tulsiani and Wolf~\cite{TulsianiW2014}.
This gives rise to an algorithmic ``quadratic decomposition theorem'' which efficiently decomposes a bounded function~$f$ into a sum of $\poly(1/\eps)$-many quadratic phase function, plus errors of $U^3$ norm and $L^1$ norm at most~$\eps$.

\begin{corollary}[Efficient quadratic decomposition] \label{cor:decomposition}
    There is a randomized algorithm that, when given query access to a $1$-bounded function $f: \F_2^n \to \C$ and some $\eps>0$, outputs with probability at least $2/3$ a decomposition
    $$f = c_1 (-1)^{p_1(\cdot)} + \dots + c_r (-1)^{p_r(\cdot)} + g + h$$
    where the~$c_i$ are constants, the~$p_i$ are quadratic polynomials, $r \leq \poly(1/\eps)$, $\|g\|_{U^3} \leq \eps$ and $\|h\|_1 \leq \eps$.
    The algorithm makes $n^2 \log n\cdot (1/\eps)^{O(\log 1/\eps)}$ queries to~$f$ and runs in time $n^3 \log n\cdot (1/\eps)^{O(\log 1/\eps)}$.
\end{corollary}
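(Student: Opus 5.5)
The plan is to run the standard energy-increment (greedy) decomposition of Tulsiani and Wolf, using the algorithmic $\PGI$ theorem (\cref{thm:algoPGI}) as the inverse step, and to estimate the needed correlations by sampling. Maintain a current residual $f_j = f - \sum_{i\le j} c_i(-1)^{p_i}$. This residual is not 1-bounded in general, so I would instead follow the Tulsiani--Wolf formulation. There one keeps a truncated or rescaled function: either project the current structured part $s_j$ onto the bounded range via the truncation map $\D$-clipping, or use their boosting scheme in which the residual is $f - s_j$ with $s_j$ built from a boosting-type update $s_{j+1} = \mathrm{clip}(s_j + \eta (-1)^{p_{j+1}})$. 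In either case the function fed to the inverse algorithm is bounded by an absolute constant, say $2$, and query access to it costs $j+1$ queries to $f$ plus evaluation of $j$ quadratic polynomials.

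The iteration proceeds as follows. At step $j$, first test whether $\|f - s_j\|_{U^3}$ is small. Rather than estimating the $U^3$ norm directly, I would simply call \cref{thm:algoPGI} (via \cref{thm:quadratic_GL} with accuracy $\poly(\eps)$) on $(f-s_j)/2$. If the returned polynomial $p$ has estimated correlation below $\poly(\eps)$, then by the guarantee of \cref{thm:quadratic_GL} combined with \cref{thm:pgi}, we may conclude $\|f-s_j\|_{U^3}\le \eps$, and we stop. Otherwise we obtain $p_{j+1}$ with $|\langle f-s_j,(-1)^{p_{j+1}}\rangle| \ge \poly(\eps)$. We then set $c_{j+1}$ to a $\poly(\eps)$-multiple of the sign of that (estimated) correlation and update.

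The energy argument goes as in Tulsiani--Wolf. A potential such as $\|f - s_j\|_2^2$, or their Bregman-type potential for the clipped version, decreases by $\poly(\eps)$ per step. It starts at most $O(1)$, so after $r \le \poly(1/\eps)$ steps we must stop. At termination we set $g = f - s_r$ (small $U^3$ norm). We take $h$ to be the difference between $s_r$ and the unclipped sum $\sum c_i(-1)^{p_i}$, whose $L^1$ norm is controlled by the clipping analysis of Tulsiani--Wolf; this is where $\|h\|_1\le\eps$ comes from. If we avoid clipping entirely and use plain $L^2$ energy decrement, $h=0$. However, the residual then loses boundedness, which is exactly why the clipping and the $h$ term are needed.

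The main obstacle is making the inverse step apply to a function that is no longer 1-bounded while keeping the $\|h\|_1$ bound. I would follow Tulsiani--Wolf's bounded-residual construction verbatim and only replace their inverse theorem by \cref{thm:algoPGI}, so that all losses become polynomial.

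For complexity, each of the $r=\poly(1/\eps)$ calls to \cref{thm:quadratic_GL} uses failure probability $\delta = 1/(10r)$ and queries a function costing $\poly(1/\eps)$ queries to $f$. Together with the union bound, this gives overall $n^2\log n\,(1/\eps)^{O(\log 1/\eps)}$ queries and $n^3\log n\,(1/\eps)^{O(\log 1/\eps)}$ time, with success probability at least $2/3$.
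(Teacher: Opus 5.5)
Your top-level strategy is the paper's: it plugs \cref{thm:algoPGI} as the inverse step into Tulsiani--Wolf's decomposition theorem \cite[Theorem~3.1]{TulsianiW2014}. However, the mechanism you spell out for the $L^1$ term does not work, and it omits the one parameter choice the paper's proof actually makes. In both of your schemes the \emph{structured part} is clipped. That makes the residual $2$-bounded, and a potential then bounds the number of steps. But nothing in that analysis controls the total amount clipped off, which is exactly your $h=s_r-\sum_i c_i(-1)^{p_i}$. Take the iterated scheme $s_{j+1}=\mathrm{clip}(s_j+\eta(-1)^{p_{j+1}})$ with $s_0=0$. The potential $\E|f-s_j|^2$, combined with the Pythagorean inequality for projection onto the unit disk, only gives $\sum_j\|e_j\|_2^2\le 1$ for the per-step clipping losses $e_j$. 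Hence $\|h\|_1\le\sqrt r\approx 1/\eta$, which is useless. Note also that the paper feeds the inverse step functions bounded by $B=1/(2\eps)$, not by an absolute constant, and this is not incidental.

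The missing idea is to leave the structured part unclipped and truncate the \emph{residual} at a level $B$ that grows with $1/\eps$.
\begin{enumerate}
\item Keep $F_j=\sum_{i\le j}c_i(-1)^{p_i}$ as an honest linear combination. Let $T_B(z)=z\min(1,B/|z|)$, and feed $T_B(f-F_j)$ to the inverse step.
\item Track $\Phi_j=\E\,\phi_B(f-F_j)$, where $\phi_B(t)=|t|^2$ for $|t|\le B$ and $\phi_B(t)=2B|t|-B^2$ otherwise. Since $\nabla\phi_B=2T_B$ is $2$-Lipschitz, a correlation $|\langle T_B(f-F_j),(-1)^{p}\rangle|\ge\eta$, with $c_{j+1}$ set to (an estimate of) that inner product, decreases $\Phi$ by about $\eta^2$.
\item Since $0\le\Phi_j\le\Phi_0=\|f\|_2^2\le 1$, this gives $r=O(1/\eta^2)$.
\item At termination set $g=T_B(f-F_r)$ and $h=(f-F_r)-T_B(f-F_r)$. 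Because $(|t|-B)_+\le\phi_B(t)/(2B)$, you get $\|h\|_1\le 1/(2B)$.
\end{enumerate}
This is why the paper takes $B=1/(2\eps)$ and needs \cref{thm:algoPGI} on $B$-bounded inputs. Apply it to $T_B(f-F_j)/B$: its $U^3$ norm exceeds $\eps/B=2\eps^2$ whenever $\|T_B(f-F_j)\|_{U^3}>\eps$, so you obtain correlation $B\cdot\poly(\eps^2)=\poly(\eps)$. Hence $\eta=\poly(\eps)$ and $r\le\poly(1/\eps)$. With $T_B(f-F_j)/B$ in place of $(f-s_j)/2$, your stopping rule, union bound and complexity accounting go through as you wrote them.
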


\begin{proof}
Denote $B := 1/(2\eps)$.
\cref{thm:algoPGI} provides an algorithm which, when given query access to a function $f: \F_2^n \to \{z\in \C:\: |z| \leq B\}$ satisfying $\|f\|_{U^3} \geq \eps$, outputs with probability at least $1-\delta$ a quadratic function $p: \F_2^n \to \F_2$ such that $|\langle f,\, (-1)^{q}\rangle| \geq \poly(\eps)$.
This algorithm makes $\widetilde{O}(n^2)$ queries to~$f$ and takes $\widetilde{O}(n^3)$ time.
The result now follows by applying \cite[Theorem~3.1]{TulsianiW2014} to this algorithm and the norm~$\|\cdot\|_{U^3}$.
\end{proof}

We note that, by replacing our use of \cite[Theorem~3.1]{TulsianiW2014} by \cite[Theorem~3.3]{kim2023cubic}, it is possible to do away with the $L^1$-error function~$h$ in this decomposition at the price of increasing the number of quadratic phase functions to $\exp(\poly(1/\eps))$.
It is at present unclear whether there exists a decomposition that attains the best of both worlds, even if one is to ignore the algorithmic aspects.

\section{Proof of the algorithmic PFR theorems} \label{sec:algo_PFR}

In this section, we use our Quadratic Goldreich--Levin theorem (Theorem~\ref{thm:quadratic_GL}) to prove an algorithmic version of the $\PFR$ theorem and its equivalent formulations.
To make the proofs clearer, we will use variations of~$P$ to denote specific positive polynomials $P: \R_+ \to \R_+$ related to our results.
For instance, we will write~$P_1$ to denote the polynomial promised to exist in the~$\PFR$ theorem (\cref{thm:marton_conjecture}):
whenever $A \subseteq \F_2^{n}$ satisfies $|A + A| \leq K |A|$, it can be covered by $P_1(K)$ translates of a subspace $V \leq \F_2^n$ of size $|V|\leq |A|$.

We begin by recalling a few results in additive combinatorics that will be needed for our algorithms.
The first is a version of the original Freiman--Ruzsa theorem with optimal bounds, which was proven by Even--Zohar~\cite{even2012sums}.

\begin{theorem}[Freiman--Ruzsa theorem]
\label{thm:spanAsize}
    Let $A\subseteq \F_2^n$ be a set with doubling constant at most~$K$:
    $|A+A|\leq K\cdot |A|$.
    Then,
    $$|\vspan(A)| \leq  \frac{2^{2K}}{2K} |A|.$$
\end{theorem}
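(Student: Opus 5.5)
We would follow the extremal-set-theory route of Green and Tao, in the sharpened form due to Even--Zohar. The idea is to reduce to the case where $A$ is a down-set in the Boolean cube, and then solve an extremal problem for down-sets by induction on the dimension.

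\emph{Normalization.} Translating $A$ does not change $|A+A|$. So we may assume $0\in A$, in which case $\vspan(A)$ is the smallest subspace containing $A$. Choosing a basis of $\vspan(A)$ made of elements of $A$, we may assume $\vspan(A)=\F_2^d$ and that $A$ contains $e_1,\dots,e_d$. Writing $m := d-\log_2(\cdot)$ informally, the claim becomes a lower bound on the doubling: we must show $|A+A|/|A| \ge K_0(d,|A|)$ for an explicit function $K_0$. We would aim to prove that $|A|\le (m+1)2^{d-m}$ forces
$$|A+A|\ \ge\ \Big(1+m+\tbinom{m}{2}\Big)2^{d-m},$$
and then interpolate for non-integer parameters. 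This bound is attained by $A = W+\{0,e_1,\dots,e_m\}$ with $\dim W=d-m$. For that set, $K\approx m/2$ and $|\vspan(A)|/|A| = 2^m/(m+1)\approx 2^{2K}/(2K)$, which matches the target bound and confirms the constant is sharp.

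\emph{Compression.} For each coordinate $i$ we would apply the Green--Tao compression $C_i$: split $A$ into fibres along the line $\{x,x+e_i\}$, and in every fibre containing exactly one point replace that point by the one with $x_i=0$. The standard argument shows that $|C_iA|=|A|$ and $|C_iA+C_iA|\le|A+A|$. Since $0$ and the basis vectors are fixed by compression (or compress to them), the compressed set still spans $\F_2^d$. Iterating, the quantity $\sum_{a\in A}|a|$ strictly decreases, so the process terminates. At that point $A$ is a down-set in $\{0,1\}^d$ containing all $e_i$, with $|A+A|$ no larger than before.

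\emph{Down-set extremal inequality (main obstacle).} Now induct on $d$. Write $A = A_0\cup(A_1+e_d)$ with $A_0,A_1\subseteq\F_2^{d-1}$; the down-set property gives $A_1\subseteq A_0$, and both are down-sets. Then
$$A+A \supseteq (A_0+A_0)\ \cup\ \big((A_0+A_1)+e_d\big),$$
and $A_0+A_1\supseteq A_0$ because $0\in A_1$ whenever $A_1\ne\emptyset$. Hence
$$|A+A|\ \ge\ |A_0+A_0|+|A_0+A_1|\ \ge\ |A_0+A_0|+|A_0|.$$
We would combine this with the induction hypothesis applied to $A_0$, optimizing over the split $|A_0|+|A_1|=|A|$. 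The degenerate case $A_1=\{0\}$ corresponds to adding an isolated basis vector $e_d$, which is exactly how the extremal sets $W+\{0,e_1,\dots,e_m\}$ grow. The difficulty is that the crude recursion above loses the $\binom m2$ term. To recover it we would also need $|A_0+A_1|\ge|A_1+A_1|+{}$something, i.e.\ a two-set version of the statement, bounding $|B+C|$ for nested down-sets $C\subseteq B$. We would therefore formulate and prove by induction a joint inequality for pairs of nested down-sets, with Kruskal--Katona-type shadow bounds handling the sumset of down-sets in small layers. We expect proving this sharp two-set inequality, including verifying the convexity needed to optimize over the split, to be the main work. The rest of the argument is routine bookkeeping that converts $|A+A|\le K|A|$ into $|\vspan(A)|=2^d\le \frac{2^{2K}}{2K}|A|$.
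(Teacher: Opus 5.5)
Your normalization step is invalid, and it is exactly where the statement fails. $|A+A|$ is translation-invariant but $|\vspan(A)|$ is not. Replacing $A$ by $A+a$ with $a\in A$ can halve the linear span, so ``we may assume $0\in A$'' changes the claim. In fact the statement as written, with the linear span, is false. For $A=\{e_3,\,e_1+e_3,\,e_2+e_3\}\subseteq\F_2^3$ one has $A+A=\{0,e_1,e_2,e_1+e_2\}$, so $K=4/3$, while $|\vspan(A)|=8>3\cdot 2^{8/3}/(8/3)\approx 7.14$. More generally, $A=e_{m+1}+\{0,e_1,\dots,e_m\}$ violates the bound by a factor tending to $2$. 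After your translation you are really bounding the affine span of $A$; the paper only cites Even-Zohar here, whose sharp bound concerns affinely generating sets. The most this can give is $|\vspan(A)|\le 2\cdot\frac{2^{2K}}{2K}|A|=\frac{2^{2K}}{K}|A|$. That factor $2$ is harmless for how the bound is used in \cref{thm:algoPFR1}, but the statement you prove has to be the corrected one.

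Even granting that fix, the proposal is not yet a proof. The parts you actually carry out are fine. The compressions preserve $|A|$, do not increase $|A+A|$, fix $0,e_1,\dots,e_d$ (since $0\in A$), and terminate in a down-set, and $|A+A|=|A_0+A_0|+|A_0+A_1|$ holds exactly. But everything that makes the bound sharp is deferred to an inequality you neither state nor prove: a lower bound on $|B+C|$ for down-sets $C\subseteq B$, where only $B$ is generating. It must be strong enough to recover the $\binom m2$ term and convex enough to optimize over the split. ``Kruskal--Katona-type shadow bounds'' does not say what is being inducted. The crude replacement $|A_0+A_1|\ge|A_0|$ discards $|A_0+A_0|-|A_0|$, almost half of $A+A$ when $K$ is large, whenever $A_1$ is comparable to $A_0$. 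For instance, $A_1=A_0$ when $W+\{0,e_1,\dots,e_m\}$ is split along a direction of $W$. This two-set inequality is the actual content of the sharp theorem.

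Two further points in your plan need repair. First, your target inequality is false as written. For $A=\{0,e_1,e_2,e_3\}$ and $m=2$ we have $|A|=4\le 3\cdot 2$ but $|A+A|=7<4\cdot 2$, so it must be phrased in terms of the ratio $|A+A|/|A|$. Second, controlling only the thresholds $K_m=(1+m+\binom m2)/(m+1)$ cannot be ``interpolated'' to $2^{2K}/(2K)$. It yields $|\vspan(A)|/|A|<2^m/(m+1)$ for $K<K_m$, whereas just above $K_{m-1}$ the target is only about $2^{m-1}/(m-1)$. For $m=10$ this is $2^{10}/11\approx 93$ against $2^{9.2}/9.2\approx 64$. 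So the sharp constant needs the exact extremal function at every density, not just at the Hamming-ball configurations.
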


The next three results are quite standard, and proofs for all of them can be found in Tao and Vu's textbook~\cite{tao2006additive}.
For an additive set~$S$ and $k\in \N$, we write $kS$ to denote the $k$-fold sumset of~$S$.

\begin{lemma}[Pl\"unnecke's inequality, special case]
\label{lem:4Asize}
    If $A \subseteq \F_2^{n}$ satisfies $|2A|/|A| \leq K$, then $|4A|/|A| \leq K^4$.
\end{lemma}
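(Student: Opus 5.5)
The plan is to derive this special case of Plünnecke's inequality from the Plünnecke--Ruzsa inequality in the form $|kB| \le K^k|B|$. That form is obtained through Petridis's lemma, and since we are in $\F_2^n$ we have $A-A = A+A = 2A$, so no sign issues arise.

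First, choose a nonempty subset $X\subseteq A$ that minimizes the ratio $|X+A|/|X|$, and call this minimum $K_0$. Because $X=A$ is admissible, $K_0 \le |2A|/|A| \le K$. Next, apply Petridis's lemma: for every set $C$ we have
$$|X+A+C| \le K_0\,|X+C|.$$
The proof of this lemma is an induction on $|C|$. To add a new element $c$, write $X+A+(C\cup\{c\})$ as $(X+A+C)$ together with the part of $X+A+c$ that is not yet covered. Set $Y = \{x\in X : x+A+c \subseteq X+A+C\}$. Then the new part has size at most $|X+A|-|Y+A|$, and minimality gives $|Y+A|\ge K_0|Y|$. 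Meanwhile $|X+C\cup\{c\}|$ grows by exactly $|X|-|Z|$, where $Z=\{x\in X: x+c\in X+C\}$ and $Z\subseteq Y$. Combining these gives the inductive step. This induction is the only genuinely delicate point, and the main thing to check is the containment $Z\subseteq Y$ together with the accounting in the growth of both sets.

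Iterating the lemma with $C = (j-1)A$ gives $|X+jA| \le K_0|X+(j-1)A|$, and hence $|X+4A| \le K_0^4|X| \le K^4|A|$. Finally, since $X$ is nonempty, pick any $x\in X$; then $x+4A \subseteq X+4A$, so $|4A| = |x+4A| \le K^4|A|$, which is the claim. Alternatively, one could simply cite \cite{tao2006additive}, as the paper indicates.
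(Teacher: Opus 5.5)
Your argument is correct, but it takes a different route from the paper. The paper gives no proof of this lemma; it cites Tao and Vu's textbook \cite{tao2006additive}. There, Pl\"unnecke's inequality comes from the classical graph-theoretic machinery of Pl\"unnecke graphs: magnification ratios, Menger's theorem, and a product argument giving $D_h \le D_1^h$.

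You instead use Petridis's lemma, and your induction is sound:
\begin{itemize}
\item The containment $Z\subseteq Y$ holds because $x+c=x'+c''$ with $x'\in X$, $c''\in C$ gives $x+A+c=x'+A+c''\subseteq X+A+C$.
\item The two increments combine via $|X+A|=K_0|X|$ and $|Y+A|\ge K_0|Y|$, and the latter also holds trivially when $Y=\emptyset$.
\item Starting the induction at $C=\emptyset$ is harmless.
\end{itemize}

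Your route is short, elementary and self-contained. It works verbatim in any abelian group, so your remark that $A-A=A+A$ in $\F_2^n$ is never actually used. It also produces a single nonempty $X\subseteq A$ with $|X+kA|\le K^k|X|$ for all $k$ at once. The textbook route is heavier but sits inside a more general framework (magnification ratios of commutative layered graphs, monotonicity of $D_h^{1/h}$), which the paper does not need here. Either justification suffices for the paper's only use of the lemma, namely the bound $|4A'|\le|4A|\le K^4|A|\le 2K^4|A'|$ in the proof of the algorithmic $\PFR$ theorem.
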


\begin{lemma}[Ruzsa's covering lemma]
\label{lem:ruzsacovering}
    If $S, T \subseteq \F_2^n$ satisfy $|T+S| \leq K|S|$, then there is a subset $X \subseteq T$ of size $|X| \leq K$ such that $T \subseteq X + 2S$.
\end{lemma}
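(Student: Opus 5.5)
The statement to prove is Ruzsa's covering lemma: if $|T+S|\le K|S|$, then $T\subseteq X+2S$ for some $X\subseteq T$ with $|X|\le K$. I will use the standard maximal-packing argument. Choose $X\subseteq T$ maximal with respect to inclusion such that the translates $x+S$, $x\in X$, are pairwise disjoint. Such a maximal set exists because $T$ is finite, as it lives inside $\F_2^n$. If $T$ is empty, take $X=\emptyset$. Otherwise any singleton $\{t\}$ is a valid starting point, so a maximal $X$ exists.

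The size bound comes from counting. The disjoint translates $x+S$ with $x\in X$ are all contained in $T+S$. Hence
\[
|X|\,|S| = \Big|\bigcup_{x\in X}(x+S)\Big| \le |T+S| \le K|S|.
\]
If $S$ is nonempty, this gives $|X|\le K$. If $S$ is empty, the hypothesis is vacuous and the conclusion $T\subseteq X+2S=\emptyset$ fails unless $T$ is empty. So I will assume $S\neq\emptyset$, which is the implicit convention in the statement.

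For the covering property, let $t\in T$. By maximality of $X$, either $t\in X$, or the translate $t+S$ meets some $x+S$ with $x\in X$. In the second case there are $s,s'\in S$ with $t+s=x+s'$. Over $\F_2^n$, subtraction equals addition, so $t=x+s+s'\in x+2S$. In the first case, $t=t+s+s\in t+2S$ for any $s\in S$. In both cases $t\in X+2S$.

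No step is a real obstacle. The only points needing care are the degenerate empty cases and the fact that characteristic $2$ lets $S-S$ be written as $2S$.
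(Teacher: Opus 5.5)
Your proof is correct. The paper gives no proof of its own and defers to Tao and Vu, whose proof is this same maximal disjoint-translates packing argument, and your observation that $S-S=2S$ in characteristic two (with the tacit assumption $S\neq\emptyset$ needed for the statement to hold) is accurate.
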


\begin{theorem}[Balog--Szemer\'{e}di--Gowers theorem]
\label{thm:BSG}
    Let $A \subseteq \F_2^n$ be a set such that
    $$E(A) = \big|\big\{(x_1, x_2, x_3, x_4)\in A^4:\: x_1+x_2 = x_3+x_4\big\}\big| \geq |A|^3/K.$$
    Then, there exists a set $A' \subseteq A$ such that
    $$|A'| \geq |A|/P_{BSG}^{(1)}(K) \quad \text{and} \quad |A'+A'| \leq P_{BSG}^{(2)}(K) |A|.$$
\end{theorem}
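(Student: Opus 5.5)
The plan is the standard graph-theoretic argument, following Gowers' proof with polynomial bounds as presented in Tao and Vu~\cite{tao2006additive}. Write $r(s) = |\{(a,b)\in A^2 : a+b = s\}|$, so that $E(A) = \sum_s r(s)^2$.

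\emph{Step 1 (popular sums).} Define the set of popular sums $S = \{s : r(s) \geq |A|/(2K)\}$. Since $\sum_s r(s) = |A|^2$, the unpopular sums contribute at most
$$\max_{s\notin S} r(s)\cdot |A|^2 \leq |A|^3/(2K)$$
to $E(A)$. Using $r(s)\leq |A|$, we then get
$$|A|\cdot\big|\{(a,b)\in A^2: a+b\in S\}\big| \;\geq\; \sum_{s\in S} r(s)^2 \;\geq\; |A|^3/(2K).$$
Hence the graph $G$ on vertex set $A$ (taken bipartite, $A\times A$), whose edges are the pairs $(a,b)$ with $a+b\in S$, has at least $|A|^2/(2K)$ edges, i.e.\ density $\delta \geq 1/(2K)$.

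\emph{Step 2 (paths of length three).} I would prove the graph lemma: in a bipartite graph on $A\times A$ of density $\delta$, there is $A'\subseteq A$ with $|A'| \geq \poly(\delta)|A|$ such that every pair $a,a'\in A'$ is joined by at least $\poly(\delta)|A|^2$ paths $a - b - c - a'$. The route is dependent random choice. First take the neighbourhood $N(v)$ of a random vertex $v$; on average it has size at least $\delta|A|$. Most pairs in it have many common neighbours, because the expected number of pairs in $N(v)$ with fewer than $\eta|A|$ common neighbours is at most $\eta|A|^2$. Then delete the vertices lying in many ``bad'' pairs, and pass through one more level of neighbourhoods to upgrade ``common neighbours'' to ``paths of length three between all pairs''. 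This is the step I expect to be the main obstacle: the exponents and deletion thresholds must be tuned so that \emph{every} pair in $A'$ (not merely most pairs) has many paths, while all losses stay polynomial in $\delta$.

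\emph{Step 3 (counting).} Fix $a,a'\in A'$. Since we are in characteristic two, each path $a-b-c-a'$ gives
$$a+a' = (a+b)+(b+c)+(c+a'),$$
with all three summands in $S$. Each popular sum $s_j$ has at least $|A|/(2K)$ representations as $x_j+y_j$ with $x_j,y_j\in A$. Combining this with the $\poly(1/K)|A|^2$ paths, the element $a+a'$ has at least $\poly(1/K)|A|^5$ representations as $x_1+y_1+x_2+y_2+x_3+y_3$ with all $x_j,y_j\in A$. Distinct sums $a+a'$ give disjoint sets of such 6-tuples, and there are at most $|A|^6$ tuples in total. Therefore
$$|A'+A'| \leq \poly(K)\,|A|.$$
Together with $|A'| \geq |A|/\poly(K)$ from Step 2, this gives the polynomials $P_{BSG}^{(1)}$ and $P_{BSG}^{(2)}$.
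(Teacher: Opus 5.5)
Your Steps 1 and 3 are sound. In Step 3, for fixed $a,a'$ the map $(b,c)\mapsto(a+b,\,b+c,\,c+a')$ is injective, so the count of $6$-tuples goes through. The graph lemma in Step 2, however, is false as stated, so the obstacle you flag cannot be overcome by tuning thresholds.

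A path $a-b-c-a'$ of length three in a bipartite graph runs from a left vertex to a right vertex, so you are asking for a single set $A'$ that works on both sides at once. Here is a counterexample. Split $A=X\sqcup Y$ into two halves and join $a_L$ to $b_R$ exactly when $a$ and $b$ lie in different halves. This graph is symmetric, like the one from Step 1, and has density $1/2$. Yet every length-three path starting at $a_L$ ends at some $a'_R$ with $a'$ in the half opposite to $a$. So any $A'$ with $|A'|\geq 3$ contains two vertices of the same half that are joined by no such path. Adding the edges $a_La_R$ that come from $0\in S$ only adds $O(|A|)$ paths per pair, far short of $\poly(\delta)|A|^2$.

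Dependent random choice inherently produces two different sets. One is $A_1\subseteq N(v)$ on one side; the other is the set $B'$ of vertices with many neighbours in $A_1$ on the other side. In this example $A_1$ and $B'$ sit in opposite halves.

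The repair is standard. Prove the two-sided lemma instead; this is the form in Tao--Vu, which the paper cites in place of giving its own proof. It yields $A',B'\subseteq A$ with $|A'|,|B'|\geq |A|/\poly(K)$ such that every $a\in A'$ and every $b\in B'$ are joined by at least $\poly(1/K)|A|^2$ paths of length three. Your Step 3 argument then gives $|A'+B'|\leq \poly(K)|A|$. Finally, Ruzsa's triangle inequality $|B'|\,|A'-A'|\leq |A'-B'|\,|B'-A'|$ reads $|A'+A'|\leq |A'+B'|^2/|B'|$ in $\F_2^n$. This gives $|A'+A'|\leq \poly(K)|A|$ with $|A'|\geq |A|/\poly(K)$, as required.
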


\subsection{Algorithmic dense model and sparse set localization}
\label{sec:lemmas}

We now provide a couple of algorithmic primitives that will be needed for our main results.
The first is an efficient randomized algorithm for ``localizing'' a sparse set $A \subset \F_2^n$.

\begin{lemma}[Sparse set localization]
\label{lem:spansample}
   Let $\varepsilon, \delta >0$ and $A \subseteq \F_2^n$.
   Let $m = \log |\vspan(A)|$ and $k = \lceil 2m/\varepsilon\rceil \cdot \lceil\log(1/\delta)\rceil$.
    If $v_1, \dots, v_k$ are uniformly random elements of~$A$, then, with probability at least $1-\delta$, we have
    $$|A\cap \vspan(\{v_1, \dots, v_k\})| \geq (1-\varepsilon)|A|.$$
\end{lemma}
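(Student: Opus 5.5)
The plan is a potential-function argument on the dimension of the span of the samples, grouped into $\lceil\log(1/\delta)\rceil$ independent blocks. The key observation is the following. Let $U = \vspan(\{v_1,\dots,v_j\})$ after drawing $j$ samples. Either $|A\cap U| \geq (1-\varepsilon)|A|$ already, or the next uniform sample $v_{j+1}$ lies outside $U$ with probability greater than $\varepsilon$, and in that case $\dim\vspan$ increases by one. Since every $v_i\in A\subseteq \vspan(A)$, the dimension of $U$ never exceeds $m=\log|\vspan(A)|$. So there can be at most $m$ ``successful'' increments before the process must have stopped in a good state. The good event is also monotone: once $|A\cap U|\ge(1-\varepsilon)|A|$ holds, it persists as $U$ grows.

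First, analyse a single block of $N=\lceil 2m/\varepsilon\rceil$ samples and show it succeeds with probability at least $1/2$. Define indicator variables $X_i$ as follows. $X_i=1$ if either the current span is already good, or $v_i\notin$ the current span. Conditioned on the past, $\Pr[X_i=1]\geq \varepsilon$ always holds. Whenever the process is not yet good, each $X_i=1$ is a genuine dimension increment. Hence, if the block ends bad, then $\sum_i X_i\le m$. The sum stochastically dominates a $\mathrm{Bin}(N,\varepsilon)$ variable, whose mean is at least $2m$. By Markov's inequality applied to the number of trials needed to get $m+1$ successes, or more simply by a martingale or Chernoff bound, we get $\Pr[\sum X_i\le m]\le 1/2$. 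The cleanest route is to bound the waiting time for each increment: its expectation is at most $1/\varepsilon$, so the expected number of samples needed to reach either a good state or $m$ increments is at most $m/\varepsilon$. Markov's inequality then gives failure probability at most $(m/\varepsilon)/N\le 1/2$. The edge case $m=0$ (that is, $A\subseteq\{0\}$) is trivial.

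Second, amplify. Split the $k$ samples into $\lceil\log(1/\delta)\rceil$ consecutive blocks of size $N$. Each block, on its own span, succeeds independently with probability at least $1/2$. Spans only grow when more vectors are added, so if any single block's span is good then the full span is good. The failure probability is therefore at most $2^{-\lceil\log(1/\delta)\rceil}\le\delta$, with logs to base 2 as in the paper.

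The only delicate point is the first step: formalising the stopping-time argument so that the conditional probability bound $\varepsilon$ is valid given the adaptively growing span. I would handle it by defining the stopping time $T$ as the first time the span is good or has dimension $m$. Note that dimension $m$ forces the span to equal $\vspan(A)$, which contains all of $A$, so that state is also good. I would then bound $\E[T]\le m/\varepsilon$ by summing geometric waiting times, each with success probability at least $\varepsilon$ given the past.
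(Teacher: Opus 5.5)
Your proposal is correct and is essentially the paper's argument. Both use the dimension of the span as a potential bounded by $m$, note that every bad step increments it with probability exceeding $\varepsilon$, obtain success probability at least $1/2$ from a block of $\lceil 2m/\varepsilon\rceil$ samples, and amplify over $\lceil\log(1/\delta)\rceil$ independent blocks using monotonicity of the good event. The only difference is in the single-block step: the paper argues by contradiction, lower-bounding $\E[\dim V_\ell] > \varepsilon\ell/2$ under the assumption that the failure probability exceeds $1/2$, whereas you upper-bound the expected hitting time by $m/\varepsilon$ and apply Markov. Markov is the right choice over your Chernoff aside, since a generic Chernoff bound does not give $1/2$ uniformly when $m$ is small.
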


\begin{proof}
Let $\ell \geq 2$ be an integer to be chosen later, and let $v_1, \dots, v_\ell$ be $\ell$ independent random elements of $A$.
Let $V_0 = \{0\}$ and, for each $1\leq i\leq \ell$, denote the linear span of the first $i$ random elements $v_1, \dots, v_i$ by $V_i$. Suppose first that
\begin{equation}
\label{eq:probbound}
    \Pr_{v_1, \dots, v_\ell\in A}\big[|A\cap V_\ell| \geq (1-\varepsilon) |A|\big] < 1/2.
\end{equation}
Then $\Pr_{v_1, \dots, v_\ell\in A}\big[|A\setminus V_\ell| > \varepsilon |A|\big] \geq 1/2$, and so
\begin{align}
    \label{eq:lowerboundonA-Vi}
    \Pr_{v_1, \dots, v_i\in A}\big[|A\setminus V_i| \geq \varepsilon |A|\big] > 1/2 \quad \text{for all $0\leq i \leq \ell$.}
\end{align}
It follows that
\begin{align*}
    \Exp_{v_1, \dots, v_\ell\in A} \big[\dim(V_\ell)\big] &= \sum_{i=1}^\ell \Pr_{v_1, \dots, v_i\in A}\big[v_i \notin V_{i-1}\big] \\
    &\geq \sum_{i=1}^\ell \Pr_{v_1, \dots, v_{i-1}\in A}\Big[|A\setminus V_{i-1}| \geq \varepsilon |A|\Big]\\
    &\qquad\quad \cdot \hspace{1mm} \Pr_{v_1, \dots, v_i\in A}\Big[v_i \notin V_{i-1}\: :\: |A\setminus V_{i-1}| \geq \varepsilon |A|\Big] \\
    &> \sum_{i=1}^\ell 1/2 \cdot \varepsilon
    = \varepsilon \ell/2,
\end{align*}
where the final inequality used equation~\eqref{eq:lowerboundonA-Vi}. 
Since $V_\ell \subseteq \vspan(A)$, we must have $\dim(V_\ell) \leq \log |\vspan(A)| = m$, and thus $\ell < 2m/\varepsilon$ is required for equation~\eqref{eq:probbound} to hold. Denoting $t := \lceil 2m/\varepsilon\rceil$, we conclude~that
\begin{equation*}
    \Pr_{v_1, \dots, v_t\in A}\big[|A\cap \langle v_1, \dots, v_t\rangle| \geq (1-\varepsilon) |A|\big] \geq 1/2.
\end{equation*}

Repeating this sampling process $\lceil\log(1/\delta)\rceil$ times independently at random, the probability that we succeed at least once is at least $1-\delta$.
Setting $k = t \lceil\log(1/\delta)\rceil$, it follows that
$$
\Pr_{v_1, \dots, v_k\in A}\Big[|A\cap \langle v_1, \dots, v_k\rangle| \geq (1-\varepsilon) |A|\Big] \geq 1-\delta,
$$
proving the lemma.
\end{proof}

We next wish to obtain a \emph{dense model} of a set $A\in \F_2^n$ with bounded doubling constant~$K$;
that is, a set $S \subseteq \F_2^m$ that is ``additively equivalent'' to $A$ (as captured by the notion of Freiman isomorphisms) but which has density at least $\poly(1/K)$ inside its ambient space $\F_2^m$.
Recall the notions of Freiman homomorphism and isomorphism:

\begin{definition}[Freiman homomorphism]
For a set $A\subseteq \F_2^n$, a function $\phi: A\rightarrow \F_2^m$ is a \emph{Freiman homomorphism} if, for every additive quadruple $x_1, x_2, x_3, x_4\in A$ such that $x_1+x_2=x_3+x_4$, we have that $\phi(x_1)+\phi(x_2)=\phi(x_3)+\phi(x_4)$.
\end{definition}

\begin{definition}[Freiman isomorphism]
   A \emph{Freiman isomorphism} is a bijective Freiman homomorphism $\phi$ such that its inverse is also a Freiman homomorphism.
\end{definition}

We obtain our algorithmic dense model by showing that, for a suitable choice of~$m$, a uniformly random linear map $\pi: \F_2^n \to \F_2^m$ is a Freiman isomorphism from $A$ to $\pi(A)$ with high probability.

\begin{lemma}[Algorithmic dense model]
\label{lem:randomFreiman}
    Let $\delta > 0$,  $A\subseteq \F_2^n$ and  let $m \geq \log |4A| + \log 1/\delta$ be an integer.
    Suppose $\pi: \F_2^n \rightarrow \F_2^m$ is a random linear map.
    Then, $A$ is Freiman-isomorphic to $\pi(A)$ with probability at least $1-\delta$.
\end{lemma}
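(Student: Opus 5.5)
Since $\pi$ is linear, it is automatically a Freiman homomorphism on $A$: if $x_1+x_2=x_3+x_4$ then $\pi(x_1)+\pi(x_2)=\pi(x_3)+\pi(x_4)$. The whole task is to show that, with probability at least $1-\delta$, $\pi$ is injective on $A$ and its inverse $\pi(A)\to A$ is also a Freiman homomorphism. Over $\F_2$ both conditions reduce to one statement about the kernel of $\pi$ restricted to $4A$. First, injectivity on $A$ fails only if $\pi(x_1)=\pi(x_2)$ for some distinct $x_1,x_2\in A$, i.e.\ only if $\pi$ kills a nonzero element of $2A$. Second, the inverse fails to be a Freiman homomorphism only if there are $x_1,\dots,x_4\in A$ with $\pi(x_1+x_2+x_3+x_4)=0$ but $x_1+x_2+x_3+x_4\neq 0$. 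In that case $\pi$ kills a nonzero element of $4A$.

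Note that $2A\subseteq 4A$, since $x+y=x+y+a+a$ for any $a\in A$ (assume $A\neq\emptyset$). So it suffices to show
$$\Pr\big[\exists\, s\in 4A\setminus\{0\}:\ \pi(s)=0\big]\leq\delta.$$
For a uniformly random linear map $\pi:\F_2^n\to\F_2^m$ (for instance, one given by a uniformly random $m\times n$ matrix) and any fixed nonzero $s$, the vector $\pi(s)$ is uniform on $\F_2^m$. Hence $\Pr[\pi(s)=0]=2^{-m}$. A union bound over the at most $|4A|$ nonzero elements of $4A$ bounds the failure probability by $|4A|\,2^{-m}$. This is at most $\delta$ because $m\geq \log|4A|+\log(1/\delta)$, where logarithms are base $2$.

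On this good event, $\pi|_A$ is a bijection onto $\pi(A)$. Suppose $y_i=\pi(x_i)$ satisfy $y_1+y_2=y_3+y_4$. Then $\pi(x_1+x_2+x_3+x_4)=0$ with $x_1+x_2+x_3+x_4\in 4A$, so $x_1+x_2=x_3+x_4$. This shows the inverse is a Freiman homomorphism, and hence $\pi|_A$ is a Freiman isomorphism.

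There is no real obstacle. The only points needing care are that over $\F_2$ the relation $x_1+x_2=x_3+x_4$ is the same as $x_1+x_2+x_3+x_4=0$, and the inclusion $2A\subseteq 4A$, which lets a single union bound handle injectivity as well.
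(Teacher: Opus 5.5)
Your proof is correct and follows essentially the same argument as the paper: both directions of the Freiman isomorphism, including injectivity (which you obtain from $2A\subseteq 4A$ and the paper obtains by observing that the four-term condition implies bijectivity), reduce to $\pi$ not vanishing on $4A\setminus\{0\}$. Since $\pi(s)$ is uniform on $\F_2^m$ for each fixed nonzero $s$, a union bound then gives failure probability at most $|4A|2^{-m}\leq\delta$.
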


\begin{proof}
One easily sees that $\pi$ is a Freiman isomorphism between $A$ and $\pi(A)$ iff
$$
\forall a, b, c, d\in A:\: a+b+c+d = 0 \iff \pi(a)+\pi(b) + \pi(c)+\pi(d) = 0.
$$
(Note that this property implies that~$\pi$ is bijective.)
If $\pi: \F_2^n \rightarrow \F_2^m$ is a linear map, then the forward implication is automatically satisfied, and moreover
$$\pi(a)+\pi(b) + \pi(c)+\pi(d) = \pi(a+b+c+d).$$
It then suffices to check that
$$\forall a, b, c, d\in A:\: \pi(a+b+c+d) = 0 \implies a+b+c+d = 0,$$
which is equivalent to requiring that $\pi(x) \neq 0$ for all nonzero $x\in 4A$.

Now let $\pi: \F_2^n \rightarrow \F_2^m$ be a uniformly random linear map.
Then, for each $x\in 4A\setminus\{0\}$ individually, $\pi(x)$ is uniformly distributed over $\F_2^m$.
It follows from the union bound that
$$
\Pr\big[\exists x\in 4A\setminus\{0\}:\: \pi(x) = 0\big] \leq \frac{|4A|-1}{2^m},
$$
which is less than $\delta$ if $2^m \geq |4A|/\delta$.
This concludes the proof.
\end{proof}

\subsection{Algorithmic restricted homomorphism}
We will need the following restricted version of a ``homomorphism-testing'' formulation of the $\PFR$ theorem.
For completeness, we include a proof based on~\cite[Proposition~2.6]{green_montreal}, where we replace the use of the Freiman--Ruzsa theorem with the $\PFR$ theorem to obtain polynomial bounds.

\begin{lemma}[Restricted homomorphism testing]
\label{lem:closeaffine}
Suppose $S\subseteq \F_2^m$ and $f: S \to \F_2^n$ satisfy
$$\big|\big\{(x_1, x_2, x_3, x_4)\in S^4:\: x_1+x_2 = x_3+x_4 \,\text{ and }\, f(x_1)+f(x_2) = f(x_3)+f(x_4)\big\}\big| \geq 2^{3m}/K.$$
Then, there exists an affine-linear function $\psi: \F_2^m \to \F_2^n$ such that $f(x) = \psi(x)$ for at least $2^m/P_2(K)$ values of $x\in S$.
\end{lemma}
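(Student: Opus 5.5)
We follow the standard graph argument. Let $\Gamma = \{(x, f(x)) : x\in S\} \subseteq \F_2^{m}\times\F_2^n = \F_2^{m+n}$. Then $|\Gamma| = |S| \leq 2^m$, and the hypothesis says precisely that the additive energy satisfies $E(\Gamma) \geq 2^{3m}/K \geq |\Gamma|^3/K$. Moreover $|S| \geq (2^{3m}/K)^{1/3}$, because the number of quadruples is at most $|S|^3$. For the same reason, $E(\Gamma)\le|\Gamma|^3$ forces $|\Gamma|^3 \geq 2^{3m}/K$, so $|\Gamma| \geq 2^m/K^{1/3}$.

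\textbf{Step 1 (BSG).} Apply \cref{thm:BSG} to $\Gamma$ to obtain $\Gamma' \subseteq \Gamma$ with $|\Gamma'| \geq |\Gamma|/P_{BSG}^{(1)}(K)$ and $|\Gamma'+\Gamma'| \leq P_{BSG}^{(2)}(K)|\Gamma| \leq \poly(K)|\Gamma'|$.

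\textbf{Step 2 (PFR).} Apply \cref{thm:marton_conjecture} to $\Gamma'$. This gives a subspace $H \leq \F_2^{m+n}$ with $|H| \leq |\Gamma'|$ such that $\Gamma'$ is covered by $\poly(K)$ translates of $H$. By pigeonhole, some coset $y+H$ contains at least $|\Gamma'|/\poly(K) \geq 2^m/\poly(K)$ points of $\Gamma'$.

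\textbf{Step 3 (graph structure).} Let $\pi_1$ be the projection onto $\F_2^m$, and set $H_0 = H\cap(\{0\}\times\F_2^n)$. Since $\Gamma'$ is a graph, each fibre of $\pi_1$ restricted to $(y+H)\cap\Gamma'$ has at most one point. Hence
$$|(y+H)\cap\Gamma'| \leq |\pi_1(H)| = |H|/|H_0|.$$
Combined with $|H| \leq |\Gamma'| \leq \poly(K)\,|(y+H)\cap\Gamma'|$, this gives $|H_0| \leq \poly(K)$.

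Choose a complement $H = H_0\oplus H_1$. Then $\pi_1$ is injective on $H_1$, so $H_1$ is the graph of a linear map $\lambda: \pi_1(H) \to \F_2^n$. Extend $\lambda$ to a linear map on all of $\F_2^m$. Every point $(x,f(x))\in (y+H)\cap\Gamma'$ then satisfies
$$f(x) = \lambda(x) + c + h \quad \text{for some } h \in \pi_2(H_0),$$
where $c$ is a constant determined by $y$ and $\lambda$. Pigeonholing over the at most $|H_0|\leq\poly(K)$ choices of $h$, one affine map $\psi(x) = \lambda(x)+c+h$ agrees with $f$ on at least $2^m/\poly(K)$ points of $S$. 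This $\psi$ is the required map, and we take $P_2$ to be the resulting polynomial.

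\textbf{Main obstacle.} Step 3 is the only step needing care. The key is using injectivity of $\pi_1$ on the graph to bound $|H_0|$, and handling the fibre $H_0$ by pigeonhole instead of assuming it is trivial. Everything else is bookkeeping of polynomial losses.
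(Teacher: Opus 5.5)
Your proposal is correct and follows essentially the same route as the paper: graph set $\Gamma$, Balog--Szemer\'edi--Gowers, then $\PFR$, then splitting $H$ into its vertical part $H_0=\ker_H(\pi)$ and a complement that is the graph of a linear map. The only cosmetic difference is the order of pigeonholing. You first pass to one coset of $H$, then deduce $|H_0|\le K'$, then pigeonhole over the $H_0$-fibre, whereas the paper pigeonholes over all $K'|\ker_H(\pi)|$ cosets of the complement at once; both give the same bound $|\Gamma'|/(K')^2$.
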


\begin{proof}
Consider the graph set
$$\Gamma = \big\{(x, f(x)):\: x\in S\big\} \subseteq \F_2^{m+n}.$$
Then $|\Gamma| = |S| \leq 2^m$ and $E(\Gamma) \geq 2^{3m}/K \geq |\Gamma|^3/K$.
By the Balog-Szemer\'{e}di-Gowers theorem (\cref{thm:BSG}), there exists a set $\Gamma' \subseteq \Gamma$ such that
$$|\Gamma'| \geq |\Gamma|/P_{BSG}^{(1)}(K) \quad \text{and} \quad |\Gamma'+\Gamma'| \leq P_{BSG}^{(2)}(K)\cdot |\Gamma|.$$
By the combinatorial $\PFR$ theorem (Theorem~\ref{thm:marton_conjecture}), we have that $\Gamma'$ can be covered~by
$$K' := P_1\big(P_{BSG}^{(1)}(K) P_{BSG}^{(2)}(K)\big)$$
translates of a subspace $H\leq \F_2^{m+n}$ of size $|H| \leq |\Gamma'|$, say
$$
\Gamma' \subseteq \bigcup_{i=1}^{K'} (u_i + H).
$$
Let $\pi: \F_2^{m+n} \to \F_2^m$ denote the projection map onto the first $m$ coordinates:
$\pi(x, y) = x$ for $x\in \F_2^m$, $y\in \F_2^n$.
Let $\ker_H(\pi) = H \cap \big(\{0^m\} \times \F_2^n\big)$ be the kernel of $\pi$ restricted to $H$ and let $H'$ be a complemented subspace of $\ker_H(\pi)$ in $H$, so that $H = H' \oplus \ker_H(\pi)$.
By linearity and the injectivity of $\pi$ on $H'$, there exists a matrix $M\in \F_2^{n\times m}$ such that
\begin{equation}
\label{eq:Hprime}
    H' = \big\{(x, Mx):\: x\in \pi(H)\big\},
\end{equation}
and by the rank-nullity theorem we have that
$$|H| = |\ker_H(\pi)|\cdot |\pi(H)|.$$
Moreover, since $\Gamma'$ is a graph, for each $i\in [K']$, we have
$$\big|\Gamma' \cap (u_i+H)\big| = \big|\pi\big(\Gamma' \cap (u_i+H)\big)\big| \leq |\pi(u_i+H)| = |\pi(H)|,$$
and thus
$$|\Gamma'| = \Bigg|\Gamma'\cap \bigcup_{i=1}^{K'} (u_i+H)\Bigg| \leq \sum_{i=1}^{K'} \big|\Gamma'\cap (u_i+H)\big| \leq K' |\pi(H)|,$$
from which we conclude that $|\pi(H)| \geq |\Gamma'|/K'$.
Finally, since $H = H' \oplus \ker_H(\pi)$, we~have
$$|\Gamma'| = \Bigg|\Gamma'\cap \bigcup_{i=1}^{K'} \bigcup_{v\in \ker_H(\pi)} (u_i+v+H')\Bigg| \leq \sum_{i=1}^{K'} \sum_{v\in \ker_H(\pi)} \big|\Gamma'\cap (u_i+v+H')\big|.$$
There must then exist some translate $u_i$ and some $v\in \ker_H(\pi)$ such that
$$\big|\Gamma'\cap (u_i+v+H')\big| \geq \frac{|\Gamma'|}{K' |\ker_H(\pi)|}.$$
Using the assumption $|H| \leq |\Gamma'|$, the identity $|H| = |\ker_H(\pi)|\cdot |\pi(H)|$ and the bound $|\pi(H)| \geq |\Gamma'|/K'$, we conclude from the last inequality that
$$\big|\Gamma'\cap (u_i+v+H')\big| \geq \frac{|H|}{K' |\ker_H(\pi)|} = \frac{|\pi(H)|}{K'} \geq \frac{|\Gamma'|}{(K')^2}.$$

We can now easily conclude.
Fixing $u_i = (x_1, y_1)$, $v = (x_2, y_2) \in \F_2^{m+n}$ such that the above inequality holds, we obtain from the description of $H'$ (equation~\eqref{eq:Hprime}) that
\begin{align*}
    \Gamma'\cap (u_i+v+H') &= \Gamma' \cap \big\{(x+x_1+x_2,\, Mx+y_1+y_2):\: x\in \pi(H)\big\} \\
    &= \Gamma' \cap \big\{(x,\, Mx + Mx_1 + Mx_2 + y_1 + y_2):\: x\in \pi(H)+x_1+x_2\big\}.
\end{align*}
There must then be at least $|\Gamma'|/(K')^2$ values of $x\in S$ such that $(x, f(x)) \in \Gamma'$ and
$$f(x) = Mx + Mx_1 + Mx_2 + y_1 + y_2.$$
Denote $\psi(x) = Mx + Mx_1 + Mx_2 + y_1 + y_2$.
Recalling that $|\Gamma'| \geq |\Gamma|/P_{BSG}^{(1)}(K)$ and
$$2^{3m}/K \leq E(\Gamma) \leq |\Gamma|^3,$$
we obtain that $f(x) = \psi(x)$ for at least
$$\frac{|\Gamma'|}{(K')^2} = \frac{|\Gamma'|}{P_1\big(P_{BSG}^{(1)}(K) P_{BSG}^{(2)}(K)\big)^2} \geq \frac{2^m}{K P_{BSG}^{(1)}(K) P_1\big(P_{BSG}^{(1)}(K) P_{BSG}^{(2)}(K)\big)^2}$$
values of $x\in S$.
Taking $P_2(K) = K P_{BSG}^{(1)}(K) P_1\big(P_{BSG}^{(1)}(K) P_{BSG}^{(2)}(K)\big)^2$ concludes the proof of the lemma.
\end{proof}

We next provide an algorithmic version of this last lemma, which relies on our optimal Quadratic Goldreich--Levin theorem (Theorem~\ref{thm:quadratic_GL}).

\begin{lemma}[Algorithmic restricted homomorphism]
\label{lem:findingaffine}
Suppose $S\subseteq \F_2^m$ and $f: S \to \F_2^n$ satisfy
$$\big|\big\{(x_1, x_2, x_3, x_4)\in S^4:\: x_1+x_2 = x_3+x_4 \,\text{ and }\, f(x_1)+f(x_2) = f(x_3)+f(x_4)\big\}\big| \geq 2^{3m}/K.$$
There is a randomized algorithm that makes $K^{O(\log K)} (m+n)^2 \log (m+n)$ queries to $S$ and to $f$, runs in $K^{O(\log K)} (m+n)^3 \log (m+n)$ time and, with probability at least $0.7$, returns $M\in \F_2^{n\times m}$, $v\in \F_2^n$ such that
$$\big|\big\{x\in S:\: f(x) = Mx+v\big\}\big| \geq 2^m/P_2'(K).$$
\end{lemma}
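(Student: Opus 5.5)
The plan is to reduce to the Quadratic Goldreich--Levin algorithm (\cref{thm:quadratic_GL}) on the ambient space $\F_2^{m+n}$, using \cref{lem:closeaffine} only as an existence statement. Define $g:\F_2^m\times\F_2^n\to\C$ by
$$g(x,y) = \one_S(x)\,(-1)^{f(x)\cdot y}.$$
This function is $1$-bounded, and one query to $g$ costs one query to $S$ and (if $x\in S$) one query to $f$. For any $M\in\F_2^{n\times m}$ and $v\in\F_2^n$, the quadratic polynomial $q(x,y)=y^\tp Mx+v\cdot y$ satisfies
$$\langle g,(-1)^{q}\rangle=\E_x \one_S(x)\,\E_y(-1)^{y\cdot(f(x)+Mx+v)} = 2^{-m}\big|\{x\in S: f(x)=Mx+v\}\big|.$$
By \cref{lem:closeaffine} there is an affine $\psi$ agreeing with $f$ on $2^m/P_2(K)$ points of $S$. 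Hence $\max_q|\langle g,(-1)^q\rangle|\geq 1/P_2(K)$. We run \cref{thm:quadratic_GL} on $g$ with $\eps = 1/(2P_2(K))$ and $\delta=0.1$. This returns a quadratic $q$ on $\F_2^{m+n}$ with $|\langle g,(-1)^q\rangle|\geq \eta := 1/(2P_2(K))$, using $(m+n)^2\log(m+n)\,K^{O(\log K)}$ queries and $(m+n)^3\log(m+n)\,K^{O(\log K)}$ time, since $\eps^{-O(\log 1/\eps)} = K^{O(\log K)}$.

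Next we extract $(M,v)$ from $q$, which I expect to be the main step. From the monomials of $q$, read off the decomposition
$$q(x,y)=y^\tp Bx + q_1(x)+q_2(y),$$
with $q_1,q_2$ quadratic (linear and constant terms absorbed). Then
$$|\langle g,(-1)^q\rangle| \leq \E_x \one_S(x)\,\big|\widehat{(-1)^{q_2}}(f(x)+Bx)\big|.$$
The Fourier transform of the classical quadratic phase $(-1)^{q_2}$ on $\F_2^n$ has constant modulus $2^{-r/2}$ on an affine subspace $c+U$ with $|U|=2^r$, and vanishes elsewhere. Here $r$ is the rank of the alternating form $q_2(y+z)-q_2(y)-q_2(z)$, $U$ is the annihilator of its radical, and $c$ is determined by the linear part of $q_2$ on that radical. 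All of this is computable by Gaussian elimination in $O(n^3)$ time. The bound then reads
$$\eta \leq 2^{-r/2}\Pr_x\big[x\in S,\ f(x)+Bx\in c+U\big].$$
This forces $2^r\leq \eta^{-2}$ and $\Pr_x[x\in S,\ f(x)+Bx\in c+U]\geq \eta$. By pigeonhole over the at most $\eta^{-2}$ elements $w\in c+U$, some $w$ satisfies $|\{x\in S: f(x)=Bx+w\}|\geq \eta^3 2^m$.

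To find such a $w$, enumerate all $w\in c+U$. For each, estimate $\Pr_x[x\in S,\ f(x)=Bx+w]$ using $\poly(1/\eta)$ uniform samples $x\in\F_2^m$, each requiring a query to $S$ and to $f$. Output $M=B$ and $v=w$ for the largest estimate. By Hoeffding and a union bound over $\eta^{-2}$ candidates, with probability $\geq 0.95$ the chosen $w$ has true density at least $\eta^3/2$. Combined with the $0.9$ success of \cref{thm:quadratic_GL}, the overall success probability exceeds $0.7$, and we may take $P_2'(K)=16P_2(K)^3$.

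The extraction step is the delicate one: we must correctly handle the correlator with extra terms $q_1,q_2$, and in particular use the flatness of the Fourier spectrum of quadratic phases over $\F_2$. That flatness is exactly what turns a correlation bound into a bounded number of candidate shifts $w$. The remaining steps are routine bookkeeping of query and time complexity, which is dominated by the single call to \cref{thm:quadratic_GL}.
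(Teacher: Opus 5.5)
Your proof is correct and has the same skeleton as the paper's. Both use the auxiliary function $g(x,y)=\one_S(x)(-1)^{f(x)\cdot y}$, invoke Lemma~\ref{lem:closeaffine} for existence, and run Quadratic Goldreich--Levin with $\eps=1/(2P_2(K))$. Both then split the output $q$ into a cross term $y^\tp Bx$ plus pure-$x$ and pure-$y$ parts and use the triangle-inequality bound by $\E_x\one_S(x)|\widehat{h}(f(x)+Bx)|$ with $h=(-1)^{q_2}$. The differences are in the two auxiliary steps.

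For existence, you compute $\langle g,(-1)^{y^\tp Mx+v\cdot y}\rangle$ exactly. This is cleaner than the paper's Cauchy--Schwarz argument, and shows that the paper's inequality is in fact an equality.

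For extraction, the paper uses only Parseval and sampling:
\begin{itemize}
\item it bounds the large spectrum $\{z:|\widehat h(z)|\geq\eta/2\}$ by $4/\eta^2$;
\item it loses a factor $2$ in an averaging step, reaching $\eta^3 2^m/8$ agreements;
\item it finds the shift by sampling random $x_i$ and collecting the differences $f(x_i)+Bx_i$ as candidates.
\end{itemize}
You instead use that $|\widehat h|$ equals exactly $2^{-r/2}$ on a coset $c+U$ of size $2^r$ and vanishes elsewhere, computed by Gaussian elimination. So your candidate list is produced deterministically and the loss is smaller. In fact your own bound gives density $\eta\,2^{-r/2}\geq\eta^2$ rather than $\eta^3$, so $P_2'(K)=O(P_2(K)^2)$ would suffice.

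The paper's route has one mild advantage: it never uses that $h$ is a quadratic phase, so it works verbatim for any $1$-bounded pure-$y$ factor. The price is the extra random search for the shift and a worse polynomial.
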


\begin{proof}
Define the function $g: \F_2^{m+n} \to \{-1, 0, 1\}$ by
$$g(x, y) = \one_S(x)\cdot (-1)^{f(x)\cdot y}.$$
Note that one query to $g$ can be made using one query to $S$, one query to $f$, and $O(n)$ time.
We first show that $g$ correlates well with a quadratic function:

\begin{claim}
\label{claim:highu3}
    There exists a quadratic polynomial $p: \F_2^{m+n} \to \F_2$ such that
    $$\Big|\Exp_{\substack{x\in \F_2^m,\\ y\in \F_2^n}} g(x,y) (-1)^{p(x,y)}\Big| \geq \frac{1}{P_2(K)},$$
    where $P_2(\cdot)$ is the polynomial promised by Lemma~\ref{lem:closeaffine}.
\end{claim}

\begin{proof}
From Lemma~\ref{lem:closeaffine}, we know there exists an affine-linear function $\psi: \F_2^m \to \F_2^n$ such that
$$\Pr_{x\in \F_2^m} \big[x\in S \,\text{ and }\, f(x) = \psi(x)\big] \geq \frac{1}{P_2(K)}.$$
Let $E$ be the set where $f$ and $\psi$ agree:
$$E = \big\{x\in S:\: f(x) = \psi(x)\big\}.$$
Note that $g(x, y) = (-1)^{\psi(x)\cdot y}$ for all $x\in E$, $y\in \F_2^n$, and so by Cauchy-Schwarz
\begin{align*}
    \Exp_{x\in \F_2^m} \one_E(x)
    &= \Exp_{x\in \F_2^m} \Big(\one_E(x) \cdot \Exp_{y\in \F_2^n} g(x,y) (-1)^{\psi(x)\cdot y}\Big) \\
    &\leq \Big(\Exp_{x\in \F_2^m} \one_E(x)^2 \Big)^{1/2} \Big(\Exp_{x\in \F_2^m}\Big(\Exp_{y\in \F_2^n} g(x,y) (-1)^{\psi(x)\cdot y}\Big)^2\Big)^{1/2} \\
    &= \Big(\Exp_{x\in \F_2^m} \one_E(x) \Big)^{1/2} \Big(\Exp_{x\in \F_2^m} \Exp_{y, y'\in \F_2^n} g(x,y) g(x,y') (-1)^{\psi(x)\cdot (y+y')}\Big)^{1/2} \\
    &= \Big(\Exp_{x\in \F_2^m} \one_E(x) \Big)^{1/2} \Big(\Exp_{x\in \F_2^m} \Exp_{z\in \F_2^n} \one_S(x) (-1)^{f(x) \cdot z} (-1)^{\psi(x)\cdot z}\Big)^{1/2}.
\end{align*}
We conclude that
$$\Big|\Exp_{x\in \F_2^m} \Exp_{z\in \F_2^n} g(x,z) (-1)^{\psi(x)\cdot z}\Big| \geq \Exp_{x\in \F_2^m} \one_E(x) = \Pr_{x\in \F_2^m}[x\in E] \geq \frac{1}{P_2(K)}.$$
The quadratic function $p: (x,z) \mapsto \psi(x)\cdot z$ thus satisfies the claim.
\end{proof}

We now use the Quadratic Goldreich--Levin theorem (Theorem~\ref{thm:quadratic_GL}) with $f$ replaced by $g$ and $\varepsilon := 1/(2P_2(K))$.
We conclude that, in $(m+n)^3 \log(m+n) \cdot K^{O(\log(K))}$ time and using $(m+n)^2 \log(m+n) \cdot K^{O(\log(K))}$ queries to $g$, we can obtain a quadratic function $q: \F_2^{m+n} \rightarrow \F_2$ which satisfies the following with probability at least $0.9$:
\begin{equation}
\label{eq:highcorrelation}
    \Big|\Exp_{x\in \F_2^m,\, y\in \F_2^n} \one_S(x) (-1)^{f(x)\cdot y} (-1)^{q(x,y)}\Big| \geq \frac{1}{2 P_2(K)}.
\end{equation}
Assume that this inequality holds, and write
$$q(x, y) = (x,y)^\tp A (x,y) + u\cdot x + u'\cdot y + b,$$
where $A \in \F_2^{(m+n) \times (m+n)}$, $u\in \F_2^m$, $u'\in \F_2^n$ and $b\in \F_2$.
Denote the $(m\times n)$-submatrix of $A$ defined by its first $m$ rows and last $n$ columns by $A_{12}$, and the $(n\times m)$-submatrix of $A$ defined by its last $n$ rows and first $m$ columns by $A_{21}$.
We claim that $f$ agrees often with an affine-linear function whose linear part equals $(A_{12}^\tp + A_{21}) x$:

\begin{claim}
    If equation~\eqref{eq:highcorrelation} holds, then there exists some $z_0\in \F_2^n$ such that
    \begin{equation}
    \label{eq:interim_agreement_funcs}
    \big|\big\{x\in S:\: f(x) = (A_{12}^\tp + A_{21}) x + z_0\big\}\big| \geq \frac{2^m}{64 P_2(K)^3}.
\end{equation}
\end{claim}

\begin{proof}
Define the bilinear form $B: \F_2^m \times \F_2^n \to \F_2$ by
$$B(x,y) = q(x, y) - q(x, 0) - q(0, y) + q(0, 0).$$
From the definition of~$q$, one easily checks that $B(x,y) = y^\tp (A_{12}^\tp + A_{21}) x$.

Denote $\sigma := 1/(2 P_2(K))$ and $M := A_{12}^\tp + A_{21}$ for convenience, so that $B(x,y) = Mx \cdot y$.
Plugging in
$$q(x,y) = Mx\cdot y + q(x,0) + q(0,y) - q(0,0)$$
into  equation~\eqref{eq:highcorrelation}, we obtain
$$\Big|\Exp_{x\in \F_2^m} \Exp_{y\in \F_2^n} \one_S(x) (-1)^{f(x)\cdot y} (-1)^{Mx\cdot y} (-1)^{q(x,0) + q(0,y) - q(0,0)}\Big| \geq \sigma.$$
By the triangle inequality, we conclude that
\begin{equation*}
\sum_{x \in S} \Big| \Exp_{y \in \mathbb{F}_2^n} (-1)^{f(x)\cdot y} (-1)^{Mx\cdot y} (-1)^{q(0,y)} \Big| \geq \sigma\cdot 2^m.
\end{equation*}

Defining the function $h: \F_2^n \rightarrow \{-1,1\}$ by $h(y)=(-1)^{q(0, y)}$, one can rewrite the last equation as
$$\sum_{x \in S} \big| \widehat{h}\big(f(x) + Mx\big) \big| \geq \sigma\cdot 2^m.$$
Since $|\widehat{h}(z)| \leq 1$ for all $z\in \F_2^n$, this implies that there exist at least $(\sigma/2)\cdot 2^m$ many $x \in S$ such that $\big| \widehat{h}\big(f(x) + Mx\big) \big| \geq \sigma/2$.
Let us define the set $T = \big\{z \in \F_2^n:\: |\widehat{h}(z)| \geq \sigma/2\big\}$, so that
$$\big|\big\{x\in S:\: f(x)+Mx \in T\big\}\big| \geq \frac{\sigma 2^m}{2}.$$
Then
$$\frac{\sigma 2^m}{2} \leq \sum_{z\in T} \big|\big\{x\in S:\: f(x)+Mx = z\big\}\big| \leq |T| \cdot \max_{z_0\in T} \big|\big\{x\in S:\: f(x)+Mx = z_0\big\}\big|.$$
Since $h$ is a Boolean function, by Parseval we have that
$$1 = \sum_{z \in \F_2^n} |\widehat{h}(z)|^2 \geq \sum_{z \in T} (\sigma/2)^2,$$
and thus $|T| \leq 4/\sigma^2$.
We conclude there exists some $z_0\in T$ such that
$$\big|\big\{x\in S:\: f(x)+Mx = z_0\big\}\big| \geq \frac{1}{|T|} \frac{\sigma 2^m}{2} \geq \frac{\sigma^3 2^m}{8},$$
which proves the claim.
\end{proof}

It now suffices to find such a vector $z_0\in \F_2^n$ such that  equation~\eqref{eq:interim_agreement_funcs} holds.
We do this by sampling $x_1, x_2, \ldots, x_t$ uniformly at random from $\F_2^m$, checking whether $x_i \in S$ and then computing the difference $d(x_i) = f(x_i) - (A_{12}^\tp + A_{21}) x_i$. 
For each $z \in \{d(x_i)\}_{i \in [t]}$, we then estimate $\Pr_{x \in S}\big[f(x) = (A_{12}^\tp + A_{21})x + z\big]$ and output the value~$z^*$ which maximizes the agreement.
To complete the argument, let us now comment on the value of $t$ required to determine a good value of $z^*$.
First, note that equation~\eqref{eq:interim_agreement_funcs} implies
$$\Pr_{x \in \F_2^m}[d(x) = z_0] \geq \frac{1}{64 P_2(K)^3}.$$
Thus, by sampling $t = O(P_2(K)^3)$ times, we ensure that $v_0 \in \{d(x_i)\}_{i \in [t]}$ with probability at least $0.9$.
Finally, we determine $z^*$ as mentioned before by estimating $\Pr_{x \in S}\big[f(x) = (A_{12}^\tp + A_{21})x + z\big]$ for each $z \in \{d(x_i)\}_{i \in [t]}$, which can be done up to error $1/(128 P_2(K)^3)$ with probability at least $1-0.1/t$ using an empirical estimator\footnote{For Boolean functions $f,g$, one can estimate $\Pr_{x}[f(x) = g(x)]$ up to error $\varepsilon$ with probability at least $1-\delta$ using the empirical estimate
$\mathrm{Est}_m := \frac{1}{m} \sum_{j =1}^m f(x_j) g(x_j)$,
which can be computed by querying $f,g$ at uniformly random  $x_1,\ldots,x_m \in \F_2^n$ and for $m = \poly(1/\varepsilon \log(1/\delta))$.}
that uses $O(\log(K) P_2(K)^3)$ samples from $\F_2^n$ and queries to $S$ and $f$ for each $i \in [t]$.
In total, this procedure consumes $O(\log(K) P_2(K)^{6})$ queries to $S$ and to $f$, and succeeds with probability at least $0.8$ (after taking the union bound).

We then return $M = A_{12}^\tp + A_{21}$ and $v = z^*$ as given above.
With probability at least $0.7$, the guarantee of the statement is satisfied with $P_2'(K) = 128 P_2(K)^3$.
The overall query and time complexities of the algorithm are dominated by the complexity of the algorithm in Theorem~\ref{thm:quadratic_GL}. This completes the proof of \cref{lem:findingaffine}.
\end{proof}

\subsection{Algorithmic PFR theorems}
\label{sec:classical}

We are finally ready to prove our algorithmic versions of the $\PFR$ theorem, corresponding to its equivalent formulations given in \cite[Proposition~10.2]{green2004finite}.\footnote{Note that formulations~$(1)$ and~$(3)$ in this proposition immediately follow from formulation~$(2)$, and will thus be omitted.}
We start with the original formulation, corresponding to our \cref{thm:APFR}, which is restated more precisely below.

\begin{theorem}[Algorithmic $\PFR$]
\label{thm:algoPFR1}
    Suppose $A \subseteq \mathbb{F}_2^n$ satisfies $|A + A| \leq K|A|$.
    There is a randomized algorithm that takes $O(\log|A| + K)$ random samples from $A$, makes $2^{O(K)} (\log |A|)^2 \log\log |A|$ queries to $A$, runs in time $K^{O(\log K)} n^4 \log n + 2^{O(K)} n^3 \log n$ and has the following guarantee:
    with probability at least $2/3$, it outputs a basis for a subspace $V \leq \mathbb{F}_2^n$ of size $|V| \leq |A|$ such that $A$ can be covered by $P_1'(K)$ translates of $V$.
\end{theorem}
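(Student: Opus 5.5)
The plan is to reduce to the algorithmic restricted homomorphism lemma (\cref{lem:findingaffine}) via a dense model. Concretely: localize $A$, compress it to a dense set by a random linear map, and apply \cref{lem:findingaffine} to the inverse Freiman isomorphism. An affine agreement of $f$ on a dense portion yields a graph subspace. We then push that subspace back to $\F_2^n$ and finish with Ruzsa covering.

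\emph{Localization.} By \cref{thm:spanAsize}, $\log|\vspan(A)| \leq \log|A| + 2K$. So \cref{lem:spansample} with $\varepsilon=1/2$ and constant $\delta$ uses $O(\log|A|+K)$ samples to produce $v_1,\dots,v_k$ with $A_1 := A\cap \vspan(v_i)$ of size $\geq |A|/2$. Gaussian elimination gives a basis of $U=\vspan(v_i)$ of dimension $d\leq \log|A|+2K$, and we identify $U\cong\F_2^d$. Queries to $A_1$ are then queries to $A$ composed with this coordinate map, which costs $O(nd)$ time.

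\emph{Dense model.} Note $|A_1+A_1|\leq 2K|A_1|$, so Plünnecke (\cref{lem:4Asize}) gives $|4A_1|\leq 16K^4|A_1|$. Take a random linear $\pi:\F_2^d\to\F_2^m$ with $m=\lceil\log(160K^4|A|)\rceil$. By \cref{lem:randomFreiman}, $\pi$ is a Freiman isomorphism on $A_1$ with probability $0.9$. Then $S:=\pi(A_1)$ has density $\geq 1/\poly(K)$ in $\F_2^m$.

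The inverse $f:S\to\F_2^d$, $f=\pi^{-1}$, is a Freiman homomorphism. Its additive quadruples number $E(S)\geq |S|^4/|2S|\geq 2^{3m}/\poly(K)$. To query $S$ and $f$ at $y\in\F_2^m$, we must decide whether some $x\in A_1$ has $\pi(x)=y$. Since $d-m\approx 2K+O(\log K)$, we enumerate the $2^{d-m}$ preimages $x$ of $y$ in $\F_2^d$ and query $A$ on each. This explains the $2^{O(K)}$ factors in query count and time, and the $(\log|A|)^2\log\log|A|$ comes from \cref{lem:findingaffine} with $m+d=O(\log|A|+K)$.

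Running \cref{lem:findingaffine} yields $M,v$ with $f(y)=My+v$ on a set $T\subseteq S$ of size $\geq 2^m/\poly(K)$. Thus $\pi^{-1}(T)\subseteq A_1$ lies in the affine subspace $\{My+v\}$. Equivalently, $A_2 := A_1\cap (v+\mathrm{Im}(M))$ has $|A_2|\geq |A|/\poly(K)$.

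\emph{Producing $V$.} Set $W=\mathrm{Im}(M)\leq U$, computable in $\poly(n)$ time. Then $|W|\leq 2^m\leq \poly(K)|A|$, and $A_2$ is a dense ($\geq 1/\poly(K)$) subset of the coset $v+W$. Ruzsa covering (\cref{lem:ruzsacovering}) with $S=A_2$, $T=A$ gives that $A$ is covered by $|A+A_2|/|A_2|\leq K|A|/|A_2|=\poly(K)$ translates of $2A_2\subseteq W$.

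$W$ may be larger than $|A|$. We fix this by choosing any subspace $V\leq W$ of codimension $\lceil\log(|W|/|A|)\rceil=O(\log K)$, which has $|V|\leq|A|$ and splits $W$ into $\poly(K)$ cosets. This gives $P_1'(K)$ translates. Knowledge of $|A|$ up to factor 2 suffices, and it can be estimated from the sampled span density or set via $m$ guessing over $O(\log n)$ values.

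The main obstacle is the simulation of query access to $S$ and $f$ inside \cref{lem:findingaffine}, i.e.\ bounding the preimage enumeration cost. This needs $d-m=O(K)$, which relies on the Even-Zohar bound on $|\vspan(A)|$. Also, the lemma's success probability $0.7$ must be combined with the localization and Freiman steps and amplified, by repetition with a verification step that estimates the coverage of $A$ by sampling, to reach $2/3$.
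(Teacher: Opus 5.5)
Your proposal is correct and follows the paper's proof step for step. The steps are: localization via \cref{lem:spansample} and \cref{thm:spanAsize}; a random linear map giving a Freiman-isomorphic dense model; \cref{lem:findingaffine} applied to the inverse map, with queries simulated by enumerating a coset of $\ker\pi$; and finally Ruzsa covering by $2A_2\subseteq\mathrm{Im}(M)$, then passing to a subspace of size at most $|A|$. For the kernel size, use the bound $|U|/|\mathrm{Im}\,\pi|\le 2|\vspan(A)|/|A|\le 2^{2K}$ rather than $2^{d-m}$, which is meaningless when $m>d$.

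You overcomplicate the success probability. The paper runs localization and the Freiman step with failure probability $0.01$ each, so a union bound with the $0.3$ failure of \cref{lem:findingaffine} already gives success above $2/3$, and no repetition is needed. If you did amplify, what you should verify by sampling is the agreement density of $f$ with $y\mapsto My+v$; full coverage of $A$ cannot be certified by sampling.
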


\begin{proof}
We first describe the algorithm to find $V$: 
\begin{enumerate}
    \item Sample $t = 28 \log|A| + 56K$ uniformly random elements from $A$, and denote their linear span by $U$.
    Let $A' := A\cap U$.
    \item Take a random linear map $\pi: U \to \mathbb{F}_2^m$ where $m = \log|A| + 4\log K + 10$. Let $S = \pi(A')$ denote the image of $A'$ under $\pi$, and let $f: S \to U$ be the inverse of $\pi$ when restricted to $S$.\footnote{In our analysis we show that this inverse is well-defined with high probability.}
    \item Apply Lemma~\ref{lem:findingaffine} to obtain an affine-linear map $\psi: \mathbb{F}_2^m \to U$ such that $f(x) = \psi(x)$ for at least $|A|/P_2'\big(2^{34}K^{13}\big)$ values $x \in S$.
    \item Take a subspace $V$ of $\textrm{Im}(\psi) + \psi(0)$ having size at most~$|A|$, and output a basis for~$V$.
\end{enumerate}

We proceed to analyze the correctness and complexity of this algorithm.
For Step $(1)$, note that Theorem~\ref{thm:spanAsize} directly implies that $|\vspan(A)|\leq 2^{2K}\cdot |A|$.
Now, by our choice of $t$, Lemma~\ref{lem:spansample} implies that  $|A'| \geq |A|/2$ with probability at least $0.99$.
Supposing this is the case, we have that
$$
|A' + A'|\leq |A+A|\leq  K|A| \leq 2K |A'|.
$$
Moreover, by Lemma~\ref{lem:4Asize} we conclude that  $|4A'| \leq |4A| \leq K^4 |A| \leq 2K^4 |A'|$.

For Step $(2)$, note that Lemma~\ref{lem:randomFreiman} shows that, with probability at least $0.99$, $\pi$ is a Freiman isomorphism from $A'$ to $S = \pi(A')$.
In this case, the inverse map $f: S\to A'$ is a Freiman isomorphism and $|S| = |A'|$.

In Step $(3)$ we wish to apply Lemma~\ref{lem:findingaffine}, which requires us to bound from below the quantity
$$\big|\big\{(x_1, x_2, x_3, x_4)\in S^4:\: x_1+x_2 = x_3+x_4 \,\text{ and }\, f(x_1)+f(x_2) = f(x_3)+f(x_4)\big\}\big|.$$
We claim that this is at least $|A'|^3/(2K)$:

\begin{claim}
    If $f: S \to A'$ is a Freiman isomorphism and $|A'+A'| \leq 2K |A'|$, then
    $$\big|\big\{(x_1, x_2, x_3, x_4)\in S^4:\: x_1+x_2 = x_3+x_4 \,\text{ and }\, f(x_1)+f(x_2) = f(x_3)+f(x_4)\big\}\big| \geq \frac{|A'|^3}{2K}.$$
\end{claim}

\begin{proof}
If $f$ is a Freiman isomorphism, then the quantity above equals
\begin{align*}
    &\big|\big\{(x_1, x_2, x_3, x_4)\in S^4:\: f(x_1)+ f(x_2) = f(x_3)+f(x_4)\big\}\big| \\
    &\qquad = \big|\big\{(y_1, y_2, y_3, y_4)\in (A')^4:\: y_1+y_2 = y_3+y_4\big\}\big| \\
    &\qquad = E(A').
\end{align*}
Note that
$$\sum_{z\in 2A'} \big|\big\{(y_1, y_2)\in (A')^2:\: y_1+y_2 = z\big\}\big| = |A'|^2$$
and
\begin{align*}
    &\sum_{z\in 2A'} \big|\big\{(y_1, y_2)\in (A')^2:\: y_1+y_2 = z\big\}\big|^2 \\
    &\qquad = \sum_{z\in 2A'} \big|\big\{(y_1, y_2, y_3, y_4)\in (A')^4:\: y_1+y_2 = z = y_1+y_2\big\}\big|^2 \\
    &\qquad = E(A'),
\end{align*}
hence, by Cauchy-Schwarz,
$$|A'|^2 \leq |2A'|^{1/2} E(A')^{1/2} \implies E(A') \geq |A'|^4/|2A'|.$$
The claim now follows from the assumption $|2A'| \leq 2K |A'|$.
\end{proof}

Next we note that, by assumption and by our choice for $m$, we have
$$|A'| \geq \frac{|A|}{2} \geq \frac{2^m}{2^{11}K^4}.$$
From the claim above, we conclude that $S$ and $f$ satisfy the hypothesis of Lemma~\ref{lem:findingaffine} with $K$ substituted by $K' := 2^{34}K^{13}$.
We then obtain an affine-linear map $\psi: \F_2^m \to U$ such that, with probability at least $0.7$,
\begin{align}
    \label{eq:promiseofpsi=phi}
    \big|\big\{x\in S:\: f(x) = \psi(x)\big\}\big| \geq \frac{2^m}{P_2'\big(2^{34}K^{13}\big)}.
\end{align}
It remains to argue how one can simulate queries to $S$ and $f$, as required by the statement of Lemma~\ref{lem:findingaffine}.
To this end, observe that we have a full description of the linear map~$\pi: U\to \F_2^m$, so in time $O(m^2 n)$ we can find $\ker(\pi)= \{v\in U:\: \pi(v)=0\}$.
We first make three observations about this:
$(a)$ $\ker(\pi)$ is a subspace of size
$$\frac{|U|}{|\mathrm{Im}(\pi)|} \leq \frac{|\vspan(A)|}{|S|} \leq \frac{2|\vspan(A)|}{|A|} \leq 2^{2K},$$
where we used Theorem~\ref{thm:spanAsize} in the final inequality;
$(b)$ for every $x\in \mathrm{Im}(\pi)$, we have that $\pi^{-1}(x)$ is a translate of $\ker(\pi)$;
$(c)$ in $O(m^2 n)$ time, we can find the inverse map $\pi^{-1}: \mathrm{Im}(\pi) \to U/\ker(\pi)$.
Using item $(b)$, we can check whether $x\in S$ (i.e., $\pi^{-1}(x) \cap A \neq \emptyset$) by enumerating over all $y\in \pi^{-1}(x)$ and checking if $y\in A$ or not.
By item $(a)$, this takes at most $2^{2K}$ queries to $A$.
Hence, after computing $\ker(\pi)$ and $\pi^{-1}$, one can make one query to $S$ and to $f$ using $2^{2K}$ queries to $A$ and $O(mn + 2^{2K}n)$ time.

Now define the affine subspace $V' = \mathrm{Im}(\psi)$.
By definition, we have that $|V'| \leq 2^m \leq 2^{10} K^4|A|$.
Since $f: S \to A'$ is injective, from equation~\eqref{eq:promiseofpsi=phi} we conclude that
$$|A\cap V'| = |\mathrm{Im}(f) \cap \mathrm{Im}(\psi)| \geq \big|\big\{x\in S:\: f(x) = \psi(x)\big\}\big| \geq \frac{2^m}{P_2'\big(2^{34}K^{13}\big)}.$$
It follows that
$$|A + (A\cap V')| \leq |A+A| \leq K |A| \leq 2^m \leq P_2'\big(2^{34}K^{13}\big) |A\cap V'|.$$
Applying Ruzsa's covering lemma (Lemma~\ref{lem:ruzsacovering}), we obtain that $A$ can be covered by $P_2'\big(2^{34}K^{13}\big)$ translates of $2(A\cap V') \subseteq V' + V' = \psi(0)+V'$.

In Step~$(4)$, we can choose a subspace $V \leq V' + \psi(0)$ of size between~$|A|/2$ and~$|A|$, which will then cover~$V'$ using at most $2^{11} K^4$ cosets.
This subspace~$V$ covers~$A$ using at most $P_1'(K) := 2^{11}K^4 P_2'\big(2^{34}K^{13}\big)$ translates, as wished.

Overall, the complexity of the algorithm is as follows.
We use $O(K+\log |A|)$ random samples from $A$.
The number of queries to~$A$ is as given by Lemma~\ref{lem:findingaffine}, where each query to~$f$ and to~$S$ costs $2^{2K}$ queries to~$A$;
using that $m = \log|A| + O(\log K)$ and $\log|U| = O(\log|A| + K)$, we then require at most
$$2^{2K} \cdot K^{O(\log K)} (m+\log|U|)^2 \log (m+\log|U|) = 2^{O(K)} (\log |A|)^2 \log \log |A|$$
queries to~$A$.
The total runtime is the cost of Lemma~\ref{lem:findingaffine}, the cost of inverting $\pi$, and the cost for making the queries to $f$ and $S$, i.e.,
$$
K^{O(\log K)} (m+n)^3 \log (m+n) + O(m^2 n) + K^{O(\log K)} (m+n)^2 \log (m+n) \cdot O(mn + 2^{2K}n).
$$
This scales as $K^{O(\log K)} n^4 \log n + 2^{O(K)} n^3 \log n$, finishing the proof.
\end{proof}

We proceed to state and prove algorithmic versions of two structural theorems whose existential version were shown to be equivalent to the the $\PFR$ theorem \cite{green2004finite, green2005notes}.

\begin{theorem}[Homomorphism testing]
\label{thm:algoPFR2'}
    Suppose $f: \F_2^m \to \F_2^n$ satisfies
    $$\Pr_{x_1+x_2 = x_3+x_4} \big[f(x_1)+f(x_2) = f(x_3)+f(x_4)\big] \geq 1/K.$$
    There is a randomized algorithm that makes $K^{O(\log K)} (m+n)^2 \log (m+n)$ queries to $f$, runs in $K^{O(\log K)} (m+n)^3 \log (m+n)$ time and, with probability at least $2/3$, outputs a matrix $M \in \F_2^{n\times m}$ and a vector $v\in \F_2^n$ such that
    $$\Pr_{x\in \F_2^m} \big[f(x) = Mx + v\big] \geq 1/P_2'(K).$$
\end{theorem}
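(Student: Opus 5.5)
This is a direct reduction to Lemma~\ref{lem:findingaffine} with $S = \F_2^m$. The only work is translating the probabilistic hypothesis into the additive-quadruple count that the lemma requires. The number of quadruples $(x_1,x_2,x_3,x_4)\in(\F_2^m)^4$ with $x_1+x_2=x_3+x_4$ is exactly $2^{3m}$, since $x_1,x_2,x_3$ are free and $x_4$ is then determined. The distribution underlying $\Pr_{x_1+x_2=x_3+x_4}$ is uniform over this set of $2^{3m}$ quadruples. The hypothesis therefore says precisely that
\[
\big|\big\{(x_1,x_2,x_3,x_4)\in (\F_2^m)^4:\: x_1+x_2=x_3+x_4 \,\text{ and }\, f(x_1)+f(x_2)=f(x_3)+f(x_4)\big\}\big| \geq 2^{3m}/K,
\]
which is exactly the hypothesis of Lemma~\ref{lem:findingaffine} with $S=\F_2^m$.

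Next comes access. Queries to $S$ are trivial: every $x\in\F_2^m$ is in $S$, so a query costs $O(1)$ time and no queries to $f$. Queries to $f$ on $S$ are just queries to $f$. The query and time bounds of Lemma~\ref{lem:findingaffine}, namely $K^{O(\log K)}(m+n)^2\log(m+n)$ and $K^{O(\log K)}(m+n)^3\log(m+n)$, therefore transfer verbatim.

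The output of Lemma~\ref{lem:findingaffine} is a pair $(M,v)$ with $|\{x\in\F_2^m : f(x)=Mx+v\}|\geq 2^m/P_2'(K)$, i.e.\ $\Pr_{x\in\F_2^m}[f(x)=Mx+v]\geq 1/P_2'(K)$, as required. The lemma only guarantees success probability $0.7$, which already exceeds $2/3$, so no amplification is needed.

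There is no real obstacle here. The one point to double-check is that the normalization in the statement, ``$\Pr_{x_1+x_2=x_3+x_4}$'', refers to the uniform distribution over additive quadruples, so that the count is exactly $2^{3m}$. If instead one wanted higher confidence, one could repeat the procedure and select the best candidate by empirically estimating the agreement $\Pr_x[f(x)=Mx+v]$ for each, as done at the end of the proof of Lemma~\ref{lem:findingaffine}.
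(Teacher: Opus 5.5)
Your proposal is correct and matches the paper's proof, which likewise just invokes Lemma~\ref{lem:findingaffine} with $S=\F_2^m$. Your translation of the probabilistic hypothesis into the count of at least $2^{3m}/K$ quadruples, and your observation that the lemma's success probability $0.7$ already exceeds $2/3$, supply the details the paper leaves implicit.
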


\begin{proof}
This follows immediately from Lemma~\ref{lem:findingaffine} with $S = \F_2^m$.
\end{proof}

\begin{theorem}[Structured approximate homomorphism]
\label{thm:algoPFR3'}
    Suppose $f: \F_2^m \to \F_2^n$ satisfies
    $$\big|\big\{f(x)+f(y)-f(x+y):\: x, y\in \F_2^m \big\}\big| \leq K.$$
    There is a randomized algorithm that makes $K^{O(\log K)} (m+n)^2 \log (m+n)$ queries to $f$, runs in $K^{O(\log K)} (m+n)^3 \log (m+n)$ time and, with probability at least $2/3$, outputs a matrix $M \in \F_2^{n\times m}$ such that
    $$|\{f(x) - Mx:\: x\in \F_2^m\}| \leq P_3'(K).$$
\end{theorem}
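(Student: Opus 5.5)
Reduce to \cref{thm:algoPFR2'} (homomorphism testing): show that $f$ has many additive quadruples, run that algorithm to get $M$ and $v$ with $f(x)=Mx+v$ on a $1/P_2'(K)$-fraction, and then upgrade this partial agreement to the full structural statement using the small set of ``errors''.

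\emph{Step 1 (energy).} Let $D=\{f(x)+f(y)-f(x+y)\}$, so $|D|\le K$. For $x_1+x_2=x_3+x_4=s$ we have $f(x_1)+f(x_2)=f(s)+d_1$ and $f(x_3)+f(x_4)=f(s)+d_2$ with $d_1,d_2\in D$. Fix $s$ and let $x_1$ (and $x_3$) be uniform. The map $x_1\mapsto d(x_1,s+x_1)$ takes at most $K$ values, so by Cauchy--Schwarz the collision probability of two independent copies is at least $1/K$. Hence
$$\Pr_{x_1+x_2=x_3+x_4}\big[f(x_1)+f(x_2)=f(x_3)+f(x_4)\big]\ge 1/K.$$

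\emph{Step 2 (invoke the algorithm).} Apply \cref{thm:algoPFR2'} to obtain, with probability $2/3$, $M$ and $v$ with $E=\{x: f(x)=Mx+v\}$ of density at least $1/P_2'(K)$. Set $g(x)=f(x)-Mx$. Then $g$ has the same error set $D$, since $M$ is linear, and $g\equiv v$ on $E$.

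\emph{Step 3 (bounding the image of $g$).} For any $x$ and any $y\in E\cap(x+E)$,
$$g(x)=g(y)+g(x+y)+d=2v+d=d \quad\text{for some } d\in D.$$
So whenever $E\cap(x+E)\neq\emptyset$, we get $g(x)\in D$. For general $x$, use that $E$ has density $\alpha\ge 1/P_2'(K)$, so $E+E+E+E=4E$ covers everything once it is a large enough sumset. More precisely, by Bogolyubov-type reasoning, or more simply Ruzsa covering (\cref{lem:ruzsacovering}) with $T=\F_2^m$ and $S=E$, there is a set $X$ with $|X|\le 1/\alpha$ and $\F_2^m\subseteq X+2E$. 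For $x=t+e_1+e_2$ with $t\in X$ and $e_1,e_2\in E$, iterating the cocycle relation gives
$$g(x)\in g(t)+g(e_1)+g(e_2)+D+D = g(t)+D+D.$$
Therefore
$$|\{g(x)\}|\le |X|\,K^2\le P_2'(K)K^2 =: P_3'(K).$$

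The main obstacle is this last covering step. Ruzsa covering must be applied with the correct sumset, and the cocycle identity must be iterated carefully so that the number of $D$-terms stays bounded. The complexity bounds are inherited directly from \cref{thm:algoPFR2'}, since Steps 1 and 3 are purely analytic and require no further queries.
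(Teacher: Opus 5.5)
Your proposal is correct and follows the paper's proof essentially step for step. You bound the additive energy by $1/K$ (your fixed-$s$ collision argument is a slightly cleaner form of the paper's Cauchy--Schwarz computation), call the homomorphism-testing algorithm, and then use Ruzsa covering $\F_2^m\subseteq X+2E$ with $|X|\le P_2'(K)$ and two applications of the cocycle relation to get $P_3'(K)=K^2P_2'(K)$. The one slip is the aside that $4E$ must cover all of $\F_2^m$, which is false in general (take $E$ a hyperplane), but your argument never uses it because the Ruzsa covering step does the work.
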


\begin{proof}
We first show that the property in the statement implies that
\begin{equation} \label{eq:propPFR2}
    \Pr_{x_1+x_2 = x_3+x_4} \big[f(x_1)+f(x_2) = f(x_3)+f(x_4)\big] \geq \frac{1}{K}.
\end{equation}
Indeed, denote $\Delta f := \big\{f(x)+f(y)-f(x+y):\: x, y\in \F_2^m \big\}$, so that $|\Delta f| \leq K$ by assumption.
Then
\begin{align*}
    &\Exp_{b\in \Delta f} \Exp_{x\in \F_2^m} \Exp_{y\in \F_2^m} \one \big[f(x)+f(y)-f(x+y) = b\big] \\
    &\qquad = \frac{1}{|\Delta f|} \Exp_{x, y\in \F_2^m} \sum_{b\in \Delta f} \one \big[f(x)+f(y)-f(x+y) = b\big] \\
    &\qquad = \frac{1}{|\Delta f|} \Exp_{x, y\in \F_2^m} 1 \\
    &\qquad \geq \frac{1}{K},
\end{align*}
and so by Cauchy-Schwarz
\begin{align*}
    \frac{1}{K^2}
    &\leq \Exp_{b\in \Delta f} \Exp_{x\in \F_2^m} \Big(\Exp_{y\in \F_2^m} \one \big[f(x)+f(y)-f(x+y) = b\big]\Big)^2 \\
    &= \Exp_{b\in \Delta f} \Exp_{x\in \F_2^m} \Exp_{y, z\in \F_2^m} \one \big[f(x)+f(y)-f(x+y) = b = f(x)+f(z)-f(x+z)\big] \\
    &= \frac{1}{K} \Exp_{x, y, z\in \F_2^m} \sum_{b\in \Delta f} \one \big[f(y)-f(x+y) = b-f(x) = f(z)-f(x+z)\big] \\
    &= \frac{1}{K} \Exp_{x, y, z\in \F_2^m} \one \big[f(y)-f(x+y) = f(z)-f(x+z)\big] \\
    &= \frac{1}{K} \Exp_{x_1+x_2 = x_3+x_4} \one \big[f(x_1)+f(x_2) = f(x_3)+f(x_4)\big],
\end{align*}
which gives inequality~\eqref{eq:propPFR2} as desired.

We may then apply Lemma~\ref{lem:findingaffine} (with $S = \F_2^m$) to obtain a matrix $M \in \F_2^{n\times m}$ and a vector $v\in \F_2^n$ such that, with probability at least $0.7$, we have
\begin{equation}
\label{eq:conclPFR2}
    \Pr_{x\in \F_2^m} \big[f(x) = Mx + v\big] \geq 1/P_2'(K).
\end{equation}
We claim that, if this inequality holds (and $|\Delta f| \leq K$), then
\begin{equation}
\label{eq:conclPFR3}
    |\{f(x) - Mx:\: x\in \F_2^m\}| \leq K^2 P_2'(K),
\end{equation}
which is the property we want with $P_3'(K) = K^2 P_2'(K)$.
It then suffices to prove~\eqref{eq:conclPFR3}.

Denote $E := \big\{x\in \F_2^m:\: f(x) = Mx + v\big\}$, so that $|E| \geq 2^m/P_2'(K)$ by  equation~\eqref{eq:conclPFR2}.
Then
$$|\F_2^m + E| = 2^m \leq P_2'(K)\cdot |E|,$$
so we may use Ruzsa's covering lemma (Lemma~\ref{lem:ruzsacovering}, with $S = E$ and $T = \F_2^m$) to conclude there exists a set $X\subseteq \F_2^m$ of size $P_2'(K)$ such that $\F_2^m \subseteq X + 2E$.
In other words, every element of $\F_2^m$ can be written as $x+y+z$ with $x\in X$ and $y, z\in E$, where $|X| \leq P_2'(K)$.

Now, for every $x\in X$, $y, z\in E$, by definition of the set $\Delta f$ there exist $b, b'\in \Delta f$ such~that
$$f(x+y)-f(x)-f(y) = b \quad \text{and} \quad f(x+y+z)-f(x+y)-f(z) = b'.$$
Summing these two identities, we conclude that
\begin{align*}
    f(x+y+z)
    &= f(x) + f(y) + f(z) + b+b' \\
    &= f(x) + My + Mz + b+b' \\
    &= f(x) + M(x+y+z) - Mx + b+b',
\end{align*}
and so
$$f(x+y+z) - M(x+y+z) = f(x) - Mx + b+b' \in \big\{f(x') - Mx':\: x'\in X\big\} + \Delta f+\Delta f$$
is contained in a set of size at most $|X|\cdot |\Delta f|^2 \leq K^2 P_2'(K)$.
This gives  equation~\eqref{eq:conclPFR3} and concludes the proof of the theorem.
\end{proof}

\section{Quantum algorithmic PFR theorem}
\label{sec:quantum}

In this section, we provide our quantum algorithm for the $\PFR$ theorem.
We start by introducing the relevant quantum information notation and the concepts and results needed for our proof.

\subsection{Quantum information} Let $\ket{0}=\tvect{1}{0}$ and $\ket{1}=\tvect{0}{1}$ be the basis for $\mathbb{C}^2$, the space in which single qubits live. An arbitrary pure single qubit state is a \emph{superposition} of $\ket{0},\ket{1}$ and has the form $\alpha\ket{0}+\beta\ket{1}=\tvect{\alpha}{\beta}$ where $\alpha,\beta\in \mathbb{C}$ and $|\alpha|^2+|\beta|^2=1$. To define multi-qubit quantum states, we will work with the basis of the Hilbert space $\mathbb{C}^{2^n}$ defined by $\ket{x} = \otimes_{i=1}^n \ket{x_i}$ for $x \in \{0,1\}^n$ built from the $n$-fold tensor product of $\ket{0},\ket{1}$.
An arbitrary $n$-qubit quantum state $\ket{\psi}\in \mathbb{C}^{2^n}$ can then be written as $\ket{\psi}=\sum_{x\in \{0,1\}^n}\alpha_x \ket{x}$ where $\alpha_x\in \mathbb{C}$ and $\sum_x |\alpha_x|^2=1$. Similarly, one can define $\bra{\psi}$ as the complex-conjugate transpose of the state $\ket{\psi}$. A valid \emph{quantum operation} on quantum states can be expressed as a \emph{unitary matrix} $U$ (which satisfies $UU^\dagger =U^\dagger U=\mathbb{I}$ with $U^\dagger$ denoting the complex-conjugate transpose of $U$). An  application of a unitary $U$ to the state $\ket{\psi}$ results in another quantum state $U\ket{\psi}$. In order to obtain classical information from  a quantum state, one can \emph{measure} the quantum state in the computational basis (i.e., $\{\ket{x}\}_{x \in \{0,1\}^n}$) to obtain a classical bit string $z\in \{0,1\}^n$ according to the probability distribution $\{|\alpha_z|^2\}_z$. We will work with the metric of \emph{infidelity} between two $n$-qubit pure quantum states $\ket{\psi}$ and $\ket{\phi}$ defined as $1 - |\langle \psi | \phi \rangle|^2$. It will also be convenient to work simply with fidelity, defined as $|\langle \psi | \phi \rangle|^2$. We refer the interested reader to~\cite{nielsen2010quantum} for more on quantum information.

\textbf{Clifford gates.} 
Clifford circuits are those generated by Hadamard gate, $S$ gate and \textsf{CNOT} gate defined as below
$$
H=\frac{1}{\sqrt{2}}\begin{pmatrix}
1 & 1\\
1 & -1
\end{pmatrix},\: S=\begin{pmatrix}
1 & 0\\
0 & i
\end{pmatrix},\: \textsf{CNOT}=\begin{pmatrix}
1 & 0 & 0 & 0\\
0 & 1 & 0 & 0\\
0 & 0 & 0 & 1\\
0 & 0 & 1 & 0
\end{pmatrix}.
$$
We will need one additional non-Clifford gate in this section, the Toffoli gate. To describe this, first observe the action of the \textsf{CNOT} gate:
$$
\textsf{CNOT}:\ket{a,b}\mapsto\ket{a,a\oplus b}\quad \text{ for all }a,b\in \{0,1\}.
$$
This is a $2$-qubit gate as it acts on the two qubits $\ket{a,b}$, which in particular, flips the second qubit if $a=1$ and keeps the second qubit as it is if $a=0$. The Toffoli gate, denoted by \textsf{CCNOT}, can then be defined as
$$
\textsf{CCNOT}:\ket{a_1,a_2,b}\mapsto \ket{a_1,a_2,b\oplus a_1\cdot a_2} \quad \text{ for all } a_1,a_2,b\in \{0,1\},
$$
i.e., the gate flips the last qubit \emph{if and only if}  the first $2$ qubits are $1$. 

The states produced by Clifford circuits acting on the input $\ket{0^n}$ are \emph{stabilizer states}, which have the following characterization.
(Recall that we write $|\cdot|: \F_2 \to \bset{} \subset \Z$ for the natural identification map.)

\begin{theorem}[Stabilizer state formula \cite{dehaene2003clifford, nest2008classical}]
\label{thm:neststabilizer}
    Every $k$-qubit stabilizer state can be expressed as
    $$
    \frac{1}{\sqrt{|A|}}\sum_{x\in A}i^{|\ell(x)|}(-1)^{q(x)}\ket{x},
    $$
    for some affine subspace $A\subseteq \F_2^k$, quadratic polynomial $q$ and linear polynomial $\ell$ in the variables $(x_1,\ldots,x_k)\in \F_2^k$. 
\end{theorem}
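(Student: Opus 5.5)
The plan is to derive this from \cref{lem:stab_explicit}, after translating between functions on $\F_2^k$ and quantum states. A state $\sum_x \alpha_x\ket{x}$ corresponds to the function $\phi(x) = 2^{k/2}\alpha_x$, so unit $\ell^2$ norm matches $\|\phi\|_2=1$. The first step is to check that states produced by Clifford circuits from $\ket{0^k}$ correspond to elements of $\Stab(\F_2^k)$. The function for $\ket{0^k}$ is $2^{k/2}\one_{\{0\}}$, which is a stabilizer state. Moreover, $H$, $S$ and $\textsf{CNOT}$ each normalize the Heisenberg--Weyl group: they conjugate Paulis to Paulis up to phase, which is a direct check on the generators $X, Z$ of one or two qubits. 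By \cref{cor:normalizer}, these gates therefore lie in $\Iso_{U^3}(\F_2^k)$. Since they are unitary and preserve both $\|\cdot\|_2$ and $\|\cdot\|_{U^3}$, they map stabilizer states to stabilizer states, and induction on circuit length finishes this step.

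Next I would apply \cref{lem:stab_explicit} to write
$$\phi(x) = \alpha\sqrt{2^{k-\dim V}}\,\one_V(x+r)(-1)^{(x+r)^\tp A(x+r)+s\cdot(x+r)}\, i^{|d\circ(x+r)|}.$$
The support is the affine subspace $r+V$, of size $2^{\dim V}$. Dividing by $2^{k/2}$ gives the amplitude $\alpha/\sqrt{|r+V|}$ on this support, which is the required normalization, with the global phase $\alpha$ dropped.

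It remains to put the phase $(-1)^{(x+r)^\tp A (x+r)+s\cdot(x+r)}\, i^{|d\circ(x+r)|}$ into the form $i^{|\ell(x)|}(-1)^{q(x)}$. The $(-1)$-part is already $(-1)^{q(x)}$ for a quadratic polynomial $q$. The main obstacle is the term $i^{|d\circ(x+r)|}$. It is not literally $i^{|\ell(x)|}$ for a single linear form $\ell$, because $|d\circ y|$ is a Hamming weight with values in $\{0,\dots,k\}$, not a value in $\{0,1\}$.

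My approach for this step mirrors step $(3b)$ of the proof of \cref{thm:quadratic_GL}. I would use the identity $|d\circ y| \equiv |d\cdot y| + 2\sum_{i<j} d_id_jy_iy_j \pmod 4$, where $d\cdot y\in\F_2$. This holds because the weight $w$ of $d\circ y$ satisfies $w \equiv (w \bmod 2) + 2\binom{w}{2} \pmod 4$, as one checks for $w=0,1,2,3$. Thus $i^{|d\circ y|} = i^{|d\cdot y|}(-1)^{\sum_{i<j}d_id_jy_iy_j}$. With $y=x+r$, the second factor is a quadratic phase and is absorbed into $q$, while $\ell(x) := d\cdot(x+r)$ is an affine linear polynomial. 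The statement allows $\ell$ linear; if a constant term appears, it gives only $i^{|\ell|}$ versus $i^{|d\cdot x|}$ variations. On the affine support one can rewrite $i^{|1+u|} = i\cdot i^{|u|}(-1)^{u}$, since $|1+u| = 1-|u|$ and $i^{-|u|} = i^{|u|}(-1)^{|u|}$. The result is a global phase times $(-1)^{\text{linear}}$, and the linear term is absorbed into $q$. This yields the stated formula.
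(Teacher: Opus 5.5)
Your proof is correct, but it takes a different route from the paper. The paper gives no proof of this formula: it imports it from \cite{dehaene2003clifford, nest2008classical}, where it is obtained directly in the stabilizer/Clifford formalism, and only remarks that it matches \cref{lem:stab_explicit}. You turn that remark into an argument, and each ingredient checks out. First, the state $\ket{0^k}$ corresponds to $2^{k/2}\one_{\{0\}}\in\Stab(\F_2^k)$. Second, $H$, $S$ and $\textsf{CNOT}$ normalize $\HW(\F_2^{2k})$. For this you only need the elementary inclusion of the normalizer in $\Iso_{U^3}(\F_2^k)$, which follows in one line from \eqref{eq:U3Weyl} and does not depend on Chow's theorem or \cref{thm:isoU3}. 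Third, the congruence $w\equiv (w \bmod 2)+w(w-1) \pmod 4$ is periodic in $w$ modulo $4$, so checking $w\in\{0,1,2,3\}$ suffices. It yields $i^{|d\circ y|}=i^{|d\cdot y|}(-1)^{\sum_{i<j}d_id_jy_iy_j}$, and you absorb the constant term correctly via $i^{|1+u|}=i\cdot i^{|u|}(-1)^{u}$. What your route buys is a self-contained link between the quantum statement and the $U^3$ picture of Section~\ref{sec:symplectic}. It also gives an explicit formula for the kind of classical quadratic correction that step~$(3b)$ in the proof of \cref{thm:quadratic_GL} finds only by interpolation on low-weight vectors. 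The price is that it rests on the Lagrangian classification and the transitivity of Weyl operators behind \cref{lem:stab_explicit}, rather than on a direct computation with Clifford gates. Finally, dropping the global phase $\alpha$ is necessary, not merely cosmetic: $HSH\ket{0}=\tfrac{e^{i\pi/4}}{\sqrt{2}}(\ket{0}-i\ket{1})$, so the formula can only hold up to a global phase. This is in line with the paper's projective convention for stabilizer states.
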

Notably, stabilizer states encode non-classical quadratic functions over an affine subspace, as noted earlier in Section~\ref{sec:symplectic}. Our quantum algorithms will revolve around stabilizer states.

Our quantum algorithmic $\PFR$ theorem will crucially use the agnostic learnability of stabilizer states. Informally the task here is as follows: supposing an arbitrary quantum state $\ket{\psi}$ was $\tau$-close to an \emph{unknown} stabilizer state $\ket{\phi}$ in fidelity (i.e., $|\langle \phi| \psi \rangle|^2 \geq \tau$), output the ``nearest" stabilizer state $\ket{\phi'}$ that is $(\tau-\varepsilon)$-close. Recently, Chen, Gong, Ye and Zhang~\cite{chen2024stabilizer} gave an agnostic learning algorithm that runs in time quasipolynomial in $1/\tau$ and polynomial in the other parameters. Formally, their result is stated in the following theorem.

\begin{theorem}[Agnostic stabilizer learning~\cite{chen2024stabilizer}]
\label{thm:sitanbootstrapping}
Let $\Stab_n$ be  the class of stabilizer states on $n$ qubits. Let $0<\varepsilon\leq \tau$ and $\delta \in (0,1)$.
There is an algorithm that, given access to copies of an $n$-qubit pure state $\ket{\psi}$ with $\max_{\ket{\phi'} \in \Stab_n} |\langle \phi' | \psi \rangle|^2 \geq \tau$, outputs a $\ket{\phi} \in \Stab_n$ such that $|\langle \phi | \psi \rangle|^2 \geq \tau - \varepsilon$ with probability at least $1-\delta$.
The algorithm performs single-copy and two-copy measurements on at most $n\cdot \poly(1/\varepsilon,(1/\tau)^{\log 1/\tau})$ copies of $\ket{\psi}$ and runs in time $n^3 \poly(1/\varepsilon, (1/\tau)^{\log 1/\tau})$.
\end{theorem}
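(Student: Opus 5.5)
This result is imported from \cite{chen2024stabilizer}, but I would prove it by running the argument of \cref{sec:finding_Lagrangians} and \cref{sec:quadGL} in the Hilbert-space ($L^2$) setting, where it is actually simpler. Identify $\ket{\psi}$ with a unit-norm $f\colon\F_2^n\to\C$, so that stabilizer states are exactly $\Stab(\F_2^n)$ and $|\langle\phi|\psi\rangle|^2=|\langle f,\phi\rangle|^2$ (after renormalizing the inner product).

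The first ingredient is an exact sampler for $Q_f$. Bell difference sampling uses two copies of $\ket{\psi}$: measure each pair of copies in the Bell basis, obtaining samples from $P_f$ (the characteristic distribution, \cref{def:char_dist}), and then add two such samples. This replaces the approximate sampler of \cref{lem:sampling} by a perfect one, and uses only single- and two-copy measurements.

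The second ingredient is the spectral test $(a,b)\in\Spec(f)$. Here one estimates $\langle f,W(a,b)f\rangle$, which is the expectation of the Hermitian, unitary observable $W(a,b)$. Measuring $W(a,b)$ on $O(\log(1/\delta'))$ single copies gives this value to constant accuracy, replacing \cref{lem:fourest}.

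The energy-increment step is the projection $\Pi^\sigma_{a,b}$ from \cref{lem:boost}. I would implement it by measuring $W(a,b)$ on each copy and post-selecting on outcome $\sigma$. Since the correlation satisfies $\tau\le\|\Pi f\|$, the norm of each projected state is at least $\tau$. A state reached after $t=O(\log(1/\tau))$ projections therefore costs roughly $(1/\tau)^{O(\log 1/\tau)}$ copies of $\ket{\psi}$ per copy of the projected state, which is exactly the quasipolynomial factor in the statement.

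With these primitives, I would repeat the analysis of \cref{thm:ideal_Lsampling} essentially verbatim. The inputs are \cref{lem:robustgen}, \cref{lem:nonrobust} and \cref{lem:Qfsmooth}, and the last of these holds for arbitrary $f$, with no boundedness assumption. Together they show that {\sc LagrangianSampling} returns $\Lcal(\phi)$, for any $\gamma$-approximate local maximizer $\phi$ with $|\langle f,\phi\rangle|^2\ge\tau$, with probability $((\gamma-\tfrac12)\tau)^{O(\log 1/\tau)}$.

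Given a basis of $L=\Lcal(\phi)$, I would next recover $\phi$ itself. Instead of \cref{lem:linear}, I would apply a Clifford unitary $\sigma(S)$ (\cref{lem:semi_rep}) with $S L=\{0\}\times\F_2^n$, computable in $O(n^3)$ gates. This maps the stabilizer states with Lagrangian $L$ to the computational basis, so a computational-basis measurement samples $\phi_\chi$ with probability $|\langle f,\phi_\chi\rangle|^2\ge\tau$. This is the ``integration'' step of \cref{lem:integration}.

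Repeating these steps gives a list, as in \cref{thm:list}. Choosing $\gamma=\tfrac12+\Theta(\eps)$, the climbing argument of \cref{lem:find_quad}, now carried out among stabilizer states rather than with the quadratic-phase rounding, shows the following. Starting from the optimal $\phi^*$, strict neighbor-improvements terminate at a local maximizer whose fidelity is at least that of $\phi^*$. More precisely, one climbs while the fidelity ratio exceeds $1+\eps$, which bounds the loss by $\eps$. Estimating all fidelities in the list by swap tests then selects an output with fidelity at least $\tau-\eps$.

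The main obstacle is the copy-complexity accounting of the post-selected projections. One must check that post-selection success probability stays at least $\tau$ at every stage. One must also check that the required tolerances ($\eps$ in the spectral test, and $\gamma-\tfrac12$ in terms of $\eps$) combine to give only a $\poly(1/\eps)$ factor times $(1/\tau)^{O(\log 1/\tau)}$, rather than something worse.
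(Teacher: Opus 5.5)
Your outline follows Chen--Gong--Ye--Zhang, whose argument \cref{sec:finding_Lagrangians} dequantizes; the paper itself gives no proof and only cites \cite{chen2024stabilizer}. The pieces you reuse carry over: \cref{lem:boost}, \cref{lem:Qfsmooth}, the case analysis of \cref{thm:ideal_Lsampling}, and measuring in the basis $\Stab_L$ after a Clifford rotation.

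The gap is in how you handle $\eps$. In agnostic learning the benchmark is the best stabilizer state $\phi^*$ itself, so no neighbor of $\phi^*$ has larger fidelity. Hence $\phi^*$ is a $1$-approximate local maximizer, and the climbing argument of \cref{lem:find_quad} is vacuous. The paper needs that argument only because its benchmark is the best \emph{quadratic phase}, which need not be a local maximizer among stabilizer states.

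Your choice $\gamma=\tfrac12+\Theta(\eps)$ is therefore unnecessary, and it also breaks the bound. Each run of {\sc LagrangianSampling} is then only guaranteed to succeed with probability $(\eps\tau)^{O(\log 1/\tau)}$. Your list-decoding bound thus carries a factor $(1/\eps)^{\Theta(\log 1/\tau)}$, which is not $\poly(1/\eps,(1/\tau)^{\log 1/\tau})$. For instance, with $\log(1/\eps)=\log^2(1/\tau)$ it is $2^{\Theta(\log^3(1/\tau))}$, against $2^{O(\log^2(1/\tau))}$. Your two descriptions of the climb also disagree: with $\gamma\approx\tfrac12$ one moves only when a neighbor beats the current fidelity by a factor $\gamma^{-1}\approx 2$, not $1+\eps$.

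The fix is to take a constant $\gamma$, say $\gamma=3/4$, which $\phi^*$ satisfies a fortiori, together with fidelity threshold $\tau$. The list then has size $\log(1/\delta)\,\tau^{-O(\log 1/\tau)}$, and every internal tolerance depends only on $\tau$. The parameter $\eps$ enters only at the end: estimate each candidate's fidelity to within $\eps/2$ using $O(\eps^{-2}\log(|\mathrm{list}|/\delta))$ copies, and output the best. This resolves the obstacle you flagged.

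Second, the Bell-sampling step is misstated. A Bell measurement on $\ket{\psi}^{\otimes 2}$ does not sample from $P_f$ when $\psi$ is complex. Its outcome has law $2^{-n}|\langle\bar\psi|W(x)|\psi\rangle|^2$ (up to conventions). For example, for the one-qubit $Y$-eigenstate it is uniform on $\{X,Z\}$, while $P_f$ is uniform on $\{I,Y\}$. What holds, and what you need, is the Gross--Nezami--Walter identity \cite{Gross2021}: the sum of two independent Bell samples (four copies) is distributed exactly as $Q_f=P_f*P_f$.

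Two smaller remarks. First, on the good branch every projector fixes $\phi$, so the whole chain of post-selections succeeds with probability $\|\Pi_s\cdots\Pi_1\psi\|^2\ge|\langle\phi|\psi\rangle|^2\ge\tau$. A copy of $f_s$ thus costs $O(1/\tau)$ copies of $\ket\psi$, and the quasipolynomial factor comes from the random guesses in Lagrangian sampling, not from post-selection. Second, union-bounding $O(n)$ separate estimates of $\langle W(a,b)\rangle$ costs an extra $\log n$ factor in copies. You can avoid it by estimating all $\langle W(u)\rangle^2$ simultaneously from Bell measurements on $\ket{\psi}^{\otimes 2}$, since the operators $W(u)\otimes W(u)$ pairwise commute and are diagonal in the Bell basis.
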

We will also require the following subroutines for estimating the overlap between two states and obtaining unitaries that prepare stabilizer states.
\begin{lemma}[\textsf{SWAP} test~\cite{nielsen2010quantum}]
\label{lem:swap_test}
Let $\varepsilon,\delta \in (0,1)$. Given two arbitrary $n$-qubit quantum states $\ket{\psi}$ and~$\ket{\phi}$, there is a quantum algorithm that estimates $|\langle \psi | \phi \rangle|^2$ up to error $\varepsilon$ with probability at least $1-\delta$ using $O(1/\varepsilon^2\cdot \log(1/\delta))$ copies of $\ket{\psi},\ket{\phi}$ and runs in $O(n/\varepsilon^2\cdot \log(1/\delta))$ time.
\end{lemma}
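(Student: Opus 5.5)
The plan is to use the standard Hadamard-test construction. The estimate will come from the acceptance statistics of a single-ancilla circuit, followed by Hoeffding's inequality.

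\emph{The circuit.} We prepare an ancilla qubit in $\ket{0}$ together with one copy each of $\ket{\psi}$ and $\ket{\phi}$. We apply $H$ to the ancilla, then a controlled-\textsf{SWAP} of the two $n$-qubit registers controlled on the ancilla, then $H$ to the ancilla again. Finally we measure the ancilla in the computational basis.

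\emph{Single-run statistics.} A direct computation of the output state gives
\[
\frac{1}{2}\ket{0}\big(\ket{\psi}\ket{\phi}+\ket{\phi}\ket{\psi}\big)+\frac{1}{2}\ket{1}\big(\ket{\psi}\ket{\phi}-\ket{\phi}\ket{\psi}\big).
\]
Hence the probability of observing $0$ is
\[
\frac{1}{4}\big\|\ket{\psi}\ket{\phi}+\ket{\phi}\ket{\psi}\big\|^2=\frac{1}{2}+\frac{1}{2}|\langle\psi|\phi\rangle|^2 .
\]
Each run is therefore a Bernoulli trial with mean $p=(1+|\langle\psi|\phi\rangle|^2)/2$. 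The quantity $2\hat p-1$, where $\hat p$ is the empirical frequency of outcome $0$, is an unbiased estimator of the overlap $|\langle\psi|\phi\rangle|^2$.

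\emph{Concentration.} We repeat the circuit $m$ times on fresh copies. By Hoeffding's inequality, $\Pr[|\hat p-p|>\varepsilon/2]\le 2\exp(-m\varepsilon^2/2)$. Taking $m=O(\varepsilon^{-2}\log(1/\delta))$ yields error at most $\varepsilon$ in $2\hat p-1$ with probability at least $1-\delta$. This also fixes the copy count.

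\emph{Gate count.} A controlled-\textsf{SWAP} of two $n$-qubit registers decomposes into $n$ controlled-swaps of qubit pairs (Fredkin gates). Each Fredkin gate is a \textsf{CNOT}--\textsf{CCNOT}--\textsf{CNOT} sandwich and so uses $O(1)$ gates, for $O(n)$ per run and $O(n\varepsilon^{-2}\log(1/\delta))$ overall.

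There is no real obstacle here. The only point requiring care is the algebra for the acceptance probability, in particular the cross terms $\langle\psi|\phi\rangle\langle\phi|\psi\rangle=|\langle\psi|\phi\rangle|^2$.
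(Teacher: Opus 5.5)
Your proposal is correct and is the standard \textsf{SWAP}-test argument: Hadamard, a controlled-\textsf{SWAP} built from $n$ Fredkin gates, another Hadamard, an acceptance probability of $\tfrac{1}{2}+\tfrac{1}{2}|\langle\psi|\phi\rangle|^2$, and then Hoeffding applied to $2\hat p-1$. The paper gives no proof of its own and simply cites Nielsen--Chuang for this construction, so your argument matches what is being invoked.
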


\begin{lemma}[Clifford synthesis~\cite{aaranson2004sim}]\label{lem:clifford_synthesis}
Given the classical description of an $n$-qubit stabilizer state $\ket{\phi}$, there is a quantum algorithm that outputs a Clifford circuit $U$ that prepares $\ket{\phi}$, using $O(n^2/\log n)$ many single-qubit and two-qubit Clifford~gates.
\end{lemma}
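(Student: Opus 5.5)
The plan is to build an explicit Clifford circuit from the normal form of $\ket{\phi}$ and then bring the gate count down to $O(n^2/\log n)$ using asymptotically optimal synthesis of linear reversible (\textsf{CNOT}) circuits. First, from the classical description we will put $\ket{\phi}$ into the form of \cref{thm:neststabilizer}, equivalently \cref{lem:stab_explicit}:
$$\ket{\phi}\propto\sum_{y\in \F_2^k} i^{|\ell(Gy+r)|}(-1)^{q(Gy+r)}\ket{Gy+r}.$$
Here the columns of $G\in\F_2^{n\times k}$ form a basis of $V$, $r\in\F_2^n$, $q$ is a quadratic polynomial and $\ell$ is a linear polynomial. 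Computing $G$, $r$, $q$, $\ell$ takes $O(n^3)$ classical time by Gaussian elimination. We will choose $G$ in reduced column-echelon form, so that $k$ pivot rows of $G$ form the identity.

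Next we prepare the support. We apply $H$ to the $k$ pivot qubits of $\ket{0^n}$, producing the uniform superposition over $y$ placed on the pivot positions. A linear reversible map $x\mapsto Gx$ then writes $Gy$ into the full register; it is the identity on pivot positions and adds the required combinations of pivot bits to the non-pivot positions. Finally $X$ gates on the support of $r$ give the translate $Gy+r$. By the Patel--Markov--Hayes theorem, any linear reversible map on $n$ bits has a \textsf{CNOT} circuit of size $O(n^2/\log n)$, so this stage is within budget.

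The phases are $(-1)^{q(x)}i^{|\ell(x)|}$ evaluated at $x=Gy+r$. The linear parts of $q$ and $\ell$ cost $O(n)$ single-qubit $Z$ and $S$ gates; the constant terms are global phases. The naive implementation of the quadratic part with one \textsf{CZ} per monomial needs $\Theta(n^2)$ gates and misses the $\log n$ saving. We will therefore use the standard reduction of a diagonal Clifford to a layered form
$$\text{(\textsf{CNOT} circuit)}\cdot\text{(layer of $S$, $Z$ gates)}\cdot\text{(\textsf{CNOT} circuit)}^{-1}.$$
Writing the quadratic form as $\sum_i |(Lx)_i|$ with phases $i^{\cdot}$, for a suitable invertible $L$ obtained from a symmetric factorization $\mathrm{Sym}(A)+\Diag(d)=L^{\tp}DL$ over $\Z_4$, shows that conjugating a single layer of $S$ gates by the linear map $L$ realizes the full quadratic phase. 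The two \textsf{CNOT} blocks are again synthesized by Patel--Markov--Hayes in $O(n^2/\log n)$ gates, and the middle layer has $O(n)$ gates. This matches the $H$-$C$-$P$-$C$ canonical form of Aaronson and Gottesman, where only a constant number of \textsf{CNOT} blocks appear.

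The main obstacle is the existence of such an $L$ for every quadratic phase; this is the diagonal-Clifford normal form. If a single conjugated $S$/$Z$ layer turns out not to suffice, we will use a constant number of such blocks, as in the Aaronson--Gottesman canonical form, which preserves the $O(n^2/\log n)$ bound. The other steps are routine linear algebra.
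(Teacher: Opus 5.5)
Your overall architecture is the right one: a constant number of \textsf{CNOT} blocks, each synthesized with Patel--Markov--Hayes, interleaved with $O(n)$-size layers of $H$ and phase gates. The paper gives no proof and simply cites Aaronson--Gottesman, whose $O(n^2/\log n)$ count comes from exactly this mechanism. Your support-preparation stage is fine.

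The gap is the step you flagged yourself, and the route you propose for it fails as stated. With the diagonal fixed, $\mathrm{Sym}(A)+\Diag(d)$ need not be congruent to a diagonal matrix. Take the two-qubit graph state $\frac12\sum_x(-1)^{x_1x_2}\ket{x}$, for which this matrix is $\bigl(\begin{smallmatrix}0&1\\1&0\end{smallmatrix}\bigr)$. For any invertible $L$ and $c\in\Z_4^2$, the exponent $c_1|(Lx)_1|+c_2|(Lx)_2|$ takes the values $c_1$, $c_2$, $c_1+c_2$ on the three nonzero $x$. This is never the pattern $0,0,2$ of $2x_1x_2$. So a single conjugated $S$-layer cannot produce $\textsf{CZ}$. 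Your fallback ("a constant number of blocks, as in the Aaronson--Gottesman canonical form") just re-cites the statement being proved.

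There is also a smaller slip. $i^{|\ell(x)|}$ is not a product of single-qubit $S$ gates once $\ell$ depends on two or more variables; it equals $i^{|d\circ x|}$ times a quadratic sign. It therefore has to be folded into the $\Z_4$ form, not charged to the $O(n)$ single-qubit layer.

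The missing idea is that the outer layer of single-qubit phase gates, which you already budgeted, lets you choose the diagonal freely.

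1. Write the total phase as $i^{\Phi(x)}$ with $\Phi(x)\equiv\sum_j t_jx_j+2\sum_{j<k}b_{jk}x_jx_k \pmod 4$, where $B=(b_{jk})$ is symmetric over $\F_2$ with zero diagonal.

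2. Use $|a_1\oplus\cdots\oplus a_m|\equiv\sum_j a_j-2\sum_{j<k}a_ja_k\pmod 4$. It shows that the coefficient of $2x_jx_k$ in $\sum_{i\le r}|(Lx)_i|$ is $\bigl(L^\tp\Diag(1^r,0^{n-r})L\bigr)_{jk}$ mod $2$. So you only need $L\in\GL_n(\F_2)$ with $L^\tp\Diag(1^r,0^{n-r})L=B+\Diag(e)$ for \emph{some} $e$ of your choosing.

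3. Take $e=\mathbf{1}$. Then $B+I$ is symmetric and non-alternating, hence congruent over $\F_2$ to $\Diag(1^r,0^{n-r})$. To see this, split off the radical and a vector $v$ with $M(v,v)=1$, and absorb hyperbolic planes using $(1)\oplus\bigl(\begin{smallmatrix}0&1\\1&0\end{smallmatrix}\bigr)\cong I_3$.

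4. The leftover $\Phi(x)-\sum_{i\le r}|(Lx)_i|$ is then $\Z_4$-linear, say $\sum_j s_jx_j$, and one more layer of single-qubit phase gates removes it:
\[
D=\Bigl(\bigotimes_j S^{s_j}\Bigr)\,C_L^{-1}\,\bigl(S^{\otimes r}\otimes I\bigr)\,C_L,\qquad C_L\ket{x}=\ket{Lx}.
\]

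Applying Patel--Markov--Hayes to $C_L$ and to your support map keeps the whole circuit at $O(n^2/\log n)$ gates. The argument is then self-contained, without appealing to the Aaronson--Gottesman canonical form.
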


\subsection{The algorithm}

We now give a quantum algorithm that is quadratically better in the query complexity compared to the classical algorithm shown in the section above. We restate the statement of the quantum result in more detail below.

\begin{theorem}[Quantum algorithmic $\PFR$]
\label{thm:algoquantumPFR1}
    Suppose $A \subseteq \mathbb{F}_2^n$ satisfies $|A + A| \leq K|A|$.
    There is a quantum algorithm that takes $O(\log|A| + K)$ random samples from $A$, makes $2^{O(K)} \log |A|$ quantum queries to $A$, runs in time $K^{O(\log K)} n^3 + 2^{O(K)}n^2$ and has the following guarantee:
    with probability at least $2/3$, it outputs a basis for a subspace $V \leq \mathbb{F}_2^n$ of size $|V| \leq |A|$ such that $A$ can be covered by $P_1'(K)$ translates of $V$.
\end{theorem}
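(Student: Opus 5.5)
We will follow the proof of \cref{thm:algoPFR1} step by step, keeping Steps $(1)$, $(2)$ and $(4)$ unchanged. Step $(1)$ samples $t = O(\log|A| + K)$ elements of~$A$ and sets $U$ to be their span and $A' = A\cap U$. Step $(2)$ takes a random linear map $\pi: U\to \F_2^m$ with $m = \log|A| + 4\log K + 10$, sets $S = \pi(A')$ and lets $f = \pi^{-1}|_S$. The change is in Step $(3)$: instead of invoking \cref{lem:findingaffine}, which relies on the classical Quadratic Goldreich--Levin algorithm, we replace the search for a quadratic correlator of
$$g(x,y) = \one_S(x)(-1)^{f(x)\cdot y}$$
by agnostic stabilizer learning (\cref{thm:sitanbootstrapping}) applied to the normalized quantum state $\ket{g} \propto \sum_{x,y} g(x,y)\ket{x,y}$. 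Once a good quadratic $q$ is found, the rest of the argument is classical and identical to before. The two claims in the proof of \cref{lem:findingaffine} extract $M = A_{12}^\tp + A_{21}$ and $z_0$ from $q$. Ruzsa's covering lemma (\cref{lem:ruzsacovering}) then gives the cover, and Step $(4)$ shrinks $\mathrm{Im}(\psi)+\psi(0)$ to a subspace of size at most $|A|$, losing an $O(K^4)$ factor.

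\textbf{State preparation.} We prepare $\ket{g}$ by putting uniform superpositions on $x\in\F_2^m$ and $y\in\F_2^{d}$, where $d = \dim U$. We then coherently compute $\one_S(x)$ and $f(x)$. Because $\ker\pi$ has size at most $2^{2K}$, this amounts to enumerating the coset $\pi^{-1}(x)$ and querying~$A$ coherently, at a cost of $2^{O(K)}$ queries. Next we apply the phase $(-1)^{f(x)\cdot y}$ using CNOT/Z gates, uncompute the workspace, and postselect on $\one_S(x)=1$. The success probability of this postselection is $|S|/2^m \ge 2^{-11}K^{-4}$, so we can boost it with $\poly(K)$ repetitions or with amplitude amplification. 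Each copy therefore costs $2^{O(K)}$ queries and $O(2^{O(K)} n)$ gates.

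\textbf{Fidelity bound.} We need $\max_\phi |\langle\phi|g\rangle|^2 \ge \poly(1/K)$. By \cref{claim:highu3}, the unnormalized correlation with some $(-1)^{p}$ is at least $1/P_2(K')$. Normalizing by $\|g\|_2 = (|S|/2^m)^{1/2}$ only improves this, so the fidelity with the stabilizer state $(-1)^p$ is at least $1/P_2(K')^2$.

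\textbf{Learning and rounding.} We run \cref{thm:sitanbootstrapping} with $\tau = P_2(K')^{-2}$ and $\varepsilon = \tau/2$. This uses $(m+d)K^{O(\log K)}$ copies of $\ket{g}$, hence $2^{O(K)}\log|A|$ queries in total since $m + d = O(\log|A| + K)$, and it outputs a stabilizer state $\phi$ with fidelity at least $\tau/2$. The output $\phi$ may be non-classical or have a proper support. We round it to a classical quadratic phase exactly as in the proof of \cref{lem:find_quad}. First we write $\phi$ in the form of \cref{lem:stab_explicit}; its support has codimension $O(\log K)$. Averaging over $y\in V^\perp$ and using the $i^{|c\circ x|}$-removal identity yields $\poly(K)$ candidate quadratics, at least one of which has correlation $\poly(1/K)$ with~$g$. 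We identify a good candidate by estimating each correlation classically with $\poly(K)$ queries, or by a \textsf{SWAP} test (\cref{lem:swap_test}) against Clifford-prepared states from \cref{lem:clifford_synthesis}. This is the main obstacle: we must control the polynomial loss in the rounding and ensure that the $z_0$-extraction claim still goes through with the weaker constant $\sigma = \poly(1/K)$. Since every loss is polynomial in $K$, the final covering number remains some polynomial $P_1'(K)$, possibly a different polynomial than in \cref{thm:algoPFR1}.

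\textbf{Complexity.} The running time is dominated by the learner's $n^3 K^{O(\log K)}$ together with the state preparation cost $2^{O(K)} n^2$. The failure probabilities of the various stages are combined by a union bound to give the stated $2/3$.
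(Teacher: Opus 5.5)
Your proposal is correct and is essentially the paper's proof. You keep Steps (1), (2) and (4). You replace Step (3), as the paper does in \cref{lem:findingfreimanquantum}, by preparing the normalized state encoding $\one_S(x)(-1)^{f(x)\cdot y}$ through coherent coset enumeration and postselection (\cref{claim:prep_psi}), running agnostic stabilizer learning (\cref{thm:sitanbootstrapping}) with $\varepsilon=\tau/2$, rounding the output to a classical quadratic, and reusing the classical $M,z_0$ extraction and Ruzsa covering. The only difference is cosmetic: you round via the argument of \cref{lem:find_quad} (the form of \cref{lem:stab_explicit}, averaging over $V^\perp$, and the $r$-trick for $i^{|c\circ x|}$). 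The paper does the same step in the form $i^{|\ell_s(z)|}(-1)^{q_s(z)}$, using the Fourier expansion of $\one_{A_s}$ and the bound $\max\{|R_h+I_h|,|R_h-I_h|\}\ge|R_h+iI_h|$.
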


To prove the above theorem, we will reprove Lemma~\ref{lem:findingaffine} in the quantum setting, but now taking advantage of the main result (Theorem~\ref{thm:sitanbootstrapping}) of~\cite{chen2024stabilizer}, which allows us to find the closest stabilizer state to a given unknown $n$-qubit quantum state. Formally, the quantum version of Lemma~\ref{lem:findingaffine} is as follows.

\begin{lemma}
\label{lem:findingfreimanquantum}
Suppose $S\subseteq \F_2^m$ and $f: S \to \F_2^n$ satisfy
$$\big|\big\{(x_1, x_2, x_3, x_4)\in S^4:\: x_1+x_2 = x_3+x_4 \,\text{ and }\, f(x_1)+f(x_2) = f(x_3)+f(x_4)\big\}\big| \geq 2^{3m}/K.$$
There is a quantum algorithm that makes $K^{O(\log K)} (m+n)$ quantum queries to $S$ and to $f$, runs in $K^{O(\log K)}(m+n)^3$ time and, with probability at least $0.7$, returns $M\in \F_2^{n\times m}$, $v\in \F_2^n$ such that
$$\big|\big\{x\in S:\: f(x) = Mx+v\big\}\big| \geq 2^m/P_2'(K).$$
\end{lemma}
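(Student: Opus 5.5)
We follow the proof of \cref{lem:findingaffine} step by step, replacing the classical Quadratic Goldreich--Levin call by the agnostic stabilizer learner of \cref{thm:sitanbootstrapping}. Take $g(x,y) = \one_S(x)(-1)^{f(x)\cdot y}$ on $\F_2^{m+n}$, as in \cref{lem:findingaffine}, and define the quantum state
$$\ket{\psi_g} = \frac{1}{\sqrt{|S|\,2^n}} \sum_{x\in S,\, y\in \F_2^n} (-1)^{f(x)\cdot y}\ket{x,y}.$$

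\textbf{State preparation.} We prepare $\ket{\psi_g}$ from $O(1)$ quantum queries as follows. Prepare the uniform superposition over $\F_2^m$ and query $\one_S$ into an ancilla. Measure the ancilla and postselect on the outcome $1$; this succeeds with probability $|S|/2^m\ge K^{-1/3}$, because the energy bound forces $|S|^3\ge 2^{3m}/K$. Next, query $f$ into an $n$-qubit register, put a fresh register $y$ into uniform superposition, apply a phase $(-1)^{f(x)\cdot y}$ via controlled-$Z$ gates, and uncompute $f$ with a second query. Each copy therefore costs $O(K^{1/3})$ queries in expectation and $O(m+n)$ gates.

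\textbf{Fidelity with a stabilizer state.} Let $p(x,y)=\psi(x)\cdot y$ be the quadratic polynomial from \cref{claim:highu3}. Then $\phi_p = 2^{-(m+n)/2}\sum (-1)^{p}\ket{x,y}$ is a stabilizer state, and
$$|\langle \phi_p|\psi_g\rangle| = \sqrt{2^m/|S|}\;\Big|\E g\,(-1)^p\Big| \ge 1/P_2(K).$$
Hence the stabilizer fidelity of $\ket{\psi_g}$ is at least $\tau := P_2(K)^{-2}$. Running \cref{thm:sitanbootstrapping} with $\varepsilon=\tau/2$ and constant $\delta$ uses $(m+n)K^{O(\log K)}$ copies and $(m+n)^3K^{O(\log K)}$ time. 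It returns a stabilizer state $\phi$ with $|\langle\phi|\psi_g\rangle|^2\ge \tau/2$, which by \cref{thm:neststabilizer} has the form $\phi\propto \one_{A}\, i^{|\ell|}(-1)^{q}$.

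\textbf{Main obstacle: rounding to an affine map.} The output $\phi$ may be supported on a proper affine subspace and may carry a non-classical phase $i^{|\ell|}$. It is therefore not immediately a quadratic phase correlating with $g$, so the classical claim producing $M=A_{12}^\tp+A_{21}$ cannot be applied directly. We plan to reuse the rounding argument from \cref{lem:find_quad}.
\begin{enumerate}
\item Write $\mathbf 1_{A}$ as an average of characters over $V^\perp$, which has size at most $2^{2\log(1/\tau)}$ because $g$ is bounded.
\item Remove $i^{|\ell|}$ by the Parseval trick, splitting over the two values of $c\cdot x$.
\item Conclude that some classical quadratic $q'$ in an explicit list of size $\poly(1/\tau)$ satisfies $|\E g(-1)^{q'}|\ge \poly(1/P_2(K))$.
\end{enumerate}

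This bound transfers from the state to $g$ because
$$|\E g\,\bar\phi| \cdot \sqrt{2^m/|S|}\cdot 2^{(m+n)/2}\cdot(\text{normalization}) = |\langle\phi|\psi_g\rangle|,$$
and $2^m/|S|\le K^{1/3}$. The step needing care is bookkeeping of the normalizations, so that every loss is polynomial in $K$.

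\textbf{Finishing.} For each candidate $q'$ in the list, we run the second claim of \cref{lem:findingaffine}: form $M$ from the off-diagonal blocks of $q'$, and select $z^*$ by sampling $x$ and estimating agreement with $O(\poly(K))$ classical queries. We output the best pair $(M,z^*)$. Correctness follows with $P_2'$ enlarged by a polynomial factor, and rescaling $P_2'$ absorbs this. The total query cost is $K^{O(\log K)}(m+n)$ and the runtime is $K^{O(\log K)}(m+n)^3$, with success probability at least $0.7$ after a union bound.
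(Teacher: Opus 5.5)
Your proposal is correct and follows essentially the same route as the paper. Both arguments prepare $\ket{\psi}$, get stabilizer fidelity $P_2(K)^{-2}$ from \cref{claim:highu3}, run \cref{thm:sitanbootstrapping}, bound the codimension of the support, expand $\one_{A_s}$ over $H_s^\perp$, remove the $i^{|\ell_s|}$ phase by a two-way split, and finish with the classical extraction of \cref{lem:findingaffine}. The differences are cosmetic. You use a deterministic CZ phase oracle, the sharper bound $|S|\ge 2^mK^{-1/3}$, and you pick the winner by classically estimating the agreement of each candidate $(M,z)$ rather than by SWAP tests. One small correction: your codimension bound actually follows from $|A_s|\ge(\tau/2)|S|2^n$, so it uses the density of $S$ and not just the boundedness of $g$; this affects only polynomial factors in $K$.
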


To prove Lemma~\ref{lem:findingfreimanquantum} and describe its corresponding algorithm, we need a quantum protocol to prepare the quantum state that encodes the function 
$$
g_S(x,y) = \one_S(x) (-1)^{f(x)\cdot y},
$$
which from Claim~\ref{claim:highu3}, we know has high correlation with a quadratic function.

\begin{claim}\label{claim:prep_psi}
Consider the context of Lemma~\ref{lem:findingfreimanquantum}. Let $\delta \in (0,1)$. Suppose we have quantum query access to $S$ via the oracle $O_S$ and query access to  $f: S \rightarrow \F_2^n$ via the oracle $O_f$ as~follows
$$
\ket{x,0} \stackrel{O_S}\longrightarrow \ket{x,\one_S(x)}, \quad \ket{x,0^n} \stackrel{O_f}\longrightarrow \ket{x,f(x)}.
$$
There is a quantum algorithm that makes $O(K \log(1/\delta))$ queries to $O_S, O_f$ and, with probability at least $1-\delta$, prepares an $(m+n)$-qubit state $\ket{\psi}$ encoding $g_S(x,y)$ as
$$
\ket{\psi}= \frac{1}{\sqrt{2^n |S|}}\sum_{\substack{x\in S,\\y\in \F_2^n}}(-1)^{f(x)\cdot y}\ket{x,y}.
$$
This algorithm takes $O(K (m+n) \log(1/\delta))$ time to prepare one copy of $\ket{\psi}$.
\end{claim}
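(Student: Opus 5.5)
We prepare $\ket{\psi}$ by rejection sampling, or repeat-until-success. The first step is to create the uniform superposition $2^{-(m+n)/2}\sum_{x,y}\ket{x}\ket{y}$ by applying Hadamards to $m+n$ fresh qubits, and then apply $O_S$ into an ancilla. This gives
$$\sqrt{\tfrac{|S|}{2^m}}\,\Big(\tfrac{1}{\sqrt{2^n|S|}}\sum_{x\in S,y}\ket{x,y}\Big)\ket{1}+\sqrt{1-\tfrac{|S|}{2^m}}\,\ket{\mathrm{junk}}\ket{0}.$$
Measuring the ancilla yields outcome $1$ with probability $|S|/2^m$. On that outcome the state collapses to the uniform superposition over $S\times\F_2^n$, with the ancilla in a known classical state that we can discard.

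The second step computes the phase. We apply $O_f$ into an $n$-qubit ancilla register to get $\ket{x,y}\ket{f(x)}$; this is well defined because $x\in S$ on the surviving branch. We then compute the inner product $f(x)\cdot y$ into one extra qubit using $n$ Toffoli gates, apply a $Z$ gate to that qubit to pick up the phase $(-1)^{f(x)\cdot y}$, and uncompute. Uncomputing means running the Toffolis again and then $O_f^{-1}=O_f$, since XOR oracles are self-inverse. After this every ancilla is back to $\ket{0}$ and we hold exactly $\ket{\psi}$. The gate count of this step is $O(m+n)$ plus two oracle calls.

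The third step bounds the success probability, and it is the only real content. From the hypothesis of Lemma~\ref{lem:findingfreimanquantum}, the number of constrained additive quadruples is at least $2^{3m}/K$. It is also at most $|S|^3$, because $x_4$ is determined by $x_1,x_2,x_3$. Hence $|S|\ge 2^m/K^{1/3}$, and in particular $|S|/2^m\ge 1/K$. Repeating the whole procedure up to $O(K\log(1/\delta))$ times, the probability that every attempt fails is $(1-1/K)^{O(K\log(1/\delta))}\le\delta$. Each attempt costs $O(1)$ queries and $O(m+n)$ gates, which gives the claimed $O(K\log(1/\delta))$ queries and $O(K(m+n)\log(1/\delta))$ time.

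I expect no serious obstacle. The only care points are two. First, the measurement must herald success, so that a failed attempt is simply discarded. Second, the uncomputation must leave no entanglement with the ancillas, which holds because every ancilla is restored to $\ket{0}$.
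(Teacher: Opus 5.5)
Your proof is correct and follows the paper's route. You use heralded rejection sampling with $O_S$ to obtain the uniform superposition over $S\times\F_2^n$, which succeeds with probability $|S|/2^m\ge 1/K$; you justify this bound explicitly via the quadruple count being at most $|S|^3$, which the paper only asserts. You then compute $f(x)\cdot y$ with Toffolis, imprint the phase, and uncompute $O_f$. The one difference is that you imprint the phase deterministically with a $Z$ gate, whereas the paper applies a Hadamard to the parity qubit and measures, which succeeds with probability $1/2$ and, after a $0$ outcome, requires recomputing the parity before retrying; your version is slightly cleaner.
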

\begin{proof}
First, given quantum query access to $S$, the algorithm prepares
$$
\frac{1}{\sqrt{2^m}}\sum_{x\in \F_2^m} \ket{x,0} \stackrel{O_S}{\longrightarrow} \frac{1}{\sqrt{2^m}}\sum_{x\in \F_2^m} \ket{x,\one_S(x)},
$$
and measures the second register. With probability $|S|/2^m \geq 1/K$, the algorithm obtains $1$, in which case the resulting state is $\ket{S}=\frac{1}{\sqrt{|S|}}\sum_{x\in S}\ket{x}$. So, making $O(K \log(1/\delta))$ quantum queries, one can prepare $\ket{S}$ with probability at least $1-\delta/2$.

The algorithm then simply performs the~following
\begin{align*}
\frac{1}{\sqrt{|S|}}\sum_{x\in S}\ket{x}\otimes \frac{1}{\sqrt{2^n}}\sum_{y\in \F_2^n}\ket{y} &\stackrel{O_f}\longrightarrow \frac{1}{\sqrt{2^n|S|}}\sum_{\substack{x\in S,\\y\in \F_2^n}}\ket{x,y,f(x)}\\
&\longrightarrow \frac{1}{\sqrt{2^n|S|}}\sum_{\substack{x\in S,\\y\in \F_2^n}}\ket{x,y,f(x)}\otimes_{i=1}^n\ket{f(x)_i\cdot y_i}\\
&\longrightarrow \frac{1}{\sqrt{2^n|S|}}\sum_{\substack{x\in S,\\y\in \F_2^n}}\ket{x,y}\ket{f(x)}\otimes_{i=1}^n\ket{f(x)_i\cdot y_i} \ket{f(x)\cdot y}.
\end{align*}
where the second operation is by applying  $n$ many \textsf{CCNOT} gates with the control qubits being $y_i,f(x)_i$ applied onto the target qubit $\ket{0}_i$,
and the third operation is by applying $n$ \textsf{CNOT} gates between the control qubit $\ket{f(x)_i\cdot y_i}$ and target qubit $\ket{0}$. After obtaining the final state above, the algorithm applies a single-qubit Hadamard on the last qubit and measures it in the computational basis. If the result is $1$, the algorithm continues. First note that, if the last qubit was $1$, then the resulting quantum state is
\begin{equation*}
     \frac{1}{\sqrt{2^n |S|}}\sum_{\substack{x\in S,\\y\in \F_2^n}}(-1)^{f(x)\cdot y}\ket{x,y}\ket{f(x)}\otimes_{i=1}^n\ket{f(x)_i\cdot y_i} \ket{1}.
\end{equation*}
Furthermore, the probability of obtaining $1$ is exactly $1/2$.
If the result of the measurement is~$0$, we repeat the Hadamard-and-measure process for $O(\log(1/\delta))$ times until the result is~$1$.

Upon succeeding, the algorithm inverts the $n$ many \textsf{CCNOT} gates and the query operator $O_f$ to obtain the~state
$$
\ket{\psi}= \frac{1}{\sqrt{2^n |S|}}\sum_{\substack{x\in S,\\y\in \F_2^n}}(-1)^{f(x)\cdot y}\ket{x,y}.
$$
The algorithm uses $O(K(m+n) \log(1/\delta))$ time and $O(K \log(1/\delta))$ queries to prepare $\ket{\psi}$ with probability $1-\delta$.
\end{proof}

We are now ready to prove Lemma~\ref{lem:findingfreimanquantum}.

\begin{proof}[ of Lemma~\ref{lem:findingfreimanquantum}]
The proof will be similar to the classical proof in Lemma~\ref{lem:findingaffine}.
As in that case, we are guaranteed by Claim~\ref{claim:highu3} that there exists a \emph{quadratic} polynomial $q: \F_2^{m} \times \F_2^n \rightarrow \F_2$ which has high correlation with $g_S(x,y) := \one_S(x) (-1)^{f(x)\cdot y},$ i.e.,
$$
\left| \Exp_{(x,y) \in \mathbb{F}_2^m \times \mathbb{F}_2^n} \one_S(x)(-1)^{f(x)\cdot y} (-1)^{q(x, y)} \right| \geq \frac{1}{P_2(K)},
$$
where $P_2(\cdot)$ is the polynomial promised by Lemma~\ref{lem:closeaffine}. For simplicity in notation, let us denote $\sigma:=1/P_2(K)$. In particular, defining the quantum states 
$$
\ket{\psi} = \frac{1}{\sqrt{2^n |S|}}\sum_{x\in S,\\y\in \F_2^n}(-1)^{f(x)\cdot y}\ket{x,y}, \quad \ket{\phi_q}= \frac{1}{\sqrt{2^{m+n}}} \sum_{x\in \F_2^m,y\in \F_2^n}(-1)^{q(x, y)}\ket{x,y},
$$
we have that $|\langle \psi|\phi_q\rangle|^2 \geq \sigma^{2}$. Moreover by Theorem~\ref{thm:neststabilizer}, we note that the quantum state $\ket{\phi_q}$ is a \emph{stabilizer state},\footnote{We remark that $\ket{\phi_q}$ is in fact a degree-$2$ phase state (i.e., the subspace is $\F_2^{m+n}$ and there are no complex phases), but we will not use that here.} and thus the stabilizer fidelity of $\ket{\psi}$ is also at least~$\sigma^{2}$.

We now use Theorem~\ref{thm:sitanbootstrapping} on copies of $\ket{\psi}$ prepared using Claim~\ref{claim:prep_psi}, with the error instantiated as $\varepsilon=\sigma^{2}/2$, to learn a stabilizer state $\ket{s}$ such that $|\langle s| \psi \rangle|^2 \geq \sigma^{2}/2$. By Theorem~\ref{thm:neststabilizer}, we can write this stabilizer state as
\begin{equation}\label{eq:stab_state_from_bootstrapping}
\ket{s} = \frac{1}{\sqrt{|A_s|}} \sum_{z \in A_s} i^{|\ell_s(z)|} (-1)^{q_s(z)} \ket{z},    
\end{equation}
where $A_s \subseteq \F_2^{m+n}$ is an affine subspace, $\ell_s$ is a linear polynomial and $q_s$ is a quadratic polynomial.
Denote $T:= S \times \F_2^n$.
To lower bound the size of $A_s$, we will lower bound the size of $A_s \cap T$:
\begin{align*}
\frac{\sigma}{\sqrt{2}} \leq |\langle \psi | s \rangle| 
&= \Big|\frac{1}{\sqrt{2^n |S|\cdot |A_s|}} \sum_{\substack{x \in S,\, y \in \F_2^n \\ (x,y) \in A_s}} i^{|\ell(x,y)|} (-1)^{q_s(x,y) + f(x)\cdot y} \Big| \\
&\leq \frac{1}{\sqrt{|A_s| \cdot |S|\cdot 2^n}} \sum_{(x,y) \in A_s \cap T} \Big| i^{|\ell(x,y)|} (-1)^{q_s(x,y) + f(x)\cdot y} \Big|\\
& \leq \frac{\sqrt{|A_s \cap T|}}{\sqrt{|S|\cdot 2^n}},    
\end{align*}
where we have used the triangle inequality in the second line and noted that each internal term is at $1$ in the final inequality along with using $|A_s| \geq |A_s \cap T|$. The above result implies that $|A_s|$ is large, i.e.,
\begin{equation}\label{eq:lb_size_As}
|A_s| \geq |A_s \cap T| \geq (\sigma^{3}/2) 2^{m+n},
\end{equation}
as $|S| \geq 2^m/K \geq \sigma \cdot 2^m$. Writing $A_s = a + H_s$ where $H_s$ is a linear subspace, we then have $\textsf{codim}(H_s) \leq \log(2/\sigma^{3})$. To obtain a quadratic phase state $\ket{\phi_p}$ corresponding to a quadratic phase polynomial $p:\F_2^{m+n} \rightarrow \F_2$ that has high fidelity with $\ket{\psi}$ from the description of $\ket{s}$, we make the following observations which will inform our approach.

Let us denote the orthogonal complement of $H_s$ as $H_s^\perp = \{x \in \F_2^{m+n} : x \cdot h = 0, \,\,\forall h \in H_s\}$. The Fourier decomposition of $\one_{A_s}(x)$ is given by
\begin{equation}\label{eq:fourier_indicator_coset}
\one_{A_s}(x) = \frac{|H_s|}{2^{m+n}}\sum_{\lambda \in H_s^\perp} (-1)^{\lambda \cdot (a+x)},    
\end{equation}
which follows from the observation that
\begin{align*}
\Exp_x[\one_{A_s}(x) (-1)^{\lambda \cdot x}] = 2^{-(m+n)} \sum_{x \in H_s} (-1)^{\lambda \cdot (a+x)} 
&= |H_s| 2^{-(m+n)} (-1)^{\lambda \cdot a} \Exp_{x \in H_s}[(-1)^{\lambda \cdot x}] \\
&= |H_s| 2^{-(m+n)} (-1)^{\lambda \cdot a} \one\{\lambda \in H_s^\perp\}.    
\end{align*}
Recalling that $g_S(x,y) = \one_S(x) (-1)^{f(x)\cdot y}$, we then observe that
\begin{align*}
    \sigma/\sqrt{2} \leq |\la \psi | s \ra |
    &= \frac{1}{\sqrt{2^n |S|\cdot |A_s|}} \Big| \sum_{z \in \F_2^{m+n}} \one_{A_s}(z) g_S(z) (-1)^{q_s(z)} i^{|\ell_s(z)|} \Big| \\
    &= \frac{|H_s|}{\sqrt{2^n |S|\cdot |A_s|}} \Big| \sum_{\lambda \in H_s^\perp} \Exp_{z \in \F_2^{m+n}} (-1)^{\lambda \cdot (a+z)} g_S(z) (-1)^{q_s(z)} i^{|\ell_s(z)|} \Big| \\
    &\leq \frac{|H_s|\cdot|H_s^\perp|}{\sqrt{2^n |S|\cdot |A_s|}} \max_{\lambda \in H_s^\perp} \Big|  \Exp_{z \in \F_2^{m+n}} (-1)^{\lambda \cdot z} g_S(z) (-1)^{q_s(z)} i^{|\ell_s(z)|} \Big| \\
    &\leq \frac{\sqrt{2}}{\sigma^{2}} \max_{\lambda \in H_s^\perp} \Big|  \Exp_{z \in \F_2^{m+n}} (-1)^{\lambda \cdot z} g_S(z) (-1)^{q_s(z)} i^{|\ell_s(z)|} \Big|,
\end{align*}
where we used the Fourier decomposition of $\one_{A_s}(z)$ from equation~\eqref{eq:fourier_indicator_coset} in the second line, applied the triangle inequality along with considering the $\lambda \in H_s^\perp$ which maximizes the expectation in the third line, and finally used equation~\eqref{eq:lb_size_As} as well as noting $|H_s|\cdot|H_s^\perp| = 2^{m+n}$ and $|S|\geq \sigma \cdot 2^m$.
From this chain of inequalities, we conclude there exists $\lambda^\star \in H_s^\perp$ such that
\begin{equation} \label{eq:gS_corr}
    \Big|  \Exp_{z \in \F_2^{m+n}} g_S(z) (-1)^{q_s(z) + \lambda^\star \cdot z} i^{|\ell_s(z)|} \Big| \geq \sigma^{3}/2.
\end{equation}
Define the function $h(z) := g_S(z) (-1)^{q_s(z) + \lambda^\star \cdot z}$.
Additionally, we denote
\begin{equation*}
    R_h = \Exp_{z\in \F_2^{m+n}} h(z) \one\{\ell_s(z)=0\}, \quad I_h = \Exp_{z\in \F_2^{m+n}} h(z) \one\{\ell_s(z)=1\},
\end{equation*}
so that by equation~\eqref{eq:gS_corr} we have
$$\sqrt{R_h^2 + I_h^2} = |R_h + iI_h| \geq \sigma^{3}/2.$$

Now, we consider the two candidate quadratic polynomials $p_0(z) := q_s(z) + \lambda^\star \cdot z$ and $p_1(z) := q_s(z) + \lambda^\star \cdot z + \ell_s(z)$, where $q_s$ and $\ell_s$ are the quadratic and linear polynomials corresponding to the stabilizer state $\ket{s}$ in hand~(equation~\eqref{eq:stab_state_from_bootstrapping}) and $\lambda^* \in H_s^\perp$ satisfies equation~\eqref{eq:gS_corr}.
We observe that the quadratic phase states
$$\ket{\phi_{p_b}} = \frac{1}{\sqrt{2^{m+n}}} \sum_{x\in \F_2^{m+n}}(-1)^{p_b(z)}\ket{z}, \quad b\in \bset{},$$
satisfy
\begin{align*}
    |\la \psi | \phi_{p_b}\ra|
    &= \frac{1}{\sqrt{2^{m+2n} |S|}} \bigg|\sum_{z \in \F_2^{m+n}} g_S(z) (-1)^{q_s(z) + \lambda^\star \cdot z + b\ell_s(z)} \bigg| \\
    &= \sqrt{\frac{2^m}{|S|}} \,\Big|\Exp_{z \in \F_2^{m+n}} h(z) (-1)^{b\ell_s(z)} \Big| \\
    &= \sqrt{\frac{2^m}{|S|}} \big| R_h + (-1)^b I_h \big| \\
    &\geq \big| R_h + (-1)^b I_h \big|.
\end{align*}
Noting that $\max\{|u+v|,|u-v|\} = |u| + |v| \geq \sqrt{u^2 + v^2}$, we then have
\begin{equation}\label{eq:promise_quadratics}
    \max\big\{|\la \psi | \phi_{p_0} \ra|,\, |\la \psi | \phi_{p_1} \ra|\big\} \geq \sqrt{R_h^2 + I_h^2} \geq \sigma^{3}/2.
\end{equation}
In other words, one of the quadratic polynomials $p_0$ or $p_1$ has high correlation with~$g_S(z)$.

To determine this quadratic polynomial, we now use the following approach. We create the list of candidate quadratic polynomials $L$ where we add the polynomials $p_0^\lambda(z) := q_s(z) + \lambda \cdot z$ and $p_1^\lambda(z) := q_s(z) + \lambda \cdot z + \ell_s(z)$ for each $\lambda \in H_s^\perp$. This list will be of size $|L| = 2 |H_s^\perp| \leq 4/\sigma^{3}$, where we have used $\textsf{codim}(H_s) \leq \log(2/\sigma^{3})$. For each $p \in L$, we prepare copies of the quadratic phase state $\ket{\phi_p}$ (which is also a stabilizer state) using Lemma~\ref{lem:clifford_synthesis} and then measure $|\la \psi | \phi_p \ra|^2$ using the \textsf{SWAP} test (Lemma~\ref{lem:swap_test}) up to error $\sigma^{3}/4$ and output the quadratic polynomial $p^\star$ that maximizes the fidelity. This consumes $\poly(1/\sigma)$ sample complexity and $O(n^2/\log n \cdot\poly(1/\sigma))$ time complexity. We are guaranteed by equation~\eqref{eq:promise_quadratics} that $(-1)^{p^\star}$ satisfies
\begin{equation}
\Big| \Exp_{(x,y) \in \F_2^{m} \times \F_2^n} \one_S(x) (-1)^{p^\star(x,y) + f(x)\cdot y} \Big| \geq \frac{\sigma^{3}}{4} = \frac{1}{4P_2(K)^3},    
\end{equation}
where we have substituted back $\sigma=1/P_2(K)$ set earlier. Having determined the polynomial $p^\star$, we now proceed as in Lemma~\ref{lem:findingaffine} to determine the affine linear function $\varphi$ that agrees with $f$ on many values $x \in S$. This completes the proof of the lemma. The query complexity and time complexity are determined by Theorem~\ref{thm:sitanbootstrapping}.
\end{proof}

With this lemma, we are finally ready to prove the main theorem of this section.

\begin{proof}[ of Theorem~\ref{thm:algoquantumPFR1}]
We proceed similarly to the proof of \cref{thm:algoPFR1}.
The algorithm is given as follows:
\begin{enumerate}
    \item Sample $t = 28 \log|A| + 56K$ uniformly random elements from $A$, and denote their linear span by $U$.
    Let $A' := A\cap U$.
    \item Take a random linear map $\pi: U \to \mathbb{F}_2^m$ where $m = \log|A| + 4\log K + 10$. Let $S = \pi(A')$ denote the image of $A'$ under $\pi$, and let $f: S \to U$ be the inverse of~$\pi$ restricted to $S$.
    \item Apply Lemma~\ref{lem:findingfreimanquantum} to obtain an affine-linear map $\psi: \mathbb{F}_2^m \to U$ such that $f(x) = \psi(x)$ for at least $|A|/P_2'\big(2^{34}K^{13}\big)$ values $x \in S$.
    \item Take a subspace $V$ of $\textsf{Im}(\psi) + \psi(0)$ having size at most $|A|$, and output a basis for~$V$.
\end{enumerate}
The only difference between the classical and quantum algorithms is in Step $(3)$.  So, we do not reproduce the correctness analysis and refer the reader to the classical proof of Theorem~\ref{thm:algoPFR1}.

Overall, the complexity of the algorithm is as follows.
The sample complexity to the set $A$ is $O(K+\log |A|)$, as given in step $(1)$.
Computing $\ker(\pi) \leq U$ and $\pi^{-1}: \mathrm{Im}(\pi) \to U/\ker(\pi)$ takes $O(m^2n)$ time and, after this is done, each query to $S$ and $f$ takes $2^{2K}$ queries to $A$ and $O(mn)$ time.
The total number of queries to $A$ needed to apply Lemma~\ref{lem:findingfreimanquantum} is then
$$2^{2K} \cdot K^{O(\log K)} (m+\log|U|) = 2^{O(K)} \log |A|,$$
where we used that $m = \log |A| + O(\log K)$ and $\log|U| = O(\log|A| + K)$.
The total runtime is the cost of Lemma~\ref{lem:findingfreimanquantum}, the cost of inverting $\pi$ and the cost of making queries to $S$ and $f$, i.e.
$$K^{O(\log K)}(m+\log|U|)^3 + O(m^2 n) + K^{O(\log K)} (m+\log|U|) \cdot O(mn + 2^{2K}n),$$
which scales as $K^{O(\log K)} n^3 + 2^{O(K)}n^2$, concluding the proof of the theorem.
\end{proof}

\subsection*{Acknowledgments}

The authors thank David Gross and Markus Heinrich for illuminating discussions regarding the stabilizer formalism and representation theory.
JB was supported by the Dutch Research Council (NWO) as part of the NETWORKS programme (Grant No.~024.002.003).
DCS was supported by the Engineering and Physical Sciences Research Council grant on Robust and Reliable Quantum Computing (grant reference EP/W032635/1). TG was supported by ERC Starting Grant 101163189 and UKRI Future Leaders Fellowship MR/X023583/1.

\bibliographystyle{plainurl}
\bibliography{references}

\end{document}